\def\ps@pprintTitle{%
  \let\@oddhead\@empty
  \let\@evenhead\@empty
  \def\@oddfoot{\reset@font\hfil\thepage\hfil}
  \let\@evenfoot\@oddfoot
}
\normalfont\fontsize{12}{15}\bfseries}{\thesection}{1em}{}
\normalfont\fontsize{11}{15}\bfseries}{\thesubsection.}{0.5em}{}
\def\am#1{{\color{red}#1}} 
\def\stringmod#1{M_{#1}}
\def\stringcpx#1{X^\bullet_{#1}}
\def\stringCpx#1{\widehat{X}^\bullet_{#1}}
\def\CO{\mathcal{O}}
\def\Db#1{{\mathcal D}^b(#1)} 
\def\La{A}
\def\Car{C}
\def\idem{\varepsilon}
\def\fmrgs#1{\tilde{#1}}  
\newcommand{\perm}{\mathcal{P}}
\newcommand{\forb}{\mathcal{F}}
\newcommand{\Sperm}{\mathcal{SP}}
\newcommand{\Sforb}{\mathcal{SF}}
\newcommand{\gldim}{{\rm gl.dim}}
\newcommand{\Hom}{\mathrm{Hom}}
\newcommand{\rad}{\mathrm{rad}}
\newcommand{\dimk}{\mathrm{dim}_k}
\def\vdim#1{\underline{\mathrm{dim}}(#1)}
\def\vdimp#1{\underline{\mathrm{dim}}^{\rm p}(#1)}
\newcommand{\inc}{\underline{{\mathrm{inc}}}}
\newcommand{\Img}{\mathbf{Im} }
\newcommand{\Rnk}{\mathrm{rk}}
\newcommand{\Ind}{{\rm ind}^0} 
\newcommand{\ind}{{\rm ind}}  
\newcommand{\IndSt}{\Ind_{\rm st}(\per\,A)}
\newcommand{\IndBd}{\Ind_{\rm bd}(\per\,A)}
\newcommand{\tr}{{\rm{tr}}}
\newcommand{\Dyn}{\mathrm{Dyn}}
\newcommand{\CDyn}{\mathrm{CarDyn}}
\newcommand{\R}{\mathbb{R}}
\newcommand{\Z}{\mathbb{Z}}
\newcommand{\M}{\mathbb{M}}
\newcommand{\MM}{\mathbb{M}}
\newcommand{\NN}{\mathbb{N}}
\newcommand{\Inc}{\mathrm{Inc}}
\def\Incsh{\mathcal{I}} 
\def\AA{\mathbb{A}}
\newcommand{\BB}{\mathbb{B}}
\newcommand{\CC}{\mathbb{C}}
\newcommand{\DD}{\mathbb{D}}
\newcommand{\EE}{\mathbb{E}}
\newcommand{\FF}{\mathbb{F}}
\newcommand{\GG}{\mathbb{G}}
\newcommand{\CRnk}{\mathrm{crk}}
\newcommand{\Id}{\mathbf{I}}
\newcommand{\roots}{\mathcal{R}}
\newcommand{\bas}{\mathbf{e}}
\newcommand{\sgn}{\mathrm{sgn}}
\newcommand{\sou}{\mathbf{s}}
\newcommand{\tar}{\mathbf{t}}
\newcommand{\souh}{\mathbf{s}^h}
\newcommand{\tarh}{\mathbf{t}^h}
\newcommand{\chr}{\mathrm{char}}
\newcommand{\bulito}{\mathmiddlescript{\bullet}}
\newcommand{\diag}{\mathrm{diag}}
\newcommand{\wt}[1]{\widetilde{#1}}
\newcommand{\Null}{\CRnk} 
\newcommand{\s}{\mathrm{z}}
\newcommand{\triv}{\mathbf{1}} 
\newcommand{\quiver}{Q} 
\newcommand{\GBQ}{\triangledown}
\newcommand{\GPC}{\triangle}
\newcommand{\GFC}{\blacktriangleup}
\newcommand{\mult}{\mathrm{M}} 
\def\verts#1{V(#1)} 
\def\edgs#1{E(#1)}  
\def\hedgs#1{H(#1)}  
\def\Oedgs#1{\vec{E}(#1)}  
\def\invw#1{{#1}^{-1}}  
\def\pmw#1{{#1}^{\pm}}
\def\unit{\mathbf{1}}
\def\walksred{\mathfrak{w}} 
\def\leftres{\mathfrak{lw}}
\def\rightres{\mathfrak{wr}}
\def\twores{\mathfrak{lwr}}
\def\wlk{\omega} 
\def\CRinc{\mathcal{R}^{\rm inc}}
\def\ZZ{\Z}
\def\cyc{\wlk}
\def\cycles{{\rm cyc}}
\def\aoi{\xi} 
\def\anp{\alpha} 
\def\permut{\mathrm{p}} 
\def\m{\mathrm{m}} 
\def\jeden{J}
\def\congder{\cong_{\rm der}}
\def\gcc{{\nabla}}
\def\cyccond{\nabla}
\def\balanced{c}
\def\weak{\sim}
\def\dim{{\rm dim}}
\def\Brgr{\Gamma}
\def\Bralg{B}
\def\Kb{\mathcal{K}^b}
\def\Kmb{\mathcal{K}^{-,b}}
\def\proj{{\rm proj}}
\def\per{{\rm per}}
\def\mod#1{\mathrm{mod}\,#1}
\def\perdims{\mathrm{P}} 
\def\projbas{P}
\def\simplebas{S}
\def\ppath#1#2{\langle #1,#2 \rangle}
\def\vtx#1{\overline{#1}}
\def\lhalf#1{\hat{#1}}
\def\rhalf#1{\check{#1}}
\def\TO#1{\overrightarrow{#1}}
\def\OT#1{\overleftarrow{#1}}
\def\Wlk{\wt{\wlk}}
\def\jj#1{{\color{red}#1}} 
\def\succ{\jj{>}}
\def\pred{\jj{<}}
\def\nrr#1{{\rm nr}(#1)}
\def\nrp#1{{\rm nr}_+(#1)}
\def\nrm#1{{\rm nr}_-(#1)}
\newcommand\mathmiddlescript[1]{\vcenter{\hbox{$\scriptstyle #1$}}}
\newcommand{\vBlock}[2]{\mathrel{\stackrel{\makebox[0pt]{\mbox{\normalfont\tiny \am{#2}}}}{#1}}}
\numberwithin{equation}{section}
\newtheorem{lemma}[equation]{Lemma}{\bf}{\it}
\newtheorem{proposition}[equation]{Proposition}{\bf}{\it}
\newtheorem{corollary}[equation]{Corollary}{\bf}{\it}
\newtheorem{theorem}[equation]{Theorem}{\bf}{\it}
{\bf}{\it}
\newtheorem{claim}[equation]{Claim}{\bf}{\it}
{\bf}{\it}
\newtheorem*{theoremA}{Theorem A}{\bf}{\it}
{\bf}{\it}
\newtheorem*{theoremB}{Theorem B}{\bf}{\it}
\newtheorem*{theoremC}{Theorem C}{\bf}{\it}
\theoremstyle{definition}
\newtheorem{remark}[equation]{Remark}{\bf}{\rm}
{\bf}{\rm}
\newtheorem{definition}[equation]{Definition}{\bf}{\rm}
\newtheorem{example}[equation]{Example}{\bf}{\rm}
\theoremstyle{remark}
\begin{document}
\title{A graph theoretic model for the derived categories of gentle algebras\\ and their homological bilinear forms}

\author{Jes\'us Arturo Jim\'enez Gonz\'alez\fnref{UG}}

\ead{jesus.jimenez@academicos.udg.mx}

\author{Andrzej Mr\'oz\fnref{UMK,cora}}
\ead{amroz@mat.umk.pl}

\fntext[UG]{Departamento de Ciencia de Datos, Centro Universitario de Chapala, Universidad de Guadalajara, Chapala Jalisco, Mexico}
\fntext[UMK]{Faculty of Mathematics and Computer Science, Nicolaus Copernicus University in Toru\'n, ul. Chopina 12/18, 87-100 Toru\'n, Poland}

\fntext[cora]{Corresponding author}
\journal{\ }

\begin{abstract}
We customize the existing models for the bounded derived category of gentle algebras to obtain simple graph theoretic tools to analyze indecomposable objects, Auslander-Reiten triangles, and their interaction with the associated homological bilinear form and the Coxeter transformation.
We apply these tools to explore related new and classical derived invariants. We exhibit the non-negativity and Dynkin type of the homological quadratic form of a gentle algebra, classify indecomposable perfect complexes by means of its roots, describe  the Coxeter polynomial and relate it with  the Avella-Alaminos Geiss invariant. We also derive some consequences for Brauer graph algebras.
\end{abstract}

\begin{keyword}
Gentle algebra \sep marked ribbon graph \sep bounded derived category \sep homological bilinear form \sep Dynkin type \sep root systems \sep Coxeter polynomial
\MSC[2020] primary:
16G20 
\sep
16E35 
\sep
05E10 
\sep
16G70 

secondary:
16E20 
\sep
17B22 
\sep
05C76 
\end{keyword}

\maketitle

\section{Introduction}

The computation and characterization of derived categories is an important task in many areas of mathematics, because of their heavy content of homological information. Some of the homological invariants that have attracted attention of representation theorists over the last decades involve Grothendieck groups, Hochschild (co-)homology and Auslander-Reiten quivers. Also a prominent example is the  homological bilinear form, associated quadratic (Euler) form and the Coxeter transformation, classically defined for the algebras of finite global dimension, whose study helped to develop the representation theory of quivers and many other classes of algebras, especially in connection with representation types, root systems and tilting theory in the works of Gabriel~\cite{GabrielI}, Bongartz~\cite{BongartzTits}, Dlab-Ringel~\cite{DlabRingel}, Kac \cite{Kac80}, Reiten-Skowro\'nski~\cite{ReitenSkowron}, Br\"ustle-de la Pe\~na-Skowro\'nski~\cite{BPS}, Happel~\cite{Happel.Trace}, Lenzing-de la Pe\~na~\cite{LPtrends}, Mr\'oz-de la Pe\~na~\cite{MP2}, Zhang~\cite{Zhang}, to mention a few examples. Descriptions of such homological invariants are difficult in full generality, thus efforts are usually directed to classes of algebras of particular interest, gentle algebras in our case.

Before their formal introduction, the underlying combinatorics of gentle algebras appeared in several instances within representation theory: in the study of representations of the Lorentz group~\cite{GePo}, self-reproducing systems~\cite{NazarovaRoiterTwo}, representations of groups~\cite{BondaDrozd,BondarenkoI, Ri1975} and skew-group algebras~\cite{NazarovaRoiterOne}. These ideas have been systematized and generalized in the framework of matrix problems as clans~\cite{CBclan}, bushes~\cite{Deng}, bundles of chains~\cite{BondarenkoII} or bunches of chains~\cite{BurbanDrozd}, structures that have in common an initial combinatorial setting: a \emph{disjoint union of (semi-) chains with an involution}. After the development of tilting theory in the 1980s, such structures were recognized independently as a particular class of well-behaved special biserial algebras, starting with the tilted classification of hereditary algebras of Dynkin type $\mathbb{A}$~\cite{AH} and the formal introduction of \emph{gentle algebras} by Assem-Skowro\'nski for  type $\widetilde{\mathbb{A}}$~\cite{AS}, and soon after in the derived discrete~\cite{BGSdisc, VossieckDerDisc} and piecewise hereditary~\cite{HZ08} classifications, see also~\cite{Hph, RvdB}. An early explicit connection between these two approaches can be found in Bekkert-Merklen's description of indecomposable complexes of the derived category of gentle algebras~\cite{BekMer} following techniques presented in~\cite{BondarenkoI}. Many homological aspects of gentle algebras came to light with the help of repetitive algebras and Happel's embedding functor, cf.~\cite{Ringel1997TheRA}, from their closedness up to derived equivalence~\cite{SchZ}, the description of their derived categories and almost split triangles in terms of \emph{string combinatorics}~\cite{Bob,ALP}, to the construction of purely combinatorial derived invariants~\cite{AAG} that describe part of the tubular structure of the derived category (the latter also with help of clans). Recently, graded versions of gentle algebras were related to Fukaya categories of Riemann surfaces for considerations in homological mirror symmetry~\cite{HKK,LPFukaya} and to cluster combinatorics as Jacobian algebras for certain quivers with potentials~\cite{Labardini}. These connections prompted extensive research in the area and, together with developments on symmetric special biserial algebras and Brauer graph algebras in terms of \emph{ribbon graphs}~\cite{MarshSchroll,Schroll.trivext}, motivated the construction of geometric models for the derived category of (graded) gentle algebras~\cite{HKK,OPS18,PPP19} successfully applied for the characterization of derived equivalence within these algebras~\cite{LPFukaya,APS23}. The presentation of the geometric model in~\cite{OPS18} is based on the notion of {\it marked ribbon graphs} and their embeddings into marked oriented surfaces, where the reformulation of a gentle algebra as disjoint union of chains with an involution (having no fixed point) was exploited again. In this geometric framework, the surface, its curves and their intersections serve as model for the derived category of a gentle algebra.

Here we propose a reformulation of the geometric model to study the derived category of a gentle algebra purely in terms of marked ribbon graphs, which may be considered as a discrete version of existing surface presentations.
Our exposition makes no direct use of newly developed geometric methods and considerably simplifies parts of the classical string combinatorics. Our \lq\lq graph model'' 
is based on three technical results dealing with  the presentation of string complexes as reduced walks of the marked ribbon graph and the corresponding presentation of bands (Propositions~\ref{pro:walkBelt} and \ref{prop:nonper}, compare to~\cite{BekMer}), and the construction of Auslander-Reiten triangles, which simplifies the well-known construction of Bobi\'nski \cite{Bob} (Proposition~\ref{thm:ARtriangle}); see also Remark \ref{rem:mor}. The presentation is intended to provide simple tools for the construction and analysis of Auslander-Reiten components  and the homological bilinear form of gentle algebras. Although the homological form is easy to compute for concrete examples (its Gram matrix in an appropriate basis is the Cartan matrix of the algebra), the description of its properties in a unified manner is challenging work. With the proposed graph model and the use of the \emph{incidence matrix} of a graph as connection to linear aspects of quadratic forms as in~\cite{JM}, we explore the definiteness of the homological bilinear form, related new and classical derived invariants, and its interrelation with the classification of indecomposable objects of the derived category. The results of \cite{JM} form a combinatorial framework for our present study, and they may be viewed as yet another path leading to combinatorics of gentle algebras (this time from  algebraic graph theory and the theory of integral quadratic forms), see also \cite{JJ.JA2018}.

\smallskip

In the paper we work with finite dimensional (bound quiver) algebras over a field $k$, see Section \ref{sec:combgentle} for the details. Direct inspiration for our main results below are the following well-known facts concerning a specific class of gentle algebras, proved when $k$ is algebraically closed. Namely, by Assem-Skowro\'nski \cite{AS},  derived hereditary algebras of type $\widetilde{\AA}$ (thus having non-negative Euler form of corank 1 and Dynkin type $\AA$) are precisely the 1-cycle gentle algebras $A=kQ/I$ such that the bound quiver $(Q,I)$ satisfies the {\em clock condition} (cf.~Remark \ref{rem:tcycle}). Whereas, if a 1-cycle gentle algebra $A$ does not satisfy the {clock condition}, then by the results of Vossieck \cite{VossieckDerDisc} $A$ is derived discrete and not derived hereditary (cf.~\cite{SchZ,AH}). However, in this case and if the global dimension of $A$ is finite, $A$ has non-negative Euler form of corank at most one and Dynkin type  $\AA$ or $\DD$ by Bobi\'nski-Geiss-Skowro\'nski~\cite{BGSdisc} (see Subsections \ref{subsec:qf} and \ref{subsec:eulf} for  detailed definitions).

The following first main result of the paper
establishes a relationship between the marked ribbon graph and the Euler form of a gentle algebra. It generalizes some aspects of the above results \cite{AS,BGSdisc} (cf.~also \cite{AH}) to all gentle algebras by providing the non-negativity, Dynkin type and corank of the Euler form and a related simple derived invariant.
From now on $k$ denotes an arbitrary fixed field (not necessarily algebraically closed). We deal with gentle bound quivers of arbitrary global dimension.

\begin{theoremA}[{Theorem~\ref{thm:mrgvsCar}}]\label{mainthm:A}
Let $(Q,I)$ be a gentle bound quiver,  and $(G,\leq)$  the associated marked ribbon graph. Then the Cartan matrix $\Car_\La$ of  the gentle algebra $\La:=kQ/I$ and the incidence matrix $\Inc(G)$ of $G$ are related by the identity
$$
\Car_\La+\Car_\La^\tr=\Inc(G)\Inc(G)^\tr.
$$
 In particular, the Euler form $q_\La:K_0({\rm per}\,\La)\to\ZZ$ of $\La$ is non-negative. Moreover:
\begin{enumerate}[label={\textnormal{(\alph*)}},topsep=3px,parsep=0px]
\item  $(Q,I)$ satisfies the \textbf{multi-clock condition} $($that is, every closed walk in $Q$ contains even number of relations of length two from $I$, see Definition~\ref{def:mcc}) if and only if the graph $G$ is bipartite.
\item The corank of the Euler form is given by the formula
$$\CRnk(q_\La)=|Q_1|-|Q_0|+\gcc_\La,$$ where $\gcc_\La\in\{0,1\}$ and $\gcc_\La=1$ if and only if $(Q,I)$ satisfies the multi-clock condition. In particular,  $\gcc_\La$  is a derived equivalence invariant.  
\item With the exception of certain degenerate cases, the Dynkin type $\Dyn(q_A)$ of $q_A$ is $\AA$, $\DD$ or $\CC$, and $\Dyn(q_A)=\AA$ if and only if $(Q,I)$ satisfies the multi-clock condition.
\end{enumerate}
\end{theoremA}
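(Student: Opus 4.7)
The backbone of the theorem is the matrix identity $\Car_\La+\Car_\La^\tr=\Inc(G)\Inc(G)^\tr$; non-negativity and all of (a)--(c) then reduce to standard graph-theoretic facts and to the results of \cite{JM} on quadratic forms of incidence type. I would reformulate both sides of the identity combinatorially. In the construction of $(G,\leq)$, edges of $G$ are indexed by the vertices of $Q$ while vertices of $G$ are indexed by the maximal permitted threads of $(Q,I)$, so $\Inc(G)_{i,v}$ counts the occurrences of $i\in Q_0$ along the thread $v$ (either $0$, $1$, or $2$, the last when $v$ passes through $i$ twice as a loop). Hence $(\Inc(G)\Inc(G)^\tr)_{ij}$ counts ordered pairs of occurrences of $i$ and $j$ inside a common thread. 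On the algebraic side, $(\Car_\La)_{ij}=\dim_k e_i\La e_j$ equals the number of permitted paths $j\to i$ in $(Q,I)$ (the trivial path when $i=j$ included), and every permitted path sits as a subsegment of a unique maximal permitted thread. The required bijection pairs a permitted path with the ordered pair of occurrences of its endpoints in the enveloping thread. The only subtle case is the diagonal, where trivial paths, loops, and vertices lying at the meeting point of two threads must all be tallied via the local gentle structure at $i$; this is the main technical step of the proof.

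\emph{Non-negativity and parts (a), (b).} The identity immediately gives $2q_\La(x)=x^\tr(\Car_\La+\Car_\La^\tr)x=\|\Inc(G)^\tr x\|^2\geq 0$. For (a), a cycle of $G$ is a cyclic alternating sequence of permitted threads and their shared endpoints in $Q_0$; each shared endpoint at a vertex $i$ corresponds to a transition between two consecutive threads, i.e.\ to a length-two relation in $I$ at $i$. Hence cycles of $G$ are in bijection with closed walks in $Q$ using length-two relations as switches, and the cycle length in $G$ equals the number of such switches, so bipartiteness of $G$ is equivalent to the multi-clock condition. For (b), $\ker(\Inc(G)\Inc(G)^\tr)=\ker\Inc(G)^\tr$, so $\CRnk(q_\La)=|E(G)|-\Rnk_\Q(\Inc(G))$. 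The classical rank formula over $\Q$ for the unsigned incidence matrix gives $\Rnk_\Q(\Inc(G))=|V(G)|-b(G)$, where $b(G)$ is the number of bipartite connected components of $G$. Double-counting occurrences of vertices of $Q$ in all permitted threads (each arrow of $Q$ lying in exactly one such thread) yields $|V(G)|=2|Q_0|-|Q_1|$, while $|E(G)|=|Q_0|$ by construction, hence $\CRnk(q_\La)=|Q_1|-|Q_0|+b(G)$. In the connected case $b(G)=\gcc_\La\in\{0,1\}$, equal to $1$ precisely under the multi-clock condition by (a); derived invariance of $\gcc_\La$ follows because $\CRnk(q_\La)$ and $|Q_1|-|Q_0|=-\chi(\La)$ are both derived invariants.

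\emph{Part (c) and principal obstacle.} Dynkin type is preserved under the symmetric integral equivalence $2q_\La\cong q_{\Inc(G)}$, where $q_{\Inc(G)}(x):=\|\Inc(G)^\tr x\|^2$, so determining $\Dyn(q_\La)$ reduces to determining the Dynkin type of the unsigned incidence quadratic form of the ribbon graph $G$. The latter is exactly the classification carried out in \cite{JM}: the type is $\AA$ when $G$ is bipartite, and $\DD$ or $\CC$ otherwise, apart from a short list of small-graph degeneracies. Combined with (a), this gives $\Dyn(q_\La)=\AA$ if and only if the multi-clock condition holds. The main obstacle across the whole argument is the diagonal case of the central identity, which requires a delicate local analysis at vertices of $Q$ carrying loops or sitting at the boundary between two threads; once this is settled, everything downstream is a standard application of classical incidence-matrix theory together with the invocation of \cite{JM}.
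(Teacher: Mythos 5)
Your overall architecture matches the paper's: everything is reduced to the incidence form of $G$ and then to the classification results of \cite{JM}, and parts (b) and (c) are essentially the paper's arguments (corank of the unsigned incidence matrix, the count $|V(G)|=2|Q_0|-|Q_1|$, derived invariance of $|Q_1|-|Q_0|$ and of the corank, and the $\AA/\DD/\CC$ trichotomy from \cite{JM}). For the central identity, however, you take a genuinely different route. The paper proves $\Car_\La+\Car_\La^\tr=\Inc(G)\Inc(G)^\tr$ column by column and homologically: the column $\Inc(G)\bas_\alpha$ is the dimension vector of the string module $M_\alpha$ of the permitted thread $\eta^\alpha$, and the short exact sequence $0\to P_i\to M_\alpha\oplus M_\beta\to E_i\to 0$ (where $\eta^\alpha,\eta^\beta$ are the two threads crossing at $i$) gives $\Inc(G)\Inc(G)^\tr\bas_i=\vdim{M_\alpha}+\vdim{M_\beta}=\vdim{P_i}+\vdim{E_i}$. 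Your direct double count (permitted paths between $i$ and $j$ versus ordered pairs of occurrences of $i$ and $j$ on a common thread) is a legitimate elementary alternative, and you correctly flag the diagonal as the delicate case; but you do not actually carry that case out, and it is exactly where the loop/self-crossing bookkeeping lives (two occurrences of $i$ on one thread give $\Inc_{i\alpha}^2=4$, matching the trivial path plus the one non-trivial permitted path $i\to i$ each counted twice in $\Car_\La+\Car_\La^\tr$). The paper's exact-sequence argument absorbs all of this at once, and also sets up the dual identity for $\Car_\La^{-1}$.

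There is a genuine gap in your part (a). You assert that every edge of a cycle of $G$ --- i.e.\ every vertex $i\in Q_0$ at which two consecutive threads of the cycle meet --- ``corresponds to a length-two relation in $I$ at $i$,'' and hence that the cycle length equals the number of relations on the corresponding closed walk of $Q$. This is false: a transition between two threads at $i$ can also be a change of direction of the walk (two arrows both leaving $i$, or both entering $i$), which carries no relation. For the Kronecker quiver $2\rightrightarrows 1$ with $I=0$ (which is gentle), $G$ is a digon, so the cycle has length $2$, while the corresponding closed walk of $Q$ contains $0$ relations. The two quantities agree only modulo $2$, and it is precisely this parity statement that needs proof. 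The paper obtains it by showing that for the closed walk $\wlk=\GBQ(\Wlk)$ one has $\deg(\wlk)={\rm nr}_+(\wlk)-{\rm nr}_-(\wlk)$, that $\deg(\Wlk)$ and $\ell(\Wlk)$ have opposite parities, and hence that the cycle length is congruent to ${\rm nr}(\wlk)$ mod $2$; equivalently, one must observe that the transitions split into relations and direction changes and that the number of direction changes around a closed walk is necessarily even. Without this step your argument proves a false intermediate identity, even though the final equivalence (bipartite $\Leftrightarrow$ multi-clock) happens to be correct.
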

We refer to the full version (Theorem~\ref{thm:mrgvsCar}) of the theorem for more precise statements on the Dynkin type, as well as some additional claims, among others, for the distinguished case of finite global dimension and a certain graph \lq\lq Koszul dual''  to $(G,\leq)$. The proof is given in Subsection~\ref{subsec:proofA}.
We present in Subsection~\ref{subsec:applA} several applications of Theorem A. Among other things, we provide there a collection of graph theoretic invariants for gentle algebras in the spirit of the results of Antipov-Zvonareva~\cite{AZ22.Brauer} and Opper-Zvonareva~\cite{OppZvo22} for Brauer graph algebras (see Corollary~\ref{cor:OZspirit}); a characterization of the properties of the Euler form for derived discrete algebras (see Corollaries~\ref{cor:BGS} and~\ref{cor:derdiscsimple}) which recover and extend some of the results of Bobi\'nski-Geiss-Skowro\'nski~\cite{BGSdisc}; and we show that the Euler form of a Brauer graph algebra is non-negative, and we interrelate its definiteness with the representation type (see Theorem~\ref{thm:Brauer}, Corollary~\ref{cor:BrauerDomestic} and Remark \ref{rem:Antipov}).

The following second main result of the paper may be viewed as an analogue of classical results for algebras of finite global dimension providing a classification of indecomposable modules in terms of  positive roots of the Euler (or Tits) form, cf.~\cite{GabrielI, BongartzTits, DlabRingel, Kac80}.

\begin{theoremB}[{Theorem~\ref{thm:rootsind}}]\label{mainthm:B}
Let $(Q,I)$ be a gentle bound quiver and $A=kQ/I$.  The following inclusions hold:
$$
\roots_q(0) \cup \roots_q(1)\  \subseteq  \  \perdims(A) \  \subseteq \ \roots_q(0) \cup \roots_q(1) \cup \roots_q(2),
$$
where $\perdims=\perdims(A):=[\ind({\rm per}\,\La)]\subseteq K_0({\rm per}\,\La)$ denotes the subset of all the classes of indecomposable perfect complexes in the Grothendieck group $K_0({\rm per}\,\La)$ of the perfect category, $q=q_\La:K_0({\rm per}\,\La)\to\ZZ$ is the  Euler form of $A$  and $\roots_q(d)=q^{-1}(d)$ is the set of all {$d$-roots} of $q$ for $d\in\ZZ$. Moreover,
 $\roots_q(0) \cup \roots_q(1) = \perdims$  if and only if\  $(Q,I)$ satisfies the multi-clock condition.
\end{theoremB}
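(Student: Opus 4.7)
The plan is to combine the Cartan-incidence identity from Theorem A with the string-band classification of indecomposable perfect complexes expressed via walks of the marked ribbon graph $(G,\leq)$ (Propositions~\ref{pro:walkBelt} and~\ref{prop:nonper}). Writing $\Inc=\Inc(G)$, Theorem A gives
$$
2q_A(v)=v^{\tr}(\Car_A+\Car_A^{\tr})v=\|\Inc^{\tr}v\|^2,
$$
so $q_A(v)$ equals half the squared Euclidean norm of an integer vector indexed by the vertices of $G$, and the admissible values of $2q_A(v)$ are controlled by the support of $\Inc^{\tr}v$.

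For the inclusion $\perdims(A)\subseteq\roots_q(0)\cup\roots_q(1)\cup\roots_q(2)$ I would go through the walk presentation of indecomposables. For a string complex $X_\omega^\bullet$ attached to a reduced walk $\omega$ in $G$ with boundary half-edges at vertices $u,w\in\verts{G}$, the interior edges of $\omega$ should contribute to $\Inc^{\tr}[X_\omega^\bullet]$ in a telescoping manner so that internal terms cancel pairwise, leaving a vector supported on $\{u,w\}$ with coefficients in $\{-1,0,+1\}$ (respectively $\{-2,0,+2\}$ if $u=w$). Thus $\|\Inc^{\tr}[X_\omega^\bullet]\|^2\in\{0,2,4\}$ and $q_A([X_\omega^\bullet])\in\{0,1,2\}$. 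For a band complex, the same cancellation closes up the walk, yielding $\Inc^{\tr}[X]=0$ and $q_A([X])=0$, independently of the band parameters $\lambda\in k^*$ and $n\geq 1$, since the class is an integer multiple of a kernel vector.

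For the converse inclusion $\roots_q(0)\cup\roots_q(1)\subseteq\perdims(A)$ I would run the argument in reverse using the structural results of \cite{JM} on non-negative unit forms admitting an incidence factorization. Given $v\in K_0(\per A)$ with $q_A(v)\leq 1$, the integer vector $\Inc^{\tr}v$ either vanishes or equals $\pm\bas_u\pm\bas_w$ for distinct vertices $u,w$ of $G$ (the value $\pm2\bas_u$, of squared norm $4$, is excluded). The realizability results of \cite{JM} translate this into a reduced walk of $G$ whose associated indecomposable complex realizes $v$ as a Grothendieck class: an open walk giving a string complex when $q_A(v)=1$, and a closed walk giving a band complex when $q_A(v)=0$, with any choice of $\lambda,n$ since the class is already determined by the underlying walk.

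The equality $\roots_q(0)\cup\roots_q(1)=\perdims(A)$ then amounts to the absence of indecomposables with $q_A=2$. From the case analysis, $q_A([X_\omega^\bullet])=2$ forces the two endpoints of $\omega$ to collide at a single vertex with matching endpoint signs, which requires $G$ to admit an odd closed walk; equivalently, $G$ is not bipartite, which by Theorem A(a) matches exactly the failure of the multi-clock condition. The main obstacle I anticipate is the sign bookkeeping along walks when expanding $\Inc^{\tr}[X_\omega^\bullet]$, which requires careful use of the ribbon order $\leq$ and of the half-edge data carried by the endpoints of $\omega$; and, in the converse direction, the construction of an explicit walk realizing a prescribed root vector rather than merely one with the correct norm. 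This is where the combinatorial framework of \cite{JM} and the string-band dictionary need to be carefully synchronized.
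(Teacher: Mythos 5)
Your overall route coincides with the paper's: identify $q_A$ in the basis of indecomposable projectives with the incidence form of $G$ via Theorem~\ref{thm:mrgvsCar}, read off the classes of indecomposables as signed incidence vectors of reduced walks, and quote the realizability theorem of \cite{JM} for $0$- and $1$-roots. The "telescoping" you anticipate is exactly Lemma~\ref{lem:incvects}(e), $\Inc(G)^{\tr}\inc(\wlk)=\bas_{\tar(\wlk)}-(-1)^{\ell(\wlk)}\bas_{\sou(\wlk)}$, combined with Lemma~\ref{lem:incvectsvsdim}(a), $\vdimp{\stringcpx{m,\wlk}}=(-1)^m\inc(\wlk)$; and the converse inclusion is Theorem~\ref{thm:JM1B}. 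Your treatment of the forward inclusion, of band classes lying in the radical, and of the equivalence with the multi-clock condition via bipartiteness of $G$ all match the paper.

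One step is wrong as stated: in the converse direction you realize a $0$-root by ``a closed walk giving a band complex, with any choice of $\lambda,n$''. Band complexes exist only over \emph{belts}, i.e.\ closed reduced walks $i_1\cdots i_Li_{L+1}$ with $i_1=i_{L+1}$, $\deg(\wlk)=0=\deg(i_Li_1i_2)$ and a primitivity condition; a generic reduced closed walk of even length is not a belt and carries no band complex, so the object you propose need not exist. The correct realization of a $0$-root $\pm\inc(\wlk)$, for $\wlk$ a reduced closed walk of even length, is the \emph{string} complex $\stringcpx{m,\wlk}$, which is an indecomposable perfect complex for every non-trivial reduced walk by Proposition~\ref{pro:walkBelt}(a); this is what the paper uses in Theorem~\ref{thm:rootsind}(a). (In the other direction every band class does land in $\roots_q(0)$: a belt truncates to a closed walk $\wlk_1$ of even length and the class of the band complex equals that of the string complex over $\wlk_1^d$, which is how the paper shows $\perdims_{\rm bd}\subseteq\perdims_{\rm st}$.) A smaller inaccuracy: the class of a band complex is \emph{not} independent of the dimension $d$ of the automorphism $\mu$ --- it scales by $d$ --- though it remains a $0$-root since the radical is closed under integer multiples. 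With these corrections your argument closes and is essentially the paper's proof.
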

We refer to the full version (Theorem~\ref{thm:rootsind}) of the theorem for more precise statements, including a complete characterization of indecomposable perfect complexes whose classes in the Grothendieck group are 0-, 1-, or 2-roots, respectively, by means of walks in the associated marked ribbon graph, see also Corollary~\ref{cor:2roots}.  The proof of the theorem and some applications are  given in Subsections~\ref{subsec:proofB} and~\ref{subs:rootsys}, respectively. We show that, analogously as in  the classical situation of representations of quivers \cite{GabrielI, Kac80}, in case the Euler form $q_A$ is positive, the set $\perdims(A)$ is finite and it carries an additional structure of  a root system in the sense of Bourbaki~\cite{Bourbaki}, see Theorem~\ref{thm:rootsys}.

The following third main result provides a complete description of the Coxeter polynomial $\psi_A(z)\in \ZZ[z]$ of a gentle algebra $A$ in terms of Avella-Alaminos Geiss invariant of $A$ (shortly, AAG invariant; see \cite{AAG} and \ref{subsec:antiwalks}). Recall that $\psi_A(z)$ is the characteristic polynomial of the automorphism of the Grothendieck group $K_0(\per\,\La)$ induced by the Auslander-Reiten translation  on the perfect category (see \ref{subsec:Cox} for the details). The Coxeter polynomial is a widely studied (mainly for algebras of finite global dimension)  derived  invariant  which  encodes certain  interesting data on the module categories and derived categories,  see for instance~\cite{LPtrends, Happel.Trace,RCox,PMahler,Zhang, Sim.AM2022}.

\begin{theoremC}[{Theorem~\ref{cor:CoxPol}}]\label{mainthm:C}
Let $(Q, I)$ be a gentle bound quiver. The Coxeter polynomial $\psi_A(z)$ of the gentle algebra $A=kQ/I$ with AAG invariant $\varphi_A$ is given by
\begin{equation}\label{eq:coxpAAGC}
\psi_A(z)=(z-1)^{|Q_1|-|Q_0|}\prod_{\substack{n>0 \\ m \geq 0}}(z^n-(-1)^{n+m})^{\varphi_A(n,m)}.
\end{equation}
\end{theoremC}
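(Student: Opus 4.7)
The plan is to combine the Cartan--incidence identity from Theorem~\ref{thm:mrgvsCar} with the combinatorial description of almost split triangles from Theorem~\ref{thm:ARtriangle}, and then to organize the resulting computation according to the orbit structure underlying the Avella-Alaminos Geiss invariant $\varphi_A$.

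First, by Theorem~\ref{thm:mrgvsCar} we have $\Car_A + \Car_A^{\tr} = \Inc(G)\,\Inc(G)^{\tr}$, which (working over $\Q$) allows us to write the Coxeter transformation of $\per A$ as $\Phi_A = I - \Car_A^{-\tr}\,\Inc(G)\,\Inc(G)^{\tr}$. Applying the Sylvester determinant identity $\det(\lambda I_n + AB) = \lambda^{n-e}\det(\lambda I_e + BA)$ to $\psi_A(z) = \det(zI - \Phi_A)$ reduces the computation to
\[
\psi_A(z) \;=\; (z-1)^{|Q_0| - |\verts{G}|}\,\det\bigl((z-1)\,I + \Inc(G)^{\tr}\,\Car_A^{-\tr}\,\Inc(G)\bigr),
\]
so the task becomes the identification of the characteristic polynomial of the smaller matrix $M := \Inc(G)^{\tr}\,\Car_A^{-\tr}\,\Inc(G)$ indexed by the vertices of $G$.

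Next, I would interpret $M$ in terms of walks on the marked ribbon graph $(G,\leq)$. Combining Theorem~\ref{thm:ARtriangle} (which controls the effect of the AR translation on the walks representing indecomposable string complexes) with the AAG algorithm (which traces closed sequences alternating permitted and forbidden threads), I expect to realize $M$, after an appropriate change of basis, as a block-diagonal signed permutation matrix whose orbit structure is precisely encoded by $\varphi_A$: each orbit of type $(n,m)$ contributes a cyclic block of size $n$ with characteristic polynomial $z^n - (-1)^{n+m}$.

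Assembling the blocks and combining with the prefactor, using the ribbon-graph identity $|\verts{G}| = 2|Q_0| - |Q_1|$ (which must follow from the correspondences between edges of $G$ and vertices of $Q$ and between vertices of $G$ and endpoints of permitted threads), converts $(z-1)^{|Q_0| - |\verts{G}|}$ into $(z-1)^{|Q_1|-|Q_0|}$ and yields the claimed formula. The main obstacle will be the sign identification in the second step: tracking the parity of $n+m$ through the combinatorial reduction requires careful bookkeeping of the marking $\leq$ along each AAG orbit, combined with the global sign coming from $-\Car_A$ in $\Phi_A$, to ensure that each cyclic block carries exactly the characteristic polynomial $z^n - (-1)^{n+m}$ with the correct sign. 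A secondary care is required to make the Sylvester reduction work when $\Car_A$ fails to be invertible over $\Z$ (infinite global dimension): one may either pass to $\Q$-coefficients and clear denominators at the end, or equivalently replace $\Phi_A$ by its defining action on $K_0(\per A)$ induced from Theorem~\ref{thm:ARtriangle}.
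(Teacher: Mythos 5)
Your proposal follows essentially the same route as the paper's proof: the paper first establishes (Corollary~\ref{cor:CoxTra}) the inversion-free formula $\Psi_A=\mathbf{I}_n-\widehat{J}\widehat{J}^{\mathrm{tr}}C_A^{\mathrm{tr}}$, where $\widehat{J}$ is the matrix of incidence vectors of anti-walks and satisfies $C_A^{\mathrm{tr}}\widehat{J}=\mathrm{Inc}(G)$, and the same Sylvester reduction then lands on the $V(G)\times V(G)$ matrix $\Lambda=\mathbf{I}-\widehat{J}^{\mathrm{tr}}C_A\widehat{J}$, which coincides with your $\mathbf{I}-\mathrm{Inc}(G)^{\mathrm{tr}}C_A^{-\mathrm{tr}}\mathrm{Inc}(G)$ whenever $C_A$ is invertible and already resolves the degenerate case you flag. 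The sign bookkeeping you single out as the main obstacle is carried out exactly as you anticipate: a one-line computation with Lemma~\ref{lem:incvects}(e) identifies $\Lambda$ with the signed permutation matrix $P(\xi)\,\mathrm{diag}[(-1)^{\ell(\overleftarrow{\alpha})}]$, whose orbits are the AAG orbits with $|O|=n$ and sign $(-1)^{n+m}$.
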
 

Observe that the theorem shows in particular that the Coxeter polynomial of a gentle algebra is cyclotomic (that is, it is a product of cyclotomic polynomials) and the formula~\eqref{eq:coxpAAGC} provides its cyclotomic decomposition as those studied in~\cite{Pcyclotalgebras, PMahler, MP2}. The proof of Theorem C is provided in Subsection \ref{subsec:Cox}. It exploits, among others, the interplay between the Cartan matrix and the marked ribbon graph from Theorem A and the compact formula for the Auslander-Reiten translation given in Proposition \ref{thm:ARtriangle}.

To sum up this introductory part, one of the basic ideas of the paper is to replace string combinatorics (or a geometry of curves in the geometric model) with the analysis of walks in marked ribbon graphs, equipped with methods related to integral quadratic forms. This approach provides simple tools to perform computations in the derived category of gentle algebras and it allows us to prove new general results for this class. It seems to us that these methods can be extended to related classes like skew-gentle algebras and (skew-)Brauer graph algebras. In particular, we believe that this may lead to a construction of further combinatorial derived invariants, cf.~\cite{AAG, APS23, OppZvo22}.

\smallskip
The authors thank Grzegorz Bobi\'nski  and Grzegorz Zwara for inspiring discussions, especially around Theorem \ref{thm:Brauer} on Brauer graph algebras (cf.~Remark \ref{rem:Antipov}).
\section{Preliminary notions}\label{sec:prelim}

 Our base commutative ring $R$ is either the integer numbers $R=\Z$ or an arbitrary field $R=k$, and $R$-modules (abelian groups or vector spaces) are supposed to have finite bases, that is, they are free and finitely generated. The set of $n \times m$ matrices with coefficients in $R$ is denoted by $\M_{n,m}(R)$; more generally, given two finite sets $X$ and $Y$, we consider the set of $X\times Y$-matrices $\M_{X,Y}(R):=\{f:X\times Y\to R: f\ \text{a function}\}$ and we define addition and multiplication of such matrices in a natural way. Any morphism of $R$-modules with fixed bases is identified with its corresponding matrix.  Given a natural number $n\geq 1$ (resp.~a finite set $X$),  by $\{\bas_i\}_{i=1}^n$ (resp.~$\{\bas_i\}_{i\in X}$) we denote the canonical basis of the $R$-module $R^n$ (resp.~$R^X$), that is, the coefficients of $\bas_i$ are given by $(\bas_i)_j=\delta_{i,j}$ for each $i,j$. We often identify $R^X$ with $R^{|X|}$ under an arbitrarily fixed ordering in $X$ and we view the elements of $R^{|X|}$ as column vectors. Similarly we identify $\M_{X,Y}(R)$ with $\M_{|X|,|Y|}(R)$. Given a matrix $M\in\M_{n,m}(\Z)$, the rank of $M$ is denoted by $\Rnk(M)$.  By $\CRnk(M):=m-\Rnk(M)$  we denote the (right) \textbf{corank} of $M$. Clearly, $\CRnk(M)$ is the rank of the (right) null space $\ker(M)$ of $M$.

\subsection{Integral quadratic forms}\label{subsec:qf}

Let $\langle -,- \rangle:K^2 \to \Z$ be an integral bilinear form where $K$ is a free finitely generated abelian group. If $\beta:=\{\beta_1,\ldots,\beta_n\}$ is a $\Z$-linear basis of $K$, then the \textbf{Gram matrix} of $\langle -,- \rangle$ with respect to $\beta$ is the matrix $M \in \M_{n,n}(\Z)$ given by
\begin{equation}\label{eq:Grambil}
M=M_{\langle -,- \rangle,\beta}=[\langle \beta_i,\beta_j \rangle]_{i,j=1}^n.
\end{equation}
The \textbf{rank}, \textbf{corank} and \textbf{determinant} of $\langle -,- \rangle$ are those of $M$, which do not depend on the choice of basis of $K$. Two integral bilinear forms $\langle -,- \rangle:K^2 \to \Z$ and $\langle -,- \rangle':(K')^2 \to \Z$ are called \textbf{equivalent} (or isometric), if there is an isomorphism of groups $f:K \to K'$ such that $\langle x,y \rangle=\langle f(x),f(y) \rangle'$ for all $x,y$ in $K$. Note that rank, corank and determinant are invariant under equivalence of integral bilinear forms.

\begin{definition}\label{def:qf}
Let $K$ be a free finitely generated abelian group.
\begin{enumerate}[label={\textnormal{(\alph*)}},topsep=4px,parsep=0px]
 \item An \textbf{integral quadratic form} on $K$ is a function $q:K \to \Z$ satisfying $q(x+x)=4q(x)$ for all $x$ in $K$, and such that the mapping $q(x,y):=q(x+y)-q(x)-q(y)$ is bilinear. The (symmetric) integral bilinear form $q(-,-):K^2 \to \Z$ is called \textbf{polarization} of $q$. Note that $q(x)=\frac{1}{2}q(x,x)$ by the first condition, and by the latter we have $q(tx)=t^2q(x)$ for all $t \in \Z$ and $x \in K$. The \textbf{rank} $\Rnk(q)$, \textbf{corank} $\CRnk(q)$ and \textbf{determinant} $\det(q)$ of $q$ are those of its polarization $q(-,-)$, and the \textbf{radical} of $q$ is the set $\rad(q):=\{x \in K \mid q(-,x)\equiv 0\}$. 

 \item  Given two integral quadratic forms $q:K \to \Z$ and $q':K' \to \Z$, by the \textbf{direct sum} of $q$ and $q'$ we mean the quadratic form $q\oplus q':K\oplus K'\to \ZZ$ defined by $(q\oplus q')(x,x'):=q(x)+q'(x')$ for $x\in K$ and $x'\in K'$.

\item  Two integral quadratic forms $q:K \to \Z$ and $q':K' \to \Z$ are \textbf{equivalent}  if so are their polarizations, that is, if there is an isomorphism of groups $f:K \to K'$ such that $q=q' \circ f$. In this case we write $q\sim q'$.

 \item An integral quadratic form $q$ is \textbf{non-negative} (respectively, \textbf{positive}) if $q(x) \geq 0$ for all $x \in K$ (respectively, if $q(x)>0$ for all $0 \neq x \in K$). The form $q$ is \textbf{irreducible} if whenever $q=aq'$ for an integral quadratic form $q'$ and an integer $a \in \Z$ then $a=\pm 1$.

 \item For $d\in \Z$, the set $\roots_q(d):=\{x \in K \mid q(x)=d\}$ is called set of \textbf{$d$-roots} of an integral quadratic form $q:K \to \Z$.
\end{enumerate}
\end{definition}

It can be easily verified that the radical of $q:K\to \Z$ is the (right or left) null space of the (symmetric) Gram matrix of $q(-,-)$ with respect to any basis of $K$. Therefore, the corank of $q$ is the rank of its radical. It can also be shown that if $q$ is non-negative then $\rad(q)=\roots_q(0)$, and $q$ is positive if and only if $\CRnk(q)=0$ (equivalently, $\det(q) \neq 0$), cf.~\cite[Proposition 2.8]{SimFI2011}.
Furthermore, given an integral bilinear form $\langle -,- \rangle:K^2 \to \Z$, the mapping $q:K\to\ZZ$ defined by $q(x):=\langle x,x \rangle$ is an integral quadratic form with $q(x,y)=\langle x,y \rangle+\langle y,x \rangle$ for all $x,y\in K$.

Note that, besides the numerical invariants rank, corank and determinant, the non-negativity (positivity) property of an integral quadratic form is preserved under equivalence (in particular, under change of basis). Below we list some notions of interest of an integral quadratic form $q:K \to \Z$ that depend on a given fixed basis $\beta:=\{\beta_1,\ldots,\beta_n\}$ of the abelian group $K$. We set  $q_{ij}=q_{ij}^\beta:=q(\beta_i,\beta_j)$ and $q_i=q_i^\beta:=q(\beta_i)=\frac{1}{2}q_{ii}$ for all $1\leq i,j\leq n$. Observe that then 
\begin{equation}\label{eq:q}
q\left(\sum_{i}x_i\beta_i\right)=\frac{1}{2}x^\tr Mx= \sum_{i}q_ix_i^2+\sum_{i<j}q_{ij}x_ix_j
\end{equation} for any $x=[x_1,\ldots,x_n]^\tr\in\ZZ^n$, where $M=M_{q(-,-),\beta}=[q_{ij}]_{i,j=1}^n$. 
\begin{itemize}[topsep=4px,parsep=0px]
\item The form $q$ is \textbf{connected} if for any $i \neq j$ in $\{1,\ldots,n\}$ there is a sequence $(i_0,\ldots,i_r)$ with $r>0$, $i_0=i$, $i_r=j$ and such that $q_{{i_{t-1}}{i_t}}\neq 0$ for $t=1,\ldots,r$.
\item The form $q$ is \textbf{unitary} or a \textbf{unit form} (respectively, \textbf{semi-unitary}) if $q_i=1$ (resp.~$q_i\in\{0,1\}$) for all $i$, cf.~\cite{BP}.

 \item The form $q$ is \textbf{Cox-regular} (resp.~\textbf{semi-Cox-regular}) if $q_i>0$ (resp.~$q_i\geq 0$) for each $1\leq i\leq n$, and $\frac{q_{ij}}{q_i}\in\ZZ$,  if $q_i>0$, and $\frac{q_{ij}}{q_j}\in\ZZ$, if $q_j>0$, for all $i\neq j$. Recall that $q$ is Cox-regular if and only if the Gram matrix $M_{q(-,-),\beta}=[q_{ij}]_{i,j=1}^n$ is a (symmetrized) quasi-Cartan matrix~\cite{BGZ}, cf.~\cite{KS, Roiter} and~\cite[Proposition 2.7]{MM.LAA2019}.
\end{itemize}

Observe that (semi-)unitary forms are (semi-)Cox-regular, and that the connectedness between non-negative unitary forms is preserved under equivalence, cf.~\cite[Lemma~2.1(e)]{Jesus.sub2021}. However, this is not the case among Cox-regular quadratic forms. Moreover, equivalences do not preserve neither unitary nor Cox-regular forms, cf.~\cite{JM, MZaj.LAA2022}.
For certain reasons (related mainly to Lie theory) one usually restricts equivalences to certain classes of isomorphisms. We recall the following classification results for certain classes of Cox-regular forms.
Let $q:K\to\ZZ$ be a non-negative connected Cox-regular form, considered in a fixed basis $\beta$ of $K$, and let $r:=\Rnk(q)\geq 1$ and $c:=\CRnk(q)\geq 0$ be the rank and corank of $q$, respectively.
\begin{itemize}[topsep=4px,parsep=0px]
\item[(D1)] If $q$ is a unit form then by~\cite{BP} there exists a unique (simply-laced) Dynkin graph $D_r \in \{\AA_{n_a},\DD_{n_d},\EE_6,\EE_7,\EE_8\}_{n_a\geq 1, n_d\geq 4}$ with $r$ vertices such that $q\weak q_{D_r}\oplus \zeta^c$, where $q_{D_r}:\ZZ^r\to \ZZ$ is the unit form associated to $D_r$ and $\zeta^c:\ZZ^c\to\ZZ$ denotes the zero form, see~\cite{BP} for details, cf.~\cite{Sim2016.1, SimZajLAA2017} and~\cite[Theorem 4.3]{JM}. The Dynkin graph  $D_r$ is called the \textbf{Dynkin type} of $q$ and it is denoted by $\Dyn(q):=D_r$.

\item[(D2)] If $q$ is irreducible, non-unitary, and the coefficients $q_{ij}=q_{ij}^\beta$ and $q_i=q_i^\beta$ of $q$ in the basis $\beta$ satisfy:   $1\leq q_i \leq 2$ and $\frac{q_{ij}}{q_iq_j} \in \ZZ$ for all $i,j$, then
    $q$ is said to have \textbf{Dynkin type} $\CC_r$, written $\Dyn(q)=\CC_r$ (note that $r\geq 2$ in this case). It is shown in~\cite[Theorem 5.19]{JM} that in this case the equivalence $q\sim q_{\DD_r}\oplus\zeta^c$ holds, provided $r\geq 4$. We stress that it should not be confused with  Dynkin type $\DD$ since simply-laced Dynkin types are defined only for unit forms. In particular, the class of Cox-regular forms of Dynkin type $\CC$ is not closed under equivalences. However, it is closed under so-called Gabrielov equivalence $\sim_G$  (of interest in Lie theory and singularity theory) and a complete $\sim_G$-classification of forms of type $\CC$  is provided in~\cite[Section 5]{JM}. We refer to~\cite{KS, MZaj.LAA2022,BKL.JA2006,PR.LAA2019,Sim.AM2022,MZaj2} for more details on Cox-regular forms, their certain other classifications and a broader context.
\end{itemize}

\subsection{Bidirected graphs}\label{subsec:bdg}

For convenience we use the following notion of graph by means of ``glued'' half-edges, cf.~\cite{OPS18}.

\begin{definition}\label{def:graph}
A \textbf{graph} $G=(V,H,\s,\iota)$ consists of sets $V$ and $H$ together with a function $\s:H \to V$ and an involution $\iota:H\to H$ without fixed points. The elements of $V$ and $H$ are called \textbf{vertices} and \textbf{half-edges} of $G$, respectively, and the vertex $\s(h)$ is said to be \textbf{incident} to the half-edge $h$. The set of \textbf{edges} $\edgs{G}$ of $G$ is the set of $\iota$-orbits on $H$, that is, $\edgs{G}:=H/\iota=\{\{h,\iota h\}\}_{h \in H}$. An edge $\{h,\iota h\}$ with $\s(h)=\s(\iota h)$ is called a \textbf{loop}. We also write $\verts{G}:=V$ and $\hedgs{G}:=H$.
\end{definition}

Throughout the paper we will assume that all graphs are finite (that is, so are the sets $V$ and $H$) and that they have no isolated vertices (that is, $\Img(\s)=V$).
Some of the usual concepts related to graphs can be expressed according to this definition as follows.
Two graphs $G=(V,H,\s,\iota)$ and $G'=(V',H',\s',\iota')$ are \textbf{isomorphic} if there are bijections $f_V:V \to V'$ and $f_H:H \to H'$ such that $\s' \circ f_H=f_V \circ \s$ and $\iota' \circ f_H =f_H \circ \iota$. A \textbf{subgraph} of a graph $G=(V,H,\s,\iota)$ consists of subsets $V' \subseteq V$ and $H' \subseteq H$ such that $\s$ and $\iota$ are well defined on their restrictions to $H'$ and $V'$, denoted by $\s':H' \to V'$ and $\iota':H' \to H'$. In that case $G':=(V',H',\s',\iota')$ is itself a graph. A non-empty subgraph $G'$ of $G$ is called \textbf{proper} if $V' \neq V$, \textbf{full} if any $h \in H$ with $\s(h)$ and $\s(\iota h)$ in $V'$ belongs to $H'$, and a \textbf{component} of $G$ if any $h \in H$ with $\s(h)$ or $\s(\iota h)$ in $V'$ belongs to $H'$. A graph is called \textbf{connected} if it contains no proper component. Observe that  $G$ is disconnected if and only if there is a (non-trivial) partition $H=H^1\cup H^2$ with $\iota H^\epsilon=H^\epsilon=\s^{-1}(\s H^\epsilon)$ for $\epsilon=1,2$. 
If $G$ is a connected graph with
\begin{equation}\label{eq:tcycle}
t=\cycles(G):=|\edgs{G}|-|\verts{G}|+1
\end{equation} then $t\geq 0$ and we say that $G$ is a $t$-\textbf{cycle} graph. Observe that $G$ is a $0$-cycle graph if and only if $G$ is a tree.

Consider the set of \textbf{ordered edges} $\Oedgs{G}$ of $G$, that is, the subset of $H \times H$ given by
\[
\Oedgs{G}:=\{(h',h'') \in H \times H \mid h''=\iota h'\}.
\]
For an ordered edge $i=(h',h'') \in \Oedgs{G}$ denote by $\invw{i} := (h'',h')\in \Oedgs{G}$ the \textbf{inverse} of $i$ and by $\vtx{i}=\{h',h''\} \in \edgs{G}$ the corresponding (unordered) edge of $G$. We use the notation $\tarh(i):=h' \in H$ and $\tar(i):=\s(h') \in V$ for the (half-edge and vertex) \textbf{target} of $i$, and $\souh(i):=h'' \in H$ and $\sou(i):=\s(h'') \in V$ for the corresponding \textbf{source} of $i$. Thus we may visualize the ordered edge $i=(h',h'')$ as follows:
\begin{equation}\label{eq:ordedg}
{\xymatrix@C=2.8pc@R=0.05pc{
{\bullet}\ar@{<-}[r]^-{\tarh(i)=h'} &\!\!\!&\ar@{|-}[l]_-{\souh(i)=h''} {\bullet}\\
\mathmiddlescript{\tar(i)=\s(h')}&\!\!\! &\mathmiddlescript{\sou(i)=\s(h'')} \\
 }}
\end{equation}

By \textbf{walk} of $G$ we mean an alternating sequence $$\wlk=(v_0,i_1,v_1,\ldots,v_{\ell-1},i_{\ell},v_{\ell})$$ of vertices $v_t$ and ordered edges $i_t$ of $G$ such that $\tar(i_t)=v_{t-1}$ and $\sou(i_t)=v_t$ for $t=1,\ldots,\ell\geq 0$ (observe that we write walks \lq\lq from right to left''). The integer $\ell(\wlk):=\ell \geq 0$ is called \textbf{length} of $\wlk$, and we use the compact notation $\wlk=i_1\cdots i_{\ell}$ or $\wlk={}^{v_0}i_1\cdots i_{\ell}^{v_{\ell}}$ if $\ell(\wlk)>0$. A walk $\wlk$ of length zero starting at vertex $v$ is called \textbf{trivial} and is denoted by $\triv_{v}:=(v)$. Extend the target (resp. source) functions to non-trivial walks by taking $\tarh(\wlk):=\tarh(i_1)$ and $\tar(\wlk):=\tar(i_1)$ (resp. $\souh(\wlk):=\souh(i_{\ell})$ and $\sou(\wlk):=\sou(i_{\ell})$), and take $\tar(\triv_v)=v=\sou(\triv_v)$. We say that a non-trivial walk $\wlk$ as above is:
\begin{itemize}
\item  \textbf{closed} if $\sou(\wlk)=\tar(\wlk)$,
\vspace{-2pt}
\item \textbf{open} if $\sou(\wlk)\neq\tar(\wlk)$,
\vspace{-2pt}
\item \textbf{reduced} if  $i_t \neq \invw{i}_{t+1}$ for $t=1,\ldots,\ell-1$,
\vspace{-2pt}
\item  \textbf{self-crossing} at $v_t$ if $v_t=v_s$ for some $0\leq t,s\leq \ell$, $s\neq t$,
\vspace{-2pt}
\item and $\wlk$ has \textbf{no repeated edges}, if $\vtx{i}_t\neq \vtx{i}_s$ for all $s\neq t$.
\end{itemize}
    The \textbf{inverse} of the walk $\wlk$ is given by $\invw{\wlk}:=\invw{i}_{\ell}\cdots \invw{i}_1$ if $\ell>1$ and $\invw{\triv}_v:=\triv_v$. A pair of walks $\wlk$ and $\wlk'$ is said to be \textbf{concatenable} if $\sou(\wlk)=\tar(\wlk')$, and their concatenation is denoted by $\wlk\wlk'$ which is a walk of length $\ell(\wlk)+\ell(\wlk')$. If $\wlk$ is a closed walk we denote by $\wlk^r$ the $r$-fold concatenation of $\wlk$ with itself, and say that $\wlk$ is \textbf{primitive} if it is not the $r$-fold concatenation of a walk of strictly smaller length. For an arbitrary walk $\wlk$ and $0 \leq t \leq \ell(\wlk)$ we will often use the decomposition
\[
 \wlk=\wlk_{[t]} \, {}_{[t]}\wlk \qquad \qquad \text{with $\ell(\wlk_{[t]})=t$ and $\ell(_{[t]}\wlk)=\ell(\wlk)-t$.}
 \]

\begin{definition} \label{def:bidi}
By \textbf{bidirected graph} $(G,\sigma)$ we mean a graph $G$ together with a function $\sigma:H\to \{\pm 1\}$ called a \textbf{bidirection} of $G$. If such function satisfies $\sigma(\iota h)=-\sigma(h)$ for all $h\in H$ we say that it is a \textbf{direction} of $G$, and the pair $(G,\sigma)$ is called a \textbf{directed graph}.
By the \textbf{(edge-vertex) incidence matrix} of $(G,\sigma)$ we mean the matrix $\Inc(G,\sigma)\in\M_{\edgs{G},\verts{G}}(\ZZ)$ whose transposed rows are defined by
\begin{equation}\label{eq:inc}
\Inc(G,\sigma)^\tr\bas_i:=\sigma(h)\bas_{\s(h)}+\sigma(h')\bas_{\s(h')}, \; \text{ for $i=\{h,h'\}\in \edgs{G}$}.
\end{equation}
Viewing a graph $G$ as a bidirected graph $(G,\sigma^{+})$ with constant $\sigma^+ \equiv +1$, we define its \textbf{incidence matrix} as $\Inc(G):=\Inc(G,\sigma^+)$.
\end{definition}
A bidirection $\sigma$ of $G$ induces a function $\sigma:\Oedgs{G} \to \{\pm 1\}$ by taking
 \begin{equation}\label{eq:signofedge}
\sigma(i):=(-1)\sigma(\tarh(i))\sigma(\souh(i)).
 \end{equation}
A positive ordered edge $i$ (that is, one with $\sigma(i)=+1$) is called a (direct) \textbf{arrow} if $\sigma(\souh(i))=1$, and an \textbf{inverse arrow} if $\sigma(\souh(i))=-1$. A negative ordered edge $i$ is called a \textbf{bidirected arrow}; in this case $\sigma(\souh(i))=\sigma(\tarh(i))=:\epsilon$, and if $\epsilon=+1$ (resp.~$\epsilon=-1$) then we call $i$ a \textbf{two-tail arrow} (resp.~\textbf{two-head arrow}). The sign of a walk $\wlk$ is given by $\sigma(\wlk):=\prod_{t=1}^{\ell(\wlk)} \sigma(i_t)$, setting $\sigma(\wlk):=+1$ if $\wlk$ is trivial. A bidirected graph $(G,\sigma)$ is {\bf balanced} if all closed walks in $G$ are positive, that is, each of them contains an even number of negative (bidirected) arrows. Otherwise, that is, when $G$ contains a negative closed walk, we say $(G,\sigma)$ is {\bf unbalanced}.   Sometimes, whenever it can be done without ambiguity, we also use letters $i$, $j$ etc. to denote unordered edges, and we apply analogous notation and terminology for unordered edges, that is, the sign of an unordered edge $i=\{h,h'\}\in E(G)$  is $\sigma(i):=-\sigma(h)\sigma(h')$ and we say that $i$ is \textbf{bidirected} if $\sigma(i)=-1$, analogously for \textbf{two-tail} (resp.~\textbf{two-head}) \textbf{edges}.

In representation theory, directed graphs are called \textbf{quivers} and are denoted by $\quiver=(\quiver_0,\quiver_1,\sou,\tar)$ where $\quiver_0$ is the set of vertices and $\quiver_1$ is the set of (direct) arrows of $\quiver$, and $\sou,\tar:\quiver_1 \to \quiver_0$ are the source and target functions as above (cf.~\cite{ASS}). In the sequel we often use (indexed) Greek letters to denote arrows in quivers and natural numbers (or Latin letters $i$, $j$) to denote their vertices. Note that the ordered edges of $\quiver$ are given by $\Oedgs{\quiver}=\quiver_1\cup \invw{\quiver}_1$. We say that a walk $\alpha_1 \cdots \alpha_{\ell}$ of $\quiver$ is a \textbf{path} if $\alpha_t \in \quiver_1$ for $t=1,\ldots,\ell \geq 0$.

\section{Combinatorial aspects of gentle algebras}\label{sec:combgentle}

By $k$-algebra we mean an associative algebra with unity over an arbitrary field $k$. Unless otherwise stated, all $k$-algebras are finite dimensional. For a $k$-algebra $A$ consider the category of finite dimensional left $A$-modules $\mod{A}$, and the full subcategory $\proj\, A$ of $\mod{A}$ determined by projective $A$-modules. We fix a complete set $\{S_i\}_{i=1}^n$ of representatives of isomorphism classes of simple $A$-modules, and take for $i=1,\ldots,n$ a projective cover $P_i$ and an injective envelope $E_i$ of $S_i$, respectively.  By {\bf global dimension} $\gldim(\La)\in\NN\cup\{\infty\}$ of $A$ we mean as usual the maximum of lengths of  (minimal) projective resolutions of simples $\{S_i\}$, cf.~\cite{ASS}.

We denote by $\Db{A}:=\Db{\mod{A}}$ the bounded derived category of (bounded complexes of) finite dimensional $A$-modules, see~\cite{HDb2}. Recall that $\Db{A}$ has a structure of triangulated category with translation functor $[1]:\Db{A}\to \Db{A}$ acting as the usual shift of complexes (to the left). The full triangulated subcategory $\per\,A$ of $\Db{A}$ determined by complexes quasi-isomorphic to bounded complexes whose terms are in $\proj\, A$ is called the \textbf{perfect category} of $\La$, and its objects are called \textbf{perfect complexes}. It is well known that $\per\,A$ (resp.~$\Db{A}$) is triangle equivalent with the homotopy category $\Kb(\proj\,\La)$ (resp.~$\Kmb(\proj\,\La)$) of bounded complexes with terms in $\proj\, A$ (resp.~right bounded complexes with terms in $\proj\, A$ and whose cohomology is bounded).
Two algebras $A$ and $A'$ are \textbf{derived equivalent} (written $A\congder A'$) if their derived categories $\Db{A}$ and $\Db{A'}$ are triangle equivalent. Recall that $\per\,A=\Db{A}$ precisely when $A$ has finite global dimension, and that in general $A\congder A'$ if and only if the categories $\per\,A$ and $\per\,A'$ are triangle equivalent, see~\cite{Rickard}.

The \textbf{path algebra} $k\quiver$ of a quiver $\quiver$ is the possibly infinite dimensional $k$-algebra whose underlying vector space has as basis the paths of $\quiver$, and its product is determined on this basis by the concatenation of concatenable paths, and zero otherwise, see \cite{ASS}. Denote by $R_Q$ the two-sided ideal of $k\quiver$ generated by the arrows of $\quiver$. A pair $(\quiver,I)$ is a \textbf{bound quiver} if $I$ is a two-sided ideal of $k\quiver$ with $I \subseteq R_\quiver^2$. Moreover, if $R^m_\quiver \subseteq I$ for some $m \geq 2$, then $I$ is an \textbf{admissible ideal} and $(\quiver,I)$ is an \textbf{admissible bound quiver}. The quotient $k(\quiver,I):=k\quiver/I$ is a \textbf{bound quiver algebra}, which is finite dimensional if and only if $I$ is an admissible ideal. It is well-known that if $k$ is algebraically closed, then every (finite dimensional) $k$-algebra is Morita equivalent to an admissible bound quiver algebra (Gabriel~\cite{GabrielI}). A \textbf{relation} of the ideal $I$ (or of the bound quiver $(Q,I)$) is an element of $I$ given as a $k$-linear combination of parallel paths (that is, with common source and common target) of length at least two. Observe that every admissible ideal $I$ is generated as bimodule by a finite set of relations. A relation of $I$ is \textbf{monomial} if it is a single path, and $I$ is a \textbf{monomial ideal} if it can be generated by finitely many monomial relations (in this case, the bound quiver $(Q,I)$ and bound quiver algebra $k(Q,I)$ are also called monomial). If $I$ is a monomial ideal, then it has a unique minimal monomial generating set, see~\cite{GreenMonomial}.

Given a bound quiver $(\quiver,I)$, a path of $Q$ not belonging to the ideal $I$ is called \textbf{permitted}, and it is a \textbf{maximal permitted path} if it is not properly contained in any permitted path (note that all paths of length at most $1$ are permitted). Observe that a monomial bound quiver algebra $k(\quiver,I)$ is isomorphic to the $k$-vector space generated by all permitted paths of $(\quiver,I)$.

If $A$ is an admissible bound quiver algebra, $A=k(\quiver,I)$, we call $\gldim(\quiver,I):=\gldim(A)$ the global dimension of $(\quiver,I)$, and take $P_i:=A \triv_i$, $S_i:=P_i/\rad(P_i)$ and $E_i:=D(\triv_iA)$ for each vertex $i \in \quiver_0$, where $D(-):=\Hom_k(-,k)$ is the standard duality. We usually identify the set of vertices $\quiver_0$ with the set $\{1,\ldots,|\quiver_0|\}$. In the paper, all $k$-algebras are bound quiver algebras.

\subsection{Gentle bound quivers}\label{subsec:gentle}

In this subsection we collect definitions and mostly well-known properties of gentle bound quivers. For a quiver $\quiver$ and a vertex $i$ in $\quiver$ we denote by $\deg^+(i)$ (resp. $\deg^-(i)$) the number of arrows $\alpha \in \quiver_1$ such that $\sou(\alpha)=i$ (resp. $\tar(\alpha)=i$), and call $\deg(i):=\deg^+(i)+\deg^-(i)$ the \textbf{degree} of $i$.

\begin{definition} \label{def:gentle}
An admissible bound quiver $(\quiver,I)$ is called a \textbf{gentle bound quiver} if the ideal $I$ is generated by paths of length two, and the following conditions hold:
\begin{enumerate}[label={\textnormal{(\alph*)}},topsep=4px,parsep=0px]
 \item $\deg^{+}(i) \leq 2$ and $\deg^{-}(i) \leq 2$ for any vertex $i$ of $\quiver$;
 \item for any arrow $\beta$ of $\quiver$ there is at most one arrow $\alpha$ such that $\alpha\beta \notin I$, and at most one arrow $\gamma$ such that $\beta\gamma \notin I$;
 \item for any arrow $\beta$ of $\quiver$ there is at most one arrow $\alpha$ such that $0 \neq \alpha\beta \in I$, and at most one arrow $\gamma$ such that $0 \neq \beta\gamma \in I$.
\end{enumerate}
Here we always assume that $\quiver$ is connected and has at least one arrow. By \textbf{gentle algebra} we mean the bound quiver algebra $k(\quiver,I)=k\quiver/I$ of a gentle bound quiver.
\end{definition}

Alternatively, an admissible bound quiver $(\quiver,I)$ with ideal $I$ generated by paths of length two is gentle if there are functions $S,T:\quiver_1 \to \{\pm 1\}$ satisfying the following conditions for any pair of arrows $\alpha, \beta\in Q_1$:
\begin{enumerate}[label={\textnormal{(\alph*)}},topsep=4px,parsep=0px]
 \item[(1)] if $\sou(\alpha)=\sou(\beta)$ and $\alpha \neq \beta$, then $S(\alpha) \neq S(\beta)$;
 \item[(2)] if $\tar(\alpha)=\tar(\beta)$ and $\alpha \neq \beta$, then $T(\alpha) \neq T(\beta)$;
 \item[(3)] if $\tar(\alpha)=\sou(\beta)$, then $\beta \alpha \in I$ if and only if $T(\alpha)=S(\beta)$.
\end{enumerate}
The functions $S,T$ are called \textbf{string functions} for $(\quiver,I)$, cf.~\cite{Bob, GeiRei05}. We  extend them to the set of non-trivial walks of $Q$ by taking $S(\alpha^{-1}):=T(\alpha)$, $T(\alpha^{-1}):=S(\alpha)$ for $\alpha\in Q_1$, and
$S(\wlk):=S(\aoi_\ell)$, $T(\wlk):=T(\aoi_1)$ for a walk $\wlk=\aoi_{1}\cdots \aoi_\ell$ in $Q$ with $\aoi_t \in \quiver_1\cup\invw{\quiver}_1$ for $t=1,\ldots,\ell \geq 1$.

Following~\cite{AAG}, if $(\quiver,I)$ is a gentle bound quiver, then a maximal permitted path is a \textbf{non-trivial permitted thread} of $(\quiver,I)$. Moreover, the path $\triv_i$ is a \textbf{trivial permitted thread} if either $\deg(i)=1$, or $\deg(i)=2$ and there are arrows $\alpha,\beta$ with $\sou(\alpha)=\tar(\beta)=i$ such that $\alpha \beta \notin I$. For the sake of precision, we also consider \textbf{split permitted thread} which is a pair of paths $(\eta'',\eta')$ such that $\eta''\eta'$ is a permitted thread, and the vertex $\sou(\eta'')=\tar(\eta')$ is called the \textbf{center} of $(\eta'',\eta')$. The set of (split) permitted threads of $(Q,I)$ is denoted by $\perm_{\quiver,I}$ (respectively, by $\Sperm_{\quiver,I}$).  A  path of $Q$ is called \textbf{forbidden} if it contains no permitted subpath of length two, and it is called a \textbf{non-trivial forbidden thread} if it is not properly contained in any forbidden path. Moreover, the path $\triv_i$ is a \textbf{trivial forbidden thread} if either $\deg(i)=1$, or $\deg(i)=2$ and there are arrows $\alpha,\beta$ with $\sou(\alpha)=\tar(\beta)=i$ such that $\alpha\beta \in I$. By \textbf{split forbidden thread} we mean a pair of paths $(\theta'',\theta')$ such that $\theta''\theta'$ is a forbidden thread, and the vertex $\sou(\theta'')=\tar(\theta')$ is called the \textbf{center} of $(\theta'',\theta')$. The set of (split) forbidden threads of $(Q,I)$ is denoted by $\forb_{\quiver,I}$ (respectively, by $\Sforb_{\quiver,I}$).  A closed forbidden path $\wlk=\alpha_{1}\cdots\alpha_\ell$ without repeated arrows is called a \textbf{full cycle} if $\alpha_\ell\alpha_1\in I$, and a \textbf{almost-full cycle} at $\sou(\wlk)$ if $\alpha_\ell\alpha_1\notin I$.

The following technical facts are known and follow easily from Definition~\ref{def:gentle}, see for instance~\cite{BekMer} and~\cite{OPS18}. We provide a sketch of proof for an illustration,
see also  Example~\ref{exa:amiot}.

\begin{lemma}\label{lem:iota}
Let $(\quiver,I)$ be a gentle bound quiver.
\begin{enumerate}[label={\textnormal{(\alph*)}},topsep=4px,parsep=0px]
 \item Every non-trivial permitted path of $(\quiver,I)$ can be extended to a unique permitted thread.
 \item Every vertex of $\quiver$ is center of exactly two split permitted threads.
\end{enumerate}
\end{lemma}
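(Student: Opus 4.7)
The plan for (a) is an elementary iteration using gentle axiom (b). Given a non-trivial permitted path $p=\alpha_1\cdots\alpha_\ell$, I would extend on the left by applying axiom (b) of Definition~\ref{def:gentle} to the arrow $\alpha_1$: there is at most one arrow $\alpha_0$ with $\alpha_0\alpha_1\notin I$, and if such $\alpha_0$ exists we prepend it to obtain a strictly longer permitted path. Admissibility of $I$ forces $R_\quiver^m\subseteq I$ for some $m\ge 2$, so all permitted paths have length less than $m$ and the procedure terminates. Applying the symmetric argument on the right to $\alpha_\ell$ completes the construction, and uniqueness at each step yields uniqueness of the resulting permitted thread.

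For (b), I would organize the arrows incident to a fixed vertex $v$ using the string functions $S,T\colon\quiver_1\to\{\pm 1\}$ from the alternative formulation in Section~\ref{subsec:gentle}. Gentle axiom (a) gives $\deg(v)\le 4$, and each of the four slots
\[
O_\pm(v)=\{\alpha\in\quiver_1\mid \sou(\alpha)=v,\ S(\alpha)=\pm 1\},\qquad I_\pm(v)=\{\beta\in\quiver_1\mid \tar(\beta)=v,\ T(\beta)=\pm 1\}
\]
has cardinality at most one. I would group them into two \emph{strands} $\Sigma_1=O_+(v)\cup I_-(v)$ and $\Sigma_2=O_-(v)\cup I_+(v)$. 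The rule $\alpha\beta\in I$ iff $S(\alpha)=T(\beta)$, coming from condition (3) of the string-function formulation, translates to the following: arrows in the same strand concatenate to a permitted path at $v$, whereas arrows in different strands do not.

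Next I would assign to each strand $\Sigma_j$ at most one split permitted thread centered at $v$. If both ends of $\Sigma_j$ are present, with arrows $\alpha$ outgoing and $\beta$ incoming, then $\alpha\beta\notin I$ extends by (a) uniquely to a permitted thread $\wlk$, which I cut at $v$ to form the split. If exactly one end is present, that arrow extends by (a) uniquely to a maximal permitted path $\wlk$, and the absence of the opposite end of the same strand guarantees that $\wlk$ cannot be extended further at $v$, yielding the split $(\wlk,\triv_v)$ or $(\triv_v,\wlk)$. An empty strand contributes nothing.

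Finally I would handle the diagonal split $(\triv_v,\triv_v)$ by inspection of the cases $\deg(v)\in\{1,2\}$: the definition of trivial permitted thread is satisfied \emph{precisely} when one of the two strands is empty, in which case the diagonal restores the count to two; when both strands are non-empty either $\deg(v)\ge 3$, or $\deg(v)=2$ with both arrows lying in the same strand (forcing $\alpha\beta\in I$), so no diagonal split arises. Since $\quiver$ is connected and has at least one arrow, $v$ cannot have both strands empty. The delicate point is this compatibility between strand vacancy and the triviality clause of a permitted thread, calibrated by the gentle axioms to yield exactly two in every configuration; once the strand bookkeeping is set up the verification is immediate.
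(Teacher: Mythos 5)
Your proposal is correct. For part (a) you and the paper do essentially the same thing: the paper derives the ``at most one extension on each side'' property from the string functions $S,T$ (showing $\alpha_2=\alpha'_2$ from $T(\alpha_2)\neq S(\alpha_1)\neq T(\alpha'_2)$), while you invoke axiom (b) of Definition~\ref{def:gentle} directly; these are interchangeable, and both use admissibility of $I$ for existence/termination. The real divergence is in part (b): the paper does not prove it but simply cites the observation of~\cite[1.3]{OPS18} (cf.~\cite[2.2]{AAG}) that every vertex occurs exactly twice along the permitted threads, whereas you give a self-contained count by partitioning the (at most four) arrows at $v$ into two strands $\Sigma_1,\Sigma_2$ via the signs of $S$ and $T$, matching non-empty strands with split permitted threads and showing the diagonal split $(\triv_v,\triv_v)$ appears exactly when one strand is empty. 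Your strand bookkeeping is sound: condition (3) of the string-function formulation does make same-strand pairs permitted and cross-strand pairs forbidden, each strand carries at most one split, and the case check over $\deg(v)\in\{1,2,3,4\}$ (including the two-outgoing-arrows and the $\alpha\beta\in I$ configurations) closes the count at two in every case. What your route buys is independence from the external reference, at the cost of the case analysis you rightly flag as the delicate point; it would be worth also noting explicitly that a self-crossing permitted thread contributes two distinct splits at the crossing vertex via two distinct strands, so the strand-to-split correspondence remains a bijection there.
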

\begin{proof}
If $\alpha_1$ is an arrow of $Q$ such that $\alpha_1 \alpha_2$ and $\alpha_1 \alpha'_2$ are permitted paths of $(\quiver,I)$, then $\tar(\alpha_2)=\sou(\alpha_1)=\tar(\alpha'_2)$ and $T(\alpha_2) \neq S(\alpha_1) \neq T(\alpha'_2)$, which implies that $\alpha_2=\alpha'_2$. Similarly, if $\alpha_0\alpha_1$ and $\alpha'_0\alpha_1$ are permitted paths of $(\quiver,I)$ then $\alpha_0=\alpha'_0$. This shows that  $\alpha_1$ is contained in at most one (non-trivial) permitted thread, cf.~\cite[Lemma~3]{BekMer}. And at least one such permitted thread exists since $I$ is admissible. By analogous arguments the claim holds for any permitted path of length greater than 1.

For  (b) recall that it was observed in~\cite[1.3]{OPS18} (cf.~\cite[2.2]{AAG}) that every vertex of $Q$ appears exactly two times along all permitted threads (this includes the situation when some permitted thread $\eta$ passes 2 times through a given vertex $i$, that is, $\eta$ self-crosses at $i$). In our terminology this means precisely that every vertex of $\quiver$ is a center of exactly two split permitted threads. 
\end{proof}

\begin{remark}\label{rem:conveta}
By Lemma~\ref{lem:iota}(a) it follows that every arrow $\alpha$ of $Q$ belongs to a unique permitted thread $\eta$ and the position of $\alpha$ in $\eta$ is unique (since obviously permitted paths have no repeated arrows). 
Hence we will usually index non-trivial permitted threads with Greek letters $\eta^{\alpha},\eta^{\beta},\ldots$ and their constituent arrows will be denoted by the corresponding indexed letters $\eta^{\alpha}=\alpha_1\alpha_2\cdots\alpha_{\ell^{\alpha}}$ and $\eta^{\beta}=\beta_1\beta_2\cdots\beta_{\ell^{\beta}}$. Observe that with this notation all monomial relations of length two in $(\quiver,I)$ are of the form $\alpha_r\beta_s$ or $\alpha_r\alpha_s$ with $s \neq r+1$ (to be precise, these relations form the unique minimal monomial generating set for $I$). 
We indicate these relations with dotted lines in diagrams.
\end{remark}

\begin{example}\label{exa:amiot} (a) Consider the following gentle bound quiver $(Q,I)$ together with (split) permitted and forbidden threads:
\vspace{-0.1cm}
\[
\begin{array}{l l |l}
{\xymatrix@C=1.8pc@R=1.5pc{
\mathmiddlescript{1} &
\mathmiddlescript{2} \ar[l]_-{\alpha_1}="a1" \ar@(ul,ur)^-{\alpha_2}="a2" &
\mathmiddlescript{3}\ar[l]_-{\alpha_3}="a3" & \\
\ar@{.}@/^3pt/"a3"!<-0.5ex,-2ex>;"a1"!<0.8ex,-2ex>
\ar@{.}@/^3pt/"a2"!<-6pt,8pt>;"a2"!<6pt,8pt>
}} & {\begin{array}{ccc}\perm_{\quiver,I}&\Sperm_{\quiver,I}&\text{centers}\\
\eta^\alpha=\alpha_1\alpha_2\alpha_3&(\alpha_1\alpha_2\alpha_3,\,\triv_3)&3\\
&(\alpha_1\alpha_2,\,\alpha_3)&2\\
&(\alpha_1,\,\alpha_2\alpha_3)&2\\
&(\triv_1,\,\alpha_1\alpha_2\alpha_3)&1\\
\eta^\beta=\triv_1&(\triv_1,\,\triv_1)&1\\
\eta^\gamma=\triv_3&(\triv_3,\,\triv_3)&3\\
\end{array}}
& {\begin{array}{ccc}\forb_{\quiver,I}&\Sforb_{\quiver,I}&\text{centers}\\
\alpha_1\alpha_3&(\alpha_1\alpha_3,\,\triv_3)&3\\
&(\alpha_1,\,\alpha_3)&2\\
&(\triv_1,\,\alpha_1\alpha_3)&1\\
\triv_1&(\triv_1,\,\triv_1)&1\\
\triv_3&(\triv_3,\,\triv_3)&3\\
\end{array}}
\\
\end{array}
\]
The ideal $I$ is generated by relations $\alpha_1\alpha_3$ and $\alpha_2^2$. Observe that $\alpha_2$ is a full cycle, and  $(Q,I)$ has infinite global dimension, cf.~Lemma~\ref{lem:fgd}.

(b) For later use, we consider also the following examples of gentle bound quivers with corresponding permitted threads:
\vspace{-0.1cm}
\[
\begin{array}{l l l l l l}
 (Q^{(0)},I^{(0)})= & & (Q^{(1)},I^{(1)})= & & (Q^{(2)},I^{(2)})= & \\
{\xymatrix@C=1.5pc@R=1.5pc{
{} & \mathmiddlescript{3} \ar[ld]_-{\gamma_1}="c1" \ar[rd]^-{\alpha_2}="a2" \\
\mathmiddlescript{5} & & \mathmiddlescript{1} \ar@<.5ex>[r]^-{\beta_1}="b1" \ar@<-.5ex>[r]_-{\alpha_1}="a1" & \mathmiddlescript{2} \\
{} & \mathmiddlescript{4} \ar[ru]_-{\beta_2}="b2" \ar[uu]^(.42){\alpha_3}="a3"
\ar@{.}@/^3pt/"a3"!<1ex,1.5ex>;"c1"!<1.5ex,-2ex>
\ar@{.}@/^3pt/"a2"!<-2ex,-1.5ex>;"b1"!<-1.5ex,-0.6ex>
\ar@{.}@/^4pt/"a1"!<0ex,2.8ex>;"b2"!<0ex,1.5ex>}} &  &
{\xymatrix@C=1.5pc@R=1.5pc{
 {} & \mathmiddlescript{3} \ar[ld]_-{\gamma_1}="c1" \ar[rd]^-{\alpha_2}="a2" \\
\mathmiddlescript{5} \ar[rd]_-{\beta_3}="b3" & &
\mathmiddlescript{1} \ar@<.5ex>[r]^-{\beta_1}="b1" \ar@<-.5ex>[r]_-{\alpha_1}="a1" & \mathmiddlescript{2} \\
{} & \mathmiddlescript{4} \ar[ru]_-{\beta_2}="b2" &&
\ar@{.}@/^3pt/"c1"!<1ex,-2.5ex>;"b3"!<1ex,3ex>
\ar@{.}@/^3pt/"a2"!<-2ex,-1.5ex>;"b1"!<-1.5ex,-0.6ex>
\ar@{.}@/^4pt/"a1"!<0ex,2.8ex>;"b2"!<0ex,1.5ex>}} &  &
{\xymatrix@C=1.5pc@R=1.5pc{
{} & \mathmiddlescript{3} \ar[ld]_-{\alpha_1}="a1" \\
\mathmiddlescript{5} \ar[rd]_-{\beta_3}="b3" & & \mathmiddlescript{1} \ar@<.5ex>[r]^-{\beta_1}="b1" \ar@<-.5ex>[r]_-{\alpha_3}="a3" & \mathmiddlescript{2} \ar@/_10pt/[llu]_-{\alpha_2}="a2" \\
{} & \mathmiddlescript{4} \ar[ru]_-{\beta_2}="b2"
\ar@{.}@/^3pt/"a1"!<1ex,-2.5ex>;"b3"!<1ex,3ex>
\ar@{.}@/^3pt/"b1"!<-1ex,-1.5ex>;"a2"!<0ex,-2ex>
\ar@{.}@/^4pt/"a3"!<0ex,2.8ex>;"b2"!<0ex,1.5ex>}} &  \\
 \eta^{\alpha}=\alpha_1\alpha_2\alpha_3 & & \eta^{\alpha}=\alpha_1\alpha_2 & & \eta^{\alpha}=\alpha_1\alpha_2\alpha_3 \\
 \eta^{\beta}=\beta_1\beta_2 & & \eta^{\beta}=\beta_1\beta_2\beta_3 & & \eta^{\beta}=\beta_1\beta_2\beta_3 \\
 \eta^{\gamma}=\gamma_1 & & \eta^{\gamma}=\gamma_1 & & \eta^{\gamma}=\triv_3 \\
 \eta^{\delta}=\triv_5 & & \eta^{\delta}=\triv_4 & & \eta^{\delta}=\triv_4 \\
\end{array}
\]
\end{example}

Forbidden threads have similar properties to permitted threads in case of finite global dimension.
To see this we recall the following known fact.

\begin{lemma}[{{\cite[Proposition~2.1]{Bob2cycle}}}]\label{lem:fgd}
A gentle bound quiver $(\quiver,I)$ has finite global dimension if and only if it contains no full cycle.
\end{lemma}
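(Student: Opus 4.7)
The plan is to describe the minimal projective resolution of every simple module combinatorially in terms of a partial bijection on the arrow set induced by the gentle conditions, and then to translate termination of all orbits of this bijection into the absence of full cycles in $(Q,I)$. By condition~(c) of Definition~\ref{def:gentle}, for each arrow $\beta \in Q_1$ there is at most one arrow $\alpha \in Q_1$ such that $\alpha\beta \in I$; setting $f(\beta):=\alpha$ whenever such $\alpha$ exists defines a partial function $f$ on the finite set $Q_1$. The complementary part of condition~(c), applied now to $\alpha$, yields that there is at most one arrow $\gamma$ with $\alpha\gamma\in I$, so $f$ is injective on its domain, hence a partial bijection.

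For each vertex $i$ I would then explicitly compute the minimal projective resolution of $S_i$. Because any non-zero permitted path factorises uniquely through its constituent arrows, distinct arrows $\alpha$ with $\sou(\alpha)=i$ produce disjoint left ideals $A\alpha \subseteq \rad(P_i)$, and so the projective cover of $\rad(P_i)$ is $\bigoplus_{\alpha:\sou(\alpha)=i} P_{\tar(\alpha)}$, with the summand indexed by $\alpha$ sending its generator to $\alpha$. Since $I$ is generated by paths of length two, a non-zero permitted path $p \in P_{\tar(\alpha)}$ satisfies $p\alpha \in I$ if and only if the arrow $\gamma$ at its source end satisfies $\gamma\alpha \in I$; by condition~(c) such $\gamma$ is unique, namely $\gamma=f(\alpha)$. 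Hence the kernel of the map out of the $\alpha$-th summand is the left ideal $A f(\alpha)$ when $f(\alpha)$ is defined and zero otherwise, itself a cyclic submodule with simple top $S_{\tar(f(\alpha))}$. Iterating, by induction the $(-n)$-th term of the minimal projective resolution of $S_i$ is
\[
P^{-n}(S_i) \;=\; \bigoplus_{\alpha} P_{\tar(f^{n-1}(\alpha))},
\]
summed over arrows $\alpha$ with $\sou(\alpha)=i$ for which $f^{n-1}(\alpha)$ is defined.

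Combining this with the identity $\gldim(Q,I)=\sup_i \mathrm{pd}(S_i)$, we obtain $\gldim(Q,I)<\infty$ if and only if every $f$-orbit in $Q_1$ is finite. Since $f$ is a partial bijection on a finite set, an infinite orbit $\alpha_1 \mapsto f(\alpha_1)\mapsto f^2(\alpha_1)\mapsto\cdots$ must eventually enter a periodic cycle $\alpha_j \to \alpha_{j+1}\to \cdots \to \alpha_{j+\ell-1}\to \alpha_j$ satisfying $\alpha_{k+1}\alpha_k\in I$ for every $k$ modulo $\ell$. Reading this cycle in reverse gives a closed forbidden path $\alpha_{j+\ell-1}\cdots\alpha_j$ with $\alpha_j \alpha_{j+\ell-1}\in I$, and minimality of the period forces the constituent arrows to be pairwise distinct, yielding a full cycle of $(Q,I)$; conversely, any full cycle of $(Q,I)$ plainly gives a periodic orbit of $f$. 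The main obstacle will be the explicit syzygy calculation: establishing that the candidate complex is indeed a minimal projective resolution requires verifying at each stage both the projective cover property for the map $P_{\tar(f(\alpha))} \twoheadrightarrow A f(\alpha)$ and the independence of the contributions arising from distinct arrows, both of which reduce to unique factorisation of permitted paths and the uniqueness encoded in the gentle axiom~(c).
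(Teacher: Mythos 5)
Your proof is correct. Note that the paper does not prove this lemma at all: it is quoted verbatim from Bobi\'nski's work (\cite[Proposition~2.1]{Bob2cycle}), so there is no in-paper argument to compare against line by line. Your route -- the partial injection $f$ on $Q_1$ with $f(\beta)=\alpha$ whenever $\alpha\beta\in I$, the syzygy formula $\Omega^{n+1}(S_i)\cong\bigoplus_{\sou(\alpha)=i}Af^{n}(\alpha)$, and the translation of non-terminating $f$-orbits into periodic cycles -- is the standard one for quadratic monomial algebras, and it is exactly the combinatorics the paper packages differently: your $f$ is the map $\Phi'$ used in the definition of the AAG invariant, the reversed maximal $f$-orbits are the forbidden threads, and your resolution of $S_i$ is a special case of the paper's Claim~1 in Subsection~\ref{subsec:proofA} (anti-walks of $(G,\leq)$ encode the minimal projective resolutions of string modules, and an ordered edge extends to an anti-walk precisely when its $f$-orbit terminates, cf.\ the discussion around~\eqref{eq:AAGtwo}). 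All the key verifications are in place: injectivity of $f$ from gentle axiom~(c), directness of $\rad(P_i)=\bigoplus_{\sou(\alpha)=i}A\alpha$ from unique factorisation of permitted paths, identification of $\ker\bigl(P_{\tar(\alpha)}\twoheadrightarrow A\alpha\bigr)$ with $Af(\alpha)$ because $I$ is generated in degree two, and the observation that a periodic $f$-orbit of minimal period, read in reverse, is precisely a full cycle. The only cosmetic point is that ``disjoint left ideals'' should read ``left ideals whose sum is direct'', which is what your unique-factorisation argument actually establishes.
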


\begin{lemma}\label{lem:forbidden}
Let $(\quiver,I)$ be a gentle bound quiver of finite global dimension.
\begin{enumerate}[label={\textnormal{(\alph*)}},topsep=4px,parsep=0px]
 \item Every non-trivial forbidden path of $(\quiver,I)$ can be extended to a unique forbidden thread.
 \item Every vertex of $\quiver$ is center of exactly two split forbidden threads.
 \item For each vertex $i$ of $Q$ there exists at most one almost-full cycle at $i$ in $(Q,I)$ and it has length at least 2.
\end{enumerate}
\end{lemma}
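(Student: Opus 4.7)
The plan is to establish (a)--(c) in sequence, closely paralleling Lemma~\ref{lem:iota} and invoking Lemma~\ref{lem:fgd} (absence of full cycles) at the crucial moments.

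For (a) I would mirror the proof of Lemma~\ref{lem:iota}(a), with the roles of $I$-relations and permitted concatenations swapped. Given a forbidden path $\wlk = \alpha_1 \cdots \alpha_\ell$, Definition~\ref{def:gentle}(c) yields uniqueness of a one-step forbidden extension at either end whenever one exists, making the iterative extension process deterministic. What remains is termination: if some arrow repeated in the extended sequence, say $\alpha_i = \alpha_j$ with $i < j$, then the subpath $\alpha_i\alpha_{i+1}\cdots\alpha_{j-1}$ would be a closed forbidden path satisfying $\alpha_{j-1}\alpha_i = \alpha_{j-1}\alpha_j \in I$, that is, a full cycle---excluded by Lemma~\ref{lem:fgd}. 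Hence the process halts after at most $|Q_1|$ steps, yielding the unique forbidden thread extending $\wlk$.

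For (b) I would work with string functions $S,T: Q_1 \to \{\pm 1\}$, under which a forbidden concatenation $\beta\alpha \in I$ at a fixed vertex $i$ corresponds exactly to $T(\alpha) = S(\beta)$. Denote by $a_\pm$ (resp.~$b_\pm$) the number of arrows outgoing from (resp.~incoming to) $i$ with $S = \pm 1$ (resp.~$T = \pm 1$); each lies in $\{0,1\}$ by the string function conditions. Let $m := a_+b_+ + a_-b_-$ be the number of forbidden pairs at $i$; by Definition~\ref{def:gentle}(c), each matched incoming (resp.~outgoing) arrow is unique, so the matched counts on both sides are equal to $m$. A direct census of split positions across all forbidden threads through $i$ yields $m$ interior splits (one per matched pair), $(a_++a_--m) + (b_++b_--m)$ boundary splits (from unmatched arrows at thread endpoints), and one additional split from $\triv_i$ precisely when $\triv_i$ is a forbidden thread. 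Enumerating the cases $\deg(i) \in \{1,2,3,4\}$ (with the relevant subcases at $\deg(i)=2$) shows that this total equals $2$ in every situation.

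For (c), I would first address the length claim: an almost-full cycle of length $1$ would be a loop $\alpha$ at $i$ with $\alpha^2 \notin I$, but since $I$ is monomial and generated by length-two paths, no power $\alpha^n$ would lie in $I$, contradicting finite-dimensionality of $A$. Hence the algebra contains no loops, and every almost-full cycle has length $\geq 2$. For uniqueness, assume $\wlk = \alpha_1\cdots\alpha_\ell$ and $\wlk' = \alpha'_1\cdots\alpha'_{\ell'}$ are two distinct almost-full cycles at $i$. If $\alpha_1 = \alpha'_1$, then by (a) the unique forbidden thread containing $\alpha_1$ visits $i$ at three positions---the start $\tar(\alpha_1)$, the source $\sou(\alpha_\ell)$, and the source $\sou(\alpha'_{\ell'})$---pairwise distinct because $\ell \neq \ell'$ and $\ell, \ell' \geq 2$. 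This produces at least three splits centered at $i$, contradicting the total of two from (b). If $\alpha_1 \neq \alpha'_1$, then either (i) both arrows lie in the same forbidden thread, yielding four putative positions of $i$ in that thread, among which an index analysis using $k \neq k'$ and $\ell, \ell' \geq 2$ confirms at least three distinct, again contradicting (b); or (ii) the arrows lie in two distinct forbidden threads, each already contributing at least two splits at $i$ from its own start and closing positions, totaling at least four. All scenarios therefore contradict (b), forcing $\wlk = \wlk'$.

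The main obstacle is the index bookkeeping in case (i) of the uniqueness argument in (c): one must carefully verify that coincidences among the four indices $\{k-1, k'-1, k+\ell-1, k'+\ell'-1\}$ can collapse at most two of the values, always leaving at least three distinct positions of $i$ in the thread. This needs no deep ingredient---only a careful inspection using $k \neq k'$ and $\ell, \ell' \geq 2$---but it is the step where the argument risks becoming tedious.
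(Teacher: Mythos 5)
Your proposal is correct. For parts (a) and (b) you follow essentially the same route as the paper, which simply defers to the dual arguments for permitted threads (Lemma~\ref{lem:iota}) plus the exclusion of full cycles from Lemma~\ref{lem:fgd}; your explicit census of split positions in (b) (interior splits $=m$, boundary splits $=(a_++a_-- m)+(b_++b_--m)$, plus the $\triv_i$ contribution) is just a spelled-out version of the observation the paper cites from~\cite{OPS18}, and it does check out in all degree cases. The genuine divergence is in the uniqueness part of (c). The paper argues directly: if $\wlk=\anp_1\cdots\anp_\ell$ and $\wlk'=\anp'_1\cdots\anp'_{\ell'}$ are two distinct almost-full cycles at $i$, gentleness forces their arrow sets to be disjoint and $\anp'_{\ell'}\anp_1,\ \anp_\ell\anp'_1\in I$, so the concatenation $\wlk'\wlk$ is a full cycle, contradicting Lemma~\ref{lem:fgd} in one stroke. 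You instead derive the contradiction indirectly by counting occurrences of $i$ as a center of split forbidden threads and comparing with the bound of two from (b). Your index bookkeeping does close: among $\{k-1,\,k+\ell-1,\,k'-1,\,k'+\ell'-1\}$ the coincidences $a_1=b_1,\ a_2=b_2$ force $\wlk=\wlk'$ and $a_1=b_2,\ a_2=b_1$ force $\ell+\ell'=0$, so at most one coincidence survives and at least three positions remain; the two-thread case gives four. The paper's argument is shorter and avoids the case analysis entirely, while yours has the mild virtue of reusing (a) and (b) rather than re-invoking gentleness; both ultimately rest on Lemma~\ref{lem:fgd}. One small point worth making explicit in your version: the positions of $i$ in a forbidden thread are well defined only because forbidden threads have no repeated arrows in the finite global dimension case---this is exactly the full-cycle exclusion you already use for termination in (a), so it costs nothing, but it should be stated.
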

\begin{proof}
Since $(\quiver,I)$ has no full cycle by Lemma~\ref{lem:fgd}, then every non-trivial forbidden path can be extended to a forbidden thread. The uniqueness of such thread follows the lines of the proof of Lemma~\ref{lem:iota}(a). The claim (b)  similarly follows by arguments as in Lemma~\ref{lem:iota}(b). 
To see (c) assume that there are two different almost-full cycles $\wlk=\anp_1\ldots\anp_\ell$ and $\wlk'=\anp'_1\ldots\anp'_{\ell'}$ at $i\in Q_0$, for some arrows $\anp_t,\anp_{t'}\in Q_1$, for $1\leq t\leq \ell$, $1\leq t'\leq \ell'$. 
 By definition $\wlk$ and $\wlk'$ are closed forbidden paths without repeated arrows with $\sou(\wlk)=\tar(\wlk)=\sou(\wlk')=\tar(\wlk')=i$ and $\anp_\ell\anp_1, \anp'_{\ell'}\anp'_1\notin I$. Gentleness of  $(Q,I)$ easily implies that $\{\anp_1,\ldots,\anp_\ell\}\cap\{\anp'_1,\ldots,\anp'_{\ell'}\}=\emptyset$ and $\anp'_{\ell'}\anp_1, \anp_\ell\anp'_1\in I$. This means that $\wlk'\wlk$ is a full cycle -- contradiction with Lemma~\ref{lem:fgd}. Finally, if $\wlk=\anp_1$ is an almost-full cycle of length 1 then it means that $\anp_1^2\notin I$, hence $\alpha^n_1\notin I$ for each $n\geq 1$, which is impossible since $I$ is admissible ($kQ/I$ has finite dimension).
\end{proof}

\subsection{Marked ribbon graphs}\label{subs:mrg}

A graph $G=(V,H,\s,\iota)$ together with a permutation $\permut$ of the set of half-edges $H$ whose orbits coincide with the inverse images $\s^{-1}(\alpha)$ for any vertex $\alpha$ of $G$ is called a \textbf{ribbon graph}, cf.~\cite[Definition~1.2]{OPS18}. Ribbon graphs have a wide range of applications, especially in connection to orientable surfaces. They were used in the analysis of Brauer graph algebras~\cite{MarshSchroll} and in connection to trivial extension of gentle algebras~\cite{Schroll.trivext}, see Subsection~\ref{subsec:applA} below.  The latter connection makes use of a function $\m:V \to H$ satisfying $\s(\m(\alpha))=\alpha$ for each vertex $\alpha \in V$, called a \textbf{marking} of the ribbon graph $(G,\permut)$, cf.~\cite[Definition~1.5]{OPS18} (recall our assumption $V=\Img(\s)$). Observe that the marking $\m$ partitions the set of half-edges $H$ of $G$ into linearly ordered sets (a disjoint union of chains joint by the involution $\iota$), which is a formulation better adjusted to our purposes:

\begin{definition}\label{def:mrg}
Let $G=(V,H,\s,\iota)$ be a graph. A pair $(G, \leq)$ is called a \textbf{marked ribbon graph} if $\leq$ is a partial order on the set of half-edges $H$ of $G$ such that $h \leq h'$ or $h' \leq h$ if and only if $\s(h)=\s(h')$ for $h,h' \in H$. Here we further assume that $G$ is a connected graph having a vertex of degree at least two.
\end{definition}

\begin{remark}\label{rem:correspondence}
If $(G,\permut)$ is a connected ribbon graph with marking $\m$, then for a vertex $\alpha \in \verts{G}$ we take
\[
\s^{-1}(\alpha):=\{h^{\alpha}_0,h^{\alpha}_1,\ldots,h^{\alpha}_{\ell^{\alpha}-1},h^{\alpha}_{\ell^{\alpha}}\} \subseteq \hedgs{G},
\]
where $h^{\alpha}_0$ is the marked half-edge $\m(\alpha)$, and such that $\permut(h^{\alpha}_r)=h^{\alpha}_{r-1}$ for $r=1,\ldots,\ell^{\alpha}$ and $\permut(h^{\alpha}_0)=h^{\alpha}_{\ell^{\alpha}}$. For $h,h' \in \hedgs{G}$, define $h \leq h'$ precisely when there is a vertex $\alpha$ of $G$ such that $h=h^{\alpha}_r$ and $h'=h^{\alpha}_{r'}$ with $0 \leq r' \leq r \leq \ell^{\alpha}$. Then $(G,\leq)$  is a marked ribbon graph, and the set $\m(\verts{G})$ consists of the maximal elements of $\hedgs{G}$ with respect to $\leq$. Clearly, the permutation $\permut$ and the marking $\m$ can be recovered from the partial order $\leq$.
\end{remark}

In the following definition we construct the marked ribbon graph associated with a gentle bound quiver as in~\cite{OPS18}, in a slightly more precise form. We also provide a converse operation below (Definition~\ref{def:IncBoundQuiver}).  These constructions establish a formal foundation for our graph theoretic model.

\begin{definition}\label{def:BoundQuiverInc}
For a gentle bound quiver $(\quiver,I)$ define a pair $\GPC(\quiver,I):=(G,\leq)$ with $G$ a graph and $\leq$ a relation on the set of half-edges of $G$ as follows. The graph $G=(V,H,\s,\iota)$ has as set of vertices the permitted threads of $(\quiver,I)$, $V:=\perm_{\quiver,I}$, as set of half-edges the split permitted threads of $(\quiver,I)$, $H:=\Sperm_{\quiver,I}$, the function $\s$ is given by $\s(\eta'',\eta'):=\eta''\eta'$, and the involution $\iota(\eta'',\eta')$ is the unique element in $\Sperm_{\quiver,I}-\{(\eta'',\eta')\}$ having the same center as $(\eta'',\eta')$, cf.~Lemma~\ref{lem:iota}(b). The relation $\leq$ on $H$ is given by $(\eta''_0,\eta'_0)\leq (\eta'',\eta')$ if and only if $\eta''_0\eta'_0=\eta''\eta'$ and $\ell(\eta'_0) \leq \ell(\eta')$.
\end{definition}
We easily verify that given a (connected) gentle bound quiver $(\quiver,I)$ the pair $\GPC(\quiver,I)$ is a (connected) marked ribbon graph having  a vertex of degree at least two.
Although the above definition is somehow cryptic, the construction is straightforward once we know the permitted threads of $(\quiver,I)$. Indeed, the vertices of $G$ may be viewed as the indices $\alpha,\beta,\ldots$ of the permitted threads, the edges of $G$ are in bijection with the vertices of $\quiver$ (by taking the center of the respective split permitted thread), and the linear orders $\leq$ record the vertices of $\quiver$ as found along each permitted thread of $(\quiver,I)$, compare with~\cite[Definition~1.8]{OPS18}. These vertices are written graphically (from right to left) as $\vBlock{\alpha}{123}$ for a permitted a thread ${}^{\am{1}}\alpha_1^{\am{2}}\alpha_2^{\am{3}}$. Note that $i$ is a loop of $G$ at vertex $\alpha$ exactly when the permitted thread $\eta^{\alpha}$ self-crosses at vertex $i$. Let us consider some examples illustrating these conventions.

\begin{example}\label{ex:small} The following table exhibits three \lq\lq small'' gentle bound quivers together with the associated marked ribbon graphs.
\[
\begin{NiceArray}{w{r}{1.5cm} w{l}{2cm} w{c}{3cm} w{r}{2cm} w{l}{3.5cm} w{c}{10mm}w{r}{3mm}w{l}{5mm}}
\Block{1-2}{\text{\small Gentle bound quiver $(\quiver,I)$}} & & \text{\small Marked ribbon graph $(G,\leq)$} & \Block{1-5}{\text{\small Vertices $\verts{G}$ and edges $\edgs{G}:=\hedgs{G}/\iota$ of $G$}} & & & & \\
\RowStyle[cell-space-limits=5pt]{}
\Block{2-1}{\xymatrix@C=1pc{{}_1 \ar@(ul,ur)^-{\alpha_1}="a" & {}
\ar@{.}@/^3pt/"a"!<-6pt,8pt>;"a"!<6pt,8pt> }}
& \eta^{\alpha}={}^{\am{1}}\alpha_1^{\am{1}}
& \Block{2-1}{\xymatrix{  {\vBlock{\alpha}{1\, 1}} \ar@{-}@(ld,lu)^(.5){1}}} & \verts{G}= &  \{\eta^{\alpha}\} & & &  \\
 & & & H(G)/\iota=&\{(\triv_1,\alpha_1),(\alpha_1,\triv_1)\} & \longleftrightarrow & 1 & =\quiver_0 \\ \\
\RowStyle[cell-space-limits=5pt]{}
\Block{2-1}{\xymatrix{\mathmiddlescript{1} \ar@/^7pt/[r]^-{\alpha_1}="a" & \mathmiddlescript{2} \ar@/^7pt/[l]^-{\alpha_2}="b"
\ar@{.}@/^3pt/"a"!<7pt,-6pt>;"b"!<7pt,7pt> }}
& \eta^{\alpha}={}^{\am{2}}\alpha_1^{\am{1}}\alpha_2^{\am{2}} &
\Block{2-1}{\xymatrix{ {\vBlock{\alpha}{212}}  \ar@{-}@(ld,lu)^(.5){2} \ar@{-}[r]^-{1} & {\vBlock{\beta}{1}} }} & \verts{G} = & \{\eta^{\alpha},\eta^{\beta}\} &  & & \\
  & \eta^{\beta} = \triv_1^{\am{1}} & & \Block{2-1}{H(G)/\iota=} & \{(\alpha_1,\alpha_2),(\triv_1,\triv_1)\} & \Block{2-1}{\longleftrightarrow} & 1 & \Block{2-1}{=\quiver_0} \\
  & & & & \{(\triv_2,\alpha_1\alpha_2),(\alpha_1\alpha_2,\triv_2)\} & & 2 & \\ \\
\RowStyle[cell-space-limits=5pt]{}
\Block{2-1}{\xymatrix{\mathmiddlescript{1} \ar@/^7pt/[r]^-{\alpha_1}="a" & \mathmiddlescript{2} \ar@/^7pt/[l]^-{\beta_1}="b"
\ar@{.}@/^3pt/"a"!<7pt,-6pt>;"b"!<7pt,7pt>
\ar@{.}@/^7pt/"b"!<-3pt,6pt>;"a"!<-3pt,-4pt> }}
& \eta^{\alpha}={}^{\am{2}}\alpha_1^{\am{1}} &
\Block{2-1}{\xymatrix{ {\vBlock{\alpha}{21}} \ar@{-}@<.5ex>[r]^-{2} \ar@{-}@<-.5ex>[r]_-{1} & {\vBlock{\beta}{12}} }} & \verts{G}= & \{\eta^{\alpha},\eta^{\beta}\} & & & \\
  & \eta^{\beta}={}^{\am{1}}\beta_1^{\am{2}} & & \Block{2-1}{H(G)/\iota=} & \{(\alpha_1,\triv_1),(\triv_1,\beta_1)\} & \Block{2-1}{\longleftrightarrow} & 1 &\Block{2-1}{=\quiver_0}\\
  & & & & \{(\triv_2,\alpha_1),(\beta_1,\triv_2)\} & & 2 & \\
\CodeAfter
 \SubMatrix{\{}{3-5}{3-5}{\}}
 \SubMatrix{\{}{3-7}{3-7}{\}}
 \SubMatrix{\{}{6-5}{7-5}{\}}
 \SubMatrix{\{}{6-7}{7-7}{\}}
 \SubMatrix{\{}{10-5}{11-5}{\}}
 \SubMatrix{\{}{10-7}{11-7}{\}}
\end{NiceArray}
\]
\end{example}
\begin{definition}\label{def:IncBoundQuiver}
For a marked ribbon graph $(G,{\leq})$ define a bound quiver $\GBQ(G,\leq):=(\quiver,I)$ as follows. The quiver $\quiver$ has as set of vertices the edges of $G$, $\quiver_0:=\edgs{G}=\hedgs{G}/\iota$ where $\iota$ is the involution of $G$, and as set of arrows $\quiver_1:=\{(h',h) \in \hedgs{G} \times \hedgs{G} \mid \text{ $h$ is direct predecessor of $h'$ with respect to $\leq$}\}$, with $\sou(h',h):=\{h,\iota h\}$ and $\tar(h',h):=\{h',\iota h'\}$. The two-sided ideal $I$ is generated by all paths of length two $(h'_2,h_2)(h'_1,h_1)$ such that $h'_1=\iota h_2$.
\end{definition}
We easily verify that given a (connected) marked ribbon graph $(G,{\leq})$ the pair $\GBQ(G,\leq)$ is a (connected) gentle bound quiver having at least one arrow.
\begin{remark}\label{rem:linear}
For a vertex $\alpha$ of a marked ribbon graph $(G,\leq)$ we have a linear order on the half-edges of $G$ that are incident to $\alpha$,
\begin{equation}\label{eq:linear}
\s^{-1}(\alpha)=\{h^{\alpha}_0 > h^{\alpha}_1 > \ldots > h^{\alpha}_{t-1}>h^{\alpha}_t>\ldots >h^{\alpha}_{\ell^{\alpha}-1}>h^{\alpha}_{\ell^{\alpha}}\},
\end{equation}
with $h^{\alpha}_t$ direct predecessor of $h^{\alpha}_{t-1}$, cf. Remark~\ref{rem:correspondence}. Analogously to the conventions in Remark~\ref{rem:conveta}, we fix the following notation for the arrows and  permitted threads of the gentle bound quiver $\GBQ(G,\leq)$. If $\ell^\alpha>0$, then we denote by $\alpha_t$ the arrow $(h^{\alpha}_{t-1},h^{\alpha}_t)$ for $t=1,\ldots,\ell^\alpha$, and by $\eta^{\alpha}$ the path $\alpha_1\cdots\alpha_{\ell^{\alpha}}$ of $\GBQ(G,\leq)=(Q,I)$. By construction, every non-trivial path $\alpha_t\cdots \alpha_{t'}$ with $1 \leq t \leq t' \leq \ell^\alpha$ is permitted, and $\eta^\alpha$ is a maximal permitted path hence a (non-trivial) permitted thread. If $\ell^\alpha=0$, then we set $\eta^{\alpha}:=\triv_{\{h_0^\alpha,\iota h_0^\alpha\}}$ and we check directly that it is a trivial permitted thread. Note that  every permitted thread in $\GBQ(G,\leq)$ can be found in the way.
\end{remark}

Consider the following examples (cf. Example~\ref{exa:amiot}),\label{ex:Amiotmrg}
\[
\xymatrix@C=1pc@R=.8pc{ {\vBlock{\gamma}{53}} \ar@{-}[rr]^-{3} \ar@{-}[dd]_-{5} && {\vBlock{\alpha}{213}} \\ \\
*++{\vBlock{\beta}{2145}} \ar@{-}[rr]_-{4} \ar@{-}@<.5ex>[rruu]^-{1} \ar@{-}@<-.5ex>[rruu]_-{2} && {\vBlock{\delta}{4}} }
\qquad \qquad
\xymatrix@C=1pc@R=.8pc{ \eta^{\gamma}={}^{\am{5}}\gamma_1^{\am{3}} \ar@{-}[rr]^-{3} \ar@{-}[dd]_-{5} && \eta^{\alpha}={}^{\am{2}}\alpha_1^{\am{1}}\alpha_2^{\am{3}} \\ \\
\eta^{\beta}={}^{\am{2}}\beta_1^{\am{1}}\beta_2^{\am{4}}\beta_3^{\am{5}} \ar@{-}[rr]_-{4} \ar@{-}@<.5ex>[rruu]^-{1} \ar@{-}@<-.5ex>[rruu]_-{2} && \eta^{\delta}=\triv_4^{\am{4}} }
\qquad \qquad
\xymatrix@C=1.2pc@R=1.2pc{
 {} & \mathmiddlescript{3} \ar[ld]_-{\gamma_1}="c1" \ar[rd]^-{\alpha_2}="a2" \\
\mathmiddlescript{5} \ar[rd]_-{\beta_3}="b3" & &
\mathmiddlescript{1} \ar@<.5ex>[r]^-{\beta_1}="b1" \ar@<-.5ex>[r]_-{\alpha_1}="a1" & \mathmiddlescript{2} \\
{} & \mathmiddlescript{4} \ar[ru]_-{\beta_2}="b2" &&
\ar@{.}@/^3pt/"c1"!<1ex,-2.5ex>;"b3"!<1ex,3ex>
\ar@{.}@/^3pt/"a2"!<-2ex,-1.5ex>;"b1"!<-1.5ex,-0.6ex>
\ar@{.}@/^4pt/"a1"!<0ex,2.8ex>;"b2"!<0ex,1.5ex>}
\]
\[
\xymatrix@C=1pc@R=.8pc{ {\vBlock{\gamma}{112}} \ar@{-}@(ld,rd)_(.2){1} \ar@{-}[rr]^-{2} && {\vBlock{\alpha}{23}} \ar@{-}[dd]^-{3} \\ \\
\vBlock{\beta}{545} \ar@{-}[rr]_-{4} \ar@{-}@(lu,ru)^(.2){5} && {\vBlock{\delta}{43}} }
\qquad \qquad
\xymatrix@C=1pc@R=.8pc{ \eta^{\gamma}={}^{\am{1}}\gamma_1^{\am{1}}\gamma_2^{\am{2}} \ar@{-}@(ld,rd)_(.2){1} \ar@{-}[rr]^-{2} && \eta^{\alpha}={}^{\am{2}}\alpha_1^{\am{3}} \ar@{-}[dd]^-{3} \\ \\ \eta^{\beta}={}^{\am{5}}\beta_1^{\am{4}}\beta_2^{\am{5}} \ar@{-}[rr]_-{4} \ar@{-}@(lu,ru)^(.2){5} && \eta^{\delta}={}^{\am{4}}\delta_1^{\am{3}} }
\qquad \qquad
\xymatrix@C=2pc@R=1.2pc{
 {} & \mathmiddlescript{2} \ar[ld]_(.3){\gamma_2}="c2" \ar@{<-}[rd]^(.3){\alpha_1}="a1" \\
\mathmiddlescript{1} \ar@(lu,ru)^-{\gamma_1}="c1" & \mathmiddlescript{5} \ar@/_10pt/[d]_-{\beta_2}="b2" & \mathmiddlescript{3}  \\ {} & \mathmiddlescript{4} \ar[u]_-{\beta_1}="b1" \ar@{<-}[ru]_-{\delta_1}="d1" &&
\ar@{.}@/^3pt/"b1"!<0ex,.5ex>;"b2"!<1.6ex,1ex>
\ar@{.}@/^3pt/"b1"!<-.5ex,1.5ex>;"d1"!<-1.5ex,2ex>
\ar@{.}@/^4pt/"a1"!<-1.8ex,-.8ex>;"c2"!<1.8ex,-.8ex>
\ar@{.}@/^3pt/"c1"!<-6pt,8pt>;"c1"!<6pt,8pt>}
\]

Note that the construction of Definition~\ref{def:IncBoundQuiver} when applied to the graph $G$ with two vertices joint by a single edge, which is not included in the definition of marked ribbon graph (\ref{def:mrg}), produces a quiver $\quiver$ with one vertex and no arrow, which we do not consider to be gentle (Definition~\ref{def:gentle}) mainly because it has no permitted threads.

It follows from \cite{OPS18,APS23} that the constructions of Definitions \ref{def:BoundQuiverInc} and \ref{def:IncBoundQuiver} are inverse to each other, see also \cite[Theorem 4.10]{PPP19}. For convenience, we provide a brief explicit proof.
\begin{lemma}\label{thm:IncGentle}
Let $(G,{\leq})$ be a marked ribbon graph and  $(\quiver,I)$ a  gentle bound quiver. Then
\begin{enumerate}[label={\textnormal{(\alph*)}},topsep=3px,parsep=0px]
\item $(G,\leq)$ and $\GPC(\GBQ(G,\leq))$ are isomorphic as marked ribbon graphs,
\item $(\quiver,I)$ and $\GBQ(\GPC(\quiver,I))$ are isomorphic as bound quivers.
\end{enumerate}
\end{lemma}
\begin{proof} 
 Taking $(G',\leq'):= \GPC(\GBQ(G,\leq))$ and $(\quiver',I') := \GBQ(\GPC(\quiver,I))$ we consider the following mappings:
\[
\xymatrix@R=.1pc{\verts{G} \ar[r]^{f_V} & \verts{G'} \\
\alpha \ar@{|->}[r] & \eta^{\alpha}} \qquad
\xymatrix@R=.1pc{\hedgs{G} \ar[r]^{f_H} & \hedgs{G'} \\
h^{\alpha}_t \ar@{|->}[r] & (\eta^{\alpha}_{[t]},{}_{[t]}\eta^{\alpha}) } \qquad
\xymatrix@R=.1pc{\quiver_0 \ar[r]^{g_0} & \quiver'_0 \\
i \ar@{|->}[r] & \{h_i,\iota h_i \} } \qquad
\xymatrix@R=.1pc{\quiver_1 \ar[r]^{g_1} & \quiver'_1 \\
\alpha_r \ar@{|->}[r] & ((\eta^{\alpha}_{[r-1]},{}_{[r-1]}\eta^{\alpha}),(\eta^{\alpha}_{[r]},{}_{[r]}\eta^{\alpha})), }
\]
where $\{h_i,\iota h_i\}$ is the pair of split permitted threads of $(\quiver,I)$ with center $i$ as in Lemma~\ref{lem:iota}(b). By Remark~\ref{rem:linear} the mappings $f_V$ and $f_H$ are well-defined bijections and a routine check shows that they define a graph isomorphism between $G$ and $G'$ (cf. \ref{subsec:bdg}) such that $f_H$ is an isomorphism between the partially ordered sets $(H(G),\leq)$ and $(H(G'),\leq')$, hence it commutes with marked ribbon graph structures. Similarly, by Lemma~\ref{lem:iota} and Remark~\ref{rem:conveta} the mappings $g_0$ and $g_1$ are well-defined bijections
 and  $g=(g_0,g_1):\quiver \to \quiver'$ is an isomorphism of quivers (that is, $\tar'\circ g_1=g_0\circ \tar$ and $\sou'\circ g_1=g_0\circ \sou$) such that the induced mapping $g:kQ\to kQ'$ sends the generating monomial relations of $I$ to those of $I'$. 
\end{proof}

\begin{lemma}\label{rem:string}
If $(\quiver,I)$ is a gentle bound quiver with marked ribbon graph $(G,\leq)$, then there is a $1-1$ correspondence between string functions of $(\quiver,I)$ and directions of the graph $G$. In particular, there exist precisely $2^{|Q_0|}$ pairs of string functions for $(Q,I)$.
\end{lemma}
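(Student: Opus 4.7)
The plan is to construct a two-sided inverse of the map $\sigma \mapsto (S^\sigma, T^\sigma)$ exhibited in the proof of Proposition~\ref{thm:IncGentle}(a), where $S^\sigma(\alpha_t) := \sigma(\rhalf{\alpha_t})$ and $T^\sigma(\alpha_t) := -\sigma(\lhalf{\alpha_t})$. Given a pair of string functions $(S,T)$ of $(\quiver, I)$, define $\sigma_{S,T} \colon H(G) \to \{\pm 1\}$ by the prescriptions: for each arrow $\alpha_t$ of $Q$, set $\sigma_{S,T}(\rhalf{\alpha_t}) := S(\alpha_t)$ and $\sigma_{S,T}(\lhalf{\alpha_t}) := -T(\alpha_t)$; for any half-edge $h$ corresponding to a trivial permitted thread $\triv_i$ (necessarily at a leaf of $G$), set $\sigma_{S,T}(h) := -\sigma_{S,T}(\iota h)$, observing that $\iota h$ cannot also come from a trivial thread since $G$ has a vertex of degree at least two (Definition~\ref{def:mrg}).

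The main verification is that $\sigma_{S,T}$ is a well-defined direction. For well-definedness, when $h = \rhalf{\alpha_t} = \lhalf{\alpha_{t+1}}$ with $0 < t < \ell^\alpha$ in $\eta^\alpha$, both prescriptions must agree, requiring $S(\alpha_t) = -T(\alpha_{t+1})$; this is precisely string function condition (3) applied to the permitted composition $\alpha_t\alpha_{t+1} \notin I$ inside $\eta^\alpha$. The direction property $\sigma_{S,T}(\iota h) = -\sigma_{S,T}(h)$ is verified at each edge $i$ of $G$ by a short case analysis on the two split permitted threads at $i$ provided by Lemma~\ref{lem:iota}(b). If both half-edges at $i$ are of the form $\rhalf{\alpha}$ and $\rhalf{\beta}$ with $\sou(\alpha) = \sou(\beta) = i$, condition (1) yields $S(\alpha) \neq S(\beta)$; symmetrically, if both are $\lhalf{\alpha}$ and $\lhalf{\beta}$ with common target $i$, condition (2) yields $T(\alpha) \neq T(\beta)$; in the mixed subcase $\rhalf{\alpha}$ and $\lhalf{\beta}$ with $\sou(\alpha) = \tar(\beta) = i$, maximality of the two threads forces $\beta\alpha \in I$ (otherwise one could be extended by the other), and condition (3) yields $T(\beta) = S(\alpha)$, giving opposite signs; finally, if one of the two half-edges is trivial, the required identity holds by construction. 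Both half-edges being trivial is excluded because $\triv_i$ admits only the single split $(\triv_i, \triv_i)$.

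The composition $(S,T) \mapsto \sigma_{S,T} \mapsto (S^{\sigma_{S,T}}, T^{\sigma_{S,T}})$, and its reverse, both yield the identity by direct substitution. For the count, any direction of $G$ is determined by an arbitrary choice of sign on one half-edge per edge, producing $2^{|E(G)|}$ directions; combined with $|E(G)| = |Q_0|$ from Proposition~\ref{thm:IncGentle}, this gives exactly $2^{|Q_0|}$ pairs of string functions. The main obstacle is bookkeeping: tracking the multiple descriptions of non-trivial half-edges as $\rhalf{\cdot}$, $\lhalf{\cdot}$, or both, and ensuring each subcase of the case analysis matches the correct defining condition of string functions.
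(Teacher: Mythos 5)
Your proposal is correct and follows essentially the same route as the paper's proof: both construct the two-sided inverse $(S,T)\mapsto\sigma^{S,T}$ of the map $\sigma\mapsto(S^\sigma,T^\sigma)$ from Proposition~\ref{thm:IncGentle}, verify the direction property via the three string-function axioms (with the mixed source/target case resolved by showing the relevant length-two composition lies in $I$), handle half-edges of trivial threads by the connectedness and degree-at-least-two assumption on $G$, and count $2^{|E(G)|}=2^{|Q_0|}$ directions. The only differences are organizational (you prescribe $\sigma$ on both $\rhalf{\alpha_t}$ and $\lhalf{\alpha_{t+1}}$ and check consistency, while the paper's formula~\eqref{eq:stringONE} is single-valued by fiat), plus a harmless notational slip in the mixed case where, given $\sou(\alpha)=\tar(\beta)$, the composable product in the paper's right-to-left convention is $\alpha\beta$ rather than $\beta\alpha$.
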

\begin{proof}
By Lemma~\ref{thm:IncGentle} we can assume that $(\quiver,I)=\GBQ(G,\leq)$ as in Definition~\ref{def:IncBoundQuiver}. If $S,T$ are string functions of $(\quiver,I)$, define a bidirection $\sigma^{S,T}$ of $G$ by taking, if $\s^{-1}(\alpha)=\{h^{\alpha}_0>h^{\alpha}_1>\ldots >h^{\alpha}_{\ell^{\alpha}}\}$ for a vertex $\alpha$ of $G$,
\begin{equation}\label{eq:stringONE}
\sigma^{S,T}(h^{\alpha}_t):=\left\{
\begin{array}{l l}
S(\alpha_t), & \text{if $0<t\leq \ell^\alpha$}, \\
(-1)T(\alpha_1), & \text{if $0=t<\ell^{\alpha}$}, \\
(-1)\sigma^{S,T}(\iota h^{\alpha}_t), & \text{if $0=t=\ell^{\alpha}$}.
\end{array} \right.
\end{equation}
Recall that the permitted thread corresponding to $\alpha$ is $\eta^\alpha=\alpha_1\cdots\alpha_{\ell^\alpha}$ where $\sou(\alpha_t)=\{h^\alpha_t,\iota h^\alpha_t\}$ and $\tar(\alpha_t)=\{h^\alpha_{t-1},\iota h^\alpha_{t-1}\}$ (see Remark~\ref{rem:linear} and Definition~\ref{def:IncBoundQuiver}). Take $h^{\beta}_s:=\iota h^{\alpha}_t$ for some vertex $\beta$. The bidirection $\sigma^{S,T}$ is well-defined, because $\ell^{\alpha}=0$ implies that the permitted thread $\eta^\beta$ is non-trivial ($\ell^{\beta}>0$), since $G$ is connected and has a vertex of degree at least two. That $\sigma^{S,T}$ is in fact a direction (that is, $\sigma(h^\alpha_t) \neq \sigma(h^\beta_s)$, see Definition~\ref{def:bidi}) follows from definition if $\ell^\alpha\ell^\beta=0$, and from the properties of string functions if $\ell^\alpha\ell^\beta>0$: if $st>0$, then $\sigma^{S,T}(h^\alpha_t)=S(\alpha_t)$ and $\sigma^{S,T}(h^\beta_s)=S(\beta_s)$, and since $\sou(\alpha_t)=\{h^\alpha_t,h^\beta_s\}=\sou(\beta_s)$ we have $S(\alpha_t) \neq S(\beta_s)$. The case $st=0$ can be shown similarly.

Now take an arbitrary direction $\sigma$ of $G$  and define two functions $S^{\sigma}, T^{\sigma}:Q_1\to \{\pm1\}$ as
\begin{equation}\label{eq:stringTWO}
\text{$S^{\sigma}(\alpha_t):=\sigma(h^{\alpha}_{t})$ and $T^{\sigma}(\alpha_t):=-\sigma(h^{\alpha}_{t-1})$ for an arrow $\alpha_t=(h^{\alpha}_{t-1},h^{\alpha}_{t})$ of $\quiver$.}
\end{equation}
To show that $S^{\sigma},T^{\sigma}$ are string functions recall  that $\sou(\alpha_t)=\{\rhalf{\alpha_t},\iota \rhalf{\alpha_t}\}$, $\tar(\alpha_t)=\{\lhalf{\alpha_t},\iota \lhalf{\alpha_t}\}$  where we set
 $\lhalf{\alpha_t}:=h^{\alpha}_{t-1}$ and $\rhalf{\alpha_t}:=h^{\alpha}_{t}$ for any arrow $\alpha_t=(h^{\alpha}_{t-1},h^{\alpha}_{t})$ as above (see Definition~\ref{def:IncBoundQuiver}).   Then  for any pair of  arrows $\alpha_t$ and $\beta_s$ of $\quiver$ we have:
\begin{itemize}[topsep=4px,parsep=0px]
 \item if $\alpha_t\neq \beta_s$ and $\sou(\alpha_t)=\sou(\beta_s)$, then $\rhalf{\alpha_t}=\iota \rhalf{\beta_s}$ and $S^{\sigma}(\alpha_t)=\sigma(\rhalf{\alpha_t}) = \sigma(\iota \rhalf{\beta_s}) = -\sigma(\rhalf{\beta_s}) = -S^{\sigma}(\beta_s)$;
 \item if $\alpha_t\neq \beta_s$ and $\tar(\alpha_t)=\tar(\beta_s)$, then $\lhalf{\alpha_t}=\iota \lhalf{\beta_s}$ and $T^{\sigma}(\alpha_t)=-\sigma(\lhalf{\alpha_t})= -\sigma(\iota \lhalf{\beta_s})= \sigma(\lhalf{\beta_s}) = -T^{\sigma}(\beta_s)$;
 \item if $\sou(\alpha_t)=\tar(\beta_s)$ and $\rhalf{\alpha_t}=\iota \lhalf{\beta_s}$ (that is, $\alpha_t\beta_s\in I$ ), then $S^{\sigma}(\alpha_t)=\sigma(\rhalf{\alpha_t})=\sigma(\iota \lhalf{\beta_s})=-\sigma(\lhalf{\beta_s})=T^{\sigma}(\beta_s)$;
 \item if $\sou(\alpha_t)=\tar(\beta_s)$ and $\rhalf{\alpha_t}=\lhalf{\beta_s}$ (that is, $\alpha_t\beta_s\notin I$ ), then $S^{\sigma}(\alpha_t)=\sigma(\rhalf{\alpha_t})=\sigma(\lhalf{\beta_s})=-T^{\sigma}(\beta_s)$. In this case we necessarily have $\beta=\alpha$ and $s=t+1$, since $\alpha_t=(h^{\alpha}_{t-1},h^{\alpha}_{t})$ and $\beta_s=(h^{\beta}_{s-1},h^{\beta}_{s})$ with $h^\alpha_t=\rhalf{\alpha_t}=\lhalf{\beta_s}=h^\beta_{s-1}$, see~\eqref{eq:linear}.
\end{itemize}
This shows that $S^{\sigma},T^{\sigma}$ are string functions of $(\quiver,I)$.
Clearly, the constructions $(S,T) \mapsto \sigma^{S,T}$  and $\sigma \mapsto (S^{\sigma},T^{\sigma})$ are inverse to each other,
which concludes the proof of the main statement. To see the final claim recall that $|Q_0|=|\edgs{G}|$ and clearly, there exist $2^{|\edgs{G}|}$ directions in $G$.
\end{proof}

For a gentle bound quiver $(\quiver,I)$ of finite global dimension an analogous construction of Definition~\ref{def:BoundQuiverInc}, replacing ``permitted'' by ``forbidden'' notions (cf. Lemma~\ref{lem:forbidden}), yields also a marked ribbon graph together with a bidirection of the graph. We give notation and brief details for convenience.

\begin{definition}\label{rem:forb}
For a gentle bound quiver $(\quiver,I)$ of finite global dimension define a triple $(\fmrgs{G},\fmrgs{\leq},\fmrgs{\sigma})$ with $(\fmrgs{G},\fmrgs{\sigma})$ a bidirected graph and $\fmrgs{\leq}$ a relation on the set of half-edges of $\fmrgs{G}$ as follows. The graph $\fmrgs{G}=(V,H,\s,\iota)$ has as set of vertices the forbidden threads of $(\quiver,I)$, $V:=\forb_{\quiver,I}$, as set of half-edges the split forbidden threads of $(\quiver,I)$, $H:=\Sforb_{\quiver,I}$, the function
$\s$ is given by $\s(\theta'',\theta'):=\theta''\theta'$, and the involution $\iota(\theta'',\theta')$ is the unique element in $\Sforb_{\quiver,I}-\{(\theta'',\theta')\}$ having the same center as $(\theta'',\theta')$, cf.~Lemma~\ref{lem:forbidden}(b). The relation $\fmrgs{\leq}$ on $H$ is given by $(\theta''_0,\theta'_0)\leq (\theta'',\theta')$ if and only if $\theta''_0\theta'_0=\theta''\theta'$ and $\ell(\theta'_0) \leq \ell(\theta')$.  The bidirection $\fmrgs{\sigma}:H \to\{\pm1\}$ is defined, for the linearly ordered half-edges
$\s^{-1}(\alpha)=\{\fmrgs{h}^{\alpha}_0 > \fmrgs{h}^{\alpha}_1 > \ldots >\fmrgs{h}^{\alpha}_{\ell^{\alpha}}\}$ incident to each vertex $\alpha\in V$, by setting $\fmrgs{\sigma}(\fmrgs{h}^{\alpha}_t):=(-1)^{\ell^{\alpha}-t}$ for $t=0,\ldots, \ell^{\alpha}$. We call the triple $(\fmrgs{G},\fmrgs{\leq},\fmrgs{\sigma})$ the \textbf{forbidden marked ribbon graph} of $(\quiver, I)$, and denote it by $\GFC(\quiver,I)$.
\end{definition}

We verify that $(\fmrgs{G},\fmrgs{\leq})$ (called the \textbf{dual marked ribbon graph} of $(Q,I)$) constructed in the first part of Definition \ref{rem:forb} is a well-defined (connected) marked ribbon graph. Recall that in the geometric model of a gentle bound quiver $(Q,I)$, the marked ribbon graph $\GPC(Q,I)$ (as in Definition \ref{def:BoundQuiverInc}) is in some sense uniquely embedded into an orientable surface with boundary, where the edges of the graph determine a dissection of the surface (called admissible dissection in~\cite{OPS18,APS23}, or cellular dissection in~\cite{PPP19}). In the case of finite global dimension, the dual marked ribbon graph $(\fmrgs{G},\fmrgs{\leq})$  corresponds to a so-called lamination or dual dissection of $(G,\leq)$ in the surface, and the gentle bound quiver $\GBQ(\fmrgs{G},\fmrgs{\leq}^{\rm op})$ is isomorphic to the Koszul dual of $(Q,I)$, see~\cite[Definition~4.4]{PPP19} and~\cite[Proposition~1.25]{OPS18}, where $\fmrgs{\leq}^{\rm op}$ denotes the order on $H(\fmrgs{G})$ inverse to $\fmrgs{\leq}$.
In comparison to graphs in the geometric model we consider here the graph $\fmrgs{G}$ with  additional structure -- the bidirection $\fmrgs{\sigma}$. In Sections~\ref{sec:derinv} and~\ref{sec:AR} we will use the bidirected graph $(\fmrgs{G},\fmrgs{\sigma})$ to combinatorially bypass the explicit inversion of Cartan matrices in the analysis of Euler forms. 

\subsection{Walks and indecomposable perfect complexes}\label{subsec:perfcpx}

In this subsection we apply the results of Bekkert-Merklen \cite{BekMer} (see also \cite{Bob,ALP}) to describe the indecomposable perfect complexes of a gentle bound quiver $(\quiver,I)$ in terms of walks of the corresponding marked ribbon graph $(G,\leq)$. Recall that all non-trivial permitted paths of $(\quiver,I):=\GBQ(G,\leq)$ are of the form $\alpha_{s+1}\cdots \alpha_t=(h^{\alpha}_s,h^{\alpha}_{s+1})\cdots (h^{\alpha}_{t-1},h^{\alpha}_t)$ for some vertex $\alpha \in \verts{G}$, see Remark~\ref{rem:linear}. Consider the (unique) ordered edges $i$ and $j$ of $G$ with $\souh(i)=h^{\alpha}_s$ and $\tarh(j)=h^{\alpha}_t$, and observe that the target and source of the path $\alpha_{s+1}\cdots \alpha_t$ are the unordered edges $\vtx{i},\vtx{j}\in \quiver_0:=\edgs{G}$ respectively, and that $j \neq \invw{i}$. Thus the situation around the vertex $\alpha$ of $G$ may be visualized as follows (we use the convention of \eqref{eq:ordedg}, and the red arrows symbolize the corresponding arrows of $Q$):
          $$
          {\xymatrix@C=2.8pc@R=0.03pc{
 &\ar@{-}[rdddd]^-{h_{s+1}^\alpha}="hsp"& \cdots&\ar@{-}[ldddd]_-{h_{t-1}^\alpha}="htm"&\\
 && && \\
 && && \\
  && && \\
\ar@{<-}[r]_-{\iota h_s^\alpha}&& {\alpha}\ar@{|-}[l]^-{h_s^\alpha}="hs"\ar@{<-}[r]_-{h_t^\alpha}="ht" &\!\!\!&\ar@{|-}[l]^-{\iota h_t^\alpha} \\
 &i& &j&\\
 \ar@[red]@/_4pt/"ht"!<0ex,1ex>;"htm"!<0ex,1.5ex>_{\mathmiddlescript{\begin{array}{c} \\ \am{\alpha_t}\end{array}}}
 \ar@[red]@/_4pt/"hsp"!<0ex,1ex>;"hs"!<0ex,1.5ex>_{\mathmiddlescript{\begin{array}{c} \\ \am{\alpha_{s+1}}\end{array}}}
 }}$$
 \vspace{-0.5cm}

 In what follows we will use the notation
\[
\ppath{i}{j}:=\alpha_{s+1}\cdots \alpha_t.
\]
Conversely, a pair of ordered edges $i$ and $j$ of $G$ is concatenable if and only if the half-edges $\souh(i)$ and $\tarh(j)$ are comparable with the partial order $\leq$, that is, $\souh(i)=h^{\alpha}_s$ and $\tarh(j)=h^{\alpha}_t$ for some vertex $\alpha \in \verts{G}$. If $s=t$ then $j=\invw{i}$ and the walk $ij$ is not reduced. If $s<t$ (that is, $h^{\alpha}_s>h^{\alpha}_t$) we take $\deg(ij):=1$ and $\ppath{i}{j}:=\alpha_{s+1}\cdots \alpha_t$ as above; if $s>t$ we take $\deg(ij)=-1$ and $\ppath{i}{j}:=\invw{\ppath{\invw{j}}{\invw{i}}}$. If $\wlk=i_1i_2\cdots i_Li_{L+1}$ is a reduced walk ($L\geq 0$) we take the \textbf{degree} of $\wlk$ as
\begin{equation}\label{eq:degree}
\deg(\wlk):=\sum_{t=1}^L\deg(i_ti_{t+1}).
\end{equation}
In particular,  $\deg(i_1)=0$ and $\deg(\wlk^{-1})=-\deg(\wlk)$.
Moreover, we have the following fact:

\begin{remark}\label{rem:letters}
The construction $\ppath{-}{-}$ provides a bijection between the set of reduced walks of $G$ of length two and the set of non-trivial permitted paths of $(\quiver,I)$ together with their inverses (the so-called homotopy letters of $(\quiver,I)$, see discussion below).
\end{remark}

As usual, the morphism between indecomposable projective (left) modules $\xymatrix{P_{i} \ar[r] & P_{j}}$ given by right multiplication with a path is denoted by same symbol as the path (where we write $P_i$ instead of $P_{\vtx{i}}$, recall that $\quiver_0=\edgs{G}$).

For a reduced walk $\wlk=i_1i_2\cdots i_Li_{L+1}$ of $(G,\leq)$, $L\geq 0$, consider the following ``unfolded diagram'' similarly as in~\cite{ALP},
\[
\xymatrix@C=3pc{P_{i_1} \ar@{-}[r]^-{\ppath{i_1}{i_2}} & P_{i_2} \ar@{-}[r]^-{\ppath{i_2}{i_3}} & \cdots \ar@{-}[r]^-{\ppath{i_{L-1}}{i_L}} & P_{i_L} \ar@{-}[r]^-{\ppath{i_L}{i_{L+1}}} & P_{i_{L+1}}}
\]
where $\xymatrix{P_{i} \ar@{-}[r]^-{\ppath{i}{j}} & P_{j}}$ denotes the morphism $\xymatrix{P_{i} \ar[r]^-{\ppath{i}{j}} & P_{j}}$ if $\deg(ij)=1$ or $\xymatrix{P_{i} \ar@{<-}[r]^-{\invw{\ppath{i}{j}}} & P_{j}}$ if $\deg(ij)=-1$. We refer to \cite[Section 2]{ALP} for description of the transition of an unfolded diagram to the corresponding ``folded'' complex. Here we illustrate the procedure with a couple of representative examples. Consider the following marked ribbon graph $(G,\leq)$ with corresponding gentle bound quiver (to simplify the notation for walks we draw the edges of $G$ together with a fixed order, cf.~\eqref{eq:ordedg}),
\[
\xymatrix@C=1pc@R=.8pc{ {\vBlock{\gamma}{53}} \ar@{<-|}[rr]^-{3} \ar@{|->}[dd]_-{5} && *++[o]{\vBlock{\alpha}{6213}} \ar@{|->}[dd]^-{6} \\ \\
*++[o]{\vBlock{\beta}{2145}} \ar@{<-|}[rr]_-{4} \ar@{<-|}@<.5ex>[rruu]^-{1} \ar@{<-|}@<-.5ex>[rruu]_-{2} && {\vBlock{\delta}{46}} }
\qquad
\xymatrix@C=1.2pc@R=1.2pc{
 {} & \mathmiddlescript{3} \ar[ld]_-{\gamma_1}="c1" \ar[rd]^-{\alpha_3}="a3" \\
\mathmiddlescript{5} \ar[rd]_-{\beta_3}="b3" & &
\mathmiddlescript{1} \ar@<.5ex>[r]^-{\beta_1}="b1" \ar@<-.5ex>[r]_-{\alpha_2}="a2" & \mathmiddlescript{2} \ar[d]^-{\alpha_1}="a1" \\
{} & \mathmiddlescript{4} \ar[ru]^-{\beta_2}="b2" && \mathmiddlescript{6} \ar[ll]^-{\delta_1}="d1"
\ar@{.}@/^3pt/"c1"!<1ex,-2.5ex>;"b3"!<1ex,3ex>
\ar@{.}@/^3pt/"a3"!<-2ex,-1.5ex>;"b1"!<-1.5ex,-0.6ex>
\ar@{.}@/^4pt/"d1"!<3ex,0ex>;"a1"!<-3ex,-1ex>
\ar@{.}@/^4pt/"a2"!<0ex,2.8ex>;"b2"!<3.3ex,-1ex>
\ar@{.}@/^4pt/"b2"!<0ex,-2.5ex>;"d1"!<-1.5ex,1.5ex>
\ar@{.}@/^4pt/"b1"!<0ex,2ex>;"a1"!<.5ex,1ex>  }
\qquad
\xymatrix@!0@C=40pt@R=25pt{ &&& 2_{\succ}\invw{3}_{\pred}\invw{5}_{\pred}4_{\succ}6_{\succ}\invw{2}_{\succ}1 \\ P_2 \ar@{-}[r]^-{\ppath{2}{\invw{3}}} & P_3 \ar@{-}[r]^-{\ppath{\invw{3}}{\invw{5}}} & P_5 \ar@{-}[r]^-{\ppath{\invw{5}}{4}} & P_4 \ar@{-}[r]^-{\ppath{4}{6}} & P_6 \ar@{-}[r]^-{\ppath{6}{\invw{2}}} & P_2 \ar@{-}[r]^-{\ppath{\invw{2}}{1}} & P_1 \\
 P_2 \ar[r]^-{\alpha_2\alpha_3} & P_3 \ar@{<-}[r]^-{\gamma_1} & P_5 \ar@{<-}[r]^-{\beta_3} & P_4 \ar[r]^-{\delta_1} & P_6 \ar[r]^-{\alpha_1} & P_2 \ar[r]^-{\beta_1} & P_1. }
\]
On the right we show a reduced walk $\wlk=2\invw{3}\invw{5}46\invw{2}1$ with its corresponding unfolded diagram, where $2_{\succ}\invw{3}$ means
$\souh(2)\,\succ\, \tarh(\invw{3})$
(equivalently,  $\deg(2\invw{3})=1$). Below we show the corresponding complex,
\[
\xymatrix@!0@C=40pt@R=10pt{ & P_2 \ar[r]^-{\alpha_2\alpha_3} & P_3 & \\ \\ P_4 \ar[r]^{\beta_3} \ar[rdd]_-{\delta_1} & P_5 \ar[ruu]_-{\gamma_1} & & \\ \\ & P_6 \ar[r]^-{\alpha_1} & P_2 \ar[r]^-{\beta_1} & P_1 } \qquad
\xymatrix@!0@C=70pt@R=10pt{ & P_2 & & \\ & \oplus & P_3 & \\ P_4 \ar[r]^-{\left[ \begin{smallmatrix} 0\\\beta_3\\ \delta_1\end{smallmatrix} \right]} & P_5 \ar[r]^-{\left[ \begin{smallmatrix} \alpha_2\alpha_3 & \gamma_1 & 0 \\ 0&0&\alpha_1 \end{smallmatrix} \right]} & \oplus \ar[r]^-{\left[ \begin{smallmatrix} 0&\beta_1\end{smallmatrix} \right]} & P_1. \\
& \oplus & P_2 & \\ & P_6 }
\]
Similarly, with the reduced walk $\wlk'=3_{\pred}\invw{6}_{\pred}\invw{4}_{\succ}5_{\succ}3$ we obtain the following unfolded diagram and corresponding complex,
\[
\xymatrix@!0@C=40pt@R=20pt{  P_3 \ar@{-}[r]^-{\ppath{3}{\invw{6}}} & P_6 \ar@{-}[r]^-{\ppath{\invw{6}}{\invw{4}}} & P_4 \ar@{-}[r]^-{\ppath{\invw{4}}{5}} & P_5 \ar@{-}[r]^-{\ppath{5}{3}} & P_3 \\
 P_3 \ar@{<-}[r]^-{\alpha_1\alpha_2\alpha_3} & P_6 \ar@{<-}[r]^-{\delta_1} & P_4 \ar[r]^-{\beta_3} & P_5 \ar[r]^-{\gamma_1} & P_3 }
\qquad
\xymatrix@!0@C=40pt@R=10pt{ P_4 \ar[rdd]_-{\beta_3} \ar[r]^-{\delta_1} & P_6 \ar[r]^-{\alpha_1\alpha_2\alpha_3} & P_3 \\ \\ & P_5 \ar[r]_-{\gamma_1} & P_3 }
\qquad \qquad
\xymatrix@!0@C=70pt@R=10pt{ & P_6 & P_3 \\
P_4 \ar[r]^-{\left[ \begin{smallmatrix} \delta_1\\ \beta_3 \end{smallmatrix} \right]} & \oplus \ar[r]^-{\left[ \begin{smallmatrix} \alpha_1\alpha_2\alpha_3 &0 \\ 0&\gamma_1 \end{smallmatrix} \right]} & \oplus \\  & P_5  & P_3. }
\]

A reduced walk $\wlk=i_1i_2\cdots i_Li_{L+1}$ of $(G,\leq)$ is a \textbf{belt} if $i_1=i_{L+1}$, $\wlk_{[L]}$ is a non-trivial ($L \geq 1$) primitive closed walk and $\deg(\wlk)=0=\deg(i_Li_1i_2)$. Such walks (as the example $\wlk'$ above) produce an additional family of complexes as follows. For an indecomposable automorphism $\mu$ of a $k$-vector space $K$ consider the complex,
\[
\xymatrix@!0@C=80pt@R=10pt{ P_4 \otimes_k K \ar[rdd]_-{\beta_3 \otimes \unit} \ar[r]^-{\delta_1 \otimes \unit} & P_6 \otimes_k K \ar[r]^-{\alpha_1\alpha_2\alpha_3 \otimes \mu} & P_3 \otimes_k K \ar@{=}[dd] \\ \\ & P_5 \otimes_k K \ar[r]_-{\gamma_1 \otimes \unit} & P_3 \otimes_k K } \qquad
\xymatrix@!0@C=110pt@R=10pt{ & P_6 \otimes_k K &  \\
P_4 \otimes_k K \ar[r]^-{\left[ \begin{smallmatrix} \delta_1 \otimes \unit\\ \beta_3 \otimes \unit \end{smallmatrix} \right]} & \oplus \ar[r]^(.51){\left[ \begin{smallmatrix} \alpha_1\alpha_2\alpha_3 \otimes \mu &\gamma_1 \otimes \unit \end{smallmatrix} \right]} & P_3 \otimes_k K. \\  & P_5 \otimes_k K  & }
\]

The following description of the indecomposable perfect complexes over a  gentle algebra is a reformulation of the classification of Bekkert-Merklen \cite{BekMer} (cf.~\cite{Bob,ALP})  in terms of walks of the corresponding marked
ribbon graph.
\begin{proposition}\label{pro:walkBelt}
Let $(\quiver,I)$ be a gentle bound quiver with marked ribbon graph $(G,\leq)$. Let $A:=kQ/I$.
\begin{enumerate}[label={\textnormal{(\alph*)}},topsep=4px,parsep=0px]
 \item If $x=(m,\wlk)$ with $m \in \Z$ and $\wlk=i_1i_2\cdots i_Li_{L+1}$ a non-trivial reduced walk of $G$, then there is an indecomposable perfect complex $\stringcpx{x}$ over $A$ given by the following unfolded diagram,
\[
\xymatrix@C=3pc{P_{i_1} \ar@{-}[r]^-{\ppath{i_1}{i_2}} & P_{i_2} \ar@{-}[r]^-{\ppath{i_2}{i_3}} & \cdots \ar@{-}[r]^-{\ppath{i_{L-1}}{i_L}} & P_{i_L} \ar@{-}[r]^-{\ppath{i_L}{i_{L+1}}} & P_{i_{L+1}},}
\]
where the module $P_{i_1}$ is positioned at cohomological degree $m$.

 \item If $x=(m,\wlk,\mu)$ with $m \in \Z$, $\wlk=i_1i_2\cdots i_Li_{L+1}$ a belt of $(G,\leq)$ and $\mu$ an indecomposable automorphism of a finite dimensional $k$-vector space $K$, then there is an indecomposable perfect complex $\stringcpx{x}$ over $A$ given by the following unfolded diagram,
\[
\xymatrix@C=3pc{P_{i_1}\otimes_k K \ar@{=}@/_10pt/[rrrr] \ar@{-}[r]^-{\ppath{i_1}{i_2}\otimes \mu} & P_{i_2}\otimes_k K \ar@{-}[r]^-{\ppath{i_2}{i_3}\otimes \unit} & \cdots \ar@{-}[r]^-{\ppath{i_{L-1}}{i_L}\otimes \unit} & P_{i_L}\otimes_k K \ar@{-}[r]^-{\ppath{i_L}{i_{L+1}}\otimes \unit} & P_{i_{L+1}}\otimes_k K,}
\]
where the module $P_{i_1}$ is positioned at cohomological degree $m$.

 \item Every indecomposable perfect complex over $A$ is isomorphic to $\stringcpx{x}$ for some $x$ as in $($a$)$ or $($b$)$.
\end{enumerate}
\end{proposition}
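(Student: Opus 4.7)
The plan is to reduce the proposition to the classical Bekkert--Merklen classification of indecomposable perfect complexes over gentle algebras~\cite{BekMer}, via the bijection of Remark~\ref{rem:letters} between reduced walks of $(G,\leq)$ of length two and homotopy letters of $(Q,I)$. Identifying $(Q,I)$ with $\GBQ(G,\leq)$ by Proposition~\ref{thm:IncGentle}, every reduced walk $\wlk=i_1i_2\cdots i_{L+1}$ produces an alternating sequence of permitted paths and formal inverses of permitted paths, which is precisely a homotopy string in the sense of~\cite{BekMer,ALP}; analogously, belts correspond to homotopy bands, and the integer $m\in\Z$ records the cohomological position of the leftmost projective term.

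For part (a), I would first verify that, after folding, the unfolded diagram yields a well-defined complex of projectives: the composition of two adjacent letters $\ppath{i_{t-1}}{i_t}$ and $\ppath{i_t}{i_{t+1}}$ of the same sign $\deg(i_{t-1}i_t)=\deg(i_ti_{t+1})$ is, by the construction of $\ppath{-}{-}$ via~\eqref{eq:linear}, a path of $Q$ whose middle vertex $\vtx{i_t}$ is traversed through two half-edges lying in \emph{different} $\leq$-chains at the corresponding ribbon vertex, so such a path contains a length-two monomial generator of $I$ and vanishes in $A$; a pair of letters of opposite signs at $\vtx{i_t}$ shares a source or a target in the unfolded diagram and therefore no composition arises. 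Boundedness and projectivity of the terms yield perfectness automatically. Indecomposability of $\stringcpx{x}$ then follows from the classical indecomposability of string complexes under the above translation.

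Part (b) is analogous: the two degree conditions $\deg(\wlk)=0=\deg(i_Li_1i_2)$ ensure that the cyclic unfolded diagram folds consistently into a periodic complex of projectives, and primitivity of $\wlk_{[L]}$ together with indecomposability of $\mu$ yield indecomposability of the band complex $\stringcpx{x}$ by the usual endomorphism-ring argument. Part (c) then follows by invoking~\cite{BekMer} to get that every indecomposable perfect complex of $A$ is isomorphic to a string or a band complex, and translating back via the inverse of the bijection in Remark~\ref{rem:letters}.

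The main obstacle I anticipate is part (b): one must carefully match the notion of belt as used here (a primitive closed reduced walk satisfying $\deg(\wlk)=0=\deg(i_Li_1i_2)$, up to the choice of starting edge) with the classical notion of homotopy band (a primitive cyclic homotopy string closing up consistently, up to cyclic rotation and inversion), and identify the role of the automorphism $\mu$ with the usual band-complex parameter. The degree conditions precisely encode the alternating-sign constraint at the wrap-around point, but verifying the bijection at this level of precision requires careful combinatorial bookkeeping; once this is in place, the classification of band complexes in~\cite{BekMer} transfers verbatim.
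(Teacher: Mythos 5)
Your proposal follows essentially the same route as the paper: identify $(Q,I)$ with $\GBQ(G,\leq)$, translate reduced walks and belts of $(G,\leq)$ into homotopy strings and homotopy bands of $(Q,I)$ (the paper packages this, including the belt--band matching you flag as the main obstacle, into Lemma~\ref{lem:walkGQ} via the string functions of Lemma~\ref{rem:string}), and then invoke the Bekkert--Merklen classification~\cite{BekMer}. Your direct half-edge verification that same-sign adjacent letters compose into a generator of $I$ is a correct unpacking of what the paper derives from Definition~\ref{def:IncBoundQuiver} and Lemma~\ref{lem:walkGQ}(a2), so the argument is sound and not materially different.
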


Observe that by construction $\stringcpx{m,\wlk}[1]\cong \stringcpx{m-1,\wlk}$ for any $m,\wlk$ as in (a), and $\stringcpx{m,\wlk,\mu}[1]\cong \stringcpx{m-1,\wlk,\mu}$ for any $m,\wlk,\mu$ as in (b). For convenience, we set $\stringcpx{m,\wlk}$ to be the zero complex for any $m\in\ZZ$ and a trivial walk $\wlk$. Our proof of Proposition~\ref{pro:walkBelt} relies on Bekkert-Merklen Theorem~\cite{BekMer} for which we recall some terminology. If $(\quiver,I)$ is a gentle bound quiver, a non-trivial walk $\wlk$ in $\quiver$ is called a \textbf{direct homotopy letter} if it is a permitted path in $(\quiver,I)$, and an \textbf{inverse homotopy letter} if $\invw{\wlk}$ is a permitted path in $(\quiver,I)$. If $\wlk$ is any non-trivial reduced walk of $Q$, then there is a decomposition $\wlk=\sigma_1(\wlk)\cdots \sigma_L(\wlk)$ with $\sigma_t(\wlk)$ a homotopy letter for $t=1,\ldots,L>0$. Such decomposition is unique if $L$ is minimal, in which case we say that $\sigma_1(\wlk)\cdots \sigma_L(\wlk)$ is the \textbf{homotopy presentation} of $\wlk$. The \textbf{degree} of $\wlk$ is the difference $\deg(\wlk)$ of the number of direct homotopy letters minus the number of inverse letters in the homotopy presentation of $\wlk$. To be precise, given string functions $S,T$ of $(\quiver,I)$ and concatenable homotopy letters $\wlk',\wlk''$ then $\wlk'\wlk''$ is a homotopy presentation if and only if
\[
S(\wlk')T(\wlk'')=\deg(\wlk')\deg(\wlk''),
\]
cf.~\cite[p. 645]{Bob}. Indeed, if $\deg(\wlk')\deg(\wlk'')=-1$ and $S(\wlk')T(\wlk'')=1$ then $\wlk'\wlk''$ in not a reduced walk, and if $\deg(\wlk')\deg(\wlk'')=1$ and $S(\wlk')T(\wlk'')=-1$ then $\wlk'\wlk''$ itself is a homotopy letter. A \textbf{non-trivial homotopy string} of $(\quiver,I)$ is a non-trivial reduced walk of $\quiver$. We consider as \textbf{trivial homotopy strings} the symbols $\triv_{v,\epsilon}$ with $v \in \quiver_0$ and $\epsilon \in \{\pm 1\}$, and take $\deg(\triv_{v,\epsilon})=0$ and $\invw{\triv}_{v,\epsilon}:=\triv_{v,-\epsilon}$. A \textbf{homotopy band} of $(\quiver,I)$ is primitive closed homotopy string $\wlk=\sigma_1(\wlk)\cdots \sigma_L(\wlk)$ such that $\deg(\wlk)=0$ and $S(\wlk)T(\wlk)=-1=\deg(\sigma_1(\wlk))\deg(\sigma_L(\wlk))$ for any string functions $T,S$ of $(\quiver,I)$. Denote by $\walksred_{\geq t}(G)$ the set of reduced walks of a graph $G$ having length at least $t$.

\begin{lemma}\label{lem:walkGQ}
Let $(\quiver,I)$ be a gentle bound quiver with marked ribbon graph $(G,\leq)$.
\begin{enumerate}[label={\textnormal{(\alph*)}},topsep=4px,parsep=0px]
 \item Assume that $i,j$ and $j_0,k$ are two concatenable pairs of ordered edges of $G$ with $j \neq \invw{i}$ and $k \neq \invw{j}_0$.
\begin{enumerate}[label={\textnormal{(\alph*)}},topsep=4px,parsep=0px]
 \item[\rm (a1)] The walks $\ppath{i}{j}$ and $\ppath{j_0}{k}$ of $\quiver$ are concatenable if and only if $j_0=\pmw{j}$.
 \item[\rm (a2)] If $j_0=j^{\epsilon}$ for $\epsilon=\pm$ then $S(\ppath{i}{j})=\epsilon \delta T(\ppath{j^{\epsilon}}{k})$ for any string functions $S,T$ of $(\quiver,I)$, where $\delta=\deg(\ppath{i}{j})\deg(\ppath{j^{\epsilon}}{k})$. That is, $\ppath{i}{j}\ppath{j_0}{k}$ is a homotopy presentation if and only if $j_0=j$.
\end{enumerate}

 \item The mapping $\GBQ:\walksred_{\geq 2}(G) \longrightarrow \walksred_{\geq 1}(Q)$ given by $\wlk:=i_1 \cdots i_{L+1} \mapsto \GBQ(\wlk):=\ppath{i_1}{i_2}\ppath{i_2}{i_3} \cdots \ppath{i_L}{i_{L+1}}$ is a bijection, whose inverse is denoted by $\GPC$.

 \item $\GBQ(\invw{\wlk})=\invw{\GBQ(\wlk)}$ and $\deg(\GBQ(\wlk))=\deg(\wlk)$, for any $\wlk\in \walksred_{\geq 2}(G)$.

 \item $\GBQ(\wlk)$ is a homotopy band of $(\quiver,I)$ if and only if $\wlk$ is a belt of $(G,\leq)$, for any $\wlk\in \walksred_{\geq 2}(G)$.
\end{enumerate}
\end{lemma}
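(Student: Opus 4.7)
The plan is to reduce the entire lemma to two explicit calculations in a fixed vertex-star $\s^{-1}(\alpha)=\{h^{\alpha}_0 > \cdots > h^{\alpha}_{\ell^{\alpha}}\}$ of $G$: the ``endpoint calculation'' identifying sources and targets of the paths $\ppath{i}{j}$ in $\quiver$, and the ``sign calculation'' giving the string function values $S(\ppath{i}{j})$, $T(\ppath{i}{j})$ in terms of the direction $\sigma$ obtained from $(S,T)$ via Lemma~\ref{rem:string}. All other claims will then follow by combinatorial bookkeeping.

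For (a1), by Definition~\ref{def:IncBoundQuiver} the source of $\ppath{i}{j}$ in $\quiver$ is the unordered edge $\vtx{j}$ and the target of $\ppath{j_0}{k}$ is $\vtx{j_0}$, so concatenability in $\quiver$ is equivalent to $\vtx{j}=\vtx{j_0}$, i.e.~$j_0=\pmw{j}$. For (a2) I would write $\souh(i)=h^{\alpha}_s$ and $\tarh(j)=h^{\alpha}_t$, and in both cases $s<t$ and $s>t$ compute, using formulas~\eqref{eq:stringONE}--\eqref{eq:stringTWO} and $\sigma(\iota h)=-\sigma(h)$,
$$
S(\ppath{i}{j})=\deg(\ppath{i}{j})\sigma(\tarh(j)), \qquad
T(\ppath{i}{j})=-\deg(\ppath{i}{j})\sigma(\souh(i)).
$$
Applying these to both sides of the asserted identity $S(\ppath{i}{j})=\epsilon\delta\,T(\ppath{j^{\epsilon}}{k})$ reduces it to $\sigma(\tarh(j))=\epsilon\,\sigma(\tarh(j^\epsilon))$, which is the defining property of the involution $\iota$ on half-edges (since $\tarh(j^{-1})=\iota\tarh(j)$). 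In particular the homotopy-presentation condition $S(\ppath{i}{j})T(\ppath{j_0}{k})=\deg(\ppath{i}{j})\deg(\ppath{j_0}{k})$ holds precisely when $\epsilon=+1$, i.e.~$j_0=j$.

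For (b), given $\wlk=i_1\cdots i_{L+1}\in \walksred_{\geq 2}(G)$, the successive pairs $(\ppath{i_t}{i_{t+1}},\ppath{i_{t+1}}{i_{t+2}})$ are concatenable by (a1) (same middle ordered edge $i_{t+1}$, so $\epsilon=+1$) and form a homotopy presentation by (a2); hence $\GBQ(\wlk)$ is a reduced walk of $\quiver$ of length $L\geq 1$. Conversely, given a reduced walk $\wlk'\in\walksred_{\geq 1}(\quiver)$, decompose it into its unique homotopy presentation $\sigma_1\cdots\sigma_L$, lift each $\sigma_t$ to a walk of length two $\ppath{i_t}{j_t}$ via Remark~\ref{rem:letters}, and observe that concatenability of $\sigma_t\sigma_{t+1}$ in $\quiver$ forces $j_t=\pmw{i_{t+1}}$ by (a1), while the homotopy-presentation condition forces $j_t=i_{t+1}$ by (a2). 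The lifted pieces thus glue into a well-defined reduced walk $i_1\cdots i_{L+1}$ of $G$ (reducedness in $G$ is the condition $j_t\neq \invw{i_t}$, built into Remark~\ref{rem:letters}), and this is the inverse $\GPC$ of $\GBQ$.

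For (c), both identities follow from Definition~\ref{def:BoundQuiverInc}: the clause $\ppath{i}{j}:=\invw{\ppath{\invw{j}}{\invw{i}}}$ when $\deg(ij)=-1$ yields $\GBQ(\invw{\wlk})=\invw{\GBQ(\wlk)}$ on the nose, and summing $\deg(i_ti_{t+1})=\pm 1$ according to whether $\ppath{i_t}{i_{t+1}}$ is direct or inverse recovers the Bekkert--Merklen degree of $\GBQ(\wlk)$. Finally (d) combines (b), (c) and (a2): primitivity and closedness of the reduced walk transfer through the bijection $\GBQ$, the degree condition $\deg(\wlk)=0$ matches $\deg(\GBQ(\wlk))=0$ by (c), and applying (a2) with $(i,j,j_0,k)=(i_L,i_1,i_1,i_2)$ and $\epsilon=+1$ shows that
$$
S(\GBQ(\wlk))T(\GBQ(\wlk))=S(\sigma_L)T(\sigma_1)=\deg(\sigma_L)\deg(\sigma_1),
$$
so the homotopy-band gluing condition $S(\GBQ(\wlk))T(\GBQ(\wlk))=-1=\deg(\sigma_1)\deg(\sigma_L)$ is equivalent to $\deg(i_Li_1)+\deg(i_1i_2)=0$, which is exactly the belt condition $\deg(i_Li_1i_2)=0$. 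The main obstacle I anticipate is the bookkeeping of signs in (a2): one has to juggle the two cases $s<t$ and $s>t$ in Definition~\ref{def:IncBoundQuiver} simultaneously with the inverse-letter convention $\ppath{i}{j}=\invw{\ppath{\invw{j}}{\invw{i}}}$ and the two possibilities $\epsilon=\pm 1$, making sure that the formulas for $S,T$ on inverse homotopy letters are the ones induced by $\sigma$ via~\eqref{eq:stringONE}--\eqref{eq:stringTWO}, rather than some ad~hoc extension.
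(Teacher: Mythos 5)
Your plan for (a)--(c) is essentially the paper's own proof: (a1) via matching $\sou(\ppath{i}{j})=\vtx{j}$ with $\tar(\ppath{j_0}{k})=\vtx{j_0}$, (a2) by expressing $S(\ppath{i}{j})$ and $T(\ppath{j^{\epsilon}}{k})$ in terms of the direction $\sigma$ attached to $(S,T)$ by Lemma~\ref{rem:string} and using $\sigma(\iota h)=-\sigma(h)$, (b) as a restatement of existence and uniqueness of homotopy presentations through the letter bijection of Remark~\ref{rem:letters}, and (c) by construction. Your intermediate sign formulas differ from the paper's by an overall sign (this traces back to the unstated convention for $S,T$ on inverse homotopy letters), but your derivation is internally consistent and lands on the asserted identity $S(\ppath{i}{j})=\epsilon\delta\,T(\ppath{j^{\epsilon}}{k})$, so this is a matter of convention rather than an error --- and you correctly identified it as the main hazard.

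The one genuine gap is in (d), where you assert that ``primitivity and closedness of the reduced walk transfer through the bijection $\GBQ$.'' Closedness is immediate from (a1), but primitivity is not a formal consequence of bijectivity, and it is precisely where the paper spends most of its effort. The subtlety is the offset between the length-$(L+1)$ walk $i_1\cdots i_Li_{L+1}$ in $G$ and the $L$-letter homotopy string $\ppath{i_1}{i_2}\cdots\ppath{i_L}{i_{L+1}}$: the belt condition concerns primitivity of the truncation $\wlk_{[L]}$, while the band condition concerns primitivity of the closed homotopy string. In the forward direction one must check that a power decomposition $\wlk_{[L]}=\wt{\wlk}^r$ produces, after re-bracketing the letters, a power decomposition $\GBQ(\wlk)=[\GBQ(\wt{\wlk})\ppath{j_\ell}{j_1}]^r$. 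In the converse direction a power decomposition $\GBQ(\wlk)=\GBQ(\wt{\wlk})^r$ only gives $j_\ell=\pmw{j_1}$ a priori; one needs the sign condition $S(\GBQ(\wt{\wlk}))=-T(\GBQ(\wt{\wlk}))$ (inherited from the band condition on $\GBQ(\wlk)$) to rule out $j_\ell=\invw{j_1}$ and conclude $j_\ell=j_1$, so that the decomposition descends to $\wlk_{[L]}=(j_1\cdots j_{\ell-1})^r$. Your argument for the gluing condition $S(\GBQ(\wlk))T(\GBQ(\wlk))=-1\Leftrightarrow\deg(i_Li_1i_2)=0$ via (a2) is correct and matches the paper; only the primitivity transfer needs to be spelled out.
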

\begin{proof}
Note that the pair $(\ppath{i}{j},\ppath{j_0}{k})$ is concatenable if and only if $\sou(\ppath{i}{j})=\vtx{j}=\vtx{j_0}=\tar(\ppath{j_0}{k})$, that is, if and only if $j_0=\pmw{j}$. To show (a2) let $\sigma$ be a direction of the edges of $G$ with corresponding string functions $S,T$ as in Lemma~\ref{rem:string}. A direct computation yields
\[
S(\ppath{i}{j})=\deg(\ppath{i}{j})\sigma(\tarh(j)) \qquad \text{and} \qquad
T(\ppath{j^{\epsilon}}{k})=-\epsilon \deg(\ppath{j^{\epsilon}}{k})\sigma(\souh(j))=\epsilon \deg(\ppath{j^{\epsilon}}{k})\sigma(\tarh(j)).
\]
Hence we have $S(\ppath{i}{j})=\epsilon \deg(\ppath{i}{j})\deg(\ppath{j^{\epsilon}}{k}) T(\ppath{j^{\epsilon}}{k})$. Due to  (a2) and our description of homotopy letters via $\ppath{-}{-}$ (Remark \ref{rem:letters}),  claim (b) is a reformulation of the existence and uniqueness of homotopy presentations for non-trivial reduced walks of $\quiver$.

Claim (c) follows by construction since $\ppath{\invw{j}}{\invw{i}}=\invw{\ppath{i}{j}}$ and definitions of degree. To show (d) take $\wlk=i_1\cdots i_Li_{L+1}\in\walksred_{\geq 2}(G)$. Assume first that $\GBQ(\wlk)$ is a homotopy band. Then $\deg(\wlk)=0=\deg(i_Li_1i_2)$ by (c), and using (a) we get $i_{L+1}=i_1$. If $\wlk':=\wlk_{[L]}=i_1\cdots i_L$ is not primitive then $\wlk'=\wt{\wlk}^r$ for some $r>1$ and some walk $\Wlk=j_1\cdots j_{\ell}$ with $j_1=i_1=i_{L+1}$ and $j_{\ell}=i_L$. Then
\[
\GBQ(\wlk)=\GBQ(\wlk')\ppath{i_L}{i_{L+1}}=[\GBQ(\wt{\wlk})\ppath{j_{\ell}}{j_1}\GBQ(\wt{\wlk})\cdots \GBQ(\wt{\wlk}) \ppath{j_{\ell}}{j_1} \GBQ(\wt{\wlk})]\ppath{i_L}{i_{L+1}}=[\GBQ(\wt{\wlk})\ppath{j_{\ell}}{j_1}]^r,
\]
in contradiction with $\GBQ(\wlk)$ being primitive. This shows that $\wlk$ is a belt. Conversely, assume that $\wlk$ is a belt and observe that $\GBQ(\wlk)$ is a closed walk with $S(\GBQ(\wlk))=-T(\GBQ(\wlk))$ for any string functions $S,T$ by (a), and that the conditions on the degree in the definition of a band follow using (c). Assume that the closed walk $\GBQ(\wlk)$ is not primitive. That is, using (b), we get a walk $\Wlk=j_1\cdots j_{\ell}$ in $G$ such that $\GBQ(\wlk)=\GBQ(\Wlk)^r$ for some $r>1$. Since $\GBQ(\Wlk)$ is closed then $j_{\ell}=\pmw{j_1}$, and since $T(\GBQ(\wt{\wlk}))=T(\GBQ(\wt{\wlk})^r)=T(\GBQ(\wlk))=-S(\GBQ(\wlk))=-S(\GBQ(\wt{\wlk}))$, then $j_{\ell}=j_1$ and the following is a homotopy presentation of $\GBQ(\wt{\wlk})^r$,
\[
[\ppath{j_1}{j_2}\cdots \ppath{j_{\ell-2}}{j_{\ell-1}}]\ppath{j_{\ell-1}}{j_{\ell}}[\ppath{j_1}{j_2}\cdots \ppath{j_{\ell-2}}{j_{\ell-1}}]\ppath{j_{\ell-1}}{j_{\ell}}\cdots \ppath{j_{\ell-1}}{j_{\ell}} [\ppath{j_1}{j_2}\cdots \ppath{j_{\ell-2}}{j_{\ell-1}}]\ppath{j_{\ell-1}}{j_{\ell}},
\]
that is, $\wlk=\GPC(\GBQ(\wt{\wlk})^r)=[(j_1\cdots j_{\ell-1})\cdots (j_1\cdots j_{\ell-1})]j_{\ell}=\wlk'^rj_{\ell}=\wlk'^ri_{L+1}$ where $\wlk':=j_1\cdots j_{\ell-1}$, in contradiction with $\wlk_{[L]}$ being primitive. This completes the proof.
\end{proof}

\begin{proof}[Proof of Proposition~\ref{pro:walkBelt}]
Fix string functions $S,T$ of $(\quiver,I)$ and a corresponding direction $\sigma$ of $G$ as in Lemma~\ref{rem:string}. We extend the bijection $\GBQ$ of Lemma~\ref{lem:walkGQ}(b) to include trivial strings by taking $\GBQ^{S,T}(i):=\triv_{\vtx{i},-1}$ if $i$ is a direct arrow of $(G,\sigma)$ and $\GBQ^{S,T}(i):=\triv_{\vtx{i},+1}$ if $i$ is an inverse arrow (and $\GBQ^{S,T}(\wlk):=\GBQ(\wlk)$ if $\ell(\wlk)>1$). This establishes a bijection $\GBQ^{S,T}$ (which we sometimes denote also by $\GBQ$) 
between the set of reduced walks $\walksred_{\geq 1}(G)$ and the set of homotopy strings of $(\quiver,I)$. For a homotopy string $\wlk'$ and an integer $m \in \Z$, the description of a corresponding indecomposable perfect complex $\stringCpx{m,\wlk'}$ was given by Bekkert and Merklen in~\cite[Theorem~3]{BekMer}, see details in~\cite[Proposition~3.1]{Bob}. Our folded presentation for a walk $\wlk$ of $(G,\leq)$ coincides with the corresponding presentation given in~\cite{ALP}, so that
\begin{equation}\label{eq:BMcpx}
\xymatrix{(m,\wlk) \ar[rr] && \stringcpx{m,\wlk}=\stringCpx{m,\GBQ^{S,T}(\wlk)}. }
\end{equation}
This shows claim (a). By Lemma~\ref{lem:walkGQ}(d), the restriction of the extended function $\GBQ$ to the belts of $(G,\leq)$ is precisely the set of bands of $(\quiver,I)$, and claim (b) follows similarly. The last claim (c) follows also from~\cite[Theorem~3]{BekMer} by the bijectivity of $\GBQ^{S,T}$.
\end{proof}

The terms homotopy string and band were introduced in~\cite{Bob}. Note that if $L=0$ (that is, $\wlk=i_1$) then $\stringcpx{m,\wlk}$ is the stalk complex  $P_{i_1}$ concentrated in degree $m$. A particular consequence of~\cite[Theorem~3]{BekMer} is that the set of isomorphism classes of indecomposable perfect complexes of a gentle algebra $A=k\quiver/I$, denoted by $\Ind(\per \, A)$, partitions into two subsets
\begin{equation}\label{eq:perstbd}
\Ind(\per \, A)= \IndSt \cup \IndBd,
\end{equation}
where $\IndSt$ denotes the classes of the complexes of Proposition~\ref{pro:walkBelt}(a), called \textbf{string complexes}, and $\IndBd$ denotes the classes of the complexes of Proposition~\ref{pro:walkBelt}(b), called \textbf{band complexes} of $A$. Then there is a bijection
\begin{equation}\label{eq:stnoniso}
\xymatrix{[\Z \times \walksred_{\geq 1}(G)]/\sim \ar@{<->}[rr]^-{1-1} & &  \IndSt,}
\end{equation}
where $(m,\wlk) \sim (m',\wlk')$ when $(m,\wlk)=(m',\wlk')$, or $m'=m+\deg(\wlk)$ and $\wlk'=\invw{\wlk}$.

\def\Faces{\mathrm{Fa}}
\def\Cdeg{\deg^\circ}
\def\VerPer{\xi}
\newcommand{\forbw}{\mathcal{F}}

\subsection{Faces of a marked ribbon graph and AAG-invariants} \label{subsec:antiwalks}

Let $(G,\permut)$ be a ribbon graph with $G=(V,H,\s,\iota)$ a finite graph. Following~\cite{OPS18}, see also~\cite{OppZvo22}, a (not necessarily reduced) primitive closed walk $\wlk=i_1\cdots i_Li_{L+1}$ ($L \geq 0$) is called a \textbf{face} of $(G,\permut)$ if the half-edge $\souh(i_{t})$ is direct successor of the half-edge $\tarh(i_{t+1})$ in the cyclic orderings determined by the permutation $\permut$ (that is, $\souh(i_{L+1})=\permut(\tarh(i_1))$ and $\souh(i_{t})=\permut(\tarh(i_{t+1}))$ for $t=1,\ldots,L$). For convenience, faces are considered up to cyclic permutations.  A marking $\m$ of a ribbon graph $(G,\permut)$ cuts the faces of $(G,\permut)$ into exactly $m:=|\verts{G}|$ walks that can be given as follows in terms of our definition of marked ribbon graph as disjoint union of chains with an involution having no fixed point, see Definition~\ref{def:mrg} and Remark~\ref{rem:correspondence}.

Fix a marked ribbon graph $(G,\leq)$ with corresponding gentle bound quiver $(Q,I):=\GBQ(G,\leq)$ (cf.~Definition~\ref{def:IncBoundQuiver}). A walk $\wlk=i_1\cdots i_Li_{L+1}$ of $G$ with $L\geq 0$ is called an \textbf{anti-walk} of $(G,\leq)$ if the half-edge $\souh(i_t)$ is direct successor of $\tarh(i_{t+1})$ with respect to the partial order $\leq$ for $t=1,\ldots,L$, and $\wlk$ is maximal with this property. One can show directly that:
\begin{itemize}
\item  anti-walks are reduced,
\vspace{-3pt}
\item the mapping $\GBQ$ of Lemma~\ref{lem:walkGQ}(b) gives a bijection between the set of anti-walks of a marked ribbon graph $(G,\leq)$, denoted by $\forbw_{G,\leq}$, and the set of forbidden threads $\forb_{\quiver,I}$ of the gentle quiver $(\quiver,I)=\GBQ(G,\leq)$ (in case $i$ is an anti-walk of length 1 we set $\GBQ(i):=\triv_{\vtx{i}}$, cf.~the proof of Proposition~\ref{pro:walkBelt}; note that in this case $\triv_{\vtx{i}}$ is a forbidden thread),
\vspace{-3pt}
\item the degree (\ref{eq:degree}) of an anti-walk $\wlk=i_1\cdots i_Li_{L+1}$ is
 \begin{equation}\label{eq:lengthAntiWalk}
 \deg(\wlk)=L=\ell(\GBQ(\wlk)),
 \end{equation}\vspace{-7pt}
\item if $\wlk$ is an anti-walk with target vertex $\alpha=\tar(\wlk)$ then the half-edge $\tarh(\wlk)$ is the maximal element $\m(\alpha)$ of $\s^{-1}(\alpha)$, and for every vertex $\alpha \in \verts{G}$ there is a unique anti-walk with $\tar(\wlk)=\alpha$, that we denote by $\OT{\alpha}:=\wlk$,
\vspace{-3pt}
\item similarly, the source half-edge $\souh(\wlk)$ is the minimal element of $\s^{-1}(\beta)$ where $\beta$ is the source vertex of $\wlk$.
\end{itemize}
Then the target and source functions restricted to anti-walks are bijections in the following commutative diagram, 
\begin{equation}\label{eq:bijections}
\xymatrix@R=1pc{ \verts{G} \ar@{=}[d] & \forbw_{G,\leq} \ar[d]_-{\GBQ} \ar[l]_-{\tar} \ar[r]^-{\sou} & \verts{G} \ar@{=}[d] \\
\perm_{\quiver,I} \ar[r]^-{\Phi_1} & \forb_{\quiver,I} \ar[r]^-{\Phi_2} & \perm_{\quiver,I} }
\end{equation}
where $\Phi_1$ and $\Phi_2$ are the bijections defined in~\cite[2.2]{Bob2cycle}. In particular, if $\eta\in\perm_{\quiver,I}$ is non-trivial then $\Phi_1(\eta)=\theta\in \forb_{\quiver,I}$, where $\theta$  is the unique forbidden thread such that $\tar(\theta)=\tar(\eta)$ and either $\ell(\theta) = 0$ or $\ell(\theta)>0$ and the terminating arrows of $\theta$ and $\eta$ differ. Whereas if $\triv_i$ is a trivial permitted thread then in case there is an arrow $\alpha_1$ such that $\tar(\alpha_1) = i$, then $\Phi_1(\triv_i)$  is the (unique) forbidden thread whose terminating arrow is $\alpha_1$. If there is no such $\alpha_1$ then $\Phi_1(\triv_i)=\triv_i$. The bijection $\Phi_2$ acts dually (with respect to the source function). The composition
\begin{equation}\label{Eq:VerPer}
\VerPer:=\sou \circ \tar^{-1}:\verts{G} \to \verts{G}
\end{equation}
in diagram~(\ref{eq:bijections}) is a permutation of the set of vertices of $G$ such that $\OT{\alpha}$ and $\OT{\VerPer(\alpha)}$ are concatenable walks for any vertex $\alpha$. Note that the concatenation of anti-walks $\wlk=\OT{\alpha^0}\OT{\alpha^1}\cdots \OT{\alpha^a}$, with $\alpha^t:=\VerPer^t(\alpha)$ and $a:=\min\{t\geq 0 \mid \alpha^{t+1}=\alpha\}$, is a face of $(G,\leq)$ (by a face of $(G,\leq)$ we mean a face of the underlying ribbon graph). Moreover, the set $\mathcal{O}=\{\GBQ(\OT{\alpha^0}),\GBQ(\OT{\alpha^1}),\ldots, \GBQ(\OT{\alpha^a})\}$ is an orbit of the bijection $\Phi:=\Phi_1\Phi_2:\forb_{\quiver,I} \to \forb_{\quiver,I}$, and it is easy to show that if we take $$\Cdeg(i_1\cdots i_{L+1}):=\sum_{t=1}^L\deg(i_ti_{t+1})+\deg(i_{L+1}i_1),$$ where $\deg(ij)$ is as in~(\ref{eq:degree}) if $j\neq \invw{i}$ and $\deg(ij):=-1$ if $j= \invw{i}$, then using~(\ref{eq:lengthAntiWalk}) we get 
\begin{equation}\label{eq:AAGone}
\ell(\wlk)=\Cdeg(\wlk)+2|\mathcal{O}|=\sum_{\Wlk \in \mathcal{O}}\ell(\Wlk) +|\mathcal{O}|.
\end{equation}
Even though every orbit in $\forb_{\quiver,I}/\Phi$ can be found in this way, in general there might be faces of $(G,\leq)$ that are not concatenation of anti-walks. We call them \textbf{full faces} of $(G,\leq)$, cf.~Example \ref{ex:twosided} below. It can be shown that for any full face $\wlk=i_1\cdots i_{L+1}$, $L\geq 0$:
\begin{itemize}
\item $\wlk$ is reduced,
\vspace{-3pt}
\item  the corresponding walk $\GBQ(i_1\cdots i_{L+1} i_1)$ (see Lemma~\ref{lem:walkGQ}(b)) is a full cycle of $(Q,I)$,
\vspace{-3pt}
\item every full cycle of $(Q,I)$ may be found in the above way,
\vspace{-3pt}
\item  we have
\vspace{-3pt}
\begin{equation}\label{eq:AAGtwo}
\ell(\wlk)=\Cdeg(\wlk)=L+1=\ell(\GBQ(i_1\cdots i_{L+1} i_1)).
\end{equation}
\vspace{-8pt}
\end{itemize}
In particular, the gentle bound quiver $(\quiver,I)$ has finite global dimension if and only if every ordered edge of $G$ can be extended to an anti-walk, see Lemma~\ref{lem:fgd}. We call $\Cdeg(\wlk)$ the \textbf{closed degree} of a closed walk $\wlk$.

The closed degree and the length of a closed walk are invariant under cyclic permutations, and when applied to faces of $(G,\leq)$ these numbers are related to the \textbf{Avella-Alaminos Geiss invariant} (AAG invariant, for short) of the gentle bound quiver $(Q,I)$ as follows. Let $\forb'_{\quiver,I}$ be the set of arrows of $Q$ that are not subpaths of any forbidden thread (that is, those that belong to full cycles) and take $\Phi':\forb'_{\quiver,I} \to \forb'_{\quiver,I}$ as $\Phi'(\alpha_t)=\beta_s$ if $\alpha_t \beta_s \in I$. Then the AAG invariant $\varphi_{Q,I}$ of the gentle bound quiver $(Q,I)$ is given by
\begin{equation}\label{eq:AAGdef}
\varphi_{Q,I}(n,m):=|\{ \mathcal{O} \in \forb_{\quiver,I}/\Phi \cup \forb'_{\quiver,I}/\Phi' \mid (n_\mathcal{O},m_\mathcal{O})=(n,m) \}|, \qquad \text{for $n,m \geq 0$},
\end{equation}
where $(n_\mathcal{O},m_\mathcal{O}):=(|\mathcal{O}|,\sum_{\wlk \in \mathcal{O}}\ell(\wlk))$ if $\mathcal{O} \in \forb_{\quiver,I}/\Phi$, and $(n_\mathcal{O},m_\mathcal{O}):=(0,|\mathcal{O}|)$ if $\mathcal{O} \in \forb'_{\quiver,I}/\Phi'$, see details in~\cite{Bob2cycle,AAG}. Note that equivalently to $\varphi_{Q,I}$, one can consider the finite multi-set $\{(n_\CO, m_\CO) \mid \CO\in\forb_{\quiver,I}/\Phi \cup \forb'_{\quiver,I}/\Phi'\}$. As usual, we also say that $\varphi_{Q,I}$ is the AAG invariant of the gentle algebra $kQ/I$, and we use freely the notation $\varphi_{kQ/I}$.

\begin{theorem}[{{\cite[Theorem A]{AAG}}}]\label{thm:AAG}
If two gentle algebras $A$ and $A'$ are derived equivalent, then $\varphi_A(n,m)=\varphi_{A'}(n,m)$ for each $n,m\geq 0$.
\end{theorem}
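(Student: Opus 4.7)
The plan is to extract $\varphi_A$ from intrinsic derived-categorical data associated to the marked ribbon graph $(G,\leq)$, so that invariance under any triangle equivalence $\Db{A}\simeq\Db{A'}$ becomes automatic. First, I would recast $\varphi_A$ purely in ribbon-graph language: by the bijections in diagram~\eqref{eq:bijections} and equations~\eqref{eq:AAGone}--\eqref{eq:AAGtwo}, the multi-set $\{(n_\mathcal{O},m_\mathcal{O})\}_{\mathcal{O}}$ underlying $\varphi_A$ is exactly the multi-set of ``face invariants'' of $(G,\leq)$, where each ordinary face arising as a concatenation $\OT{\alpha^0}\OT{\alpha^1}\cdots\OT{\alpha^a}$ of anti-walks contributes $(a+1,\ell(\wlk)-(a+1))$, and each full face contributes $(0,\ell(\wlk))$. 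So the goal becomes: identify this data as a triangulated invariant of $\per A$.

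Next, I would single out a distinguished family $\mathcal{D}\subseteq\IndSt\cup\IndBd$ whose Auslander--Reiten orbits parametrize the faces. Specifically, for each anti-walk $\OT{\alpha}\in\forbw_{G,\leq}$ take the stalk string complex $\stringcpx{0,i_1}$ on its first ordered edge (equivalently, the simple or near-simple complex corresponding to the forbidden thread $\GBQ(\OT{\alpha})$ via Proposition~\ref{pro:walkBelt}); for each full cycle, take the band complex $\stringcpx{0,\wlk,\mathrm{id}_k}$ on the corresponding belt. Using the compact formula for the Auslander--Reiten translation given in Theorem~\ref{thm:ARtriangle}, I would verify that $\tau$ permutes $\mathcal{D}$ and that its action factors through the vertex permutation $\VerPer$ of~\eqref{Eq:VerPer} on the anti-walk part and through rotation on the belt part, producing orbits in bijection with the faces of $(G,\leq)$. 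Along each AR-orbit one records the orbit length $n_\mathcal{O}$ and the total cohomological shift $m_\mathcal{O}$ incurred after $n_\mathcal{O}$ steps (which by~\eqref{eq:lengthAntiWalk} equals $\sum_t\ell(\GBQ(\OT{\alpha^t}))$); this recovers $(n_\mathcal{O},m_\mathcal{O})$ exactly.

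Since both the Auslander--Reiten translation $\tau$ and the shift $[1]$ are intrinsic to the triangulated structure of $\per A$, any triangle equivalence $F:\per A\xrightarrow{\sim}\per A'$ carries $\tau$-orbits to $\tau'$-orbits and preserves cohomological shifts. It remains only to check that $F$ sends the distinguished family $\mathcal{D}_A$ to $\mathcal{D}_{A'}$ up to shift and to elements of the AR-orbits we are counting. This is the crux: I would characterize $\mathcal{D}$ categorically, for example as the set of indecomposable stalk complexes on simple projectives outside full cycles together with one band complex per AR-homogeneous tube of the corresponding type, so that the class is preserved by any triangle equivalence. Together with the preceding step, this yields $\varphi_A=\varphi_{A'}$.

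The main obstacle will be the intrinsic categorical characterization of $\mathcal{D}$. String and band complexes are not themselves triangle-equivalence invariant concepts (their description depends on the bound-quiver presentation), so one must identify enough triangulated/AR-theoretic properties—such as length in the AR-quiver, interaction with $\tau$-periodicity on band components, and shift behavior along boundary-type orbits—that pin down $\mathcal{D}$ up to shift and rotation within its AR-orbits. Once this is in place, the numerical invariants $(n_\mathcal{O},m_\mathcal{O})$ fall out of the orbit structure, and Theorem~\ref{thm:AAG} follows.
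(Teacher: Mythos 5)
You should first note that the paper does not prove this statement at all: Theorem~\ref{thm:AAG} is imported verbatim from \cite{AAG} and used as a black box (e.g.\ in Corollary~\ref{cor:OZspirit} and Theorem~\ref{cor:CoxPol}). So the only meaningful comparison is with the original proof of Avella-Alaminos and Geiss. Your overall strategy --- re-express $\varphi_A$ via faces of $(G,\leq)$ using \eqref{eq:AAGnew}, then realize the multi-set $\{(n_\mathcal{O},m_\mathcal{O})\}$ as data read off from the interaction of the Auslander--Reiten translation $\tau$ and the shift $[1]$ on distinguished orbits of indecomposables --- is indeed the right kind of argument and is in the same spirit as the known proof, which identifies $\varphi_A$ with counts of AR-components of $\Db{A}$ on which a prescribed relation between powers of $\tau$ and $[1]$ holds. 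The first reduction (faces $\leftrightarrow$ orbits of $\Phi$ and $\Phi'$, with $(n_\mathcal{O},m_\mathcal{O})$ recovered from $\ell(\wlk)$ and $\Cdeg(\wlk)$) is correct and is already in Subsection~\ref{subsec:antiwalks}.

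However, the proposal has a genuine gap, and it sits exactly at the step you yourself flag as ``the crux.'' Two concrete problems. First, the distinguished family $\mathcal{D}$ as you define it is not $\tau$-stable: for $\wlk=i_1$ a single ordered edge, Theorem~\ref{thm:ARtriangle} gives $\tau\stringcpx{0,i_1}\cong\stringcpx{m(i_1),\,\TO{\beta}\cdot i_1\cdot\OT{\alpha}}$, and the walk $\TO{\beta}\cdot i_1\cdot\OT{\alpha}$ has length $\ell(\TO{\beta})+\ell(\OT{\alpha})+1$ minus cancellations, which is almost never again a single edge (already for a hereditary gentle algebra $\tau P_i\cong I_i[-1]$ is not a projective stalk complex). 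So $\tau$ does not ``permute $\mathcal{D}$''; you would have to pass to entire $\tau$-orbits (or $\tau[1]$-orbits at the boundary of the $\ZZ\AA_\infty$-components), and then the problem becomes: characterize \emph{which} orbits these are by properties preserved under an arbitrary triangle equivalence. That characterization is the entire mathematical content of the theorem, and the proposal offers no argument for it --- ``the set of stalk complexes on simple projectives outside full cycles'' is a presentation-dependent description, not a triangulated one. Second, the contribution $\varphi_A(0,m)$ from full cycles is not addressed: band complexes sit in homogeneous tubes (Remark~\ref{rem:beltARtriangle}), on which $\tau$ acts without any visible trace of the cycle length $m$, so extracting $m$ intrinsically requires a separate idea (in \cite{AAG} this is handled by a different mechanism than the $\ZZ\AA_\infty$ part). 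As it stands the proposal is a plausible research plan that converges on the known strategy, but it does not constitute a proof.
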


By the discussion above, there is a bijection between the set of (chosen representatives of the cyclic permutation equivalence classes of) faces $\Faces(G,\leq)$ of the marked ribbon graph $(G,\leq)$ and the set of orbits $\forb_{\quiver,I}/\Phi \cup \forb'_{\quiver,I}/\Phi'$, and using the identities~(\ref{eq:AAGone}) and~(\ref{eq:AAGtwo}) we get
\begin{equation} \label{eq:AAGnew}
\varphi_{Q,I}(n,m)=\left| \left\{ \wlk \in \Faces(G,\leq) \mid \left( \frac{\ell(\wlk)-\Cdeg(\wlk)}{2},\frac{\ell(\wlk)+\Cdeg(\wlk)}{2} \right)=(n,m) \right\} \right|.
\end{equation} 
We will apply equation~(\ref{eq:AAGnew}) in Corollary~\ref{cor:OZspirit} to reformulate the derived invariance of $\varphi_{Q,I}$ in graph theoretical terms.

Note that the orbits in $\verts{G}$ of the permutation~\eqref{Eq:VerPer}  correspond to the marked points in the geometric model of gentle algebras, as found along the boundary components of the corresponding (oriented) marked surface, see~\cite{OPS18}. In Subsection~\ref{subsec:Cox} we will see how the bijection~\eqref{Eq:VerPer} relates the AAG invariant to the Coxeter polynomial of the gentle bound quiver $(\quiver,I)$.

\subsection{Non-perfect complexes}\label{subs:nonperf}

Fix a gentle bound quiver $(Q,I)$ with marked ribbon graph $(G,\leq)$ and set $A:=kQ/I$. Recall that all indecomposable perfect complexes over $A$  are given by reduced walks and belts of $(G,\leq)$, see Proposition \ref{pro:walkBelt}. Here we provide  analogous description of indecomposable non-perfect complexes, that is,  objects of the class $\ind(\Db{\La})\setminus \ind(\per\,\La)$. This approach may be viewed as a gentle algebra analogue of the techniques of {\it Green walks} in Brauer (ribbon) graphs, see \cite{GreenWalk} and \cite{Schroll.Brauer}. 
Note that via the triangle equivalence $\Db{\La}\cong\Kmb(\proj\,\La)$ indecomposable non-perfect complexes correspond to $\ind(\Kmb(\proj\,\La))\setminus \ind(\Kb(\proj\,\La))$. We apply the classification of \cite{BekMer} following conventions established in \cite{ALP}.

\begin{definition}
Let  $\wlk=i_1i_2\cdots i_Li_{L+1}$ be a reduced walk of $(G,\leq)$, with $L\geq 1$.
\begin{itemize}
\vspace{-3pt}
\item The walk  $\wlk$  is called \textbf{left resolvable} if $i_1$ belongs to a full face of $(G,\leq)$ and $\deg(i_1\cdots i_t)\geq 0$ for all $t=1,\ldots, L+1$.
\vspace{-4pt}
\item Left resolvable walk $\wlk$ is \textbf{primitive} if there is no $s\in\{2,\ldots, L\}$ such that $i_1\cdots i_s$ is a subwalk of a power of a full face, and
$i_{s}\cdots i_{L+1}$ is left resolvable.
\end{itemize}
\end{definition}
\vspace{-3pt}
Fix $m\in\ZZ$ and a primitive left resolvable walk $\wlk=i_1\cdots i_{L+1}$. Then there is a unique 
face $\phi=j_1\cdots j_K$ in $(G,\leq)$  such that $j_1=i_1$ and the composition $\phi\wlk$ is defined. Consider the  complex $\stringcpx{m,-,\wlk}$ given by the following left-infinite unfolded diagram
\begin{equation}\label{eq:linf}
\xymatrix@C=1.7pc{
\cdots P_{j_1} \ar@{->}[r]^-{\ppath{j_1}{j_2}} & P_{j_2} \ar@{->}[r]^-{\ppath{j_2}{j_3}} & \cdots \ar@{->}[r]^-{\ppath{j_{K-1}}{j_K}} & P_{j_K} \ar@{->}[r]^-{\ppath{j_K}{j_{1}}} &
P_{j_1} \ar@{->}[r]^-{\ppath{j_1}{j_2}} & P_{j_2} \ar@{->}[r]^-{\ppath{j_2}{j_3}} & \cdots \ar@{->}[r]^-{\ppath{j_{K-1}}{j_K}} & P_{j_K} \ar@{->}[r]^-{\ppath{j_K}{i_{1}}} &
P_{i_1} \ar@{->}[r]^-{\ppath{i_1}{i_2}} & P_{i_2} \ar@{-}[r]^-{\ppath{i_2}{i_3}} & \cdots \ar@{-}[r]^-{\ppath{i_{L-1}}{i_L}} & P_{i_L} \ar@{-}[r]^-{\ppath{i_L}{i_{L+1}}} & P_{i_{L+1}},}
\end{equation}
with $P_{i_1}$  positioned at cohomological degree $m$. Observe that the homotopy letters $\ppath{j_t}{j_{t+1}}$ and $\ppath{j_K}{j_{1}}$ are direct of length 1 by the properties of a full face. One may view \eqref{eq:linf} as the diagram of the infinite word $\cdots\phi\phi\phi\wlk$, analogously as for finite walks in Subsection \ref{subsec:perfcpx}. We consider the obvious dual definitions of (\textbf{primitive}) \textbf{right resolvable} walk and the associated complex $\stringcpx{m,+,\wlk}$ given by right-infinite unfolded diagram dual to \eqref{eq:linf}. A reduced walk $\wlk$ of $(G,\leq)$ is  called (\textbf{primitive}) \textbf{two-sided resolvable} if it is both (primitive) left resolvable and (primitive) right resolvable  and we consider the associated complex $\stringcpx{m,\mp,\wlk}$ given by two-sided infinite unfolded diagram (compare with~\cite[2.3]{ALP}). In each of the cases we position $P_{i_1}$  at cohomological degree $m$. Denote by  $\leftres(G,\leq)$ $($resp.~$\rightres(G,\leq)$, $\twores(G,\leq):=\leftres(G,\leq)\cap \rightres(G,\leq))$  the set of all primitive left
$($resp.~right, two-sided$)$ resolvable walks of $(G,\leq)$.

\begin{example}\label{ex:twosided} 
Consider the following gentle bound quiver and the associated marked ribbon graph
(we draw its edges together with a fixed order, cf.~\eqref{eq:ordedg}):
$$(Q,I):\ \ {\xymatrix@C=2.2pc{\mathmiddlescript{1} \ar@/^7pt/[r]^-{\alpha_2}="a2"
& \mathmiddlescript{2} \ar@/^7pt/[l]^-{\beta_1}="b1"
\ar@{.}@/^3pt/"a2"!<7pt,-6pt>;"b1"!<7pt,7pt>
\ar@{.}@/^7pt/"b1"!<-2pt,4pt>;"a2"!<-3pt,-3pt>
\ar@/^7pt/[r]^-{\alpha_1}="a1"
& \mathmiddlescript{3} \ar@/^7pt/[l]^-{\beta_2}="b2"
\ar@{.}@/^3pt/"a1"!<7pt,-6pt>;"b2"!<7pt,7pt>
\ar@{.}@/^7pt/"b2"!<-2pt,4pt>;"a1"!<-3pt,-3pt>}}
\qquad (G,\leq):\ \ {\xymatrix@C=3pc{ {\vBlock{\alpha}{321}} \ar@{|->}[r]^-{2} \ar@{|->}@/_10pt/[r]^-{1} \ar@{|->}@/^10pt/[r]^-{3} & {\vBlock{\beta}{123}} }}
$$
The graph $(G,\leq)$ has three faces: $\invw{3}1$ (being the concatenation of anti-walks $\OT{\alpha}=\invw{3}$ and $\OT{\beta}=1$) and two full faces $2\invw{1}$ and $3\invw{2}$. The walk $\wlk:=\invw{1}_{\succ}2_{\pred}\invw{3}$ is primitive two-sided resolvable and the associated two-sided infinite unfolded diagram with the corresponding complex $\stringcpx{m,\mp,\wlk}$ for fixed $m\in\ZZ$ look as follows
$$
\xymatrix@C=1.5pc{
\cdots P_{1} \ar@{->}[r]^-{\beta_1} & P_{2} \ar@{->}[r]^-{\alpha_2} &
P_1 \ar@{->}[r]^-{\beta_1} & P_{2} \ar@{->}[r]^-{\alpha_2} &
P_{1} \ar@{->}[r]^-{\beta_1} & P_{2} &
P_3 \ar@{->}[l]_-{\alpha_1} &
P_2 \ar@{->}[l]_-{\beta_2} &
P_3 \ar@{->}[l]_-{\alpha_1} &
P_2 \ar@{->}[l]_-{\beta_2} &
P_3 \ar@{->}[l]_-{\alpha_1} \cdots
}
$$
$$
\stringcpx{m,\mp,\wlk}: \xymatrix@!0@C=50pt@R=10pt{
 \hspace{-2cm}&P_1 & P_2 & P_1 & P_2 & P_1 & &\\
\cdots \hspace{-2cm}&\oplus \ar[r]^-{\left[ \begin{smallmatrix} \beta_1&0\\0&\alpha_1\end{smallmatrix} \right]} & \oplus \ar[r]^-{\left[ \begin{smallmatrix} \alpha_2&0\\0&\beta_2\end{smallmatrix} \right]}& \oplus \ar[r]^-{\left[ \begin{smallmatrix} \beta_1&0\\0&\alpha_1\end{smallmatrix} \right]} & \oplus \ar[r]^-{\left[ \begin{smallmatrix} \alpha_2&0\\0&\beta_2\end{smallmatrix} \right]}& \oplus \ar[r]^-{\left[ \begin{smallmatrix} \beta_1&\alpha_1\end{smallmatrix} \right]}& P_2\ar[r] &0\ \cdots\\
\hspace{-2cm}&P_3 & P_2 & P_3 & P_2 & P_3 & &\\
 }
$$
where the rightmost non-zero term $P_2$ in $\stringcpx{m,\mp,\wlk}$ is positioned at cohomological degree $m+1$. Note that $\stringcpx{m,\mp,\wlk}$ is the  projective resolution of (the stalk complex of) the simple module $S_2$. The walk $\wlk$ treated as (primitive) left resolvable gives rise to the following complex
$$
\stringcpx{m,-,\wlk}: \xymatrix@!0@C=50pt@R=10pt{
 \hspace{-2cm}& &  &  &  & P_1 & &\\
\cdots \hspace{-2cm}&P_1 \ar[r]^-{\beta_1} & P_2 \ar[r]^-{ \alpha_2}& P_1 \ar[r]^-{\beta_1} & P_2 \ar[r]^-{\left[ \begin{smallmatrix} \alpha_2\\0\end{smallmatrix} \right]}& \oplus \ar[r]^-{\left[ \begin{smallmatrix} \beta_1&\alpha_1\end{smallmatrix} \right]}& P_2\ar[r] &0\ \cdots\\
\hspace{-2cm}& &  &  &  & P_3 & &\\
 }
$$
Observe that here $\stringcpx{m,-,\wlk}\cong\stringcpx{m,+,\invw{\wlk}}$, since $\deg(\wlk)=0$.
\end{example}

The classification of indecomposable non-perfect complexes provided by Bekkert-Merklen \cite{BekMer} can be reformulated in terms of walks in the marked ribbon graph as follows. We give a sketch of the proof leaving the details to the reader.
\begin{proposition}\label{prop:nonper}
Let $(Q,I)$ be a gentle bound quiver and $(G,\leq)$ the associated marked ribbon graph. Then for every $\wlk\in \leftres(G,\leq\nolinebreak)$ $($resp.~$\wlk\in \rightres(G,\leq)$, $\wlk\in \twores(G,\leq))$ the complex
$\stringcpx{m,\epsilon,\wlk}$ for $\epsilon=-$ $($resp.~$\epsilon=+$, $\epsilon=\mp)$ is an indecomposable object of $\Kmb(\proj\,\La)$ for $A=kQ/I$. Moreover, the construction of $\stringcpx{m,\epsilon,\wlk}$ induces the following bijection with the set of the isomorphism classes of indecomposable non-perfect complexes over $A$
\begin{equation}\label{eq:nonper}
\xymatrix@C=1pc{[\ZZ\times\{-\}\times \leftres(G,\leq)]\  \cup \ ([\Z \times\{\mp\}\times \twores(G,\leq)]/\sim) \ \ \ar@{<->}[rr]^-{1-1} \ \ & & \ \  \Ind(\Kmb(\proj\,\La))\setminus \Ind(\Kb(\proj\,\La)),}
\end{equation}
where $(m,\mp,\wlk) \sim (m',\mp,\wlk')$ when $(m,\wlk)=(m',\wlk')$, or $m'=m+\deg(\wlk)$ and $\wlk'=\invw{\wlk}$.
\end{proposition}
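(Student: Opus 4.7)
The strategy is to reduce to Bekkert--Merklen's classification \cite{BekMer} of the indecomposable objects of $\Kmb(\proj\,\La)$ in terms of (possibly infinite) homotopy strings, and then transport it through a suitable extension of the bijection $\GBQ$ of Lemma~\ref{lem:walkGQ}(b) between reduced walks of $G$ and reduced walks of $Q$. Throughout I would work under the triangle equivalence $\Db{\La}\cong\Kmb(\proj\,\La)$, so that indecomposable non-perfect complexes correspond to objects of $\Kmb(\proj\,\La)\setminus\Kb(\proj\,\La)$.

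The first step is to extend $\GBQ$ to infinite reduced walks. By the discussion in Subsection~\ref{subsec:antiwalks}, in particular \eqref{eq:AAGtwo}, $\GBQ$ sends a full face $\phi=j_1\cdots j_K$ of $(G,\leq)$ to a full cycle of $(Q,I)$. Hence iterating $\phi$ on either side produces one-sided infinite periodic walks of $G$ whose images under a letter-by-letter extension of $\GBQ$ are precisely the one-sided infinite periodic homotopy strings appearing in the BM parametrization. This extension yields a bijection between one- and two-sided eventually periodic reduced walks of $G$ (modulo appropriate primitivity) and one- and two-sided infinite homotopy strings of $(Q,I)$ in BM's sense.

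Next I would verify that a walk $\wlk=i_1\cdots i_{L+1}$ belongs to $\leftres(G,\leq)$ exactly when $i_1$ lies on a full face $\phi$ of $(G,\leq)$ such that every iterated concatenation $\phi^n\wlk$ remains a reduced walk; here the requirement $\deg(i_1\cdots i_t)\geq 0$ for all $t$ rules out any cancellation with a later return through $\phi$, and primitivity of $\wlk$ corresponds to the minimality of the non-periodic tail in BM's normal form. The right-resolvable and two-sided cases are symmetric. A routine inspection of the folding procedure of \cite{ALP} then shows that the complex $\stringcpx{m,\epsilon,\wlk}$ obtained from the (infinite) unfolded diagram \eqref{eq:linf} coincides, under $\GBQ$, with the BM complex attached to the corresponding infinite homotopy string, so that its indecomposability follows directly from \cite{BekMer}.

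Finally, I would show that \eqref{eq:nonper} exhausts all isoclasses without repetition. Right-resolvable walks are subsumed via $\wlk\mapsto\invw{\wlk}$, since $\stringcpx{m,+,\wlk}\cong \stringcpx{m+\deg(\wlk),-,\invw{\wlk}}$ (as illustrated after Example~\ref{ex:twosided}), and hence need not appear separately. In the two-sided case, reversing an infinite homotopy string yields an isomorphic complex up to a shift by the degree, which is precisely the equivalence $(m,\mp,\wlk)\sim(m+\deg(\wlk),\mp,\invw{\wlk})$. The main obstacle is this last bookkeeping: certifying that the imposed primitivity and degree conditions capture \emph{exactly} the BM equivalence classes and nothing more, so that the map in \eqref{eq:nonper} is both well-defined and injective on the quotients; once this technical correspondence is in place, the proposition follows at once from \cite{BekMer}.
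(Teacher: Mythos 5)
Your proposal follows essentially the same route as the paper's own argument: both translate primitive left/right/two-sided resolvable walks of $(G,\leq)$ through the bijection $\GBQ$ into the (infinite) resolvable homotopy strings of \cite{ALP}, identify $\stringcpx{m,\epsilon,\wlk}$ with the corresponding Bekkert--Merklen complex, and then import indecomposability and completeness from \cite[Theorem 3]{BekMer}, handling the parametrization by absorbing right-resolvable walks via inversion and quotienting the two-sided case by $(m,\mp,\wlk)\sim(m+\deg(\wlk),\mp,\invw{\wlk})$. The paper's proof is likewise a sketch that leaves the same bookkeeping details to the reader, so there is nothing substantive to add.
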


\begin{proof} 
By applying Lemma \ref{lem:walkGQ} and the properties of a full face from Subsection \ref{subsec:antiwalks}  we check directly that a reduced walk $\wlk$ of $(G,\leq)$ is primitive left (resp.~right, two-sided) resolvable if and only if the walk $\GBQ(\wlk)$ of $Q$ is a primitive  left (resp.~right, two-sided) resolvable homotopy string in the sense of \cite[Definition 2.6]{ALP}. Moreover,
for every $\wlk\in \leftres(G,\leq\nolinebreak)$ $($resp.~$\wlk\in \rightres(G,\leq)$, $\wlk\in \twores(G,\leq))$ the complex
$\stringcpx{m,\epsilon,\wlk}$ for $\epsilon=-$ $($resp.~$\epsilon=+$, $\epsilon=\mp)$ is precisely the complex associated with the (respectively shifted) \lq\lq infinite homotopy string'' denoted by ${}^\infty\GBQ(\wlk)$ (resp.~$\GBQ(\wlk)^\infty$, ${}^\infty\GBQ(\wlk)^\infty$) in \cite[2.3]{ALP}.
Thus the hypothesis follows from \cite[Theorem 3]{BekMer}, see also \cite[Theorem 2.9]{ALP}. With regard to the parametrization \eqref{eq:nonper} we only note that $\wlk$ is right resolvable if and only if $\wlk^{-1}$ is left resolvable thus it is enough to take primitive left resolvable walks  as representatives, cf.~Example \ref{ex:twosided}. Moreover, it easily follows by construction that $\stringcpx{m,\mp,\wlk}\cong \stringcpx{m+\deg(\wlk),\mp,\wlk^{-1}}$, for each $m\in\ZZ$ and $\wlk\in\twores(G,\leq)$. 
\end{proof}

\begin{remark}\label{rem:mor}As seen in Propositions~\ref{pro:walkBelt} and~\ref{prop:nonper}, the reduced walks and belts of a marked ribbon graph $(G,\leq)$ are compact expressions of the indecomposable complexes in the bounded derived category of the gentle algebra $A=kQ/I$ with bound quiver $(Q,I)=\GBQ(G,\leq)$, cf.~Lemma~\ref{thm:IncGentle}. A description of the morphisms between two such indecomposable objects was given by Arnesen-Laking-Pauksztello~\cite{ALP} for algebraically closed fields. Translating such description to our graphical setting yields certain maxi\-mally common subwalks, or ``overlaps'', which simplify the string approach of~\cite{ALP} by unifying so-called graph/quasi-graph maps and single/double maps into non-trivial and trivial ``overlaps'', respectively. We believe, however, that a precise formulation and necessary details for these claims deserve a separate paper, especially in the context of tilting and silting theory. For the main goals of the present paper it is more important to provide a description of Auslander-Reiten triangles, see Proposition \ref{thm:ARtriangle} below.
\end{remark}

\section{Derived invariants}\label{sec:derinv}

In this section we formulate and prove the first main result of the paper, Theorem \ref{thm:mrgvsCar} (Theorem A from the introduction) and provide several applications, see Subsection \ref{subsec:applA}. We start by recalling basic notions and facts related Grothendieck groups, homological bilinear and quadratic forms  and other derived invariants.

\subsection{Grothendieck groups and the Euler form}\label{subsec:eulf}

Let  $\La$ be an arbitrary (finite dimensional) $k$-algebra and let $\{S_i\}_{i=1}^n$, $\{P_i\}_{i=1}^n$ and $\{E_i\}_{i=1}^n$ be complete sets of representatives of isomorphism classes of simple, indecomposable projective and indecomposable injective left $A$-modules respectively, with $P_i$ a projective cover of $S_i$ and $E_i$ and injective envelope of $S_i$ (cf. Section~\ref{sec:combgentle}).  By $K_0(\mod{A})$ and $K_0(\Db{\La})$ we denote the Grothendieck groups of the category $\mod{A}$ of finite dimensional $\La$-modules and the derived category $\Db{\La}=\Db{\mod{\La}}$ of $\mod{A}$, defined by means of short exact sequences and the distinguished triangles, respectively (see~\cite{ASS} and~\cite{HDb2} for the details). Recall that there is an isomorphism  $K_0(\mod{A})\overset{\cong}{\rightarrow}K_0(\Db{\La})$ of abelian groups induced by the canonical embedding of $\mod{A}$ into $\Db{\La}$. The inverse mapping is given by
\begin{equation}\label{eq:alternating}[X^\bullet] \ \,\mapsto \ \, \sum_{i\in\ZZ}(-1)^i[X^i],
\end{equation} where $[X^\bullet]$ denotes the class in $K_0(\Db{\La})$ of a complex $X^\bullet$ from $\Db{\La}$ and $[X^i]$, the class in $K_0(\mod{A})$ of the $\La$-module $X^i$ of $X^\bullet$ of cohomological degree $i$. The groups $K_0(\mod{A})$ and $K_0(\Db{\La})$ are freely generated by the corresponding classes $\{[S_i]\}_{i=1}^n$ of simple modules. Given an $A$-module $X$ (resp.~a complex $X^\bullet$), the coordinate vector in $\ZZ^n$ of the class $[X]\in K_0(\mod{A})$ (resp.~$[X^\bullet]\in K_0(\Db{\La})$) in the basis $\{[S_i]\}_{i=1}^n$ is  called the \textbf{dimension vector} of $X$ (resp.~of $X^\bullet$) and it is denoted by $\vdim{X}\in\ZZ^n$ (resp.~$\vdim{X^\bullet}\in\ZZ^n$). Observe that $\vdim{X}$ belongs to $\NN^n$, while $\vdim{X^\bullet}$ may have negative coefficients, cf.~\eqref{eq:alternating} and~\cite{HDb2}. 

 Similarly, we consider the Grothendieck groups $K_0(\proj\,\La)$ and $K_0(\per\,\La)$   of the categories $\proj\,\La$ of projective $\La$-modules and $\per\,\La$ of perfect complexes, respectively (cf. Section~\ref{sec:combgentle}). Also in this case there is a natural isomorphism  $K_0(\proj\,\La)\overset{\cong}{\rightarrow}K_0(\per\,\La)$  induced by the canonical embedding of $\proj\,\La$ into $\Kb(\proj\,\La)$. The groups $K_0(\proj\,\La)\cong K_0(\per\,\La)$ are freely generated by the corresponding classes $\{[P_i]\}_{i}$ of indecomposable projective modules. Given an $A$-module $X$ in $\proj\,\La$ (resp.~a perfect complex $X^\bullet$), the coordinate vector in $\ZZ^n$ of the class $[X]\in K_0(\proj\,\La)$ (resp.~$[X^\bullet]\in K_0(\per\,\La)$) in the basis $\{[P_i]\}_{i=1}^n$ is   denoted by $\vdimp{X}\in\ZZ^n$ (resp.~$\vdimp{X^\bullet}\in\ZZ^n$).

Recall that the mapping $P\mapsto P/ \rad(P)$ induces an isomorphism \begin{equation}\label{eq:sillyiso}
K_0(\proj\,\La)\overset{\cong}{\rightarrow}K_0(\mod{\La})
\end{equation} which sends the basis elements $[P_i]$ to $[S_i]$ for each $i$. However, recall that the categories $\per\,A\cong \Kb(\proj\,\La)$ and $\Db{\La}$ are not equivalent in general.

In what follows we use the bilinear and quadratic forms, which are  usually defined on $K_0(\mod{A})$ for algebras of finite global dimension (cf.~\cite{ASS, HDb2}). For our purposes it is useful to consider the following slightly more general notions (cf.~\cite{Ladkani.2017.cateulf,BoSk}).

\begin{definition}\label{def:euler}
Let $A$ be a $k$-algebra. The {\bf homological bilinear form} $\langle -,- \rangle_A: K_0(\per\,A)^2\to\ZZ$ of $A$ is given by
\[
\langle [X^{\bullet}],[Y^{\bullet}] \rangle_A=\sum_{i \in \Z} (-1)^i \dim_k\Hom_{\Db{A}}(X^{\bullet},Y^{\bullet}[i]).
\]
Its associated quadratic form $q_A(x):=\langle x,x \rangle_A$ is called the {\bf homological quadratic form} of $A$, or simply {\bf Euler form}.
\end{definition}

\begin{remark}\label{rem:euler}
The Gram matrix of the bilinear form $\langle-,- \rangle_A$ in the basis $\projbas=\{[P_i]\}_{i=1}^n$ of indecomposable projectives is the \textbf{Cartan matrix} $\Car_A:=M_{\langle -,- \rangle_A,\projbas}=[\dim_k\Hom_{A}(P_i,P_j)]_{i,j}$ of a $k$-algebra $A$ (cf.~\eqref{eq:Grambil}). Note that for bound quiver algebras we have $\Car_A\bas_i=\vdim{P_i}$ and $\Car_A^\tr\bas_i=\vdim{E_i}$ for each $i=1,\ldots,n$, see for instance~\cite{Barot2014IntroductionTT}. 
 If two algebras $A$ and $A'$ are derived equivalent then their bilinear forms, resp.~Euler forms are equivalent; in particular, the rank $\Rnk(q_A)$, the corank $\CRnk(q_A)$ and the determinant of the Cartan matrix are derived equivalence invariants, see~\cite{HDb2} and~\cite[Proposition 1.5]{BoSk}. Moreover, if $A$ has finite global dimension $\gldim(A)$, then it follows from~\cite{HDb2} that:
\begin{enumerate}[label={\textnormal{(\alph*)}},topsep=4px,parsep=0px]
\item There is a triangle equivalence of categories $\Kb(\proj\,\La)$ and $\Db{\La}$, that is, $\per\,\La=\Db{\La}$. In particular, the identity mapping $P\mapsto P$ induces an isomorphism of groups $K_0(\proj\,\La)\overset{\cong}{\rightarrow}K_0(\mod{\La})$ (alternative to~\eqref{eq:sillyiso}). 
\item The Cartan matrix $\Car=\Car_A$ is $\ZZ$-invertible and it is the matrix of the isomorphism $K_0(\proj\,\La)\overset{\cong}{\rightarrow}K_0(\mod{\La})$ from (a) in the bases of indecomposable projectives and simples, respectively.
\item The (perfect) bilinear form $\langle -,- \rangle_\La$ is the (classical) Euler characteristic of $A$ and its Gram matrix in the basis of simple $A$-modules is $(\Car^{-1})^\tr\cdot\Car\cdot \Car^{-1}=(\Car^{-1})^\tr$, cf.~\cite[III.3]{ASS}. 
\end{enumerate}
\end{remark}

\begin{lemma}\label{lem:eulfconnected}
Let $(Q,I)$ be an admissible bound quiver with $Q$ connected. Then the Euler form $q_A:K_0(\per\, A)\to\ZZ$ of the algebra $A:=kQ/I$ in the basis of indecomposable projectives is connected.
\end{lemma}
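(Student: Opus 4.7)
The plan is straightforward and hinges on a single observation linking arrows of $Q$ to nonvanishing Hom-spaces between indecomposable projectives. Here is the outline.

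First, I would unwind the definitions in the chosen basis $P = \{[P_i]\}_{i=1}^n$. Since each $P_i$ is projective and can be regarded as a stalk complex in degree $0$, we have $\Hom_{\Db{A}}(P_i, P_j[k]) \cong \Ext_A^k(P_i, P_j)$, which vanishes for $k \neq 0$. Hence $\langle [P_i],[P_j]\rangle_A = \dim_k \Hom_A(P_i,P_j)$, so the off-diagonal Gram entries of the polarization are
\[
q_{ij} = \dim_k \Hom_A(P_i,P_j) + \dim_k \Hom_A(P_j,P_i), \qquad i \neq j.
\]
It therefore suffices to show that vertices $i,j \in Q_0$ joined by an arrow satisfy $q_{ij} \neq 0$, after which the connectedness of $Q$ immediately yields, for arbitrary $i\neq j$, a sequence $(i_0,i_1,\ldots,i_r)$ with $i_0=i$, $i_r=j$, and $q_{i_{t-1} i_t}\neq 0$ for each $t$, as required by the definition of a connected form.

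Second, I would verify the key local statement. Writing $P_i = A\varepsilon_i$, there is a canonical isomorphism $\Hom_A(A\varepsilon_i, A\varepsilon_j) \cong \varepsilon_i A \varepsilon_j$, and the latter is spanned by the residue classes of paths of $Q$ joining the appropriate endpoints. If $\alpha \in Q_1$ is an arrow between $i$ and $j$, then the class of $\alpha$ in $A = kQ/I$ lies in a $\Hom$-space between $P_i$ and $P_j$ (in one of the two directions, depending on $\sou(\alpha)$ and $\tar(\alpha)$), and it is nonzero, because $I \subseteq R_Q^2$ forbids $I$ from containing any length-one combination of arrows. Hence at least one of $\Hom_A(P_i,P_j)$ or $\Hom_A(P_j,P_i)$ is nonzero, so $q_{ij} \geq 1$.

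The proof is then concluded by combining the two steps: for any distinct $i,j \in Q_0$, connectedness of $Q$ (viewed as an undirected graph) supplies a path $i = i_0 - i_1 - \cdots - i_r = j$ via arrows of $Q$, and the previous paragraph guarantees $q_{i_{t-1} i_t} \neq 0$ along this path. There is no real obstacle here; the only subtlety worth flagging is the direction convention for arrows, which only affects whether a given arrow contributes to $\Hom(P_i,P_j)$ or $\Hom(P_j,P_i)$, but not to the sum $q_{ij}$.
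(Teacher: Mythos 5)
Your proof is correct and follows essentially the same route as the paper's: identify the off-diagonal coefficients $q_{ij}$ as $\dim_k\Hom_A(P_i,P_j)+\dim_k\Hom_A(P_j,P_i)$, observe that each arrow of $Q$ yields a nonzero homomorphism between the corresponding indecomposable projectives (nonzero because $I\subseteq R_Q^2$ cannot kill a path of length one), and then transport connectedness of $Q$ to connectedness of the form. The only cosmetic difference is that you justify the nonvanishing via the identification $\Hom_A(A\varepsilon_i,A\varepsilon_j)\cong\varepsilon_iA\varepsilon_j$, whereas the paper simply invokes right multiplication by the arrow.
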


\begin{proof} 
Recall that the indecomposable projective $A$-modules are of the form $P_i:=A \triv_i$, for $i\in Q_0=\{1,\ldots,n\}$ (cf.~the beginning of Section~\ref{sec:combgentle}).
Observe that for each arrow  $\alpha\in Q_1$ with $j:=\tar(\alpha)$ and $i:=\sou(\alpha)$ there is a non-zero $A$-homomorphism $\bar{\alpha}:P_{j}\to P_{i}$  induced by the right composition by $\alpha$ (cf.~\cite[p.~644]{Bob}). This means that the corresponding coefficient of $q=q_A$ in the basis of indecomposable projectives $q_{ji}=q_{ij}=q([P_i],[P_j])=\langle [P_i],[P_j] \rangle_A+\langle [P_j],[P_i] \rangle_A=\dim_k\Hom_{A}(P_i,P_j)+\dim_k\Hom_{A}(P_j,P_i)$ is non-zero (cf.~\eqref{eq:q}). Now take arbitrary $i'\neq j'$ in $\{1,\ldots,n\}$. Since $Q$ is connected then there exists a walk $\wlk=\aoi_{1}\cdots \aoi_\ell$ in $Q$ with $\aoi_t \in \quiver_1\cup\invw{\quiver}_1$ for $t=1,\ldots,\ell \geq 1$ such that $\sou(\wlk)=i'$ and $\tar(\wlk)=j'$. Thus by the above observations the walk $\wlk$ induces the sequence of non-zero coefficients $q_{i_{t-1}i_t}\neq 0$ for $i=1,\ldots,\ell$, where $i_t=\sou(\aoi_t)$, $i_\ell=i'$ and $i_0=j'$. This shows that $q_A$ in the basis of projectives is connected, see Subsection~\ref{subsec:qf}.
\end{proof}
It should be noted that the Euler form $q_A$ of the algebra $A=kQ/I$ of a connected admissible bound quiver $(Q,I)$ may be disconnected in the basis of simple $A$-modules.

\subsection{Main result}\label{subsec:ThmA}

To formulate the  main result we need to establish some notation.
Fix a gentle bound quiver $(\quiver,I)$.  Given a closed walk $\wlk=\aoi_{1}\cdots \aoi_\ell$ in $Q$ with $\aoi_t \in \quiver_1\cup\invw{\quiver}_1$ for $t=1,\ldots,\ell \geq 1$, assume that $\wlk$ has no {repeated edges}, that is $\aoi_i\neq \aoi_j$ and $\aoi_i\neq \invw{\aoi}_j$ for all $i\neq j$. In particular, this means that $\wlk$ is reduced. By $\nrp{\wlk}$ we denote the number of monomial relations of length two $\rho\in I$ that are either a subpath of $\wlk$, or of the form $\rho=\aoi_\ell\aoi_1$ with $\aoi_1,\aoi_\ell\in \quiver_1$ (we call them \textbf{clockwise relations} on $\wlk$, cf.~\cite{AS}). Similarly, denote by $\nrm{\wlk}$ the number of monomial relations of length two $\rho\in I$ that are either a subpath of $\invw{\wlk}$ or of the form $\rho=\invw{\aoi}_1\invw{\aoi}_\ell$ with $\invw{\aoi}_1,\invw{\aoi}_\ell\in \quiver_1$ (we call them \textbf{counterclockwise relations} on $\wlk$). We set $\nrr{\wlk}:=\nrp{\wlk}+\nrm{\wlk}$ and we say that $\wlk$ contains $\nrr{\wlk}$ relations. Note that  $\nrr{\wlk}=\nrr{\invw{\wlk}}=\nrr{\wlk''\wlk'}$   for each decomposition $\wlk=\wlk'\wlk''$ for subwalks $\wlk'$, $\wlk''$. 

\begin{definition}\label{def:mcc}
Let $(\quiver,I)$ be a gentle bound quiver and $\La:=k(\quiver,I)=kQ/I$ the associated gentle algebra.  We say that $(\quiver,I)$ (or  $\La$) satisfies the {\bf  multi-clock condition}, if for each non-trivial closed walk $\cyc$ in $Q$ without repeated edges, the number $\nrr{\cyc}$ of relations in $\wlk$
is even. If $\La$ satisfies the multi-clock condition then we set $\gcc_\La:=1$, otherwise we set $\gcc_\La:=0$.
\end{definition}

\begin{remark}\label{rem:tcycle}
 We say that an admissible bound quiver $(Q,I)$ or the associated algebra $kQ/I$ is $t$-cycle if so is the underlying graph of $Q$ (we set $\cycles(Q,I)=\cycles(Q)=t$, cf.~\eqref{eq:tcycle}).
If $(\quiver,I)$ is a 1-cycle gentle bound quiver satisfying the {\em clock condition} in the sense of~\cite{AS} (that is, $\nrp{\wlk}=\nrm{\wlk}$ for the unique up to inverse and rotation non-trivial closed walk $\wlk$ of $Q$ without repeated edges)
then $(\quiver,I)$ satisfies the multi-clock condition. Obviously, each 0-cycle gentle bound quiver also satisfies the multi-clock condition, cf.~\cite{AH}.
\end{remark}

The following main result of this section is a more precise version of Theorem A. We use the notation and the terminology of Section~\ref{sec:prelim} and Definitions~\ref{def:BoundQuiverInc} and~\ref{rem:forb}. Recall that a graph $G$ is \textbf{bipartite} if there is a partition $\verts{G}=V' \cup V''$ such that the end-vertices of each edge of $G$ belong to different parts of the partition. Alternatively, a graph is bipartite if and only if all its closed walks have even length, if and only if all its cycles have even length (a cycle is a non-trivial reduced closed walk in which only the first and last vertices are equal). 

\begin{theorem}\label{thm:mrgvsCar}
Let $(Q,I)$ be a gentle bound quiver,  $(G,\leq)=\GPC(\quiver,I)$  the associated marked ribbon graph and $(\fmrgs{G},\fmrgs{\leq},\fmrgs{\sigma})=\GFC(\quiver,I)$ the associated forbidden marked ribbon graph $($the latter in case $\gldim(Q,I)<\infty)$. Then the Cartan matrix $\Car_\La$ of  the gentle algebra $\La:=kQ/I$ satisfies
\begin{equation}\label{eq:mrbvsCar}
\begin{array}{rll}\Car_\La+\Car_\La^\tr&=&\Inc(G)\Inc(G)^\tr\quad\text{and}\smallskip\\ \Car_\La^{-\tr}+\Car_\La^{-1}&=&\Inc(\fmrgs{G},\fmrgs{\sigma})\Inc(\fmrgs{G},\fmrgs{\sigma})^\tr\ \ (\text{if}\ \  \gldim(\La)<\infty).
\end{array}
\end{equation}
 In particular, the Euler form $q_\La$ of $\La$ is non-negative. Moreover:
\begin{enumerate}[label={\textnormal{(\alph*)}},topsep=3px,parsep=0px]
\item The following conditions are equivalent.
\begin{enumerate}[label={\textnormal{(\roman*)}},topsep=3px,parsep=0px]
\item[{\rm (a1)}] $(Q,I)$ satisfies the {multi-clock condition}.
\item[{\rm (a2)}] There exist  string functions $S, T:Q_1\to\{\pm 1\}$ such that $S \equiv -T$,
\item[{\rm (a3)}] The graph $G$ is bipartite.
\item[{\rm (a4)}] The bidirected graph $(\fmrgs{G},\fmrgs{\sigma})$ is balanced.
\end{enumerate}
$($The condition {\rm (a4)} is well-defined only in case  $\gldim(\La)<\infty)$.
\item The rank and corank of the Euler form are given by the formulae
\begin{equation}\label{eq:mrbvsCarcrk}
\begin{array}{l}
r=r_A:=\Rnk(q_\La) \ \ =|\verts{G}|-\gcc_\La\ =\ 2|Q_0|-|Q_1|-\gcc_\La\ =\ |\verts{\fmrgs{G}}|-\gcc_\La,\smallskip\\
c=c_A:=\CRnk(q_\La)=|\edgs{G}|-|\verts{G}|+\gcc_\La\ =\ |Q_1|-|Q_0|+\gcc_\La\ =\ |\edgs{\fmrgs{G}}|-|\verts{\fmrgs{G}}|+\gcc_\La
\end{array}
\end{equation}
    $($the last equalities in both rows hold only in case  $\gldim(\La)<\infty)$. In particular, the binary parameter $\gcc_\La$ of Definition~\ref{def:mcc} is a derived equivalence invariant.  

\item The Euler form $q=q_\La$ in the basis of indecomposable projectives $\projbas:=\{[P_i]\}_{i\in Q_0}$ is connected, Cox-regular and:
\begin{itemize}
\item[{\rm (c1)}] The  diagonal coefficients of $q$ are given by:
\begin{equation}\label{eq:mrbvsCarqisproj}
q_{i}=q_i^\projbas = \left\{
\begin{array}{l l}
2, & \text{if there is a permitted path self-crossing at $i$}, \\
1, & \text{otherwise}, \\
\end{array} \right.
\end{equation}
for each vertex $i\in Q_0$. In particular, $q$ is a unit form if and only if  $(Q,I)$ has no self-crossing permitted path.
\item[{\rm (c2)}] If $q$ is a unit form then
\begin{equation}\label{eq:mrbvsCarqDynproj}
\Dyn(q)=\left\{\begin{array}{ll}
\DD_{r},&\text{if }\  \gcc_\La=0\   \text{ and }\  2|Q_0|-|Q_1|\geq 4,\\
\AA_{r},&\text{otherwise}.\\
\end{array}
\right.
\end{equation}

\smallskip

 \item[{\rm (c3)}] If $q$ is not unitary then  $\gcc_\La=0$ and
 \begin{equation}\label{eq:mrbvsCarqCproj}
\left\{\begin{array}{ll}
\Dyn(q)=\CC_r,&\text{if } \  2|Q_0|-|Q_1|\geq 2,\\
\Dyn(\frac{1}{2}q)=\AA_1,&\text{if } \  2|Q_0|-|Q_1|= 1,\\
\end{array}
\right.
\end{equation}
where $\frac{1}{2}q$ in the latter denotes the $($unit$)$ form given by $x\mapsto \frac{1}{2}q(x)$.
\end{itemize}

\item If $\gldim(A)<\infty$ then the Euler form $q=q_\La$ in the basis of simples $\simplebas:=\{[S_i]\}_{i\in Q_0}$ is semi-Cox-regular and:
\begin{itemize}
\item[{\rm (d1)}] The  diagonal coefficients of $q$ are given by:
\begin{equation}\label{eq:mrbvsCarqis}
q_{i}=q_i^\simplebas = \left\{
\begin{array}{l l}
0, & \text{if there is a $($unique$)$ almost-full cycle  at $i$ of odd length}, \\
2, & \text{if there is a $($unique$)$ almost-full cycle  at $i$ of even length}, \\
1, & \text{otherwise}, \\
\end{array} \right.
\end{equation}
for each vertex $i\in Q_0$. In particular, $q$ is a unit form if and only if  $(Q,I)$ has no
almost-full cycles. Moreover,
\begin{itemize}
\item[$\bullet$] if $q$ is connected then there is no almost-full cycle of odd length,
\item[$\bullet$] if there is no almost-full cycle of odd length and $2|Q_0|-|Q_1|\geq 3$ then $q$ is connected.
\end{itemize}
\item[{\rm (d2)}] If $q$ is connected and unitary then the formula~\eqref{eq:mrbvsCarqDynproj} of $($c$2)$ for the Dynkin type  holds.
 \item[{\rm (d3)}] If $q$ is connected and not unitary then  $\gcc_\La=0$ and   the formula~\eqref{eq:mrbvsCarqCproj} of $($c$3)$ for the Dynkin type  holds.
\end{itemize}
\end{enumerate}
\end{theorem}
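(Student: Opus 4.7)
The plan is to establish the Cartan matrix identity $\Car_\La + \Car_\La^\tr = \Inc(G)\Inc(G)^\tr$ first by direct combinatorial counting; from it the non-negativity of $q_\La$ is immediate. I will start from the fact that $(\Car_\La)_{ij}$ equals the number of permitted paths in $(Q,I)$ from $i$ to $j$, since $\La=kQ/I$ has a $k$-basis of permitted paths. By Lemma~\ref{lem:iota}(a) every non-trivial permitted path is a subpath of a unique permitted thread, so the count stratifies by threads $\eta^\alpha$, $\alpha \in \verts{G}$. If $k_i^\alpha$ denotes the number of occurrences of vertex $i \in Q_0 = \edgs{G}$ along $\eta^\alpha$, then by construction of $G$ (Definition~\ref{def:BoundQuiverInc}) one has $k_i^\alpha = \Inc(G)_{i\alpha}$, and the identity $(\Car_\La + \Car_\La^\tr)_{ij} = \sum_\alpha k_i^\alpha k_j^\alpha = (\Inc(G)\Inc(G)^\tr)_{ij}$ follows after absorbing the diagonal contribution from the trivial paths $\triv_i$ and from self-crossings (loops in $G$). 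Non-negativity is then the factorization $q_\La(x) = \tfrac{1}{2} x^\tr (\Car_\La+\Car_\La^\tr) x = \tfrac{1}{2} \|\Inc(G)^\tr x\|^2 \geq 0$. The dual identity in finite global dimension will be proved analogously, using forbidden threads in place of permitted ones and the bidirection $\fmrgs{\sigma}$ to track the signs arising in the description of $\Car_\La^{-1}$.

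For part~(a) I will prove the chain (a1)$\Leftrightarrow$(a2)$\Leftrightarrow$(a3)$\Leftrightarrow$(a4). The equivalence (a2)$\Leftrightarrow$(a3) comes from Lemma~\ref{rem:string}: via~\eqref{eq:stringTWO}, having $S \equiv -T$ amounts to requiring $\sigma(h^\alpha_t)$ to depend only on the edge $\{h^\alpha_t, \iota h^\alpha_t\}$, i.e.\ the two half-edges of every edge receive the same sign, which is a bipartition. For (a1)$\Leftrightarrow$(a2), I will interpret a length-two relation $\alpha_r \beta_s \in I$ via string functions as $T(\alpha_r) = S(\beta_s)$; evaluating a product of string values along a closed walk $\wlk$ of $Q$ without repeated edges then yields a sign determined by the parity of $\nrr{\wlk}$, and the existence of $(S,T)$ with $S \equiv -T$ is exactly the condition that this parity is always even. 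The equivalence (a3)$\Leftrightarrow$(a4) will exploit the duality between $G$ and $\fmrgs{G}$: a closed walk in $\fmrgs{G}$ acquires a negative sign precisely when it produces an odd-length cycle in $G$. For part~(b), the identity gives $\Rnk(q_\La) = \Rnk(\Inc(G))$ over $\Q$; the classical fact that the $\{0,1,2\}$-incidence matrix of a connected graph has rank $|\verts{G}|-1$ if bipartite and $|\verts{G}|$ otherwise, combined with the count $|\verts{G}| = 2|Q_0|-|Q_1|$ (from $\sum_\alpha \ell^\alpha = |Q_1|$ and $\sum_\alpha \deg(\alpha) = 2|Q_0|$), yields the formulas~\eqref{eq:mrbvsCarcrk}. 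Derived invariance of $\gcc_\La$ then follows from that of $\CRnk(q_\La)$ together with that of $|Q_1| - |Q_0|$, the latter being derived invariant via the AAG invariant (Theorem~\ref{thm:AAG}).

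For parts~(c) and~(d) I will read off $q$ from the two Gram matrices, respectively $\Inc(G)\Inc(G)^\tr$ in the projective basis and $\Inc(\fmrgs{G},\fmrgs{\sigma})\Inc(\fmrgs{G},\fmrgs{\sigma})^\tr$ in the simple basis. The diagonal formulas~\eqref{eq:mrbvsCarqisproj} and~\eqref{eq:mrbvsCarqis} come directly from the entry-wise expansion: $q_i^\projbas$ counts permitted self-paths at $i$ (hence $1$ or $2$ according to whether $i$ is a loop in $G$), while $q_i^\simplebas$ is determined by the $\fmrgs{\sigma}$-signed sum attached to the unique almost-full cycle at $i$ (Lemma~\ref{lem:forbidden}(c)), whose length-parity dictates the value $0$, $1$ or $2$. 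Connectedness in~(c) is Lemma~\ref{lem:eulfconnected}, and Cox-regularity is checked by observing that off-diagonal entries $q_{ij}$ are bounded by $2$ and are even whenever $q_i=2$. The Dynkin type claims~\eqref{eq:mrbvsCarqDynproj} and~\eqref{eq:mrbvsCarqCproj} will follow from the classifications (D1) of~\cite{BP} (for non-negative unit forms) and (D2) of~\cite{JM} (for non-unit Cox-regular forms of type~$\CC$), applied to the quadratic form encoded by $\Inc(G)$; the $\AA$/$\DD$ dichotomy is isolated by the bipartiteness of $G$. The main obstacles I foresee are a careful proof of the dual Cartan identity via a sign-sensitive interpretation of $\Car_\La^{-1}$ in terms of forbidden threads, and the exhaustive case analysis for~\eqref{eq:mrbvsCarqDynproj}--\eqref{eq:mrbvsCarqCproj}, especially the small-rank collapses between $\AA$, $\DD$ and $\CC$ and the correlation in~(d) between disconnectedness of $q$ in the simples basis and the existence of odd-length almost-full cycles.
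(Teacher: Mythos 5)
Your derivation of the first identity by stratified path-counting is a genuinely different route from the paper, which instead realizes each column $\Inc(G)\bas_\alpha$ as the dimension vector of the string module $M_\alpha$ of the permitted thread $\eta^\alpha$ and reads the identity off the short exact sequence $0 \to P_i \to M_\alpha\oplus M_\beta \to E_i \to 0$. Your count is correct: with $k_i^\alpha$ the number of occurrences of $i$ along $\eta^\alpha$ one checks $\sum_\alpha k_i^\alpha k_j^\alpha=(\Car_\La+\Car_\La^\tr)_{ij}$ including the diagonal (using that each vertex is the center of exactly two split permitted threads), and it is arguably more elementary than the module-theoretic argument. Parts (b), (c1)--(c3) and the bulk of (d) also follow the same skeleton as the paper (rank of the incidence matrix of a connected graph, $|\verts{G}|=2|Q_0|-|Q_1|$, and the classification of non-negative incidence forms from \cite{JM} and \cite{BP}).

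There are, however, two genuine gaps. First, your argument for (a2)$\Leftrightarrow$(a3) is stated backwards: under the correspondence of Lemma~\ref{rem:string}, $S\equiv -T$ translates via~\eqref{eq:stringTWO} into $\sigma(h^\alpha_t)=\sigma(h^\alpha_{t-1})$, i.e.\ the direction $\sigma$ is constant on the half-edges incident to each \emph{vertex} of $G$ --- not, as you write, equal on the two half-edges of each \emph{edge}. The latter condition is incompatible with $\sigma$ being a direction (which forces $\sigma(\iota h)=-\sigma(h)$), so as written the equivalence proves nothing; the bipartition comes from coloring each vertex of $G$ by the common value of $\sigma$ around it and noting that every edge joins differently colored vertices. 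Second, the dual identity $\Car_\La^{-1}+\Car_\La^{-\tr}=\Inc(\fmrgs{G},\fmrgs{\sigma})\Inc(\fmrgs{G},\fmrgs{\sigma})^\tr$ cannot be obtained ``analogously'' by counting forbidden paths, because the entries of $\Car_\La^{-1}$ are not counts of anything directly visible in $(Q,I)$. The missing idea is that each anti-walk $i_1\cdots i_{L+1}$ of $(G,\leq)$ encodes a projective resolution $0\to P_{i_1}\to\cdots\to P_{i_{L+1}}\to M_\beta\to 0$ of the string module of its source thread, whence $\vdim{M_\beta}=\Car_\La\sum_t(-1)^{L+1-t}\bas_{\vtx{i_t}}$ and therefore $\Inc(G)=\Car_\La\,\Inc(\fmrgs{G},\fmrgs{\sigma})$ up to a bijection of columns; the dual identity then follows by conjugating the first one by $\Car_\La^{-1}$. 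This same matrix relation is also what makes (a3)$\Leftrightarrow$(a4) a one-line corank comparison, whereas your proposed direct correspondence between closed walks of $\fmrgs{G}$ and odd cycles of $G$ is unsubstantiated and would be delicate to prove from scratch.
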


Observe that even though the Dynkin types of the Euler form $q$ in both bases $P$ and $S$ are described by the same formula, it may happen that $q$ is unitary in the basis $P$ and non-unitary in $S$ and vice-versa. In particular, it may happen that $q$ in the basis $P$  has Dynkin type $\DD$ while in the basis $S$ it has Dynkin type $\CC$ etc., cf.~comments (D2) in Subsection~\ref{subsec:qf}. Moreover, it should be noted that the Euler form of an admissible bound quiver algebra in general does not need to be (semi-)Cox-regular and it is quite surprising that this property is fulfilled for gentle algebras  (see claims (c) and (d)). Cox-regularity of the Euler form of a gentle algebra seems to be a reason of some of the good properties of its roots discussed in Section~\ref{sec:roots}.

The proof of the theorem will be given in Subsection~\ref{subsec:proofA}. In the next subsection we discuss some preparatory  facts required for the proof.


\subsection{Incidence quadratic forms}\label{subsec:incqf}

Let  $(G,\sigma)$ be a bidirected graph.
In~\cite{JM} we studied the following integral quadratic form associated with $(G,\sigma)$.
\begin{definition}\label{def:incqf}
The \textbf{incidence (quadratic) form} of $(G,\sigma)$ is the mapping $q=q_{G,\sigma}:\Z^{\edgs{G}} \to \Z$ defined as
\begin{equation}\label{eq:qinc}
q(x):=\frac{1}{2}x^\tr \Incsh\Incsh^{\tr}x=\frac{1}{2}||\Incsh^{\tr}x||^{2},
\end{equation}
for $x \in \Z^{\edgs{G}}$, where $\Incsh=\Inc(G,\sigma)\in\M_{\edgs{G},\verts{G}}(\ZZ)$ is the incidence matrix of $(G,\sigma)$ as in Definition~\ref{def:bidi}.
\end{definition}
We also consider  the incidence form of a graph $G$, given by  $q_G:=q_{G,\sigma^{+}}$,  where $\sigma^{+}$ is the constant bidirection $\sigma^+\equiv +1$. Recall that the incidence matrix of $G$ is defined as $\Inc(G)=\Inc(G,\sigma^+)$ (cf. Definition~\ref{def:bidi}).

\begin{lemma}\label{lem:Null} Let $G$ $($resp.~$(G,\sigma))$ be a connected graph $($resp.~bidirected graph) and let $\balanced_G:=\Null(\Inc(G))$ $($resp.~$\balanced_{G,\sigma}:=\Null(\Inc(G,\sigma)))$ be the corank of the incidence matrix of $G$ $($resp.~of $(G,\sigma))$.  Then
\begin{enumerate}[label={\textnormal{(\alph*)}},topsep=4px,parsep=0px]
\item $\balanced_{G,\sigma} \in \{0,1\}$ and  $\balanced_{G,\sigma}=1$ if and only if $(G,\sigma)$ is balanced,
\item $\balanced_G \in \{0,1\}$ and  $\balanced_G=1$ if and only if $G$ is bipartite.
 \end{enumerate}
\end{lemma}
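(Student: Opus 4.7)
The plan is to interpret the null vectors of $\Inc(G,\sigma)$ as $\ZZ$-valued functions on $\verts{G}$ satisfying a propagation rule along edges, and to use connectedness of $G$ to pin down the dimension. For each non-loop edge $e = \{h,h'\} \in \edgs{G}$ with $u = \s(h)$ and $v = \s(h')$, the corresponding row of $\Inc(G,\sigma)$ applied to $x \in \ZZ^{\verts{G}}$ yields $\sigma(h)\,x_u + \sigma(h')\,x_v = 0$, which rewrites as $x_v = \sigma(i)\,x_u$, where $i$ is the unique ordered edge with $\tar(i) = u$ and $\sou(i) = v$ and~\eqref{eq:signofedge} gives $\sigma(i) = -\sigma(h)\sigma(h')$. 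For a loop at $v$ the constraint reads $(\sigma(h)+\sigma(h'))\,x_v = 0$, which is either vacuous or forces $x_v = 0$.

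First I would prove the upper bound $\balanced_{G,\sigma} \leq 1$. Fix a base vertex $v_0$ and let $x \in \ker(\Inc(G,\sigma))$. Since $G$ is connected, any other vertex $v$ is joined to $v_0$ by a walk $\wlk = i_1\cdots i_\ell$ with $\tar(i_1) = v_0$ and $\sou(i_\ell) = v$; iterating the propagation formula along the vertices $v_t := \sou(i_t) = \tar(i_{t+1})$ of $\wlk$ yields $x_v = \sigma(i_\ell)\cdots\sigma(i_1)\,x_{v_0} = \sigma(\wlk)\,x_{v_0}$. Hence $x$ is fully determined by the single scalar $x_{v_0}$, and $\balanced_{G,\sigma} \leq 1$.

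For the equivalence with balancedness, assume first that $(G,\sigma)$ is balanced. Then $\sigma(\wlk) = 1$ for every closed walk, so declaring $x_v := \sigma(\wlk_v)\,x_{v_0}$ for any walk $\wlk_v$ joining $v_0$ and $v$ gives a well-defined function on $\verts{G}$; a direct verification (treating non-loops and loops separately) shows it lies in $\ker(\Inc(G,\sigma))$, so $\balanced_{G,\sigma} = 1$. Conversely, if some closed walk $\wlk$ at a vertex $u$ satisfies $\sigma(\wlk) = -1$, then every null vector obeys $x_u = -x_u$, forcing $x_u = 0$; the propagation formula together with connectedness then forces $x \equiv 0$, giving $\balanced_{G,\sigma} = 0$. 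This proves (a).

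Part (b) follows by specializing to the constant bidirection $\sigma^+ \equiv +1$: by~\eqref{eq:signofedge} every ordered edge has $\sigma(i) = -1$, so $\sigma(\wlk) = (-1)^{\ell(\wlk)}$ for any walk, and $(G,\sigma^+)$ is balanced precisely when every closed walk has even length, which is the standard characterization of bipartite graphs. The main bookkeeping issue is the consistent treatment of loops -- a loop with $\sigma(h) = \sigma(h')$ (automatic for $\sigma^+$) both forces $x_v = 0$ and supplies a negative closed walk of length one, so the two perspectives agree -- but with this point understood the argument reduces to a direct propagation calculation.
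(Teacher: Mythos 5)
Your proof is correct. For part (b) you take exactly the paper's route: under $\sigma^{+}\equiv+1$ every edge is negative, so $(G,\sigma^{+})$ is balanced precisely when all closed walks have even length, i.e.\ when $G$ is bipartite, and (b) reduces to (a). The difference is in part (a), which the paper disposes of by citing \cite[Proposition~3.12]{JM}; you instead give a self-contained proof via the propagation rule $x_{\sou(i)}=\sigma(i)\,x_{\tar(i)}$ satisfied by any $x\in\ker(\Inc(G,\sigma))$ along non-loop edges. That argument is sound: connectedness shows a null vector is determined by its value at one base vertex (so $\balanced_{G,\sigma}\le 1$); a negative closed walk through $u$ forces $x_u=-x_u=0$ and hence $x\equiv 0$; and in the balanced case $x_v:=\sigma(\wlk_v)x_{v_0}$ is well defined (two walks joining $v_0$ to $v$ concatenate, using $\sigma(\invw{i})=\sigma(i)$ from~\eqref{eq:signofedge}, to a closed walk of sign $\sigma(\wlk_v)\sigma(\wlk'_v)=+1$) and is readily checked to lie in the kernel. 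Your explicit handling of loops --- under $\sigma^{+}$ a loop is negative, so it simultaneously kills the corresponding coordinate and witnesses non-bipartiteness --- is the one bookkeeping point the paper leaves implicit, and you resolve it consistently. What your version buys is independence from the external reference; what the paper's buys is brevity, since the cited proposition is exactly the classical statement you reprove.
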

\begin{proof} 
Claim (a) follows from~\cite[Proposition 3.12]{JM}. To show (b) note that all edges of $G$ are negative with respect to the constant bidirection $\sigma^+$ (cf. \ref{subsec:bdg}). Therefore the bidirected graph $(G,\sigma^+)$ is balanced if and only if all closed walks in $G$ are of even length. This holds precisely when $G$  is bipartite. Thus (b) follows from (a).
\end{proof}

One of the main results of~\cite{JM} is a description of incidence quadratic forms that we summarize as follows.

\begin{theorem}\label{thm:JM1}
Let $(G,\sigma)$ be a bidirected graph. Then the incidence  form $q_{{G,\sigma}}:\Z^{\edgs{G}} \to \Z$ is a semi-Cox-regular non-negative integral quadratic form of rank $\Rnk(q_{{G,\sigma}})=|\verts{G}|-\balanced_{{G,\sigma}}$ and corank $\CRnk(q_{{G,\sigma}})=|\edgs{G}|-|\verts{G}|+\balanced_{{G,\sigma}}$.
The following additional properties are expressed in terms of the canonical basis.
\begin{enumerate}[label={\textnormal{(\alph*)}},topsep=4px,parsep=0px]

 \item A connected unit  form $q:\Z^n \to \Z$ is non-negative of Dynkin type $\AA$ or $\DD$ if and only if $q=q_{G',\sigma'}$ for some $($connected$)$ bidirected graph $(G',\sigma')$ without loops. Moreover, in this case $\Dyn(q)=\DD$ if and only if $|\verts{G'}| \geq 4$ and ${(G',\sigma')}$ is unbalanced.

 \item A connected irreducible integral quadratic form $q:\Z^n \to \Z$ is non-negative of Dynkin type $\CC$ if and only if $q=q_{G',\sigma'}$ for some $($connected$)$ bidirected graph $(G',\sigma')$ having a bidirected loop.
 \end{enumerate}
\end{theorem}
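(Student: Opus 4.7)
The proof splits into two stages. The first establishes the general structural claims (non-negativity, semi-Cox-regularity, and the rank/corank formulas), and the second handles the Dynkin type characterizations (a) and (b).

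For the structural part, non-negativity is immediate from $q_{G,\sigma}(x) = \tfrac{1}{2}\|\Incsh^\top x\|^2$ as in~\eqref{eq:qinc}. The rank/corank formulas follow from $\Rnk(\Incsh\Incsh^\top) = \Rnk(\Incsh)$ (over $\Q$), combined with Lemma~\ref{lem:Null} identifying $\Null(\Incsh) = |\verts{G}| - \Rnk(\Incsh)$ with $\balanced_{G,\sigma}\in\{0,1\}$. Semi-Cox-regularity is obtained by case analysis on each edge $i=\{h,h'\}$: a non-loop edge gives $q_i=1$ with divisibility trivial; a loop with $\sigma(h)=-\sigma(h')$ has a zero row in $\Incsh$ and hence $q_i=0$ with all $q_{ij}=0$; a bidirected loop at $v_0$ has row $\pm 2\bas_{v_0}$, giving $q_i=2$ and $q_{ij}=\pm 2\Incsh_{j,v_0}$, automatically divisible by $2$. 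This calculation also shows that whenever a bidirected loop is present the sharper divisibility $q_{ij}/(q_iq_j)\in\ZZ$ characteristic of Dynkin type $\CC$ holds, which will be used for part (b).

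For part (a), the forward direction starts from the observation that a loopless connected bidirected graph yields a connected unit incidence form (all $q_i=1$), non-negative by the structural part. The Barot-Peña classification~\cite{BP} then places its Dynkin type in $\{\AA_n,\DD_n,\EE_6,\EE_7,\EE_8\}$, and the task is to exclude the exceptional types. I plan to do this via a combinatorial obstruction: the entries of $\Incsh\Incsh^\top$ show that the edges of $G'$ incident to a common vertex of degree $d$ form a $d$-clique (with all $|q_{ij}|=1$) in the off-diagonal pattern of $q_{G',\sigma'}$, and one leverages an equivalence-invariant (such as a discriminant or a count of sub-forms of a given shape) to show this local clique structure is incompatible with $\EE_6,\EE_7,\EE_8$. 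The $\AA/\DD$ dichotomy is then read off from the corank: by Lemma~\ref{lem:Null} the corank equals $|\edgs{G'}|-|\verts{G'}|+\balanced_{G',\sigma'}$, so balanced or low-rank $(G',\sigma')$ land on Dynkin type $\AA$, while unbalanced graphs with $|\verts{G'}|\geq 4$ reach the required rank of $\DD_r$. For the converse, I would exhibit explicit loopless bidirected graphs realizing each canonical form $q_{\AA_n}\oplus\zeta^c$ (e.g.\ a directed path with extra parallel edges to tune $c$) and $q_{\DD_n}\oplus\zeta^c$ (an odd cycle or a longer unbalanced cycle attached to a tail), then conclude using the uniqueness in (D1).

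Part (b) follows the same blueprint. An incidence form of a bidirected graph containing a bidirected loop has some $q_i=2$, so fails to be a unit form, and by the divisibility computed in the structural part it satisfies the type-$\CC$ Cox-regularity criterion $q_{ij}/(q_iq_j)\in\ZZ$; irreducibility holds once $G'$ contains a non-loop edge (giving a diagonal entry~$1$ that prevents scaling). The converse constructs a realizing bidirected graph by adjoining bidirected loops to one of the base graphs from (a). The main obstacle throughout is the rigidity argument ruling out the Dynkin types $\EE_6,\EE_7,\EE_8$ in (a), with its counterpart in (b): this is the delicate combinatorial heart of the statement and is where the bulk of the technical work in~\cite{JM} lies, since for all three exceptional diagrams one must show the impossibility of \emph{any} equivalent realization, not merely any realization with the corresponding off-diagonal shape.
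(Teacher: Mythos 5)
First, note that the paper does not prove this statement at all: Theorem~\ref{thm:JM1} is imported wholesale from~\cite{JM} (\enquote{One of the main results of~\cite{JM}\dots}), so there is no in-paper proof to compare against, only the proof that~\cite{JM} must contain. Judged on its own merits, your sketch is sound for the preliminary structural claims. Non-negativity from $q_{G,\sigma}(x)=\tfrac12\|\Incsh^{\tr}x\|^2$, the rank/corank formulas via $\Rnk(\Incsh\Incsh^{\tr})=\Rnk(\Incsh)$ together with Lemma~\ref{lem:Null}, and the coefficient computation ($q_i=1$ for a non-loop edge, $0$ for a positive loop, $2$ for a bidirected loop, with the divisibility $q_{ij}\in q_i\ZZ$ read off from the row $\pm2\bas_{v_0}$) are all correct and match~\eqref{eq:coeffsincform}. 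The problems are concentrated in (a) and (b), which are the actual content of the theorem.

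Two genuine gaps. First, your exclusion of $\EE_6,\EE_7,\EE_8$ is a placeholder, not an argument: the clique pattern of $q_{ij}$ around a vertex of $G'$ is a property of the form \emph{in a particular basis}, and as you yourself concede, the Dynkin type is defined via equivalence, under which this local structure is destroyed. You name no invariant that actually does the job. (A workable route does exist -- e.g.\ $\Incsh^{\tr}$ sends every $1$-root of $q_{G',\sigma'}$ to a vector $\pm\bas_u\pm\bas_v$ of squared length $2$, so the root system of $q$ embeds into one of type $\DD_m$, and no exceptional type occurs as a subsystem of any $\DD_m$ -- but that argument, or the inductive flation/reduction machinery of~\cite{JM}, must actually be supplied.) Second, and more seriously, your strategy for the \enquote{only if} directions of (a) and (b) proves the wrong statement. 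Exhibiting one loopless bidirected graph realizing each canonical form $q_{\AA_r}\oplus\zeta^c$ or $q_{\DD_r}\oplus\zeta^c$ and invoking the uniqueness of the Dynkin type only shows that each \emph{equivalence class} contains an incidence form. The theorem asserts that \emph{every} connected non-negative unit form of type $\AA$ or $\DD$ equals $q_{G',\sigma'}$ in its given basis, and \enquote{being an incidence form} is not preserved by equivalence (the paper itself stresses that equivalences need not even preserve unitarity, and that the class of type-$\CC$ Cox-regular forms is not closed under equivalence). So for both (a) and (b) the hard direction is a completeness-of-realization statement in a fixed basis, which your plan does not address; this, together with the exceptional-type exclusion, is precisely where the technical weight of~\cite{JM} sits.
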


For a more detailed discussion on the basic properties (connectedness, coefficients) of the incidence forms and some examples we refer to~\cite[Section 2]{JM}.

\subsection{{\it Proof of Theorem~\ref{thm:mrgvsCar}}}\label{subsec:proofA}

Fix a gentle bound quiver $(Q,I)$, and let  $(G,\leq)=\GPC(\quiver,I)$ be the associated marked ribbon graph  and $\Car_\La$, the Cartan matrix  of  the gentle algebra $\La:=kQ/I$. Note that since $I$ is a monomial ideal, there is an isomorphism of $k$-vector spaces
\begin{equation}\label{eq:algebradecomp}
A \cong \bigoplus_{\text{$\eta$ a permitted path}} k \eta.
\end{equation}
Recall that for a vertex $\alpha \in \verts{G}$, the corresponding permitted thread $\eta^{\alpha}=\alpha_1\cdots\alpha_{\ell^{\alpha}}\in\perm_{\quiver,I}$  is a string in the sense of Butler-Ringel~\cite{ButlerRingel}. Consider the set
$R_{\eta^{\alpha}}:=\{\text{permitted paths of $(Q,I)$}\}\,\setminus\, \{{}_{[t]}\eta^{\alpha}\}_{t=0}^{\ell_\alpha}$.
It is clear that the $k$-linear space $\langle R_{\eta^{\alpha}} \rangle$ generated by $R_{\eta^{\alpha}}$ is a left ideal of $A$ and for the left $A$-module  $\stringmod{\alpha}:=A/\langle R_{\eta^{\alpha}} \rangle$ there is an  isomorphism of $k$-vector spaces
\[
M_{\alpha}\cong\bigoplus_{t=0}^{\ell^{\alpha}}k\, {}_{[t]}\eta^{\alpha}=k\alpha_1\cdots\alpha_{\ell^{\alpha}} \oplus k\alpha_2\cdots\alpha_{\ell^{\alpha}} \oplus \ldots \oplus k\alpha_{\ell^{\alpha}}\oplus k\triv_{\sou(\eta^{\alpha})}
\]
by~\eqref{eq:algebradecomp}.
Note that $M_{\alpha}$ is a left $\La$-module isomorphic to the string module associated to the string $\eta^{\alpha}$ as defined in~\cite{ButlerRingel}. In particular, the dimension vector of $M_{\alpha}$ encodes the (half-)edges of $G$ that are incident to vertex $\alpha$, that is, $\alpha$-th column of the incidence matrix $\Inc(G)$ of $G$ has the shape: 
\begin{equation}\label{eq:IncString}
\Inc(G)\bas_{\alpha}=\vdim{M_{\alpha}}, \qquad \text{for every vertex $\alpha \in\verts{G}$}
\end{equation}
(see Definition~\ref{def:bidi}).
Considering a vertex $i$ of $\quiver$ as an edge of $G$, then its end-vertices $\alpha,\beta \in \verts{G}$ correspond to permitted threads $\eta^{\alpha},\eta^{\beta}$ that intersect at vertex $i$ (to be precise, the two split permitted threads with center $i$ are $(\eta^{\alpha}_{[r]},{}_{[r]}\eta^{\alpha})$ and $(\eta^{\beta}_{[s]},{}_{[s]}\eta^{\beta})$ for some integers $r$ and $s$, see Lemma~\ref{lem:iota}(b)).
One can directly show that the following is a short exact sequence of left $\La$-modules,
\begin{equation}\label{eq:ses}
\xymatrix{0 \ar[r] & P_i \ar[r]^-{\left[\begin{smallmatrix} {}_{[r]}\eta^{\alpha} \\ {}_{[s]}\eta^{\beta} \end{smallmatrix} \right]} & M_{\alpha} \oplus M_{\beta} \ar[r]^-{\left[\begin{smallmatrix} a
 & -b \end{smallmatrix} \right]} & E_i \ar[r] & 0, }
\end{equation}
where ${}_{[r]}\eta^{\alpha}$ denotes the right multiplication by the path ${}_{[r]}\eta^{\alpha}$, and similarly for ${}_{[s]}\eta^{\beta}$,  $E_i=D(\triv_iA)=D(\bigoplus k \eta)$ is the corresponding left indecomposable injective module, and $a:M_{\alpha} \to E_i$ is given by ${}_{[t]}\eta^{\alpha} \mapsto 0$ if $t<r$ and ${}_{[t]}\eta^{\alpha} \mapsto ({}_{[r]}\eta^{\alpha})_{[t-r]}^*$ if $t\geq r$ (where the direct sum in the description of $E_i$ is taken over all permitted paths $\eta$ with $\tar(\eta)=i$ and $\eta^*$ denotes the usual dual basic element), and similarly for $b$. Together with the definition of incidence matrix, equations~(\ref{eq:IncString}) and~(\ref{eq:ses}) imply
\begin{equation}\label{eq:CC}
\Inc(G)\Inc(G)^{\tr}\bas_i=\Inc(G)(\bas_{\alpha}+\bas_{\beta})=\vdim{M_{\alpha}}+ \vdim{M_{\beta}}=
\vdim{P_i} + \vdim{E_i}=(\Car_\La+\Car_\La^\tr)\bas_i.
\end{equation}
This happens for every $i\in Q_0$ which means that  the first equality $\Inc(G)\Inc(G)^{\tr}=\Car_\La+\Car_\La^\tr$ of~\eqref{eq:mrbvsCar} in Theorem ~\ref{thm:mrgvsCar} holds. Moreover, this shows that the Euler form $q_A$ in the basis of indecomposable projectives is the incidence form $q_G$ of $G$  and hence it is non-negative (see Definition~\ref{def:incqf} and Subsection~\ref{subsec:eulf}). 

To show the second equality of~\eqref{eq:mrbvsCar} we apply the following observation.

\smallskip

\begin{claim}\label{claimone}
 {If $\wlk$ is an anti-walk of $(G,\leq)$ having as source the vertex $\sou(\wlk)=\beta$, then the string complex $\stringcpx{0,\invw{\wlk}}$ is a projective resolution of the string module $M_{\beta}$. In particular, if $\wlk=i_1\ldots i_Li_{L+1}$ then the resolution $\stringcpx{0,\invw{\wlk}}=\stringcpx{-L,\wlk}$ induces the following exact sequence of $\La$-modules
\begin{equation*}\label{eq:resolution}
\xymatrix{0 \ar[r]& P_{i_{1}} \ar[r] & P_{i_{2}} \ar[r] & \cdots \ar[r] & P_{i_L} \ar[r] & P_{i_{L+1}}\ar[r]&M_\beta\ar[r]&0.}
\end{equation*}}
\end{claim}

This result was noted in~\cite[Lemma~2.4]{Bob2cycle} and its proof, see also~\cite[Lemma~7.21]{JM}, cf.~\eqref{eq:lengthAntiWalk}, \eqref{eq:bijections} and Subsection~\ref{subsec:perfcpx}. Similarly, the anti-walk $\wlk$ with target vertex $\tar(\wlk)=\alpha$ contains the information of an injective  resolution of the string module $M_{\alpha}$ (see Claim \ref{claimtwo} below).

\smallskip
As consequence of Claim \ref{claimone}, for an anti-walk $\wlk=i_1\ldots i_Li_{L+1}$ with $\sou(\wlk)=\beta$  we have
\begin{equation}\label{eq:alterresolution}
\vdim{M_{\beta}}=\sum_{t=1}^{L+1}(-1)^{L+1-t}\vdim{P_{i_t}}=\Car_\La\sum_{t=1}^{L+1}(-1)^{L+1-t}\bas_{\vtx{i_t}},
\end{equation} cf. Remark~\ref{rem:euler} (recall that $\quiver_0=\edgs{G}$).
Assume now that the gentle bound quiver  $(\quiver,I)$ has finite global dimension and let
$(\fmrgs{G},\fmrgs{\leq},\fmrgs{\sigma}):=\GFC(\quiver,I)$ be the forbidden marked ribbon graph as in Definition~\ref{rem:forb}. Observe that by construction, the $\theta$-th column of the  incidence matrix of the bidirected graph $(\fmrgs{G},\fmrgs{\sigma})$ has the shape
\begin{equation}\label{eq:incantistring}
\Inc(\fmrgs{G},\fmrgs{\sigma})\bas_{\theta}=\sum_{t=1}^{L+1}(-1)^{L+1-t}\bas_{\vtx{i_t}},
\end{equation} 
where $\theta:={\GBQ}(\wlk)\in  \forb_{\quiver,I}$ is the corresponding forbidden thread, see~\eqref{eq:bijections} and Definition~\ref{def:bidi} (recall that $\verts{\fmrgs{G}}=\forb_{\quiver,I}$ and $\edgs{\fmrgs{G}}=\quiver_0$).
Now combining the formulae~\eqref{eq:IncString}, \eqref{eq:alterresolution} and~\eqref{eq:incantistring} we get
\begin{equation}\label{eq:inccarincf}
\Inc(G)=\Car_\La\cdot\Inc(\fmrgs{G},\fmrgs{\sigma}),
\end{equation}
via the bijection $\Phi_2=\sou\circ\GBQ^{-1}:\forb_{Q,I}\to\perm_{Q,I}$ of~\eqref{eq:bijections}.
Thus, since $\Car_A$ is $\ZZ$-invertible in this case,  we have
\begin{equation}\label{eq:CCbdg}
\Inc(\fmrgs{G},\fmrgs{\sigma})\Inc(\fmrgs{G},\fmrgs{\sigma})^{\tr}=\Car_\La^{-1}\Inc(G)\Inc(G)^{\tr}\Car_\La^{-\tr}=\Car_\La^{-1}(\Car_\La+\Car_\La^{\tr})\Car_\La^{-\tr}=\Car_\La^{-1}+\Car_\La^{-\tr}.
\end{equation}
This shows the second equality of~\eqref{eq:mrbvsCar}.
Note that this means that the Euler form $q_A$ in the basis of simples is the incidence form $q_{\fmrgs{G},\fmrgs{\sigma}}$ of the bidirected graph $(\fmrgs{G},\fmrgs{\sigma})$ in the case of $\gldim(A)<\infty$.
\smallskip

(a) By Lemma~\ref{thm:IncGentle} we can assume that $(\quiver,I)=\GBQ(G,\leq)$. To show the equivalence of (a2) and (a3) let $S,T$ be the string functions associated to a direction $\sigma:\hedgs{G} \to \{\pm 1\}$ of $G$ as in Lemma~\ref{rem:string}
and observe that
\begin{itemize}
\item[(i)] $S=-T$ if and only if $\sigma$ is constant on all half-edges incident to a vertex of $G$, that is, $\sigma(h)=\sigma(h')$ if $\s(h)=\s(h')$.

Indeed, assume first that $S=-T$ and recall that any arrow of $Q$ is contained in a permitted thread $\eta^\alpha=\alpha_1\cdots \alpha_{\ell^{\alpha}}$ with $\ell^\alpha \geq 1$, for some $\alpha\in\verts{G}$ with $\s^{-1}(\alpha)=\{h^{\alpha}_0 > h^{\alpha}_1 > \ldots >h^{\alpha}_{\ell^{\alpha}}\}$, where $\alpha_t=(h^{\alpha}_{t-1},h^{\alpha}_{t})$, see Remark~\ref{rem:linear} and Lemma~\ref{lem:iota}(a).
 By definition of $S,T$ and the condition $S=-T$ we have $S(\alpha_t)=-T(\alpha_{t+1})=S(\alpha_{t+1})$ for $t=1,\ldots,\ell^\alpha-1$.  Using the definition of $\sigma=\sigma^{S,T}$, see~(\ref{eq:stringONE}), we have $\sigma(h^{\alpha}_t)=S(\alpha_t)=S(\alpha_{t+1})=\sigma(h^{\alpha}_{t+1})$ for $t=1,\ldots,\ell^\alpha-1$, and $\sigma(h^{\alpha}_0)=-T(\alpha_1)=S(\alpha_1)=\sigma(h^{\alpha}_1)$, that is, $\sigma$ is constant on $\s^{-1}(\alpha)$. Conversely, if $\sigma$ is constant on every $\s^{-1}(\alpha)$, by the definition of $S=S^\sigma$ and $T=T^\sigma$, see~(\ref{eq:stringTWO}), we have $S(\alpha_t)=-T(\alpha_t)$ for $t=1,\ldots,\ell^\alpha$. Since every arrow of $(\quiver,I)=\GBQ(G,\leq)$ is of the form $\alpha_t=(h^{\alpha}_{t-1},h^{\alpha}_{t})$ as above, this shows that $S=-T$, as wanted.
\end{itemize}

If $G$ is bipartite with partition $\verts{G}=V' \cup V''$ we can define a direction $\sigma$ by taking $\sigma(h')=1$ for all $h' \in \s^{-1}(V')$ and $\sigma(h'')=-1$ for all $h'' \in \s^{-1}(V'')$; then by (i) the corresponding string functions satisfy $S^{\sigma}=-T^{\sigma}$. Conversely, if there are string functions $S=-T$ of $(\quiver,I)$ then by (i) the associated direction $\sigma^{S,T}$ of $G$ is constant around each vertex, which defines a partition $\verts{G}=V^+ \cup V^-$ where $V^\epsilon:=\{\alpha \in \verts{G} \mid \text{$\sigma^{S,T}(h)=\epsilon$ for any half-edge $h$ with $\s(h)=\alpha$} \}$. Since $\sigma^{S,T}$ is a direction, the end-vertices of each edge of $G$ belong to different parts of this partition, which shows that $G$ is bipartite.

To show the equivalence of (a1) and (a3), note that any closed walk $\wlk$ of $\quiver$ without repeated edges is a homotopy string of $(\quiver,I)$. Since we are interested in $\wlk$ up to cyclic permutations, we may assume that $\wlk=\sigma_1(\wlk)\cdots \sigma_L(\wlk)$ is a homotopy presentation such that:
\[
\text{If both $\sigma_1(\wlk), \sigma_L(\wlk)$ are direct (resp. inverse) homotopy letters, then $\sigma_L(\wlk)\sigma_1(\wlk) \in I$ (resp. $\invw{(\sigma_L(\wlk)\sigma_1(\wlk))} \in I$).} \tag{$*$}
\]
In this situation we have
\begin{itemize}
\item[(ii)] $\deg(\wlk)=\nrp{\wlk}-\nrm{\wlk}$, where $\nrr{\wlk}=\nrp{\wlk}+\nrm{\wlk}$ is the number of relations in $\wlk$ as in Definition \ref{def:mcc}.

Recall that $\deg(\wlk)$ is the number of direct minus the number of inverse homotopy letters in the decomposition $\wlk=\sigma_1(\wlk)\cdots \sigma_L(\wlk)$.  Considering the letters $\sigma_t(\wlk)$ as arrows in a cycle, say $\widetilde{\mathbb{A}}_L$, a simple combinatorial argument shows that the number of clockwise oriented arrows minus the number of counterclockwise oriented arrows of the cycle (that is, $\deg(\wlk)$) coincides with the number of clockwise oriented paths of length two minus the number of counterclockwise oriented paths of length two of the cycle (that is, $\nrp{\wlk}-\nrm{\wlk}$ by the above condition $(*)$ on the homotopy presentation of the closed walk $\wlk$).
\end{itemize}
Moreover, note that for any reduced walk $\Wlk$ of $G$ we have that
\begin{itemize}
\item[(iii)] the integers $\deg(\Wlk)$ and $\ell(\Wlk)$ have different parity.

Since $|\deg(ij)|=1$ for any reduced walk $ij$ of length two, if $\Wlk=i_1 \cdots i_Li_{L+1}$ we have $\deg(\Wlk)=\sum_{t=1}^L\deg(i_ti_{t+1}) \equiv L (\mod 2)$, that is, $\deg(\Wlk)-\ell(\Wlk) \equiv 1 (\mod 2)$.
\end{itemize}

Assume first that $(\quiver,I)$ satisfies the multi-clock condition, let $i_1\cdots i_L$ be a cycle of $G$ and take $\Wlk:=i_1\cdots i_Li_{L+1}$ with $i_{L+1}:=i_1$. It is easy to see that $\Wlk$ is a reduced walk and $\wlk:=\GBQ(\Wlk)$ is a closed walk without repeated edges, see Lemma~\ref{lem:walkGQ}, satisfying the condition $(*)$ above. Since $\nrr{\wlk}=\nrp{\wlk}+\nrm{\wlk}$ is even by assumption, then so is $\deg(\wlk)=\deg(\Wlk)$ by (ii), see Lemma~\ref{lem:walkGQ}(c). Using (iii), the length $L+1$ of $\Wlk$ is odd, that is, the length of original cycle $i_1\cdots i_L$ is even. Since this holds for any cycle then $G$ is a bipartite graph. Conversely, assume that $G$ is a bipartite graph, let $\wlk$ be a closed walk of $\quiver$ without repeated edges satisfying condition $(*)$ above, and take $\Wlk:=\GPC(\wlk)=i_1\cdots i_Li_{L+1}$. Since $i_1\cdots i_L$ is a closed walk (by Lemma~\ref{lem:walkGQ}(a) and $(*)$) and $G$ is bipartite, then $L$ is even and by (iii) the degree $\deg(\Wlk)=\deg(\wlk)$ is also even (since $\ell(\Wlk)=L+1$ is odd). By (ii) the difference $\deg(\wlk)=\nrp{\wlk}-\nrm{\wlk}$ and the sum $\nrr{\wlk}=\nrp{\wlk}+\nrm{\wlk}$ are even integers. This shows that $(\quiver,I)$ satisfies the multi-clock condition.

To show the equivalence of (a3) and (a4), assume that $\gldim(\La)<\infty$ and observe using equation~\eqref{eq:inccarincf} that the coranks of the incidence matrices $\Inc(G)$ and $\Inc(\fmrgs{G},\fmrgs{\sigma})$ coincide (since in this case $\Car_\La$ is $\Z$-invertible). By Lemma~\ref{lem:Null}, it follows that $G$ is bipartite if and only if $(\fmrgs{G},\fmrgs{\sigma})$ is balanced, as claimed.

\medskip

(b) 
Recall that the Euler form $q_A$ of $A$ in the basis of indecomposable projectives is the incidence form $q_G$ of (the underlying graph of) the associated marked ribbon graph $\GPC(\quiver,I)=(G,\leq)$, see~\eqref{eq:CC}. So by Theorem~\ref{thm:JM1}
the corank of $q_A$ is given by $\CRnk(q_{A})=|\edgs{G}|-|\verts{G}|+\balanced_{{G}}$, where $\balanced_{G}=\balanced_{G,\sigma^+}=\Null(\Inc(G))$. Moreover, by  Lemma~\ref{lem:Null}(b) and the equivalence \lq\lq (a1) $\Leftrightarrow$ (a3)'' proved above, the
value of $\Null(\Inc(G))\in\{0,1\}$ coincides with the \lq\lq multi-clock characteristic'' parameter $\gcc_\La$ of Definition~\ref{def:mcc}. Thus we have proved the first equality $\CRnk(q_{A})=|\edgs{G}|-|\verts{G}|+\gcc_\La$ of~\eqref{eq:mrbvsCarcrk} for the corank. This gives us also the last equality
$\CRnk(q_{A})=|\edgs{\fmrgs{G}}|-|\verts{\fmrgs{G}}|+\gcc_\La$ in the finite global dimension case, since \begin{equation}\label{eq:edgesverticesThmA}
|\edgs{G}|=|Q_0|=|\edgs{\fmrgs{G}}|\quad \text{and}\quad |\verts{G}|=|\perm_{Q,I}|=|\forb_{Q,I}|=|\verts{\fmrgs{G}}|,
\end{equation} see Definitions~\ref{def:BoundQuiverInc} and~\ref{rem:forb} and the bijections of the diagram~\eqref{eq:bijections}, cf. Lemmas~\ref{lem:iota}(b) and~\ref{lem:forbidden}(b). (Note that one can alternatively apply Theorem~\ref{thm:JM1} and the fact that $q_A$ in the basis of simples is the incidence form $q_{\fmrgs{G},\fmrgs{\sigma}}$, cf. Lemma~\ref{lem:Null}(a)). To show the remaining equality $\CRnk(q_\La)=|Q_1|-|Q_0|+\gcc_\La$ first note that given a vertex $\alpha\in V(G)$, its degree $\deg(\alpha)$ (that is, the number of incident half-edges)  equals to $\ell(\eta^\alpha)+1$, where $\ell(\eta^\alpha)$ is the length of the corresponding permitted thread $\eta^\alpha\in\perm_{Q,I}$, cf. Definition~\ref{def:BoundQuiverInc}. Hence applying~\eqref{eq:edgesverticesThmA} and elementary graph theoretic property  we get
\begin{equation}\label{eq:edgesdegreesThmA}
2|Q_0|=2|E(G)|=\sum_{\alpha\in V(G)}\deg(\alpha)=\sum_{\alpha\in V(G)}(\ell(\eta^\alpha)+1)
=\sum_{\eta^\alpha\in \perm_{Q,I}}\ell(\eta^\alpha)+|V(G)|=|Q_1|+|V(G)|
\end{equation}
(the latter equality follows since every arrow of $Q$ is contained in exactly one non-trivial permitted thread, by Lemma~\ref{lem:iota}(a)). In particular, we have $|V(G)|=2|Q_0|-|Q_1|$ and this gives us
\begin{equation}\label{eq:centraleqThmA}
\CRnk(q_\La)=|\edgs{G}|-|\verts{G}|+\gcc_\La=|Q_0|-(2|Q_0|-|Q_1|)+\gcc_\La=|Q_1|-|Q_0|+\gcc_\La.
\end{equation}
The formulae for the rank $\Rnk(q_A)$ follow from those for $\CRnk(q_A)$ since $\Rnk(q_A)=|Q_0|-\CRnk(q_A)$, cf.~\eqref{eq:edgesverticesThmA} and Subsection~\ref{subsec:qf}.
To finish the proof of claim (b) it remains to conclude that the binary parameter $\gcc_A$ is a derived equivalence invariant. But this follows from the proved formula for the corank of $q_A$. Indeed, by~\eqref{eq:centraleqThmA} we have that $\gcc_\La=|Q_0|-|Q_1|+\CRnk(q_\La)$ and every summand of the right-hand side of this equality is a derived invariant, see Remark~\ref{rem:euler}, and~\cite[Proposition B]{AAG} for the invariance of  $|Q_1|$.

\smallskip

(c) First observe that given an arbitrary bidirected graph $(G',\sigma')$, the diagonal coefficients of its incidence form $q'=q_{G',\sigma'}:\ZZ^{E(G')}\to\ZZ$ satisfy:
\begin{equation}\label{eq:coeffsincform}
q'_{i} = \left\{
\begin{array}{l l}
2, & \text{if $i$ is a negative (i.e., bidirected) loop}, \\
1, & \text{if $i$ is not a loop}, \\
0, & \text{if $i$ is a positive loop},
\end{array} \right.
\end{equation}
for $i\in E(G')$, see Definitions~\ref{def:bidi},\ref{def:incqf}, cf.~\cite[Lemma 2.11]{JM}. In the case of  our marked ribbon graph $\GPC(\quiver,I)=(G,\leq)$, every edge of $G$
 is negative (with respect to the constant bidirection $\sigma^+\equiv+1$). So since  the Euler form $q=q_A$ in the basis $\projbas=\{[P_i]\}_{i\in Q_0}$ is the incidence form $q_G=q_{G,\sigma^+}$ of $G$, by~\eqref{eq:coeffsincform}
 we have $q_i=q_i^\projbas\in\{1,2\}$ for each $i\in \edgs{G}=Q_0$. In particular, $q$ is Cox-regular by the first statement of Theorem~\ref{thm:JM1}, and $q$ is connected by Lemma~\ref{lem:eulfconnected}. Moreover, by Definition~\ref{def:BoundQuiverInc} it follows that $G$ has a (negative) loop $i\in E(G)$ at vertex $\alpha\in V(G)$ if and only if the corresponding permitted thread $\eta^\alpha$ self-crosses at $i$ as a vertex in $Q_0$. This proves the formula~\eqref{eq:mrbvsCarqisproj} of (c1), cf. Lemma~\ref{lem:iota}. 

Assume now that  $q$ is unitary. By the arguments above, this holds precisely when $G$ has no loops, equivalently, when  $(Q,I)$ has no self-crossing permitted path. Then by Theorem~\ref{thm:JM1}(a), $q$ as the incidence form of $G$, has Dynkin type $\AA$ or $\DD$, and $\Dyn(q)=\DD$ if and only if $|V(G)|\geq 4$ and $(G,\sigma^+)$ is unbalanced. Note that $(G,\sigma^+)$ is unbalanced precisely when $G$ is not bipartite, equivalently, when $\gcc_A=0$ by the equivalence \lq\lq (a1) $\Leftrightarrow$ (a3)'' shown before. This proves (c2), since $|V(G)|=2|Q_0|-|Q_1|$ by~\eqref{eq:edgesdegreesThmA} (note that if $\gcc_A=0$ then $r_A=2|Q_0|-|Q_1|$, cf.~\eqref{eq:mrbvsCarcrk}).

 If $q$ is not unitary, then by the arguments above $G$ has a loop, which means that $G$ is not bipartite. Thus $\gcc_A=0$. Assume first that $2|Q_0|-|Q_1|\geq 2$. It means that $r_A=|V(G)|\geq 2$, so $q$ as the incidence form of $G$ having at least 2 vertices is irreducible by~\cite[Lemma 2.14(b)]{JM}. So we can apply Theorem~\ref{thm:JM1}(b) which implies  that $\Dyn(q)=\CC_{r_A}$. In the remaining case $r_A=|V(G)|=2|Q_0|-|Q_1|=1$ the graph $G$ consists of one vertex and $n:=|E(G)|$ two-tail loops, thus
 \begin{equation}\label{eq:onevertex}
 \Inc(G)=[2,2,2,\ldots,2]^\tr\in\MM_{n,1}(\ZZ)\quad\text{and}\quad
 \Car_\La+\Car_\La^\tr=\Inc(G)\Inc(G)^{\tr}=4\cdot\jeden\in\MM_{n,n}(\ZZ),\end{equation}
  where $\jeden$ denotes the $n\times n$ matrix whose all entries equal 1, cf. Definitions~\ref{def:bidi},\ref{def:incqf} and~\eqref{eq:CC}. This means that the Euler form $q=q_A$ in the basis of projectives is given as $q(x)=2x^\tr J x$, for $x\in\ZZ^n\cong K_0(\per\, A)$. In particular,
  $\frac{1}{2}q(x)=x^\tr J x$ is a non-negative unit form of corank $n-1$ and Dynkin type $\AA_1$, cf.~(D1) in Subsection~\ref{subsec:qf} and~\cite[Example 6.9]{JM}. 

\smallskip

(d) 
Recall that in the case $\gldim(A)<\infty$ the Euler form $q=q_A$ in the basis $\simplebas=\{[S_i]\}_{i\in Q_0}$ of simples is the incidence form $q_{\fmrgs{G},\fmrgs{\sigma}}$ of the bidirected graph $(\fmrgs{G},\fmrgs{\sigma})$, see
\eqref{eq:CCbdg}. In particular,  $q$ is semi-Cox-regular  by Theorem~\ref{thm:JM1}. Note that by construction, $\fmrgs{G}$ has a loop $i\in E(\fmrgs{G})$ at vertex $\alpha\in V(\fmrgs{G})$ if and only if the forbidden thread $\theta^\alpha$ corresponding to $\alpha$ self-crosses at $i$ as a vertex in $Q_0$, see Definition~\ref{rem:forb}. Note that the latter is equivalent with the existence of a (unique) almost-full cycle $\wlk$ at $i$ in $Q$, cf. Lemmas~\ref{lem:fgd} and~\ref{lem:forbidden} (take $\wlk$  as the unique subpath of $\theta^\alpha$ with $\sou(\wlk)=\tar(\wlk)=i$). Moreover, by the definition of the bidirection $\fmrgs{\sigma}$ in Definition~\ref{rem:forb} it follows that if  $i=\{h, \iota h\}$ is a loop in $\fmrgs{G}$ then $\fmrgs{\sigma}(h)=-\fmrgs{\sigma}(\iota h)$ (resp.~$\fmrgs{\sigma}(h)=\fmrgs{\sigma}(\iota h)=+1$), that is, $i$ is a positive (resp.~negative) loop,  provided the corresponding almost-full cycle $\wlk$ has odd (resp.~even) length, cf.~\eqref{eq:signofedge}. Combining these observations with  the formula~\eqref{eq:coeffsincform} we get the proof of~\eqref{eq:mrbvsCarqis}. 
In particular, $q$ is a unit form if and only if  $(Q,I)$ has no
almost-full cycles. Moreover, if there is an almost-full cycle of odd length at $i\in Q_0$ then by the above we have $0=q_i=q([S_i])$. Since $q$ is non-negative then $\rad(q)=\roots_q(0)=q^{-1}(0)$ thus $q_{ij}=q([S_i],[S_j])=0$ for each $j\in Q_0$, which means that $q$ is disconnected, see Subsection~\ref{subsec:qf}. Finally, assume that there is no almost-full cycle of odd length and $2|Q_0|-|Q_1|\geq 3$. This means that $(\fmrgs{G},\fmrgs{\sigma})$ has no positive loop and $|V(\fmrgs{G})|=2|Q_0|-|Q_1|\geq 3$, cf.~\eqref{eq:edgesdegreesThmA} and~\eqref{eq:edgesverticesThmA}. This implies that $q$, as the incidence form of $(\fmrgs{G},\fmrgs{\sigma})$, is connected by~\cite[Lemma 2.14(c)]{JM}.
This finishes the proof of (d1).

To show (d2)-(d3), assume that $q$ is connected. 
We proceed similarly as in (c2)-(c3). If  $q$ is unitary then $(\fmrgs{G},\fmrgs{\sigma})$ has no loops and by Theorem~\ref{thm:JM1}(a), $q$ has Dynkin type $\AA$ or $\DD$. Moreover, $\Dyn(q)=\DD$ if and only if $|V(\fmrgs{G})|\geq 4$ and $(\fmrgs{G},\fmrgs{\sigma})$ is unbalanced. Equivalently,  when $\gcc_A=0$ and $r_A=2|Q_0|-|Q_1|\geq 4$ by \lq\lq (a1) $\Leftrightarrow$ (a4)'' and~\eqref{eq:mrbvsCarcrk} shown before. This proves (d2). If $q$ is not unitary, then $(\fmrgs{G},\fmrgs{\sigma})$ has a negative (bidirected) loop, which means it is unbalanced, that is, $\gcc_A=0$. Now, if  $2|Q_0|-|Q_1|\geq 2$ then $r_A=|V(\fmrgs{G})|\geq 2$, so $q$ is irreducible by~\cite[Lemma 2.14(b)]{JM}. Thus $\Dyn(q)=\CC_{r_A}$ by Theorem~\ref{thm:JM1}(b). In the remaining case $r_A=|V(\fmrgs{G})|=2|Q_0|-|Q_1|=1$ the graph $\fmrgs{G}$ consists of one vertex and $n:=|E(\fmrgs{G})|$ bidirected loops, with $s$ two-tail and $n-s$ two-head loops for some $0\leq s\leq n$.
Thus, up to renumbering, we have:
 \begin{equation}\label{eq:onevertexbidi}
 \Inc(\fmrgs{G},\fmrgs{\sigma})=[2,\ldots,2,-2,\ldots,-2]^\tr\in\MM_{n,1}(\ZZ)\quad\text{and}\quad
 \Car_\La^{-\tr}+\Car_\La^{-1}=\Inc(\fmrgs{G},\fmrgs{\sigma})\Inc(\fmrgs{G},\fmrgs{\sigma})^{\tr}=4\left[
 \begin{array}{cc}
 \!J_{s,s}\!&\!-J_{s,n-s}\!\\
 \!-J_{n-s,s}\!&\!J_{n-s,n-s}\!
 \end{array}
 \right]\in\MM_{n,n}(\ZZ),\end{equation}
  where $\Inc(\fmrgs{G},\fmrgs{\sigma})$ has $s$ entries $2$ and the remaining entries are $-2$, and
  $\jeden_{t, t'}\in\MM_{t,t'}(\ZZ)$ denotes the matrix whose all entries equal 1, cf.\eqref{eq:onevertex}. Similarly as in (c3), this means that
  $\frac{1}{2}q$ is a non-negative unit form of rank $1$ and corank $n-1$, hence it has  Dynkin type $\AA_1$, cf.~\cite[Example 6.9]{JM}. This finishes the proof of (d3) and  of the theorem.
\hfill$\Box$

\subsection{Applications}\label{subsec:applA}

In this subsection we show some consequences of Theorem~\ref{thm:mrgvsCar} and its proof. We start with the following immediate conclusion which provides a collection of graph theoretic invariants for gentle algebras in the spirit of recent results for Brauer graph algebras~\cite[Theorem 1.2]{AZ22.Brauer} and~\cite[Theorem~A]{OppZvo22}.

\begin{corollary}\label{cor:OZspirit}
Let $(Q,I)$ and $(Q',I')$ be two gentle bound quivers with marked ribbon graphs $(G,\leq)=\GPC(\quiver,I)$ and $(G',\leq')=\GPC(\quiver',I')$. If the gentle algebras $A=kQ/I$ and $A'=kQ'/I'$ are derived equivalent then:
\begin{enumerate}[label={\textnormal{(\alph*)}},topsep=3px,parsep=0px]
\item $G$ and $G'$ have the same number of vertices, edges and faces,
\item the multi-sets of lengths and closed degrees of faces $\{(\ell(\wlk),\Cdeg(\wlk)) \mid\ \wlk\in \Faces(G,\leq)\}$ and $\{(\ell(\wlk),\Cdeg(\wlk)) \mid\ \wlk\in \Faces(G',\leq')\}$  coincide,
 \item either both or none of $G$ and $G'$ are bipartite.
 \end{enumerate}
\end{corollary}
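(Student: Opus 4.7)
The plan is to reduce each of the three claims to a derived invariant already established earlier in the paper (either in Theorem~\ref{thm:mrgvsCar} or in the Avella-Alaminos-Geiss theorem, Theorem~\ref{thm:AAG}), using the combinatorial correspondences between $(G,\leq)$ and the invariants of $A=kQ/I$ developed in Section~\ref{sec:combgentle}.

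For part (a), I would combine three identities. First, by Definition~\ref{def:BoundQuiverInc} the edges of $G$ are in bijection with the vertices of $Q$, so $|\edgs{G}|=|Q_0|$, which is the rank of $K_0(\per\,A)$ and hence a derived invariant. Second, equation~\eqref{eq:edgesdegreesThmA} in the proof of Theorem~\ref{thm:mrgvsCar} gives $|\verts{G}|=2|Q_0|-|Q_1|$; combined with the derived invariance of $|Q_1|$ (which is \cite[Proposition B]{AAG}), this yields invariance of $|\verts{G}|$. Third, by the discussion preceding~\eqref{eq:AAGnew}, the set of faces $\Faces(G,\leq)$ is in bijection with the set of orbits $\forb_{\quiver,I}/\Phi\cup\forb'_{\quiver,I}/\Phi'$, and the cardinality of the latter is $\sum_{n,m\geq 0}\varphi_A(n,m)$, which is derived invariant by Theorem~\ref{thm:AAG}.

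For part (b), the key is equation~\eqref{eq:AAGnew}, which expresses the AAG invariant as
\[
\varphi_A(n,m)=\bigl|\bigl\{\wlk\in\Faces(G,\leq)\mid \bigl(\tfrac{\ell(\wlk)-\Cdeg(\wlk)}{2},\tfrac{\ell(\wlk)+\Cdeg(\wlk)}{2}\bigr)=(n,m)\bigr\}\bigr|.
\]
Inverting this change of variables via $\ell=n+m$, $\Cdeg=m-n$, the multi-set $\{(\ell(\wlk),\Cdeg(\wlk))\mid \wlk\in\Faces(G,\leq)\}$ is determined by $\varphi_A$ and vice versa. Since $\varphi_A=\varphi_{A'}$ by Theorem~\ref{thm:AAG}, the two multi-sets coincide.

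Part (c) is immediate from Theorem~\ref{thm:mrgvsCar}: the equivalence (a1)$\Leftrightarrow$(a3) identifies the bipartiteness of $G$ with the multi-clock condition on $(Q,I)$, equivalently with the binary parameter $\gcc_A=1$; and part (b) of the same theorem asserts that $\gcc_A$ is itself a derived equivalence invariant. Since essentially all work has been done in Theorem~\ref{thm:mrgvsCar} and Theorem~\ref{thm:AAG}, there is no serious obstacle here; the only care required is the bookkeeping of the bijections of diagram~\eqref{eq:bijections} to ensure the graph-theoretic quantities on $G$ are matched to the algebraic invariants of $A$.
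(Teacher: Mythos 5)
Your proposal is correct and follows essentially the same route as the paper: edges via $|\edgs{G}|=|Q_0|$, faces and the multi-set of pairs $(\ell,\Cdeg)$ via equation~\eqref{eq:AAGnew} together with Theorem~\ref{thm:AAG}, and bipartiteness via the equivalence (a1)$\Leftrightarrow$(a3) and the invariance of $\gcc_A$ from Theorem~\ref{thm:mrgvsCar}. The only cosmetic difference is for $|\verts{G}|$, where you use $|\verts{G}|=2|Q_0|-|Q_1|$ and the invariance of $|Q_1|$ directly, while the paper routes through $\Rnk(q_A)=|\verts{G}|-\gcc_A$; these are equivalent since the paper's own proof of the invariance of $\gcc_A$ already rests on the invariance of $|Q_1|$.
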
 
\begin{proof}
Recall that $|\edgs{G}|=|Q_0|$ hence  derived invariance of $|\edgs{G}|$ is clear. For $|\verts{G}|$ observe that if $A\congder A'$ then by Theorem~\ref{thm:mrgvsCar}(b) we have $\gcc_A=\gcc_{A'}$ and $|\verts{G}|-\gcc_{\La}=\Rnk(q_\La)=\Rnk(q_{\La'})=|\verts{G}|-\gcc_{\La'}$ thus $|\verts{G}|=|\verts{G'}|$. Claim (b) is a direct consequence of Theorem~\ref{thm:AAG} and the formula~\eqref{eq:AAGnew}. In particular, (b) implies that $|\Faces(G,\leq)|=|\Faces(G',\leq')|$. The remaining claim (c) follows from  derived invariance of the multi-clock condition (see Theorem~\ref{thm:mrgvsCar}(b)) and the equivalences in Theorem ~\ref{thm:mrgvsCar}(a).
\end{proof}

The following example shows that sometimes we can easily disprove derived equivalences by a quick glance at the quiver and relations.

\begin{example}\label{ex:Amiotder} Consider the three gentle bound quivers $(Q^{(i)},I^{(i)})$, $i=0,1,2$, from Example~\ref{exa:amiot}(b). Let $A^{(i)}:=kQ^{(i)}/I^{(i)}$ be the corresponding gentle algebras. Observe that all three algebras have the same AAG invariant $\{(4,6)\}$ and the Coxeter polynomial $z^5-z^4-z+1$ (cf.~Subsection \ref{subsec:antiwalks} and Example~\ref{ex:Amiotcoxp}). However, we easily see that $\cyccond_{A^{(0)}}=\cyccond_{A^{(2)}}=1$ (that is, $A^{(0)}$ and $A^{(2)}$ satisfy the multi-clock condition) and $\cyccond_{A^{(1)}}=0$, thus $A^{(0)}\not\congder A^{(1)}\not\congder A^{(2)}$ by Theorem~\ref{thm:mrgvsCar}(b). Recall that Amiot showed that also $A^{(0)}\not\congder A^{(2)}$ by using non-trivial geometric arguments, see~\cite[3.6]{Amiot3}. Moreover, the coranks and Dynkin types of the Euler forms are as follows: $\CRnk(q_{A^{(0)}})=2=\CRnk(q_{A^{(2)}})$, $\CRnk(q_{A^{(1)}})=1$ and $\Dyn(q_{A^{(0)}})=\AA_3=\Dyn(q_{A^{(2)}})$, $\Dyn(q_{A^{(1)}})=\DD_4$, cf.~Theorem~\ref{thm:mrgvsCar}(c). On the other hand, we verify directly that each of the three  marked ribbon graphs $(G^{(i)},\leq^{(i)}):=\GPC(\quiver^{(i)},I^{(i)})$ for $i=1,2,3$, has 4 vertices, 5 edges and 1 face of length 10 and closed degree 2 (cf.~example on p.~\pageref{ex:Amiotmrg}). But $G^{(0)}$ and $G^{(2)}$ are bipartite and $G^{(1)}$ is not, cf.~Theorem~\ref{thm:mrgvsCar}(a). This also shows that the collection of graph theoretic parameters in Corollary~\ref{cor:OZspirit} is not a complete derived invariant. 
\end{example}

 The subsequent conclusion  of Theorem~\ref{thm:mrgvsCar} may be viewed as a refinement of ~\cite[Theorem C(ii)]{BGSdisc}.
\begin{corollary}\label{cor:BGS} Let $\La=kQ/I$ be a 1-cycle gentle algebra $($of arbitrary $\gldim(A))$. Then $q_{\La}$ is non-negative of $\CRnk(q_\La)=\gcc_\La\in\{0,1\}$.
\end{corollary}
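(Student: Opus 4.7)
The plan is to derive this corollary as a direct specialization of Theorem~\ref{thm:mrgvsCar}. The non-negativity of $q_A$ is already included in the main statement of that theorem for arbitrary gentle algebras, so the only substantive task is to evaluate the corank formula from part~(b) in the 1-cycle setting.

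First I would recall that $1$-cycle means that the underlying graph of $Q$ satisfies $\cycles(Q) = |Q_1| - |Q_0| + 1 = 1$, see~\eqref{eq:tcycle}, which is equivalent to $|Q_1| = |Q_0|$. Then I would substitute this into the formula
\[
\CRnk(q_\La) = |Q_1| - |Q_0| + \gcc_\La
\]
from Theorem~\ref{thm:mrgvsCar}(b), obtaining immediately $\CRnk(q_\La) = \gcc_\La$. The fact that $\gcc_\La \in \{0,1\}$ is part of Definition~\ref{def:mcc} (or alternatively follows from Lemma~\ref{lem:Null}(b) via the equivalence \textup{(a1)}$\Leftrightarrow$\textup{(a3)} in Theorem~\ref{thm:mrgvsCar}(a), applied to the associated marked ribbon graph $G$).

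There is essentially no obstacle here: the corollary is a one-line computation once Theorem~\ref{thm:mrgvsCar} is available, and no further appeal to the underlying structure (marked ribbon graph, clock condition, global dimension) is needed beyond what is already packaged in part~(b) of that theorem.
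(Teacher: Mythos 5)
Your proposal is correct and is essentially identical to the paper's own proof: both simply note that the $1$-cycle hypothesis gives $|Q_1|=|Q_0|$ and substitute into the corank formula $\CRnk(q_\La)=|Q_1|-|Q_0|+\gcc_\La$ from Theorem~\ref{thm:mrgvsCar}(b), with non-negativity coming from the main statement of that theorem. No difference worth noting.
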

\begin{proof}
By Theorem~\ref{thm:mrgvsCar}, $q_\La$ is non-negative and $\CRnk(q_\La)=|Q_1|-|Q_0|+\gcc_\La=\gcc_\La$.
\end{proof}

\smallskip

Recall that an algebra $\La$ over an algebraically closed field is {\bf derived discrete} if for each $v=(v_i)\in\NN^{(\ZZ)}$ there are only finitely many isoclasses of indecomposable objects in $\Db{\La}$ of a given homology dimension vector $v$, see~\cite{VossieckDerDisc}. In particular, if $\La$ is {\bf derived finite}, that is, $\Db{\La}$ has only finitely many indecomposables up to shift and isomorphism, then $\La$ is derived discrete.

\begin{corollary}\label{cor:derdiscsimple} Let $\La$ be a derived discrete algebra over an algebraically closed field. Then the Euler form $q_\La$ is non-negative of
$\CRnk(q_\La)\in\{0,1\}$.  If $\La$ is derived finite then  $q_\La$ is  positive.
\end{corollary}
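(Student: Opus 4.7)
The plan is to reduce to the classification of derived discrete algebras over algebraically closed fields, which is standard and allows us to invoke the gentle-algebra result Corollary~\ref{cor:BGS}. By Vossieck~\cite{VossieckDerDisc}, every connected derived discrete algebra $A$ over an algebraically closed field is derived equivalent to one of two kinds of algebras: a hereditary algebra $kQ$ of Dynkin type, or a 1-cycle gentle bound quiver algebra not satisfying the clock condition of~\cite{AS}. Since the Euler form is invariant under derived equivalence up to isometry (Remark~\ref{rem:euler}), and since non-negativity, corank and positivity are all preserved by isometry, it suffices to verify the conclusions on any derived representative.

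In the Dynkin case, the Euler form of $kQ$ coincides with the (positive definite) Tits form of $Q$, so $q_A$ is positive, with corank zero. In the 1-cycle gentle case, Corollary~\ref{cor:BGS} applies verbatim to the chosen gentle representative $A'$ of $A$, yielding that $q_{A'}$ is non-negative with $\CRnk(q_{A'})=\gcc_{A'}\in\{0,1\}$; transporting this back to $A$ along the isometry gives the same conclusion for $q_A$. The two cases combined establish the first assertion.

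For the second assertion I would appeal to Bobi\'nski-Geiss-Skowro\'nski~\cite{BGSdisc}: a connected derived discrete algebra over an algebraically closed field is derived finite if and only if it is piecewise hereditary of Dynkin type. Under this hypothesis $A$ falls into the first case above, where $q_A$ was already shown to be positive.

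The main obstacle is the derived-finite step, and specifically the implication that the 1-cycle gentle non-clock case is never derived finite. Here I rely directly on the classification of~\cite{BGSdisc}; an intrinsic verification within the graph-theoretic framework of this paper would require producing an infinite family of pairwise non-isomorphic indecomposable perfect complexes in any such algebra, for instance by exhibiting a non-trivial belt in the associated marked ribbon graph, which by Proposition~\ref{pro:walkBelt}(b) gives a $1$-parameter family of band complexes indexed by indecomposable automorphisms of finite-dimensional $k$-vector spaces, or alternatively by identifying an infinite $\tau$-orbit of string complexes via Theorem~\ref{thm:ARtriangle}. Carrying this analysis out cleanly is essentially the finer content of~\cite{BGSdisc}, and for the purposes of the corollary it is simpler to invoke that reference directly.
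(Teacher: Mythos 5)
Your proposal is correct and follows essentially the same route as the paper's proof: both invoke Vossieck's dichotomy (derived hereditary of Dynkin type versus $1$-cycle gentle not satisfying the clock condition), apply Corollary~\ref{cor:BGS} in the gentle case, use positivity of the Euler form in the Dynkin case, and settle the derived-finite assertion by the equivalence of derived finiteness with being derived hereditary of Dynkin type. The extra remarks on verifying derived infiniteness intrinsically via belts or $\tau$-orbits are a nice aside but not needed.
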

\begin{proof}
Recall from~\cite{VossieckDerDisc}  $\La$ is derived discrete if and only if  $\La$ is derived hereditary of Dynkin type, or $\La$ is a gentle 1-cycle algebra not satisfying the clock condition (see Remark \ref{rem:tcycle}). Note that $\La$ is derived hereditary of Dynkin type if and only if $\La$ is derived finite. Now the hypothesis follows easily by Corollary~\ref{cor:BGS} and the known fact that the Euler form of a derived hereditary algebra of Dynkin type is positive, cf.~\cite{HDb2}. 
\end{proof}

The classification of Dynkin types of the Euler forms of derived discrete algebras follows by applying Theorem~\ref{thm:mrgvsCar}(c). In particular, we can recover also some other claims on the Euler form from~\cite{BGSdisc}.

Yet another conclusion  of Theorem~\ref{thm:mrgvsCar} (and its proof) provides few nice combinatorial limitations for gentle quivers and the associated graphs.

\begin{corollary}\label{cor:limits}
Let $(Q,I)$ be a gentle bound quiver and take $\GPC(\quiver,I)=(G,\leq)$ and $\GFC(\quiver,I)=(\fmrgs{G},\fmrgs{\leq},\fmrgs{\sigma})$ $($the latter in case $\gldim(Q,I)<\infty)$. Then
\begin{enumerate}[label={\textnormal{(\alph*)}},topsep=3px,parsep=0px]
\item $\cycles(Q)=\cycles(G)=\cycles(\fmrgs{G})$, cf.~\eqref{eq:tcycle},
\item $|Q_0|-1\,\leq\, |Q_1|\,\leq\, 2|Q_0|-1$,
\item $0\,\leq\, \cycles(Q)\,\leq\, |Q_0|$,
\item the following four equalities hold simultanously: $|Q_1|= 2|Q_0|-1$,\  $\cycles(Q)= |Q_0|$, \  $|V(G)|=1$, and $|V(\fmrgs{G})|=1$.
\end{enumerate}
\end{corollary}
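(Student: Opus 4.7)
The plan is to derive all four claims as direct combinatorial consequences of the key degree-counting identity $|V(G)| = 2|Q_0| - |Q_1|$ established in equation~\eqref{eq:edgesdegreesThmA} of the proof of Theorem~\ref{thm:mrgvsCar}, together with the bijection $|V(\fmrgs{G})| = |V(G)|$ recorded in~\eqref{eq:edgesverticesThmA}. Since these identities are already available, no substantial new work is required; each assertion reduces to rewriting $\cycles$ via its definition or to reading off a linear inequality.

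For part (a), I would note that $|E(G)| = |Q_0| = |E(\fmrgs{G})|$ by Definitions~\ref{def:BoundQuiverInc} and~\ref{rem:forb}, and then compute directly
$$\cycles(G) = |E(G)| - |V(G)| + 1 = |Q_0| - (2|Q_0|-|Q_1|) + 1 = |Q_1| - |Q_0| + 1 = \cycles(Q),$$
using that $Q$ (and hence $G$, $\fmrgs{G}$) is connected. The identity $\cycles(\fmrgs{G}) = \cycles(G)$ then follows from $|V(\fmrgs{G})| = |V(G)|$ and $|E(\fmrgs{G})| = |E(G)|$ (the bijections in diagram~\eqref{eq:bijections}, valid in finite global dimension).

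For (b), the lower bound $|Q_1| \geq |Q_0|-1$ is just connectedness of $Q$, while the upper bound is obtained by rearranging the identity $|V(G)| = 2|Q_0|-|Q_1|$ and using $|V(G)| \geq 1$ (every gentle bound quiver has at least one permitted thread, cf.~Lemma~\ref{lem:iota}). Part (c) is immediate from (b) together with $\cycles(Q) = |Q_1|-|Q_0|+1$: substituting the two bounds on $|Q_1|$ yields $0 \leq \cycles(Q) \leq |Q_0|$.

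Finally, for (d) I would observe that the identity $|V(G)| = 2|Q_0| - |Q_1|$ gives the equivalence $|V(G)| = 1 \Longleftrightarrow |Q_1| = 2|Q_0|-1$, which in turn is equivalent to $\cycles(Q) = |Q_1|-|Q_0|+1 = |Q_0|$; and $|V(\fmrgs{G})| = |V(G)|$ (when $\gldim(Q,I) < \infty$) gives the last equivalence $|V(\fmrgs{G})|=1 \Longleftrightarrow |V(G)|=1$. Since everything is bookkeeping with identities already proven, there is no real obstacle; the only point demanding care is being explicit about which statements require the finite global dimension hypothesis (only those involving $\fmrgs{G}$).
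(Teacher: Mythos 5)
Your proposal is correct and follows essentially the same route as the paper: both arguments reduce everything to the identities $|E(G)|=|Q_0|=|E(\fmrgs{G})|$, $|V(G)|=2|Q_0|-|Q_1|=|V(\fmrgs{G})|$ from~\eqref{eq:edgesverticesThmA} and~\eqref{eq:edgesdegreesThmA}, the paper merely packaging part (a) as a consequence of the corank formula~\eqref{eq:mrbvsCarcrk} (which is itself derived from those same identities) where you expand the computation directly. Your explicit attention to connectedness and to which claims need the finite global dimension hypothesis matches the paper's implicit use of these facts.
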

\begin{proof} 
Claim (a) is a direct consequence of the corank formula~\eqref{eq:mrbvsCarcrk} of Theorem~\ref{thm:mrgvsCar}(b). First inequality in (b) is obvious, since $|Q_0|-1= |Q_1|$ if and only if $Q$ is a tree, cf.~\eqref{eq:tcycle}. To see the second one, recall that $V(G)=2|Q_0|-|Q_1|$ (see~\eqref{eq:edgesdegreesThmA}). Thus $1\leq 2|Q_0|-|Q_1|$, so $|Q_1|\,\leq\, 2|Q_0|-1$. Inequalities in (c) are a simple reformulation of these in (b), cf.~\eqref{eq:tcycle}. The remaining claim (d) follows from the arguments above and~\eqref{eq:edgesverticesThmA}.
\end{proof}

Let us consider another interesting class of algebras, playing an important role  in modular representation theory of finite groups. Recall that a \textbf{Brauer graph} is a triple $\Gamma=(G,\permut,\mult)$, where $(G,\permut)$ is a ribbon graph (cf.~Subsection~\ref{subs:mrg}) and $\mult:\verts{G}\to \NN$ is a function, called the multiplicity function. We say that $\Gamma$ has trivial multiplicities if $\mult\equiv 1$. With a Brauer graph $\Gamma$ one associates a finite dimensional (bound quiver) $k$-algebra  $B=B(\Gamma)$ called the \textbf{Brauer graph algebra}. We recall that these algebras are symmetric, thus self-injective, so they typically have infinite global dimension, cf.~\cite{Schroll.trivext}. We skip the precise definition, for the details as well as some historical remarks we refer   to~\cite{Schroll.Brauer, Schroll.trivext}, cf.~\cite{AZ22.Brauer}.

\begin{theorem}\label{thm:Brauer}
Let $\Gamma$ be a connected Brauer graph with trivial multiplicities and let $\Bralg=\Bralg(\Brgr)$ be the associated Brauer graph algebra over an algebraically closed field $k$. Then the Euler form $q_\Bralg$ is non-negative. Moreover, the following conditions are equivalent:
\begin{enumerate}[label={\textnormal{(\alph*)}},topsep=3px,parsep=0px]
\item The Euler form $q_\Bralg$ is positive.
\item The Cartan matrix $\Car_{\Bralg}$ of $\Bralg$ is non-singular.
\item $\Brgr$ is a tree or $\Brgr$ is a 1-cycle graph and the unique cycle  has  odd length.
\end{enumerate}
\end{theorem}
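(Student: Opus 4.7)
The plan is to leverage the fact, due to Schroll~\cite{Schroll.trivext}, that every Brauer graph algebra $\Bralg = \Bralg(\Brgr)$ with trivial multiplicity arises as the trivial extension $\Bralg \cong \La \ltimes D\La$ of a gentle algebra $\La = kQ/I$; moreover, an admissible cut of $\Brgr$ amounts to choosing a marking that identifies the underlying graph of the marked ribbon graph $\GPC(Q,I) = (G,\leq)$ with $\Brgr$. Under this identification, the basis of $K_0(\per\,\Bralg)$ of classes of indecomposable projectives is indexed by $Q_0 = \edgs{\Brgr}$, matching the index set used in Theorem~\ref{thm:mrgvsCar}.

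Computing $(\Car_\Bralg)_{ij} = \dim_k e_i \Bralg e_j$ via the $\La$-bimodule decomposition $\Bralg = \La \oplus D\La$ gives
\[
\dim_k e_i \Bralg e_j = \dim_k e_i \La e_j + \dim_k D(e_j \La e_i) = (\Car_\La)_{ij} + (\Car_\La)_{ji},
\]
so $\Car_\Bralg = \Car_\La + \Car_\La^\tr$, and the identity~\eqref{eq:mrbvsCar} from Theorem~\ref{thm:mrgvsCar} yields the key formula
\[
\Car_\Bralg = \Inc(\Brgr)\,\Inc(\Brgr)^\tr.
\]
Since $\Bralg$ is symmetric, $\Car_\Bralg$ is itself symmetric, so the Gram matrix of the polarization of $q_\Bralg$ in the basis of indecomposable projectives equals $2\Car_\Bralg$ and consequently $q_\Bralg(x) = x^\tr \Car_\Bralg\, x = \|\Inc(\Brgr)^\tr x\|^2 \geq 0$. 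In particular $q_\Bralg$ is non-negative (in fact $q_\Bralg = 2\,q_\Brgr$, twice the incidence form of $\Brgr$), and (a)~$\Leftrightarrow$~(b) is immediate: positivity of $q_\Bralg$ is equivalent to the vanishing of its radical, hence to non-singularity of the Gram matrix $2\Car_\Bralg$, i.e., of $\Car_\Bralg$ itself.

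For (b)~$\Leftrightarrow$~(c), observe that $\Car_\Bralg = \Inc(\Brgr)\Inc(\Brgr)^\tr$ is non-singular exactly when $\Inc(\Brgr)$ has full column rank $|\verts{\Brgr}|$, which by Lemma~\ref{lem:Null}(b) translates to $|\verts{\Brgr}| - \balanced_\Brgr = |\edgs{\Brgr}|$, where $\balanced_\Brgr \in \{0,1\}$ is the corank of $\Inc(\Brgr)$, equal to $1$ precisely when $\Brgr$ is bipartite. Rewriting via $\cycles(\Brgr) = |\edgs{\Brgr}| - |\verts{\Brgr}| + 1 \geq 0$ gives $\cycles(\Brgr) + \balanced_\Brgr = 1$, forcing two cases: $\cycles(\Brgr) = 0$ with $\balanced_\Brgr = 1$ (so $\Brgr$ is a tree, automatically bipartite), or $\cycles(\Brgr) = 1$ with $\balanced_\Brgr = 0$ (so $\Brgr$ is a 1-cycle graph whose unique cycle has odd length, making $\Brgr$ non-bipartite). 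This is precisely condition~(c).

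The main obstacle I anticipate is the careful bookkeeping of the reduction $\Bralg(\Brgr) \cong \La \ltimes D\La$: one must verify that every connected Brauer graph $\Brgr$ with trivial multiplicity admits a compatible marking realising it as $\GPC(Q,I) = (G,\leq)$ with $G = \Brgr$ for some gentle bound quiver $(Q,I)$, and that the resulting identification of indexing sets for projectives respects the Cartan pairing. Degenerate configurations of $\Brgr$ outside the marked ribbon graph setup of Definition~\ref{def:mrg} (notably those with no vertex of degree $\geq 2$) should be handled directly by small explicit computation. With this reduction in place, the remainder is a clean application of Theorem~\ref{thm:mrgvsCar} combined with the incidence-matrix dichotomy of Lemma~\ref{lem:Null}.
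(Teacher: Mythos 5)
Your proposal is correct and follows essentially the same route as the paper: reduce to a gentle algebra via Schroll's trivial-extension theorem, compute $\Car_\Bralg=\Car_\La+\Car_\La^\tr=\Inc(\Brgr)\Inc(\Brgr)^\tr$ using Theorem~\ref{thm:mrgvsCar}, and then settle (a)--(c) with Lemma~\ref{lem:Null}(b) and the identity $\cycles(\Brgr)=|\edgs{\Brgr}|-|\verts{\Brgr}|+1$ (the paper likewise handles the degenerate one-vertex and one-edge Brauer graphs by direct computation, as you flag). One small slip: non-singularity of $\Inc(\Brgr)\Inc(\Brgr)^\tr$ is equivalent to $\Rnk(\Inc(\Brgr))=|\edgs{\Brgr}|$ (full \emph{row} rank, i.e.\ injectivity of $\Inc(\Brgr)^\tr$ on $\R^{\edgs{\Brgr}}$), not ``full column rank $|\verts{\Brgr}|$''; your displayed condition $|\verts{\Brgr}|-\balanced_\Brgr=|\edgs{\Brgr}|$ is nevertheless the correct one, so the argument stands.
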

\begin{proof} 
First assume that $\Gamma$ consists of a single vertex (resp.~a single non-loop edge). Then $B\cong k$ (resp.~$B\cong k[x]/(x^2)$), cf.~\cite[p.~189]{Schroll.trivext}. In these cases $\Car_B=[1]$
(resp.~$\Car_B=[2]$) and all the statements hold easily. Hence since now we may assume that $\Gamma$ has a vertex of degree at least two as in Definition~\ref{def:mrg}.

By~\cite[Theorem 1.3]{Schroll.trivext} there exists a gentle algebra $\La$ such that $\Bralg\cong T(\La)=\La \lJoin D(\La)$ is the trivial extension of $\La$ by the $\La$-$\La$-bimodule $D(\La)={\rm Hom}_k(\La, k)$. Recall that $T(\La)=\La \oplus D(\La)$ as a vector space and the multiplication is given by $(a, f)\cdot(b,g)=(ab, ag+fb)$. In particular, $1_{T(\La)}=(1_\La,0)$ is the identity of $T(\La)$, and if $1_\La=\idem_1+\ldots+ \idem_n$ is a decomposition of $1_\La$ into the sum of primitive orthogonal idempotents then so is  $1_{T(\La)}=(\idem_1,0)+\ldots+ (\idem_n,0)$ for $1_{T(\La)}$. Moreover, for any $1\leq i,j\leq n$ we have $(\idem_j,0)T(\La)(\idem_i,0)\cong\idem_j\La\idem_i\oplus D(\idem_i\La\idem_j)$. Thus
$(\Car_{T(\La)})_{i,j}=\dimk((\idem_j,0)T(\La)(\idem_i,0))=\dimk(\idem_j\La\idem_i)+\dimk(\idem_i\La\idem_j)=
(\Car_\La)_{i,j}+(\Car_{\La})_{j,i}$. This means that
 $\Car_{T(\La)}=\Car_\La+\Car_\La^\tr$. In particular, the Euler forms of $T(A)$ and $A$ are related as follows
  \begin{equation}\label{eq:eulfBrauer}
  q_{T(A)}=2q_A,
  \end{equation}
  under the identifications $K_0(\per\,T(\La))\cong\ZZ^n\cong K_0(\per\,\La)$ (via the bases of corresponding indecomposable projectives). Thus $q_\Bralg$ is non-negative by Theorem~\ref{thm:mrgvsCar}.

\smallskip

Equivalence of (a) and (b): 
  it was shown above that $\Car_\Bralg$ is a symmetric positive semidefinite (integer) matrix. Therefore, if $\Car_\Bralg$ is not positive definite then $\Car_\Bralg$ has eigenvalue 0. This  means that $\Car_\Bralg$ is singular. The opposite implication follows obviously from Sylvester's criterion.

Equivalence of (a) and (c): if $\Gamma=(G',\permut,\mult)$ with $\mult\equiv 1$, then by~\cite[Theorems 1.2, 1.3]{Schroll.trivext} we may assume that the gentle algebra $A$ with $T(A)\cong B$ as above, has the form $A=kQ/I$, where the associated marked ribbon graph $\GPC(\quiver,I)=(G,\leq)$ has its underlying graph $G$ isomorphic to $G'$; more precisely, $(G,\leq)$ arises, up to isomorphism, from $(G',\permut)$ by choosing a marking $\m:V(G') \to H(G')$ corresponding to so-called admissible cut, cf.~Remark~\ref{rem:correspondence} and~\cite[pp.195-196]{Schroll.trivext} (note that since $\Brgr$ has trivial multiplicities then $\Brgr$ is unique for the isoclass of $B$, see~\cite[Lemma 3.1]{AZ22.Brauer}). In particular, $n=|E(G)|=|E(G')|$.
 Now observe that  $q_{B}$ is positive if and only if so is $q_A$ by~\eqref{eq:eulfBrauer}. And the latter is positive if and only if
 $\CRnk(q_A)=0$. On the other hand, by Theorem~\ref{thm:mrgvsCar}(b) we get that $\CRnk(q_A)=n-m+\gcc_A$, where $m=|V(G)|=|V(G')|$,  so $\CRnk(q_A)=0$ is equivalent with $n=m-\gcc_A$.
    The latter equality holds precisely when $G'$ is a tree or $G'$ is not bipartite 1-cycle graph by Theorem~\ref{thm:mrgvsCar}(a). Equivalently, (c) holds.
\end{proof}

As a consequence we obtain the following analogue of classical results for algebras of finite global dimension interrelating the definiteness of the Euler (or Tits) form with the representation type, cf.~\cite{GabrielI,DlabRingel,BPS}. Recall that Brauer graph algebras are of tame representation type. For more details as well as  brief definitions of representation types we refer to~\cite{Schroll.Brauer}.
\begin{corollary}\label{cor:BrauerDomestic}
Let $\Gamma$ be a connected Brauer graph with trivial multiplicities and let $\Bralg=\Bralg(\Brgr)$ be the associated Brauer graph algebra over an algebraically closed field $k$. Then the Euler form $q_\Bralg$ is positive if and only if $\Bralg$ is of finite or 1-domestic representation type.
\end{corollary}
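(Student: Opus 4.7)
The plan is to reduce Corollary~\ref{cor:BrauerDomestic} directly to Theorem~\ref{thm:Brauer} together with the known classification of representation types for Brauer graph algebras with trivial multiplicities. Theorem~\ref{thm:Brauer}(c) already translates positivity of $q_B$ into the combinatorial condition: either $\Gamma$ is a tree, or $\Gamma$ is a $1$-cycle graph whose unique cycle has odd length. Since every Brauer graph algebra is tame, it therefore suffices to verify that these two combinatorial cases correspond exactly to representation-finiteness and to $1$-domesticity, respectively.

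First I would handle the tree case. A Brauer graph algebra with trivial multiplicities whose underlying graph is a tree is the classical (multiplicity-free) Brauer tree algebra, and is hence of finite representation type; this is well established via Riedtmann's classification of selfinjective representation-finite algebras of type $\mathbb{A}$, and goes back to the modular representation theory of blocks of cyclic defect. Conversely, representation-finiteness of a Brauer graph algebra with trivial multiplicities forces the underlying graph to be a tree (for instance, by noting that the presence of a cycle produces a one-parameter family of band modules, hence infinite representation type).

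Next I would treat the $1$-cycle case. The claim is that, among Brauer graph algebras with trivial multiplicities whose underlying graph has exactly one cycle, $1$-domesticity coincides with the cycle length being odd; even length produces a $2$-domestic algebra and more than one cycle produces a non-domestic algebra. I would invoke here either the direct classification of tame-domestic Brauer graph algebras (Bocian-Skowro\'nski for trivial extensions of gentle algebras, and Antipov-Zvonareva-Schroll on the Brauer graph side), or, more internally, the Schroll trivial-extension description $B\cong T(A)$ for a gentle algebra $A$ used in the proof of Theorem~\ref{thm:Brauer}: combining Vossieck's derived-discrete dichotomy for $1$-cycle gentle algebras with the fact (Proposition~\ref{pro:walkBelt}(b)) that one-parameter families of indecomposables in $T(A)$ come from belts of the marked ribbon graph, the parity of the cycle length controls exactly how many $\mathbb{Z}$-independent band families survive up to shift, matching the corank of $q_B$ computed in Theorem~\ref{thm:Brauer}.

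The main obstacle is pinning down, in the precise form needed, the identification ``$1$-domestic rank $=$ corank of $q_B$'' for trivial-multiplicity $1$-cycle Brauer graph algebras; once this link is in place, Theorem~\ref{thm:Brauer}(c) gives the equivalence immediately. If a direct reference in the literature is not available in the exact form desired, I would fall back on the trivial-extension strategy above (consistent with the paper's spirit) to count band families via the marked ribbon graph and use the already established parity dichotomy of Theorem~\ref{thm:Brauer}(a).
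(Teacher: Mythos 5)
Your proposal is correct and follows essentially the same route as the paper: reduce to the combinatorial criterion of Theorem~\ref{thm:Brauer}(c), then invoke the classical facts that a trivial-multiplicity Brauer graph algebra is representation-finite precisely when the graph is a tree, and (by Bocian--Skowro\'nski) is $1$-domestic precisely when the graph is $1$-cycle with an odd cycle. The extra fallback via trivial extensions and belt counting is not needed and is not used in the paper.
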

\begin{proof} It is well-known that $\Bralg$ is representation-finite if and only if $\Brgr$ is a tree, see~\cite[Corollary 2.9]{Schroll.Brauer}. Moreover, from the characterization of domestic Brauer graph algebras due to Bocian-Skowro\'nski~\cite{BoSkDomestic} it follows that, in case of trivial multiplicities, $\Bralg$ is 1-domestic if and only if $\Gamma$ a 1-cycle graph and the unique cycle  has  odd length, cf.~\cite[Theorem 5.1]{Schroll.Brauer}. Hence the claim follows by Theorem~\ref{thm:Brauer}.
\end{proof}

Recall that the (non)singularity of the Cartan matrix may reflect certain interesting properties of an algebra, cf.~\cite{BSY,BessHolm2008}.
As an exemplary consequence of Theorem~\ref{thm:Brauer} we present the following observation.
\begin{corollary}\label{cor:Brauer}
Let $\Brgr$ be a connected Brauer graph with trivial multiplicities and let $\Bralg=\Bralg(\Brgr)$ be the associated Brauer graph algebra over an algebraically closed field. Assume that $\Brgr$ is a tree or $\Brgr$ is a 1-cycle graph and the unique cycle  has  odd length. Then the Auslander-Reiten quiver  of $\mod{\Bralg}$ does not admit a generalized standard tube.
\end{corollary}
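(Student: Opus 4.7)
The plan is to argue by contradiction, combining Theorem~\ref{thm:Brauer} with a standard Auslander--Reiten-theoretic argument for generalized standard tubes of self-injective algebras. Under the hypothesis on $\Gamma$, Theorem~\ref{thm:Brauer} states that $q_{\Bralg}$ is positive, equivalently the Cartan matrix $\Car_{\Bralg}$ is non-singular (note that $\Bralg$ is symmetric, hence $\Car_{\Bralg}^\tr=\Car_{\Bralg}$, so $\det\Car_{\Bralg}\neq 0$ is forced by $\det(\Car_{\Bralg}+\Car_{\Bralg}^\tr)\neq 0$). Now suppose, for contradiction, that the Auslander--Reiten quiver $\Gamma(\mathbf{mod}\,\Bralg)$ admits a generalized standard tube $\mathcal{T}$. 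Since $\Bralg$ is self-injective, $\mathcal{T}$ is automatically a stable tube, say of rank $r\geq 1$, with indecomposable mouth modules $M_1,\ldots,M_r$ (cyclically permuted by the Auslander--Reiten translation $\tau$).

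The key step, which would close the proof, is to establish that the non-zero positive vector
\[
h_{\mathcal{T}}:=\sum_{i=1}^{r}\vdim{M_i}\in\NN^n
\]
lies in the right null space of $\Car_{\Bralg}$. Once this is shown, $\Car_{\Bralg}\cdot h_{\mathcal{T}}=0$ with $h_{\mathcal{T}}\neq 0$ contradicts the non-singularity of $\Car_{\Bralg}$, forcing $\mathcal{T}$ not to exist.

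To prove that $h_{\mathcal{T}}\in\ker(\Car_{\Bralg})$ I would invoke the Auslander--Reiten theoretic machinery for generalized standard stable tubes in self-injective algebras developed by Skowro\'nski (compare the framework used in~\cite{Schroll.Brauer} and the results cited in Remark~\ref{rem:Antipov}). Two ingredients enter: (i) the additive relations from the Auslander--Reiten sequences $0\to M_{i-1}\to E_i\to M_i\to 0$ at the mouth of $\mathcal{T}$, which for $i=1,\ldots,r$ sum (using $\tau$-periodicity) to $\sum_i\vdim{E_i}=2h_{\mathcal{T}}$; and (ii) the Auslander--Reiten duality $D\overline{\Hom}(X,Y)\cong\overline{\Hom}(Y,\tau X)$ for symmetric algebras, together with generalized standardness, which forces every morphism from an indecomposable projective $P_j$ (which does not belong to $\mathcal{T}$) into the modules of $\mathcal{T}$ that factors through the infinite radical to vanish. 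Reading off the $j$-th coordinate of $\Car_{\Bralg}\cdot h_{\mathcal{T}}$ in terms of homomorphism and extension dimensions and exploiting the cancellation provided by the periodic sum yields $(\Car_{\Bralg}\cdot h_{\mathcal{T}})_j=0$ for every vertex $j$.

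The main obstacle is making the vanishing in (ii) rigorous, as the modules $M_i$ in general do not lie in $\per\Bralg$ (since $\Bralg$ has infinite global dimension), so the homological bilinear form of Definition~\ref{def:euler} does not directly pair $[M_i]$ with $[P_j]$. The natural way around this is to work inside $\mathbf{mod}\,\Bralg$ via the classical $\Hom/\Ext$ interpretation of $\Car_{\Bralg}$ and to exploit symmetry ($\nu_{\Bralg}\cong\mathrm{id}$) in order to convert the (finite) sums over the $\tau$-orbit into an identity in $K_0(\mathbf{mod}\,\Bralg)$; alternatively, one may invoke directly Skowro\'nski's theorem on characteristic vectors of generalized standard stable tubes of self-injective algebras, which asserts precisely that such $h_{\mathcal{T}}$ is a non-zero element of $\ker(\Car_{\Bralg})$. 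In either case the conclusion is the same, and the combination with Theorem~\ref{thm:Brauer} yields the corollary.
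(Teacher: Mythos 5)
Your argument is correct and follows essentially the same route as the paper: the paper simply observes that $\Bralg$ is symmetric and cites \cite[Theorem 2.1]{BSY} (a generalized standard tube forces the Cartan matrix of a symmetric algebra to be singular), which contradicts Theorem~\ref{thm:Brauer}(b). Your attempted re-derivation of that cited result via the characteristic vector $h_{\mathcal{T}}\in\ker(\Car_{\Bralg})$ is exactly its content and can be replaced by the citation, as you yourself note at the end.
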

\begin{proof} Recall that $\Bralg$ is a symmetric algebra. So in case the AR quiver of $\mod{\Bralg}$ contains a generalized standard tube then by~\cite[Theorem 2.1]{BSY} the Cartan matrix $\Car_\Bralg$ is singular, a contradiction with Theorem~\ref{thm:Brauer}(b).
\end{proof}

At the final stages of preparing the manuscript,  discussions with Bobi\'nski and Zwara pointed us to the following observations.

\begin{remark}\label{rem:Antipov} 
(a) Generalization of Theorem \ref{thm:Brauer} to Brauer graph algebras with arbitrary multiplicities may be retrieved from the results of Antipov \cite{Antipov}. Namely, by \cite[Proposition 1]{Antipov} the Cartan matrix $\Car_\Bralg$ of the algebra $\Bralg=\Bralg(\Gamma)$ of a Brauer graph $\Gamma=(G,\permut,\mult)$ (with $G$ connected and having at least one edge) decomposes as a sum of certain symmetric matrices, which may be reformulated using our conventions as
\def\vert{\alpha}
\def\wkol{c}
\begin{equation}\label{eq:antipov}
\Car_\Bralg=\sum_{\vert\in \verts{G}}\mult(\vert)\wkol_\vert \wkol_\vert^\tr
\end{equation}
where $\wkol_\vert:=\Inc(G)\bas_\vert$ is the $\vert$-th column of the incidence matrix $\Inc(G)$ of $G$. This shows that $\Car_\Bralg$ is positive semidefinite (thus the Euler form $q_B$ is non-negative). Moreover, it follows  that the positive definiteness of $C_B$ does not depend on the multiplicity function $\mult$ (since, by  \eqref{eq:antipov}, $\Car_\Bralg$ is positive definite if and only if for every $0\neq x\in\ZZ^{\edgs{G}}$ there exists $\alpha\in\verts{G}$ such that $x^\tr\wkol_\vert \wkol_\vert^\tr x>0$). In particular, the equivalence of points (a,b,c) in Theorem \ref{thm:Brauer} holds also in the general case. However, since the representation type of a Brauer graph algebra heavily depends on multiplicities, one cannot extend Corollary \ref{cor:BrauerDomestic} for $\Bralg$ with non-trivial $\mult$. Actually, only one implication of the corollary holds in this case: {\em if $\Bralg$ is of finite or 1-domestic representation type, then $q_\Bralg$ is positive} (for instance, if $\Brgr$ is a tree with at least three non-trivial multiplicities then $q_{\Bralg}$ is positive but $\Bralg$ is not domestic, cf.~\cite{BoSkDomestic,Schroll.Brauer}).
We note that  the characterization of Brauer graph algebras having non-singular Cartan matrices, as in Theorem \ref{thm:Brauer}(b,c),
may be deduced from \cite[Main Theorem]{Antipov}, which was proved by different techniques.

(b) 
Recall that for the algebra $\Bralg=\Bralg(\Gamma)$ of a Brauer graph $\Gamma=(G,\permut,\mult)$ with $\mult\equiv 1$ we have $\Car_\Bralg=\Inc(G)\Inc(G)^\tr$ (cf.~Theorem \ref{thm:mrgvsCar} and the proof of Theorem \ref{thm:Brauer}). The formula \eqref{eq:antipov} implies that for $\Bralg=\Bralg(\Gamma)$ with arbitrary $\mult$ we have:
\begin{equation}\label{eq:sqrtinc}
    \Car_\Bralg=\Inc(G,\mult)\Inc(G,\mult)^\tr
\end{equation}
where $\Inc(G,\mult)\in\M_{\edgs{G},\verts{G}}(\ZZ)$ is the matrix defined by $\Inc(G,\mult)\bas_\vert=\sqrt{\mult(\vert)}\Inc(G)\bas_\vert$ for each $\vert\in\verts{G}$. One may view $\Inc(G,\mult)$ as the \lq\lq weighted'' incidence matrix of $(G,\mult)$. It is interesting to note that $C_B$ does not depend on the permutation $\permut$.
\end{remark}

\section{Roots and indecomposable complexes}\label{sec:roots}

The following main result of this section is a more precise version of Theorem B from the introduction. We use the notation and the terminology of Subsections~\ref{subsec:qf} and~\ref{subsec:perfcpx}.

\begin{theorem}\label{thm:rootsind}
Let $(Q,I)$ be a gentle bound quiver and let $q=q_\La:K_0({\rm per}\,\La)\to\ZZ$ denote the  Euler form of the gentle algebra $A=kQ/I$. Then the following inclusions hold:
\begin{equation}\label{eq:rootsind}
\roots_q(0) \cup \roots_q(1)\  \subseteq  \  \perdims(A) \  \subseteq \ \roots_q(0) \cup \roots_q(1) \cup \roots_q(2),
\end{equation}
where $\perdims=\perdims(A):=[\ind({\rm per}\,\La)]\subseteq K_0({\rm per}\,\La)$ denotes the subset of all the classes of indecomposable perfect complexes in the Grothendieck group $K_0({\rm per}\,\La)$ of all perfect complexes. Moreover, if  $(G,\leq)=\GPC(\quiver,I)$ is the associated marked ribbon graph then
\begin{enumerate}[label={\textnormal{(\alph*)}},topsep=3px,parsep=1px]
\item $\roots_q(0) = \{[\stringcpx{m,\wlk}] \mid \ m\in\ZZ\ \text{\,and\,} \, \wlk\ \text{is a reduced closed walk in $G$ of even length}\}$,
\item $\roots_q(1) = \{[\stringcpx{m,\wlk}] \mid \  m\in\ZZ\ \text{\,and\,}\, \wlk\ \text{is a reduced  open walk in $G$}\}$,
\item $\perdims\cap\roots_q(2)=  \{[\stringcpx{m,\wlk}] \mid \  m\in\ZZ\ \text{\,and\,}\, \wlk\ \text{is a reduced closed walk in $G$ of odd length}\}$,
\item $\roots_q(0) \cup \roots_q(1) = \perdims$ \ if and only if\ \ $(Q,I)$ satisfies the multi-clock condition,
\item $[Y^\bullet]\in  \roots_q(0)$ for every indecomposable band complex $Y^\bullet$.
\end{enumerate}
\end{theorem}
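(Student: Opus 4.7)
The proof rests on the identification, provided by Theorem~\ref{thm:mrgvsCar}, of the Euler form $q=q_A$ with the incidence form $q_G$ of the marked ribbon graph $(G,\leq)$ in the basis $\{[P_i]\}_{i\in Q_0}$ of indecomposable projectives, so that $q(x)=\tfrac{1}{2}\|\Inc(G)^{\tr}x\|^{2}$. The plan is to compute $q$ directly on the explicit classes of the string and band complexes of Proposition~\ref{pro:walkBelt}, and then to invoke the graph-theoretic classification of roots of $q_G$ from \cite{JM} for the reverse inclusions. Statement (d) will then follow from the equivalence ``multi-clock $\Leftrightarrow$ $G$ bipartite'' of Theorem~\ref{thm:mrgvsCar}(a).

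\medskip

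For a reduced walk $\wlk=i_1\cdots i_{L+1}$ of $(G,\leq)$ and $m\in\Z$, reading off the unfolded diagram of $\stringcpx{m,\wlk}$ and using \eqref{eq:alternating} yields
\[
[\stringcpx{m,\wlk}]=\sum_{t=1}^{L+1}(-1)^{d_t}\bas_{\vtx{i_t}},
\]
with $d_1=m$ and $d_{t+1}=d_t+\deg(i_ti_{t+1})$. Applying $\Inc(G)^{\tr}\bas_{\vtx{i_t}}=\bas_{\tar(i_t)}+\bas_{\sou(i_t)}$, the shared vertex $\sou(i_t)=\tar(i_{t+1})$ of $G$ receives contributions $(-1)^{d_t}+(-1)^{d_{t+1}}=0$ since the $d$'s differ by $\pm1$; all internal terms cancel and only the endpoint contributions survive:
\[
\Inc(G)^{\tr}[\stringcpx{m,\wlk}]=(-1)^{d_1}\bas_{\tar(i_1)}+(-1)^{d_{L+1}}\bas_{\sou(i_{L+1})}.
\]
If $\wlk$ is open these two vertices of $G$ differ and $q([\stringcpx{m,\wlk}])=1$. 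If $\wlk$ is closed they coincide and the coefficient $(-1)^{d_1}(1+(-1)^{\deg(\wlk)})$ is $\pm2$ or $0$ according to the parity of $\deg(\wlk)$; by observation (iii) in the proof of Theorem~\ref{thm:mrgvsCar} this parity is opposite to that of $\ell(\wlk)$, yielding $q=2$ for odd-length closed walks and $q=0$ for even-length ones. For a band complex attached to a belt $\wlk=i_1\cdots i_Li_1$ with $\dim_kK=r$, the belt condition $\deg(\wlk)=0$ makes the wrap-around pair $i_L,i_1$ also cancel, so the telescoping is complete and $\Inc(G)^{\tr}[\stringcpx{m,\wlk,\mu}]=0$, giving $q=0$ and proving (e). Moreover the class of this band complex coincides with that of the string complex on the $r$-fold concatenation of $\wlk_{[L]}$ (a reduced closed walk of even length $rL$), so the band contribution is already visible inside (a). Collecting everything, we obtain $\perdims\subseteq\roots_q(0)\cup\roots_q(1)\cup\roots_q(2)$ and the $\supseteq$-inclusions in (a), (b), (c).

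\medskip

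For the reverse inclusion $\roots_q(0)\cup\roots_q(1)\subseteq\perdims$ (and the $\subseteq$-parts of (a) and (b)), we appeal to the graph-theoretic description of roots of incidence forms from \cite{JM}: any $x\in\Z^{E(G)}$ with $q_G(x)=1$ has $\|\Inc(G)^{\tr}x\|^{2}=2$, and one reconstructs a reduced open walk of $(G,\leq)$ between the two vertices of $G$ where $\Inc(G)^{\tr}x$ is non-zero by an inductive peeling argument along $G$, obtaining exactly the coefficient vector $\sum(-1)^{d_t}\bas_{\vtx{i_t}}$; for $x\in\rad(q_G)=\roots_q(0)$ a parallel construction produces either a reduced closed walk of even length or a belt realising $x$, both of which furnish indecomposable perfect complexes by Proposition~\ref{pro:walkBelt}. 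For (d), Theorem~\ref{thm:mrgvsCar}(a) equates the multi-clock condition with bipartiteness of $G$, i.e., with the absence of odd-length closed walks in $G$; by (c) this is exactly $\perdims\cap\roots_q(2)=\emptyset$, which together with the inclusions above forces $\perdims=\roots_q(0)\cup\roots_q(1)$. Conversely, any odd cycle of $G$ produces an element of $\perdims\cap\roots_q(2)$, breaking the equality.

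\medskip

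The principal obstacle is precisely this last reverse direction: producing an actual reduced walk from a prescribed root of $q_G$. The forward computation above is a clean telescoping, but reversing it requires threading a sequence of edges through $G$ that simultaneously realises the given coefficient vector and is reduced, and moreover compatible with the partial order $\leq$ at each vertex so as to define a legitimate walk in $(G,\leq)$ (equivalently, a valid homotopy string of $(Q,I)$). When $\gcc_A=1$ the radical $\rad(q_G)$ carries an extra free rank not accounted for by any belt, and the isolation of the corresponding ``global'' closed-walk class is the subtlest point of the construction.
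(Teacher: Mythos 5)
Your proposal is correct and follows essentially the same route as the paper: identify $q_A$ in the projective basis with the incidence form $q_G$ via Theorem~\ref{thm:mrgvsCar}, express the classes $[\stringcpx{m,\wlk}]$ as signed incidence vectors (your telescoping computation is exactly Lemma~\ref{lem:incvectsvsdim}(a) combined with Lemma~\ref{lem:incvects}(e) and the parity observation), absorb band classes into even closed walks, and import the reverse inclusions from the root classification of incidence forms (Theorem~\ref{thm:JM1B}, i.e.\ Theorem~C of~\cite{JM}). The obstacle you flag at the end is already fully handled by that cited theorem --- in particular $\roots_q(0)$ is exactly the set of $\pm$ incidence vectors of even closed walks, with no separate belt case and no extra radical class to isolate --- so no additional construction is needed.
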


Slightly abusing our conventions,  in this section any trivial walk $\wlk$ is considered  as a reduced closed walk of even length and in this case the associated zero complex $\stringcpx{m,\wlk}$ is treated as an indecomposable string complex. Observe that in contrast to the classical results providing a classification of indecomposable modules in terms of  positive roots of the Euler (or Tits) form (cf.~\cite{GabrielI, BongartzTits, DlabRingel}) we do not restrict in Theorem~\ref{thm:rootsind} to \lq\lq positive'' roots since we deal with objects of the derived category. The proof of the theorem will be given in Subsection~\ref{subsec:proofB}. In the next subsection we discuss some preparatory facts required for the proof.

\subsection{Incidence vectors}\label{subsec:incvects}

In~\cite[Section 6]{JM} we studied so-called incidence vectors associated to walks in an arbitrary bidirected graph. Here we recall the special case of this construction, for walks in a graph $G$ (viewed as the bidirected graph $(G,\sigma^{+})$ with constant bidirection $\sigma^+ \equiv +1$, cf.~Definition~\ref{def:bidi}).
\begin{definition}\label{def:incvect} Given a graph $G$ and a walk $\wlk=i_1\cdots i_Li_{L+1}$ of $G$, for $L\geq 0$ and $i_t\in\Oedgs{G}$, $t=1,\ldots,L+1$, we set
\begin{equation}\label{eq:defincvect}
\inc(\wlk)=\inc_G(\wlk):=\sum_{t=0}^{L}(-1)^t\bas_{i_{t+1}}\in\ZZ^{\edgs{G}}
\end{equation}
where $\bas_{i}:=\bas_{\vtx{i}}$ for $i\in \Oedgs{G}$, and we call $\inc(\wlk)$ the \textbf{incidence vector} of $\wlk$. For a trivial walk $\wlk$ we set $\inc(\wlk):=0$.
\end{definition} 

We recall from~\cite[Lemmas~6.2-6.3]{JM} the following basic properties of incidence vectors.
\begin{lemma}\label{lem:incvects}
Let $G$ be a graph  and $\wlk$, $\wlk'$ a pair of  concatenable walks in $G$. Then
\begin{enumerate}[label={\textnormal{(\alph*)}},topsep=3px,parsep=0px]
 \item $\inc(\wlk \wlk')=\inc(\wlk)+(-1)^{\ell(\wlk)}\inc(\wlk')$.
 \item $\inc(\invw{\wlk})=(-1)^{\ell(\wlk)+1}\inc(\wlk)$,
 \item if $\wlk$ is a closed walk of even length, then $\inc(\wlk^k)=k\,\inc(\wlk)$ for each $k\in\ZZ$,
\item if $\wlk$ is a closed walk of odd length,  then $\inc(\wlk^k)=(k \,{\rm mod}\, 2)\inc(\wlk)$ for each $k\in\ZZ$,
 \item $\Inc(G)^{\tr}\inc(\wlk)=\bas_{\tar(\wlk)}-(-1)^{\ell(\wlk)}\bas_{\sou(\wlk)}\in \ZZ^{\verts{G}}$, where $\Inc(G)\in\M_{\edgs{G},\verts{G}}(\ZZ)$ is the incidence matrix of $G$ as in Definition~\ref{def:bidi}.
\end{enumerate}
\end{lemma}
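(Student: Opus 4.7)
The plan is to unpack the definition $\inc(\wlk)=\sum_{t=0}^L(-1)^t\bas_{i_{t+1}}$ for $\wlk=i_1\cdots i_{L+1}$ of length $\ell(\wlk)=L+1$, prove (a) by direct computation, and derive (b)--(e) from it. For (a), writing $\wlk=i_1\cdots i_\ell$ and $\wlk'=j_1\cdots j_m$ with $\sou(\wlk)=\tar(\wlk')$, the concatenation $\wlk\wlk'$ has the edges $i_1,\dots,i_\ell,j_1,\dots,j_m$ in that order and length $\ell+m$. Splitting the defining sum for $\inc(\wlk\wlk')$ at position $\ell$ and reindexing the tail by $s=t-\ell$ converts the second half into $(-1)^\ell\inc(\wlk')$, giving the claimed identity at once.

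For (b), the inverse walk $\invw{\wlk}=\invw{i}_\ell\cdots\invw{i}_1$ has edges with the same unordered classes $\vtx{i}_t$, so the substitution $t'=\ell-t+1$ in the defining sum produces an overall sign $(-1)^{\ell-1}=(-1)^{\ell+1}$. Items (c) and (d) follow by iterating (a): one has $\inc(\wlk^2)=(1+(-1)^{\ell(\wlk)})\inc(\wlk)$, which equals $2\inc(\wlk)$ when $\ell(\wlk)$ is even and $0$ when it is odd. A short induction on $|k|$ (combined with (b) and the conventions $\wlk^0:=\triv_{\tar(\wlk)}$ and $\wlk^{-k}:=(\invw{\wlk})^k$ for $k\geq 0$) then yields both identities for every $k\in\ZZ$; in the odd case the further observation $\inc(\wlk^{2r+1})=\inc(\wlk^{2r})+(-1)^{2r\ell}\inc(\wlk)=\inc(\wlk)$ gives exactly the parity formula $(k \bmod 2)\inc(\wlk)$.

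Only (e) uses the incidence matrix explicitly. Specialising Definition~\ref{def:bidi} to the constant bidirection $\sigma^+\equiv+1$ gives $\Inc(G)^\tr\bas_i=\bas_{\tar(i)}+\bas_{\sou(i)}$, and combining this with $\tar(i_t)=v_{t-1}$, $\sou(i_t)=v_t$ rewrites
\[
\Inc(G)^\tr\inc(\wlk)=\sum_{t=1}^{\ell}(-1)^{t-1}\bigl(\bas_{v_{t-1}}+\bas_{v_t}\bigr).
\]
The sum telescopes: every interior vertex $v_s$ with $1\le s\le\ell-1$ appears twice with opposite signs $(-1)^{s-1}$ and $(-1)^s$, leaving only $\bas_{v_0}+(-1)^{\ell-1}\bas_{v_\ell}=\bas_{\tar(\wlk)}-(-1)^{\ell(\wlk)}\bas_{\sou(\wlk)}$.

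There is no serious mathematical obstacle; the whole argument is a chain of direct computations. The only real difficulty is bookkeeping: keeping the parity of $\ell(\wlk)$ and the two competing indexing conventions (the defining sum is written from $t=0$, while the telescope in (e) is most naturally indexed from $t=1$) consistent across (a), (b) and (e).
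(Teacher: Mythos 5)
Your computations are correct and complete, including the careful handling of the sign conventions in (b), the extension of (c)--(d) to negative exponents via $\wlk^{-k}=(\invw{\wlk})^k$, and the telescoping in (e) (which also works for loops, where $\Inc(G)^{\tr}\bas_{i_t}=2\bas_{v_t}$ is still of the form $\bas_{v_{t-1}}+\bas_{v_t}$). Note, however, that the paper does not prove this lemma at all: it imports it verbatim from \cite[Lemmas~6.2--6.3]{JM}, where the analogous statements are established for arbitrary bidirected graphs $(G,\sigma)$ by essentially the same direct index manipulations you carry out here for the constant bidirection $\sigma^+\equiv+1$. So your argument is a correct, self-contained verification of the special case actually used, rather than a divergence from the paper's (absent) proof.
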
 

The following one of the main results of~\cite{JM} is crucial in our study.
\begin{theorem}[{{\cite[Theorem C]{JM}}}]\label{thm:JM1B}
Given a connected graph $G$, the set of vectors
\begin{equation}\label{eq:JM1Bincroots}
\CRinc_G:=\{\pm\inc(\wlk) \mid \ \wlk \text{ a walk of } G\}\subseteq \Z^{\edgs{G}}
\end{equation}
satisfies
\begin{equation}\label{eq:JM1Bsubsets}
\roots_q(0) \cup \roots_q(1)\  \subseteq  \ \CRinc_G \ \subseteq \ \roots_q(0) \cup \roots_q(1) \cup \roots_q(2),
\end{equation}
where $q=q_G=q_{G,\sigma^+}:\Z^{\edgs{G}} \to \Z$ is the incidence form of $G$ as in Definition~\ref{def:incqf}. Moreover,
\begin{enumerate}[label={\textnormal{(\alph*)}},topsep=4px,parsep=0px]
\item $\roots_q(0) = \{\pm \inc(\wlk) \mid \  \, \wlk\ \text{is a closed walk of even length}\}$,
\item $\roots_q(1) = \{\pm \inc(\wlk) \mid \  \, \wlk\ \text{is an open walk}\}$,
\item $\CRinc_G\cap\roots_q(2)=  \{\pm \inc(\wlk) \mid \  \, \wlk\ \text{is a closed walk of odd length}\}$.
\end{enumerate}
\end{theorem}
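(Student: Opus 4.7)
The plan is to reduce the theorem to Theorem~\ref{thm:JM1B} on incidence vectors of walks in the graph $G$, via two identifications. First, by Theorem~\ref{thm:mrgvsCar}, the Euler form $q=q_A$ expressed in the basis $\projbas=\{[P_i]\}_{i\in Q_0}$ of indecomposable projectives coincides with the incidence form $q_G:\ZZ^{\edgs{G}}\to\ZZ$ of the underlying graph, under the natural identification $K_0(\per A)\cong\ZZ^{Q_0}=\ZZ^{\edgs{G}}$. Second, I claim that for any non-trivial reduced walk $\wlk=i_1\cdots i_{L+1}$ and any $m\in\ZZ$, the class of the associated string complex satisfies
$$[\stringcpx{m,\wlk}]=(-1)^m\,\inc(\wlk),$$
and that for any belt $\wlk$ with $\wlk_{[L]}=i_1\cdots i_L$, any $m\in\ZZ$, and any indecomposable automorphism $\mu$ of a finite dimensional $k$-vector space $K$, the corresponding band complex satisfies
$$[\stringcpx{m,\wlk,\mu}]=(-1)^m\,(\dim_kK)\,\inc(\wlk_{[L]}).$$
Both identities will follow from the alternating-sum definition of a class in $K_0(\per A)$ combined with the observation that in the folded complex extracted from the unfolded diagram of $\wlk$, the projective $P_{i_t}$ sits at cohomological degree $m+d_t$ where $d_t:=\sum_{s=1}^{t-1}\deg(i_si_{s+1})$; since every $\deg(i_si_{s+1})\in\{\pm 1\}$, one gets $d_t\equiv t-1\pmod 2$, which is exactly the parity pattern needed to convert the alternating sum of projective classes into $\inc(\wlk)$.

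With these two bridges in place, parts (a), (b), and (c) follow by direct translation of the corresponding parts of Theorem~\ref{thm:JM1B} into the language of string complexes. For this translation I will use that $\{\pm\inc(\wlk)\mid\wlk\text{ an arbitrary walk of }G\}=\{\pm\inc(\wlk)\mid\wlk\text{ a reduced walk of }G\}$, which is a consequence of Lemma~\ref{lem:incvects}(a) applied to each elementary reduction $\wlk_1\cdot i\cdot i^{-1}\cdot\wlk_2\mapsto\wlk_1\cdot\wlk_2$ (one computes $\inc(i\cdot i^{-1})=\bas_i-\bas_i=0$, so $\inc$ is preserved), together with the fact that such reductions preserve both the parity of the length and the closed/open character of the walk. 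For part (e), note that the defining condition $\deg(\wlk)=\sum_{t=1}^{L}\deg(i_ti_{t+1})=0$ of a belt forces $L=\ell(\wlk_{[L]})$ to be even (since the sum has $L$ terms of value $\pm 1$); hence Theorem~\ref{thm:JM1B}(a) yields $\inc(\wlk_{[L]})\in\roots_q(0)=\rad(q)$, and since the radical is stable under integer scaling, $[\stringcpx{m,\wlk,\mu}]=\pm(\dim_kK)\inc(\wlk_{[L]})\in\roots_q(0)$.

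The sandwich~\eqref{eq:rootsind} follows by combining the classification~\eqref{eq:perstbd} of $\ind(\per A)$ into string and band complexes with the above: every class of a band complex lies in $\roots_q(0)$ by (e), every class of a string complex lies in $\CRinc_G\subseteq\roots_q(0)\cup\roots_q(1)\cup\roots_q(2)$ by Theorem~\ref{thm:JM1B}, while conversely each element of $\roots_q(0)\cup\roots_q(1)$ is realized as $\pm\inc(\wlk)=[\stringcpx{m,\wlk}]$ for some reduced walk $\wlk$ and some $m\in\ZZ$ by parts (a), (b) of that theorem. Part (d) is then equivalent to $\perdims\cap\roots_q(2)=\emptyset$, which by (c) means that $G$ admits no reduced closed walk of odd length, equivalently $G$ is bipartite, which by Theorem~\ref{thm:mrgvsCar}(a) is equivalent to the multi-clock condition. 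The principal technical hurdle will be the careful verification of the class formulae for $[\stringcpx{m,\wlk}]$ and $[\stringcpx{m,\wlk,\mu}]$ — specifically, tracking cohomological degrees through the folding procedure and confirming the parity identity $(-1)^{d_t}=(-1)^{t-1}$ irrespective of the individual signs $\deg(i_si_{s+1})$; once this bridge is in place, the remainder of the proof is an essentially formal transcription of the graph-theoretic statement in Theorem~\ref{thm:JM1B} via the Cartan--incidence identity of Theorem~\ref{thm:mrgvsCar}.
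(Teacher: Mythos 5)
Your proposal does not prove the statement in question. The statement is Theorem~\ref{thm:JM1B} itself --- the purely graph-theoretic assertion that for a connected graph $G$ the set $\CRinc_G$ of $\pm$ incidence vectors of walks is sandwiched between $\roots_q(0)\cup\roots_q(1)$ and $\roots_q(0)\cup\roots_q(1)\cup\roots_q(2)$ for the incidence form $q=q_G$, together with the characterizations (a)--(c). What you have written is instead a proof of Theorem~\ref{thm:rootsind} (Theorem B), and it takes Theorem~\ref{thm:JM1B} as a black-box input: you explicitly ``reduce the theorem to Theorem~\ref{thm:JM1B}'' and invoke its parts (a), (b), (c) at every stage. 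As an argument for the target statement this is circular --- it contains no reasoning whatsoever toward the claim it is supposed to establish. (Your references to ``part (d)'', ``part (e)'', the multi-clock condition, string and band complexes, and equation~\eqref{eq:rootsind} all belong to Theorem~\ref{thm:rootsind}; Theorem~\ref{thm:JM1B} has only three parts and mentions no algebras at all.) For what it is worth, the paper does not prove this theorem either: it is imported verbatim from the reference [JM, Theorem C], so there is no internal proof to compare against.

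If you did want to prove Theorem~\ref{thm:JM1B}, the two inclusions have very different difficulty. The right-hand inclusion and the ``$\supseteq$'' halves of (a)--(c) are a short computation: by Lemma~\ref{lem:incvects}(e) one has $\Inc(G)^{\tr}\inc(\wlk)=\bas_{\tar(\wlk)}-(-1)^{\ell(\wlk)}\bas_{\sou(\wlk)}$, so $q_G(\inc(\wlk))=\frac12\|\Inc(G)^{\tr}\inc(\wlk)\|^2$ equals $0$, $1$ or $2$ according to whether $\wlk$ is closed of even length, open, or closed of odd length. The substantive content is the left-hand inclusion $\roots_q(0)\cup\roots_q(1)\subseteq\CRinc_G$, i.e.\ that \emph{every} $0$-root and every $1$-root of the incidence form is realized as $\pm\inc(\wlk)$ for some walk $\wlk$; this requires an inductive construction of a walk from an arbitrary integer vector with $q_G$-value $0$ or $1$ and is nowhere addressed in your proposal. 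The only fragment of your text that bears on the statement at all is the observation that one may restrict to reduced walks in the definition of $\CRinc_G$, which is a side remark rather than a proof.
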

Vectors from $\CRinc_G$ are called \textbf{incidence roots} of $G$ (or of $q_G$). Observe that by Lemma~\ref{lem:incvects}(a)-(b)  we may restrict to reduced walks in the definition of $\CRinc_G$, that is,
$\CRinc_G=\{\pm\inc(\wlk) \mid \ \wlk \text{ a reduced walk of } G\}$, and similarly for the sets in claims  (a)-(c) of Theorem~\ref{thm:JM1B}, cf.~\cite[Remark 7.1]{JM}. 
 The following observations provide a  bridge from incidence roots to the study of the present paper. We use the notation and terminology of Subsections~\ref{subsec:perfcpx} and~\ref{subsec:eulf}.

\begin{lemma}\label{lem:incvectsvsdim}
Let $(Q,I)$ be a gentle bound quiver and  $(G,\leq)=\GPC(\quiver,I)$  the associated marked ribbon graph. Then
\begin{enumerate}[label={\textnormal{(\alph*)}},topsep=4px,parsep=1px]
 \item $\vdimp{\stringcpx{m,\wlk}}=(-1)^m\inc_G(\wlk)$ for each  $m\in\ZZ$ and a non-trivial reduced walk $\wlk$ of $G$,
\item  $\vdimp{\stringcpx{m,\wlk,\mu}}=(-1)^md\,\inc_G(\wlk_{[\ell(\wlk)-1]})$ for each $m \in \Z$, a belt $\wlk$  of $(G,\leq)$ and  an indecomposable automorphism $\mu$ of a $k$-vector space $K$ of dimension $d={\rm dim}_k(K)\geq 1$.
\end{enumerate}
\end{lemma}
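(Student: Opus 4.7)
The plan is to reduce both claims to a direct calculation of cohomological degrees in the unfolded diagrams of Proposition~\ref{pro:walkBelt}, combined with the alternating-sum identity~\eqref{eq:alternating} for classes in $K_0(\per\,A)$. First I would determine the cohomological position of each term $P_{i_t}$ in the complex $\stringcpx{m,\wlk}$ for $\wlk=i_1 i_2 \cdots i_L i_{L+1}$. By construction of the unfolded diagram, $P_{i_1}$ sits at cohomological degree $m$, and the arrow between $P_{i_t}$ and $P_{i_{t+1}}$ is drawn so that its target has cohomological degree one higher than its source; this means $P_{i_{t+1}}$ is placed one degree above $P_{i_t}$ if $\deg(i_t i_{t+1})=+1$ and one degree below if $\deg(i_t i_{t+1})=-1$. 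Consequently, the cohomological degree of $P_{i_t}$ is
$$d_t \;=\; m + \sum_{s=1}^{t-1}\deg(i_s i_{s+1}) \;=\; m + \deg(\wlk_{[t-1]}), \qquad t=1,\dots,L+1.$$

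The key observation is that each $\deg(i_s i_{s+1})\in\{\pm 1\}$ is odd, so $d_t \equiv m+(t-1)\pmod 2$, and therefore $(-1)^{d_t}=(-1)^m(-1)^{t-1}$, \emph{regardless of the individual signs}. Now applying~\eqref{eq:alternating} (which identifies the class of a complex with the alternating sum of the classes of its components) to the decomposition of $\stringcpx{m,\wlk}$ into indecomposable projectives, and comparing with Definition~\ref{def:incvect}, yields
$$\vdimp{\stringcpx{m,\wlk}} \;=\; \sum_{t=1}^{L+1}(-1)^{d_t}\,\bas_{i_t} \;=\; (-1)^m\sum_{t=0}^{L}(-1)^t\,\bas_{i_{t+1}} \;=\; (-1)^m\,\inc_G(\wlk),$$
which proves~(a). (Here I use that $\bas_{i}=\bas_{\vtx{i}}$ does not distinguish ordered from unordered edges, as in Definition~\ref{def:incvect}.)

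For~(b), the band complex $\stringcpx{m,\wlk,\mu}$ has the same underlying unfolded-diagram structure, but each $P_{i_t}$ is replaced by $P_{i_t}\otimes_k K$ and the terminal module $P_{i_{L+1}}\otimes_k K$ is identified with the initial module $P_{i_1}\otimes_k K$. This identification is compatible with cohomological grading precisely because $\wlk$ is a belt, so $\deg(\wlk)=0$ gives $d_{L+1}=m=d_1$. Each of the $L=\ell(\wlk)-1$ remaining distinct terms contributes $d\cdot\vdimp{P_{i_t}}$ where $d=\dim_k K$, so the same parity argument gives
$$\vdimp{\stringcpx{m,\wlk,\mu}} \;=\; d\sum_{t=1}^{L}(-1)^{d_t}\,\bas_{i_t} \;=\; (-1)^m\, d \sum_{t=0}^{L-1}(-1)^t\,\bas_{i_{t+1}} \;=\; (-1)^m\,d\,\inc_G(\wlk_{[\ell(\wlk)-1]}),$$
as required. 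The only subtlety is careful bookkeeping of the three indexing conventions (cohomological degree, walk length, and the convention $\ell(\wlk_{[t]})=t$), but no genuine obstacle appears: the parity identity $(-1)^{d_t}=(-1)^m(-1)^{t-1}$ is what makes the formula an exact match with the incidence vector, explaining a posteriori why the Euler form of a gentle algebra coincides with the incidence quadratic form of its marked ribbon graph (Theorem~\ref{thm:mrgvsCar}).
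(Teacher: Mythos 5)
Your proof is correct and follows essentially the same route as the paper: the paper proves (a) by induction on $L$, with the inductive step resting on exactly your parity observation that the cohomological degree of the last term satisfies $(-1)^{d_{L+1}}=(-1)^{m+L}$, and then deduces (b) from (a) via $\vdimp{\stringcpx{m,\wlk,\mu}}=d\,\vdimp{\stringcpx{m,\wlk_{[\ell(\wlk)-1]}}}$ just as you do. The only blemish is a harmless indexing slip: your explicit sum $\sum_{s=1}^{t-1}\deg(i_si_{s+1})$ equals $\deg(\wlk_{[t]})$, not $\deg(\wlk_{[t-1]})$, but the sum itself (and hence the parity argument) is the correct one.
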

\begin{proof} 
 We show (a) by induction on $L\geq 0$, for reduced walks $\wlk=i_1\cdots i_Li_{L+1}$. If $L=0$ then $\wlk=i_1$ and
$\stringcpx{m,\wlk}=\ldots 0\to P_{i_1}\to 0\ldots$ is a stalk complex concentrated in cohomological degree $m$, so $\vdimp{\stringcpx{m,\wlk}}=(-1)^m\bas_{i_1}$, cf.~the formula~\eqref{eq:alternating}. Since $\inc(i_1)=\bas_{i_1}$ we get the claim. Now, given a reduced walk $\wlk'=i_1\cdots i_Li_{L+1}i_{L+2}$, by construction in Subsection~\ref{subsec:perfcpx} it is clear that $\vdimp{\stringcpx{m,\wlk'}}=\vdimp{\stringcpx{m,\wlk}}+(-1)^{s+1}\bas_{i_{L+2}}$, independently on the degree $\deg(i_{L+1}i_{L+2})$, where $\wlk=i_1\cdots i_Li_{L+1}$ and  $s$ is the cohomological degree of the summand $P_{i_{L+1}}$  in $\stringcpx{m,\wlk}$, cf.~\eqref{eq:alternating}. Thus by the inductive assumption we get that $(-1)^s=(-1)^{m+L}$ and $$\vdimp{\stringcpx{m,\wlk'}}=(-1)^m\inc(\wlk)+(-1)^{m+L+1}\bas_{i_{L+2}}
=(-1)^m\left(\sum_{t=0}^{L}(-1)^t\bas_{i_{t+1}}+(-1)^{L+1}\bas_{i_{L+2}}\right)=(-1)^m\inc(\wlk'),$$
which shows (a). Now take  $m\in\ZZ$, a belt $\wlk$ and an automorphism $\mu:K\to K$ as in (b). Then directly from Proposition~\ref{pro:walkBelt} it follows that $\vdimp{\stringcpx{m,\wlk,\mu}}=d\,\vdimp{\stringcpx{m,\wlk_{[\ell(\wlk)-1]}}}$  for $d={\rm dim}_k(K)$. Thus by (a) we get $\vdimp{\stringcpx{m,\wlk,\mu}}=(-1)^md\,\inc_G(\wlk_{[\ell(\wlk)-1]})$.
\end{proof}

\subsection{{\it Proof of Theorem~\ref{thm:rootsind}}}\label{subsec:proofB} 
Fix  a gentle bound quiver $(Q,I)$, and let  $(G,\leq)=\GPC(\quiver,I)$ be the associated marked ribbon graph  and $q=q_\La:K_0({\rm per}\,\La)\to\ZZ$, the Euler form  of  the gentle algebra $\La:=kQ/I$. Recall that by Theorem~\ref{thm:mrgvsCar} the  form $q_A$ in the basis of indecomposable projectives $\projbas:=\{[P_i]\}_{i\in Q_0}$ is the incidence form $q_G:\Z^{\edgs{G}} \to \Z$ of $G$  (see Definition~\ref{def:incqf}). In particular, 
\begin{equation}\label{eq:proofBqAqG}
q_A([X^\bullet])=q_G(\vdimp{X^\bullet})
\end{equation}
for every perfect complex $X^\bullet$ (the set of edges $\edgs{G}$ of $G$ is identified with the set of vertices $Q_0$ of $Q$).
By Proposition~\ref{pro:walkBelt} the set $\perdims=\perdims(A)=[\ind({\rm per}\,\La)]\subseteq K_0({\rm per}\,\La)$ is the sum
\begin{equation}\label{eq:proofBPsplit}
\perdims=\perdims_{\rm st}\cup\perdims_{\rm bd}\quad\text{for}\quad
\begin{array}{l}\perdims_{\rm st}:=\{[\stringcpx{m,\wlk}] \mid \ m\in\ZZ\ \text{\,and\,} \, \wlk\ \text{is a reduced walk in $G$}\,\},\medskip\\
\perdims_{\rm bd}:=\{[\stringcpx{m,\wlk,\mu}] \mid \ m\in\ZZ, \, \wlk\ \text{is
a belt of $(G,\leq)$ and $\mu$ is an indecomposable}\\
\,\hspace{2.5cm}\text{ automorphism of a finite dimensional $k$-vector space $K$}\},
\end{array}
\end{equation}
 cf.~\eqref{eq:perstbd}. By transferring the inclusions~\eqref{eq:JM1Bsubsets} of Theorem~\ref{thm:JM1B} via~\eqref{eq:proofBqAqG} and Lemma~\ref{lem:incvectsvsdim}(a) we obtain  
 \begin{equation}\label{eq:proofBincluPst}
\roots_{q_A}(0) \cup \roots_{q_A}(1)\  \subseteq  \  \perdims_{\rm st} \  \subseteq \ \roots_{q_A}(0) \cup \roots_{q_A}(1) \cup \roots_{q_A}(2),
\end{equation}
and in the same way the equalities  (a) and (b) of Theorem~\ref{thm:rootsind} follow by Theorem~\ref{thm:JM1B}(a)-(b) (note that $[\stringcpx{m+1,\wlk}]=-[\stringcpx{m,\wlk}]$).
To show the remaining claims take an indecomposable band complex $Y^\bullet$. By Proposition~\ref{pro:walkBelt} there exists $m\in\ZZ$,
a belt $\wlk=i_1i_2\cdots i_Li_{L+1}$ of $(G,\leq)$ and an indecomposable automorphism $\mu$ of a finite dimensional $k$-vector space $K$ such that $Y^\bullet\cong \stringcpx{m,\wlk,\mu}$. By the definition of
 a belt it follows that $\wlk_1:=\wlk_{[L]}=i_1i_2\cdots i_L$ is a closed walk and $\deg(\wlk)=0$. The latter condition implies that $L=\ell(\wlk_1)>0$ is even. 
 Hence by Lemma~\ref{lem:incvectsvsdim} and Lemma~\ref{lem:incvects}(c) we get that
 \begin{equation}\label{eq:proofBbeltvsstring}
 \vdimp{Y^\bullet}=\vdimp{\stringcpx{m,\wlk,\mu}}=(-1)^md\,\inc_G(\wlk_1)=(-1)^m\,\inc_G(\wlk_1^d)
 =\vdimp{\stringcpx{m,\wlk_1^d}}
 \end{equation}
 where $d={\rm dim}_k(K)$. In particular, $[Y^\bullet]=[\stringcpx{m,\wlk_1^d}]$ belongs to $\roots_{q_A}(0)$ by (a), since $\wlk_1^d$ is a closed walk of even length (note that $\wlk_1^d$ is reduced since $i_Li_1=i_Li_{L+1}$ is reduced by the definition of a belt). This shows claim (e). This also shows that $\perdims_{\rm bd}\subseteq \perdims_{\rm st}$ hence $\perdims=\perdims_{\rm st}$. Thus by applying~\eqref{eq:proofBincluPst} we obtain the inclusions~\eqref{eq:rootsind}. Similarly, we get claim (c) by applying Theorem~\ref{thm:JM1B}(c) via Lemma~\ref{lem:incvectsvsdim}(a). 
To see the remaining claim (d) observe that by~\eqref{eq:rootsind}, the equality $\roots_q(0) \cup \roots_q(1) = \perdims$ is equivalent with $\perdims\cap\roots_q(2)=\emptyset$. By (c), the latter holds exactly if $G$ has no closed walks of odd length, that is, if $G$ is bipartite. And this holds if and only if  $(Q,I)$ satisfies the multi-clock condition by Theorem~\ref{thm:mrgvsCar}(a). 
\hfill$\Box$

\subsection{Root systems and applications}\label{subs:rootsys}

In this subsection we show some exemplary applications and consequences of Theorem~\ref{thm:rootsind}.

\begin{corollary}\label{cor:2roots}
Let $(Q,I)$  be a gentle bound quiver and let $q=q_\La:K_0({\rm per}\,\La)\to\ZZ$ be the Euler form of the gentle algebra $A=kQ/I$. Let $\perdims=\perdims(A)=[\ind({\rm per}\,\La)]\subseteq K_0({\rm per}\,\La)$ be the subset  as in Theorem~\ref{thm:rootsind}.
Then for any $x\in \perdims\cap \roots_q(2)$ there exist infinitely many $($up to shift and isomorphism$)$ indecomposable string complexes $X^\bullet$ in $\Db{A}$ such that $[X^\bullet]=x$. In particular, if $(Q,I)$ does not satisfy the multi-clock condition, then $A$ is of derived infinite type.
\end{corollary}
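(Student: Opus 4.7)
The plan is to exploit Theorem~\ref{thm:rootsind}(c) to rewrite $x$ as $[\stringcpx{m_0,\wlk_0}]$ for some $m_0\in\ZZ$ and reduced closed walk $\wlk_0=i_1\cdots i_\ell$ of odd length $\ell$ in $G$. Since two string complexes $\stringcpx{m,\wlk}$ and $\stringcpx{m',\wlk'}$ are shift-equivalent iff $\wlk=\wlk'$ or $\wlk'=\invw\wlk$ (by~\eqref{eq:stnoniso}), it suffices to produce an infinite sequence of reduced closed walks $\wlk_k$ of \emph{strictly increasing} odd length with $\inc_G(\wlk_k)=\inc_G(\wlk_0)$, because then by Lemma~\ref{lem:incvectsvsdim}(a) we get $\vdimp{\stringcpx{m_0,\wlk_k}}=\vdimp{\stringcpx{m_0,\wlk_0}}$, so $[\stringcpx{m_0,\wlk_k}]=x$, while the walks have pairwise distinct lengths (in particular none is the inverse of another).

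The easy subcase is $i_1\neq\invw i_\ell$: then $\wlk_0^{2k+1}$ remains reduced at every seam for all $k\geq 0$, while Lemma~\ref{lem:incvects}(d) gives $\inc_G(\wlk_0^{2k+1})=\inc_G(\wlk_0)$, and the lengths $(2k+1)\ell$ grow without bound. The hard subcase is $i_1=\invw i_\ell$, which prevents iteration of $\wlk_0$ and is the main obstacle. To handle it I would ``peel off'' symmetric layers: write $\wlk_0=i_1\,\wlk_0^{(1)}\,\invw{i_1}$, where $\wlk_0^{(1)}=i_2\cdots i_{\ell-1}$ is again a reduced closed walk, of odd length $\ell-2$, starting/ending at $\sou(i_1)$. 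Recursively set $\wlk_0^{(t)}=i_{t+1}\,\wlk_0^{(t+1)}\,\invw{i_{t+1}}$ whenever the first/last edges of $\wlk_0^{(t)}$ are mutually inverse; the process must stop after some $s$ steps, either because $\wlk_0^{(s)}$ has length $1$ (a loop $e$, which satisfies $e\neq\invw e$ since $\iota$ is fixed-point-free) or because its first and last edges are no longer inverse to each other.

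Having reached such an innermost $\wlk_0^{(s)}$, I would define
\[
\wlk_k := i_1\, i_2\cdots i_s\,(\wlk_0^{(s)})^{2k+1}\,\invw{i_s}\cdots\invw{i_2}\,\invw{i_1},\qquad k\geq 0.
\]
Reducedness of $\wlk_k$ should be checked seam by seam, inductively on the layer: the innermost seams of $(\wlk_0^{(s)})^{2k+1}$ are handled by the subcase hypothesis on $\wlk_0^{(s)}$, and each outer seam between $i_{t+1}$ and the first edge of the inserted block (resp.\ the last edge and $\invw{i_{t+1}}$) is exactly a seam of the original reduced walk $\wlk_0^{(t)}$, hence reduced. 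For the incidence vector I would apply Lemma~\ref{lem:incvects}(a),(b),(d) iteratively: at the innermost level $\inc_G((\wlk_0^{(s)})^{2k+1})=\inc_G(\wlk_0^{(s)})$ because $2k+1$ is odd; then at each outer level $\inc_G(i_{t+1}W\invw{i_{t+1}})=2\bas_{\vtx{i_{t+1}}}-\inc_G(W)$ for any closed $W$ of odd length, which exactly reproduces the recursion $\inc_G(\wlk_0^{(t)})=2\bas_{\vtx{i_{t+1}}}-\inc_G(\wlk_0^{(t+1)})$ applied to the original walk. Unfolding the $s$ layers yields $\inc_G(\wlk_k)=\inc_G(\wlk_0)$, while $\ell(\wlk_k)=2s+(2k+1)\ell(\wlk_0^{(s)})$ is odd and strictly increasing in $k$. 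This produces the desired infinite family.

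For the final clause, if $(Q,I)$ does not satisfy the multi-clock condition, then by Theorem~\ref{thm:mrgvsCar}(a) the graph $G$ is not bipartite and contains an odd cycle, which, traversed once, gives a reduced closed walk $\wlk$ of odd length in $G$. By Theorem~\ref{thm:rootsind}(c) the class $[\stringcpx{0,\wlk}]$ belongs to $\perdims\cap\roots_q(2)$, so this intersection is non-empty; applying the main part of the corollary yields infinitely many pairwise non-shift-equivalent indecomposable complexes in $\per\,A\subseteq\Db{A}$, so $A$ is of derived infinite type. The only delicate point in the whole argument is the reducedness verification in the nested construction of paragraph three; everything else is an application of the two lemmas on incidence vectors combined with Theorem~\ref{thm:rootsind}.
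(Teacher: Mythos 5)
Your proof is correct, and it in fact does more than the paper's own argument. The paper's proof takes the representing reduced closed walk $\wlk$ of odd length from Theorem~\ref{thm:rootsind}(c) and simply passes to the odd powers $\wlk^t$, invoking Lemma~\ref{lem:incvects}(d) and Lemma~\ref{lem:incvectsvsdim}(a) to conclude $[\stringcpx{m,\wlk^t}]=[\stringcpx{m,\wlk}]$ and citing \eqref{eq:stnoniso} for pairwise non-isomorphy; it never addresses whether $\wlk^t$ is reduced. That is exactly the issue you isolate as the ``hard subcase'': if $i_{L+1}=\invw{i_1}$ then $\wlk^t$ is not reduced for $|t|>1$, so $\stringcpx{m,\wlk^t}$ is not an indecomposable string complex as defined in Proposition~\ref{pro:walkBelt}(a). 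This subcase is not vacuous, and it cannot be dodged by re-choosing the representative: in the second graph of Example~\ref{ex:small} (an edge $1$ joining $\alpha$ and $\beta$ plus a loop $2$ at $\alpha$), the class with coordinate vector $2\bas_1-\bas_2$ lies in $\perdims\cap\roots_q(2)$ but is realized only by walks of the form $i\,j^{2k+1}\,\invw{i}$, none of which is cyclically reduced. Your peeling construction --- stripping mutually inverse outer edges down to an innermost closed walk whose power is reduced, then re-dressing the odd powers with the stripped prefix and suffix --- handles exactly these classes; the seam-by-seam reducedness check and the identity $\inc(i\,W\,\invw{i})=2\bas_{\vtx{i}}-\inc(W)$ for $W$ closed of odd length are both correct, and the strictly increasing odd lengths guarantee pairwise non-isomorphy up to shift via \eqref{eq:stnoniso}. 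Your easy subcase coincides with the paper's proof verbatim, and your deduction of the final clause (odd cycle in a non-bipartite $G$, via Theorem~\ref{thm:mrgvsCar}(a) and Theorem~\ref{thm:rootsind}(c,d)) matches the paper's. In short: same strategy, but your version closes a genuine gap in the published argument.
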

\begin{proof} 
 Take $x\in \perdims\cap \roots_q(2)$. By Theorem~\ref{thm:rootsind}(c) there exists $m\in\ZZ$ and a reduced closed walk $\wlk$ in $G$ of odd length such that $x=[\stringcpx{m,\wlk}]$, where $(G,\leq)=\GPC(\quiver,I)$. So by Lemma~\ref{lem:incvectsvsdim}(a) and Lemma~\ref{lem:incvects}(d) for every odd $t\in\ZZ$ we have
$$
\vdimp{\stringcpx{m,\wlk^t}}=(-1)^m\inc_G(\wlk^t)=(-1)^m\inc_G(\wlk)=\vdimp{\stringcpx{m,\wlk}},
$$
thus $[\stringcpx{m,\wlk^t}]=[\stringcpx{m,\wlk}]=x$. Observe that the string complexes $\{\stringcpx{m,\wlk^t}\}_{t\in\ZZ}$ are pairwise not isomorphic (even up to shift), see~\eqref{eq:stnoniso}, cf.~\cite{BekMer}. Thus the first  assertion holds. Now, if  $(Q,I)$ does not satisfy the multi-clock condition then by Theorem~\ref{thm:rootsind}(d) the set $\perdims\cap \roots_q(2)$ is non-empty. And by the arguments above, every $x\in\perdims\cap \roots_q(2)$ induces an infinite family of pairwise not isomorphic (up to shift) indecomposable complexes in $\Db{A}$.
\end{proof}

\begin{example}\label{ex:extremal} 
Let $(Q,I)$  be a gentle bound quiver satisfying the equivalent properties of Corollary~\ref{cor:limits}(d). In particular, the associated marked ribbon graph $(G,\leq)=\GPC(\quiver,I)$ has one vertex and $n:=|Q_0|=|\edgs{G}|\geq 1$ loops, and $(Q,I)$ does not satisfy the multi-clock condition. Observe that the condition $|\verts{G}|=1$ is precisely the case when the Euler form $q=q_\La:K_0({\rm per}\,\La)\to\ZZ$ of $A=kQ/I$ is not irreducible, and in this case $\Dyn(\frac{1}{2}q)=\AA_1$, see Theorem~\ref{thm:mrgvsCar}(c), cf.~\eqref{eq:onevertex}. Therefore the set $\roots_{q}(1)\subseteq K_0({\rm per}\,\La)$ of $1$-roots of $q$ is empty. Alternatively, this follows by Theorem~\ref{thm:rootsind}(b) since  there are no open walks in $G$. Moreover, the sets $\roots_{q}(0)$ and $\perdims(A)\cap\roots_q(2)$ are non-empty by Theorem~\ref{thm:rootsind}(a,c).

Let us consider the case $n=1$ in more detail. 
In this case $(Q,I)$ is the one loop gentle bound quiver {\xymatrix@C=-1pt{{}_1 \ar@(dl,ul)^-{\alpha_1}="a" & {}
\ar@{.}@/^3pt/"a"!<-6pt,-5pt>;"a"!<-6pt,5pt> }} of Example~\ref{ex:small}, $A=kQ/I$ is the algebra of \lq\lq dual numbers'' (cf.~\cite{VossieckDerDisc}),  and $G={\xymatrix{  {{\alpha}} \ar@{-}@(ld,lu)^(.5){1}}}$. Note that  the Grothendieck group $K_0({\rm per}\,\La)$ has rank 1 and it is generated by the class $[P_1]$ of the only indecomposable projective $A$-module $P_1=A$.
Moreover, every non-trivial reduced walk of $G$ is the  $\ell$-fold concatenation $1^\ell=11\ldots 1$ or $1^{-\ell}:=(1^{-1})^\ell=1^{-1}1^{-1}\ldots 1^{-1}$, for $\ell\geq 1$ (here without loss of generality we treat the loop $1$ as an ordered edge such that $\deg(11)=1$, cf.~Subsection~\ref{subsec:perfcpx}). Observe that $\deg(1^\ell)=\ell-1$ and $\deg(1^{-\ell})=1-\ell$ for every $\ell\geq 1$, hence $G$ has  no belts. 
 Therefore by Proposition~\ref{pro:walkBelt}, up to isomorphism every (non-zero) indecomposable perfect complex is the string complex $\stringcpx{m,1^\ell}=\stringcpx{m+l-1,1^{-\ell}}$ of the form
 \begin{equation*}\label{eq:dualnumberscpx}
\xymatrix{...\ar[r]&0 \ar[r]& P_{1} \ar[r]^{\alpha_1} & P_{1} \ar[r]^{\alpha_1} & \cdots \ar[r]^{\alpha_1} & P_{1} \ar[r] \ar[r]&0\ar[r]&...}
\end{equation*}
 with $\ell$ copies of $P_1$ starting at cohomological degree $m$, for some $m\in\ZZ$ and $\ell\geq 1$. To get the non-perfect complexes observe that the walk $1$ is a full face and $11$ is the unique primitive left resolvable walk of $(G,\leq)$. Thus by Proposition \ref{prop:nonper} every indecomposable non-perfect complex has the form $\stringcpx{m,-,11}$ which is the shifted (infinite) projective resolution of the simple module $S_1$. In this way we easily recovered the known
classification of indecomposable objects in $\Db{A}$, cf.~\cite{VossieckDerDisc},  see also Example \ref{ex:oneloopAR}. Moreover, since $1^\ell$ is a closed walk, by Theorem~\ref{thm:rootsind}(a,c) we infer that $[\stringcpx{m,1^\ell}]=0\in \roots_q(0)$ if $\ell$ is even, and $[\stringcpx{m,1^\ell}]=(-1)^m[P_1]\in \roots_q(2)$ if $\ell$ is odd,  cf.~\eqref{eq:alternating}. 
 Note that here $\roots_q(2)=\{[P_1],-[P_1]\}$ since the Cartan matrix has the form $C_A=[2]\in\MM_{1,1}(\ZZ)$. Thus $\perdims(A)=\roots_q(0)\cup\roots_q(1)\cup \roots_q(2)=\roots_q(0)\cup\roots_q(2)=\{0,[P_1],-[P_1]\}$, cf.~\eqref{eq:rootsind}.
\end{example}

In the theorem below we show that in case the Euler form of a gentle algebra $A$ is positive, the set $\perdims=[\ind({\rm per}\,\La)]$ is finite and it carries an additional structure of  a root system in the sense of Bourbaki~\cite{Bourbaki}. Recall that a root system is  a finite system of vectors in a Euclidean space satisfying axioms defined in~\cite[III.9]{HumphreysLie}. Root systems  are classified via their \textbf{Cartan-Dynkin types} $\AA_n, \BB_n,\CC_n,\DD_n,\EE_6,\EE_7,\EE_8,$ $\FF_4,\GG_2$, see~\cite[Chapter III]{HumphreysLie} for the details. Recall also that this classification played a fundamental role in the well-known classification of complex semisimple Lie algebras, see~\cite{Bourbaki,HumphreysLie}.

\begin{theorem}\label{thm:rootsys}
Let $(Q,I)$  be a gentle bound quiver and let $q=q_\La:K_0({\rm per}\,\La)\to\ZZ$ be the  Euler form of the gentle algebra $A=kQ/I$. Let $n:=|Q_0|\geq 1$ and let $\perdims=\perdims(A)=[\ind({\rm per}\,\La)]\subseteq K_0({\rm per}\,\La)$ be the subset  as in Theorem~\ref{thm:rootsind}.
 If $q_\La$ is positive then  the set $\perdims^\star:=\perdims\setminus\{0\}$ is an irreducible $($finite$)$ root system in the Euclidean space $(K_0({\rm per}\,\La)\otimes \R, q(-,-))$. Moreover,
     \begin{enumerate}[label={\textnormal{(\alph*)}},topsep=4px,parsep=1px]
\item if $(Q,I)$ satisfies the multi-clock condition then the root system $\perdims^\star$  has $n^2+n$ elements and its Cartan-Dynkin type is $\AA_n$,
 \item if $(Q,I)$ does not satisfy the multi-clock condition then the root system $\perdims^\star$  has $2n^2$ elements and its Cartan-Dynkin type is $\CC_n$ $($we set $\CC_1:=\AA_1$ here$)$;
moreover, the set $\roots_q(1)\subsetneq\perdims^\star$ has $2(n^2-n)$ elements and for $n\geq 4$ it is an irreducible root system of the Cartan-Dynkin type $\DD_n$.
    \end{enumerate}
\end{theorem}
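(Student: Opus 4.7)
The plan is to reduce everything to the combinatorics of the marked ribbon graph $(G,\leq)=\GPC(Q,I)$ and then invoke the classification of incidence forms from \cite{JM}. First, I would unpack positivity: by Theorem~\ref{thm:mrgvsCar}(b) the condition $\CRnk(q_A)=0$ gives $|E(G)|=n$ and $|V(G)|=n+\gcc_A$, so $G$ is a tree on $n+1$ vertices in case (a), whereas in case (b) $G$ is a connected non-bipartite graph on $n$ vertices with $n$ edges, hence a $1$-cycle graph whose unique cycle has odd length. Theorem~\ref{thm:mrgvsCar}(c2)--(c3) then identifies the Dynkin type of $q_A$ in the projective basis as $\AA_n$ in case (a) and $\CC_n$ (or $\AA_1$ if $n=1$) in case (b). Next, under the $\Z$-isomorphism $K_0(\per A)\cong\Z^{E(G)}$ induced by the projective basis, Lemma~\ref{lem:incvectsvsdim}(a) identifies $[\stringcpx{m,\wlk}]$ with $(-1)^m\inc_G(\wlk)$, and combined with the inclusion $\perdims_{\rm bd}\subseteq\perdims_{\rm st}$ from the proof of Theorem~\ref{thm:rootsind} this yields $\perdims=\CRinc_G$. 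Since positivity forces $\roots_q(0)=\{0\}$, Theorem~\ref{thm:JM1B} then gives the walk-theoretic description $\perdims^\star=\roots_q(1)\cup(\perdims\cap\roots_q(2))$.

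Case (a) is now essentially formal. Multi-clock (equivalent to $G$ being bipartite) excludes closed walks of odd length, so Theorem~\ref{thm:rootsind}(c) yields $\perdims^\star=\roots_q(1)$. The equivalence $q\sim q_{\AA_n}$ transports this bijectively onto the standard $\AA_n$ root system of cardinality $n^2+n$, preserving the root-system structure (which is an isometric invariant). A direct count confirms the cardinality: each of the $\binom{n+1}{2}$ unordered pairs of distinct vertices of the tree determines a unique reduced walk, contributing with its inverse the two vectors $\pm\inc(\wlk)$, for a total of $n(n+1)$ elements.

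For case (b), I would use \cite[Theorem~5.19]{JM} to obtain an abstract equivalence $q\sim q_{\DD_n}$ for $n\geq 4$, transporting $\roots_q(1)$ onto the standard $\DD_n$ root system of cardinality $2n(n-1)$. By Theorem~\ref{thm:rootsind}(c) the remaining piece $\perdims\cap\roots_q(2)$ consists of incidence vectors of reduced closed walks of odd length in $G$, and the key combinatorial observation is that for each vertex $v\in V(G)$ there is (up to inversion) a unique such walk, obtained by going along the tree-path from $v$ to the cycle, traversing the odd cycle once, and returning along the reverse tree-path; this produces exactly $n$ distinct incidence vectors, hence $2n$ elements with signs, so $|\perdims^\star|=2n(n-1)+2n=2n^2$, matching $\CC_n$. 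To conclude that $\perdims^\star$ is actually a root system of type $\CC_n$, one verifies closure under the reflections $s_\alpha(x)=x-\tfrac{q(\alpha,x)}{q(\alpha)}\alpha$: for $\alpha\in\roots_q(1)$ this is the inherited $\DD_n$ structure, while the $2n$ long roots complete the picture via an explicit realization of the standard $\CC_n$ model using a root basis supported on the $n$ edges of $G$. The degenerate case $n=1$ is Example~\ref{ex:extremal}, giving $\perdims^\star=\{\pm[P_1]\}$, the $\AA_1=\CC_1$ root system. The principal obstacle is the precise verification of reflection-closure in case (b): the abstract equivalence $q\sim q_{\DD_n}$ preserves $1$-roots but does not transparently transport the $2$-roots, so the $\CC_n$ structure must be reconstructed intrinsically from $(G,\leq)$, either by a direct computation establishing $s_\alpha(\perdims^\star)\subseteq\perdims^\star$ for all $\alpha$, or by invoking the Gabrielov-equivalence classification of Cox-regular $\CC$-type forms from \cite{JM}.
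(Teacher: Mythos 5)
Your overall route is the same as the paper's: use Theorem~\ref{thm:mrgvsCar}(b) to reduce positivity to the dichotomy \lq\lq$G$ a tree'' versus \lq\lq$G$ a non-bipartite $1$-cycle graph'', identify $\perdims$ with the incidence roots $\CRinc_G$ via Lemma~\ref{lem:incvectsvsdim}(a), treat $n=1$ separately via Example~\ref{ex:extremal}, and then read off cardinalities and types from the walk combinatorics of $G$. Your counts ($n^2+n$ in the tree case; $2n(n-1)$ short roots plus $2n$ long roots coming from the essentially unique odd closed walk through each vertex in the $1$-cycle case) agree with what is needed. However, there is a genuine gap exactly where you flag the \lq\lq principal obstacle'': you never actually verify that $\perdims^\star$ (resp.\ $\roots_q(1)$) is closed under the reflections $s_\alpha$, nor that it is irreducible as a root system, in case (b). The paper does not reconstruct this either --- it invokes \cite[Proposition~7.15 and Remark~7.16]{JM}, where the root system axioms for $\CRinc_{G}\setminus\{0\}$ and the cardinality counts are established directly for trees and for non-bipartite $1$-cycle graphs. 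Neither of the two alternatives you sketch closes the gap as stated: the abstract equivalence $q\sim q_{\DD_n}$ of \cite[Theorem~5.19]{JM} only controls $1$-roots and does not see which $2$-roots are realized by walks, and the Gabrielov-equivalence classification in \cite[Section~5]{JM} classifies forms, not the subsets $\perdims^\star\subseteq\roots_q(1)\cup\roots_q(2)$. So the missing ingredient is precisely the result in \cite[Section~7]{JM} (or an explicit in-graph verification of reflection-closure, which you propose but do not carry out).

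A secondary inaccuracy: you assert that Theorem~\ref{thm:mrgvsCar}(c2)--(c3) gives $\Dyn(q_A)=\CC_n$ in case (b). That only holds when $q_A$ is non-unitary in the projective basis, i.e.\ when $G$ has a loop. If the odd cycle of $G$ has length $\geq 3$ and $G$ has no loops, then $q_A$ is a unit form with $\Dyn(q_A)=\DD_n$ (for $n\geq 4$); the Cartan--Dynkin type $\CC_n$ in the statement refers to the root system $\perdims^\star$ (which contains the $2n$ long roots), not to the Dynkin type of the form. Your later analysis treats the short and long roots correctly, so this misstatement does not derail the argument, but it should be corrected to keep the two notions of \lq\lq type'' separate.
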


\begin{proof} Let $(G,\leq)=\GPC(\quiver,I)$ be the associated marked ribbon graph. Assume that $q_\La$ is positive, equivalently, that $\CRnk(q_A)=0$. Recall  that $\CRnk(q_A)=|\edgs{G}|-|\verts{G}|+\gcc_\La\ =\ |Q_1|-|Q_0|+\gcc_\La$ and $|\edgs{G}|=|Q_0|$, see Theorem~\ref{thm:mrgvsCar}(b) and~\eqref{eq:edgesverticesThmA}. Thus positivity of $q_\La$  implies that one of the following conditions holds (cf.~the arguments in the proof of Theorem~\ref{thm:Brauer}): 
\begin{enumerate}[label={\textnormal{(\roman*)}},topsep=4px,parsep=1px]
\item $G$ is a tree (equivalently, $Q$ is a tree),
\item $G$ is not bipartite 1-cycle graph (equivalently, $(Q,I)$ is 1-cycle not satisfying the multi-clock condition),
\end{enumerate}
cf.~Theorem~\ref{thm:mrgvsCar}(a) and Corollary~\ref{cor:limits}(a).

Assume first that $n=|\edgs{G}|=1$. 
Since by our conventions $G$ has a vertex of degree at least two (see Definitions~\ref{def:mrg} and \ref{def:BoundQuiverInc}), the assumption $n=1$ implies that the case (ii) holds and  $G$ consists of one vertex and one loop. Thus $(Q,I)$ is a one loop gentle bound quiver. As we showed in  Example~\ref{ex:extremal}, in this case $\roots_{q_A}(1)=\emptyset$ and $\perdims^\star=\roots_{q_A}(2)=\{[P_1],-[P_1]\}\subseteq K_0({\rm per}\,\La)\cong \ZZ[P_1]$ hence $\perdims^\star$ is an irreducible root system in $K_0({\rm per}\,\La)\otimes \R\cong \R$ of Cartan-Dynkin type $\AA_1(=\CC_1)$, see~\cite[9.3]{HumphreysLie}.

Now assume that $n=|\edgs{G}|\geq 2$. For this  case we may apply~\cite[Proposition 7.15 and Remark 7.16]{JM}, where   by a direct verification of the root system axioms and a careful counting of walks in $G$ it was shown that the incidence roots of the incidence form $q_G$ of $G$ satisfy the following properties.
\begin{itemize}[topsep=4px,parsep=1px]
\item In both cases (i) and (ii), the set $\roots^\star:=\CRinc_G\setminus\{0\}$ is an irreducible root system in the Euclidean space $(\R^n, q_G(-,-))$.
\item In case (i) we have $|\roots^\star|=n^2+n$ and $\CDyn(\roots^\star)=\AA_n$.
\item In case (ii) we have $|\roots^\star|=2n^2$ and $\CDyn(\roots^\star)=\CC_n$.
\item In case (ii) we have $|\roots_{q_G}(1)|=2(n^2-n)$ and $\roots_{q_G}(1)\subsetneq \roots^\star$. Moreover,   the set $\roots_{q_G}(1)$ is an irreducible root system with  $\CDyn(\roots_{q_G}(1))=\DD_n$, provided $n\geq 4$. 
\end{itemize}
(Here  $\CDyn(-)$ denotes  the Cartan-Dynkin type of a root system.) Now recall that by Theorem~\ref{thm:mrgvsCar} the Euler form $q_A$ in the   basis of indecomposable projectives $\projbas:=\{[P_i]\}_{i\in Q_0}$ is the incidence form $q_G:\Z^{\edgs{G}} \to \Z$ of $G$, and the set $\perdims=\perdims_{\rm st}\subseteq K_0({\rm per}\,\La)$ corresponds bijectively via
$\vdimp{-}$ to the set $\CRinc_G\subseteq \ZZ^n$ (see Lemma~\ref{lem:incvectsvsdim}(a) and the proof of Theorem~\ref{thm:rootsind}). Thus  the properties of the root systems $\roots^\star$ and $\roots_{q_G}(1)$ listed above transfer to the sets $\perdims^\star$ and  $\roots_{q_A}(1)$, respectively, and in this way we get the claims (a) and (b).
\end{proof}

\begin{remark}\label{rem:longroots}
Let $(Q,I)$ be a gentle bound quiver $(Q,I)$  with positive Euler form $q=q_A$.

(a) Since by Theorem~\ref{thm:rootsys}, the set $\perdims^\star\subseteq K_0({\rm per}\,\La)$ is an irreducible root system, there is a simple complex Lie algebra having $\perdims^\star$ as its root system, cf.~\cite[18.4]{HumphreysLie}. It is interesting to ask what happens if $q$ is not positive. In this case the set $\perdims^\star$ is infinite and it seems to us that (at least in some cases) $\perdims^\star$ or its subsets may be viewed as infinite root systems like these related to Kac-Moody Lie algebras, cf.~\cite{Kac,DlabRingel, Kac80}, compare also with recent results on Lie algebra structures related with gentle algebras  in \cite{Chaparro2018OnTL,CY23}. 

\smallskip

(b) If $n=|Q_0|\geq 4$ and  $(Q,I)$ does not satisfy the multi-clock condition, then by Theorem~\ref{thm:rootsys}(b) there are two root systems: $\roots_q(1)\subsetneq\perdims^\star$ and $\perdims^\star$ in $(K_0({\rm per}\,\La)\otimes \R, q(-,-))$ of Cartan-Dynkin types $\DD_n$ and $\CC_n$, respectively. Observe that the set $\perdims':=\perdims^\star\setminus \roots_q(1)$ has the cardinality $2n$ and it is contained in $\roots_q(2)$ (cf.~Theorem~\ref{thm:rootsind}). Thus, using the terminology of~\cite{HumphreysLie},  the elements of $\roots_q(1)$ (resp.~of $\perdims'$) are \lq\lq short'' (resp.~\lq\lq long'') roots in the root system $\perdims^\star$. For more detailed discussion on  root systems induced by  positive Cox-regular quadratic forms (of arbitrary Dynkin type) and their interrelations with Weyl groups and Weyl roots we refer to~\cite{MZaj.LAA2022}, cf.~\cite{Roiter}.
\end{remark}

\section{Auslander-Reiten triangles and the Coxeter transformation}\label{sec:AR}

Let $\La$ be a finite dimensional $k$-algebra with bounded derived category $\Db{\La}$. It was shown in~\cite{HDb3} that if $Z^\bullet$ is an indecomposable complex in $\Db{\La}$ then there is an Auslander-Reiten triangle $X^\bullet \to Y^\bullet \to Z^\bullet \to X^\bullet[1]$ in $\Db{\La}$ if and only if $Z^\bullet$ is a perfect complex, in which case $X^\bullet$ is indecomposable and the triangle is unique up to isomorphism, see also~\cite{HDb2}. Moreover, if $\La$ is a Gorenstein algebra, then the whole triangle is Auslander-Reiten in the category $\per\,\La$, see, for instance,~\cite{HaKeRe08}. Since gentle algebras are Gorenstein (see~\cite{GeiRei05} and~\cite{APS23}), using Proposition~\ref{pro:walkBelt} and following~\cite{Bob} we produce all Auslander-Reiten triangles in the derived category  of a gentle algebra in terms of walks of the associated marked ribbon graph (Proposition~\ref{thm:ARtriangle} and Remark \ref{rem:beltARtriangle}). Afterwards, we apply this construction to describe the Coxeter transformation (Corollary \ref{cor:CoxTra}) and the Coxeter polynomial of a gentle algebra (Theorem \ref{cor:CoxPol}, which is Theorem C from the introduction).

The main construction of  Auslander-Reiten triangles involves the concatenation of walks followed by an appropriate ``reduction'', which we briefly analyze below.

\subsection{Reduced concatenation}\label{subsec:redCon}

For any concatenable walks $\wlk_1,\wlk_2$ of a graph $G$ there is a maximal $\ell \geq 0$ such that $\wlk_1=\wlk'_1\wlk$ and $\wlk_2=\invw{\wlk} \wlk''_2$ for a walk $\wlk$ of length $\ell$. Note that $\sou(\wlk'_1)=\tar(\wlk)=\sou(\invw{\wlk})=\tar(\wlk''_2)$, and that if $\wlk_1$ and $\wlk_2$ are reduced walks then $\wlk'_1\wlk''_2$ is reduced by the maximality of $\ell$. In this situation, in what follows we use the notation,
\[
\ell(\wlk_1,\wlk_2):=\ell \qquad \text{and} \qquad \wlk_1 \cdot \wlk_2 := \wlk'_1\wlk''_2,
\]
and call $\wlk_1 \cdot \wlk_2$ the \textbf{reduced concatenation} of the (reduced) walks $\wlk_1$ and $\wlk_2$.

\begin{lemma}\label{lem:associative}
If $\wlk_1$, $\wlk_2$, $\wlk_3$ are concatenable reduced walks of a graph $G$ then $\wlk_1 \cdot (\wlk_2 \cdot \wlk_3) = (\wlk_1 \cdot \wlk_2) \cdot \wlk_3$.
\end{lemma}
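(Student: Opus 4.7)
The plan is to view walks of $G$ as words in the free groupoid generated by the oriented edges, modulo the congruence generated by the elementary reductions $i\,\invw{i} \sim \triv_{\tar(i)}$, and exploit the uniqueness of reduced normal forms in this structure. Concretely, I would verify that the reduced concatenation $\wlk_1\cdot\wlk_2$ is precisely the unique reduced walk in the equivalence class of the ordinary concatenation $\wlk_1\wlk_2$. Indeed, writing $\wlk_1=\wlk_1'\wlk$ and $\wlk_2=\invw{\wlk}\wlk_2''$ with $\ell(\wlk)=\ell(\wlk_1,\wlk_2)$ maximal, the word $\wlk_1\wlk_2 = \wlk_1'\wlk\invw{\wlk}\wlk_2''$ reduces to $\wlk_1'\wlk_2''=\wlk_1\cdot\wlk_2$; this latter word is itself reduced because $\wlk_1'$ and $\wlk_2''$ are reduced (as subwalks of reduced walks) and the maximality of $\ell$ forbids any further cancellation at the junction between $\wlk_1'$ and $\wlk_2''$.

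Once this identification is established, associativity is immediate. The reduction relation is compatible with concatenation on both sides (an elementary reduction applied inside one factor yields an elementary reduction of the whole word), so from $\wlk_2\cdot\wlk_3 \sim \wlk_2\wlk_3$ we get $\wlk_1(\wlk_2\cdot\wlk_3) \sim \wlk_1\wlk_2\wlk_3$, and passing to reduced normal forms,
\[
\wlk_1\cdot(\wlk_2\cdot\wlk_3) \;=\; \text{red}\bigl(\wlk_1\wlk_2\wlk_3\bigr) \;=\; (\wlk_1\cdot\wlk_2)\cdot\wlk_3,
\]
where $\text{red}(-)$ denotes the unique reduced representative of the equivalence class. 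The same conclusion can equivalently be phrased by saying that reduced walks are the morphisms of the free groupoid on $G$ and reduced concatenation is composition therein, which is associative by construction.

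The one nontrivial ingredient is the \emph{uniqueness} of reduced representatives, i.e.\ confluence of the elementary reductions $i\,\invw{i}\mapsto\triv$. This is the graph-theoretic analogue of the classical Church--Rosser property for the free group, and I would establish it either by citing the standard argument (any two elementary reductions applied to the same walk can be completed to a common result) or by noting that walks from $v$ to $w$ form exactly the hom-set $\Hom_{\mathcal{G}}(w,v)$ in the free groupoid $\mathcal{G}$ generated by $G$, whose elements are known to admit unique reduced representatives. I expect this to be the only real hurdle; granted uniqueness, the identification of $\wlk_1\cdot\wlk_2$ with the reduced form of $\wlk_1\wlk_2$ together with the compatibility of reduction with concatenation gives the associativity in one line.

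If one prefers to avoid the groupoid framework altogether, the statement admits a direct case analysis on $a:=\ell(\wlk_1,\wlk_2)$ and $b:=\ell(\wlk_2,\wlk_3)$, splitting according to whether $a+b\le \ell(\wlk_2)$ (so the cancellation regions are disjoint inside $\wlk_2$, and both sides equal $\wlk_1'\,\gamma\,\wlk_3''$ where $\wlk_2=\invw{\wlk}\gamma\wlk'$ with the obvious middle piece $\gamma$) or $a+b>\ell(\wlk_2)$ (where the cancellations overlap and propagate into $\wlk_1'$ or $\wlk_3''$). This alternative is elementary but combinatorially heavier and, in my view, less illuminating than the groupoid argument above.
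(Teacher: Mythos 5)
Your proposal is correct, but it takes a genuinely different route from the paper. The paper proves associativity by a direct, self-contained case analysis on $a:=\ell(\wlk_1,\wlk_2)$ and $b:=\ell(\wlk_2,\wlk_3)$, splitting into the three cases $a+b<\ell(\wlk_2)$, $a+b=\ell(\wlk_2)$ and $a+b>\ell(\wlk_2)$ and explicitly tracking the decompositions of $\wlk_2$ in each case --- exactly the ``combinatorially heavier'' alternative you sketch in your last paragraph. Your main argument instead identifies $\wlk_1\cdot\wlk_2$ with the unique reduced representative of the class of $\wlk_1\wlk_2$ in the free groupoid on $G$, after which associativity is a one-line consequence of associativity of concatenation and compatibility of reduction with it. This identification is sound: since $\wlk_1$ and $\wlk_2$ are reduced, the only cancellations available in $\wlk_1\wlk_2$ occur at the junction, and maximality of $\ell(\wlk_1,\wlk_2)$ guarantees the result $\wlk_1'\wlk_2''$ is reduced; the same observation applies when you iterate (the paper itself records that a reduced concatenation of reduced walks is reduced, so the middle reduction in $\wlk_1\cdot(\wlk_2\cdot\wlk_3)$ is again between reduced walks). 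What your approach buys is conceptual clarity and brevity; what it costs is the need to invoke or prove confluence of the rewriting $i\,\invw{i}\mapsto\triv$ (uniqueness of reduced normal forms), which is classical but sits outside the paper's toolkit --- the authors evidently preferred to keep the lemma elementary and self-contained at the price of the three-case bookkeeping. Either proof is acceptable; just make sure that if you go the groupoid route you actually state and justify (or precisely cite) the unique-normal-form property rather than treating it as folklore.
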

\begin{proof}
We distinguish three cases taking $\ell:=\ell(\wlk_1,\wlk_2)$ and $\ell':=\ell(\wlk_2,\wlk_3)$:

\smallskip

\noindent \textit{Case 1.} Assume that $\ell + \ell' < \ell(\wlk_2)$ and take the following decompositions with $\ell=\ell(\wlk''_1)=\ell(\wlk'_2)$ and $\ell'=\ell(\wlk'''_2)=\ell(\wlk'_3)$,
\[
\xymatrix{ { \Bigl( } \ar@{.}@<-1.75ex>[rrr]_-{\wlk_1} \ar@{-}[rr]^-{\wlk'_1} & & { \am{[} } \ar@{-}[r]^-{\wlk''_1} & { \Bigr)  \Bigl(} \ar@{.}@<-1.75ex>[rrr]_-{\wlk_2} \ar@{-}[r]^-{\wlk'_2} & {\am{]}} \ar@{-}[r]^-{\wlk''_2} & {\am{(}} \ar@{-}[r]^-{\wlk'''_2} & { \Bigr)  \Bigl(} \ar@{.}@<-1.75ex>[rrr]_-{\wlk_3} \ar@{-}[r]^-{\wlk'_3} & {\am{)}} \ar@{-}[rr]^-{\wlk''_3} & & { \Bigr)}. }
\]
Then $\wlk_1 \cdot (\wlk_2 \cdot \wlk_3)=(\wlk'_1\wlk''_1) \cdot (\wlk'_2\wlk''_2\wlk''_3)=(\wlk'_1\wlk''_2\wlk''_3)$ and $(\wlk_1 \cdot \wlk_2) \cdot \wlk_3=(\wlk'_1\wlk''_2\wlk'''_2) \cdot (\wlk'_3\wlk''_3)=(\wlk'_1\wlk''_2\wlk''_3)$.

\smallskip

\noindent \textit{Case 2.} Assume that $\ell + \ell' = \ell(\wlk_2)$ and take the following decompositions with $\ell=\ell(\wlk'''_1)=\ell(\wlk'_2)$ and $\ell'=\ell(\wlk''_2)=\ell(\wlk'_3)$,
\[
\xymatrix{ { \Bigl( } \ar@{.}@<-1.75ex>[rrr]_-{\wlk_1} \ar@{-}[r]^-{\wlk'_1} & {} \ar@{-}[r]^-{\wlk''_1} & {\am{[}} \ar@{-}[r]^-{\wlk'''_1} & { \Bigr)  \Bigl(} \ar@{.}@<-1.75ex>[rr]_-{\wlk_2} \ar@{-}[r]^-{\wlk'_2} & {\am{](}} \ar@{-}[r]^-{\wlk''_2} & { \Bigr)  \Bigl(} \ar@{.}@<-1.75ex>[rrr]_-{\wlk_3} \ar@{-}[r]^-{\wlk'_3} & {\am{)}} \ar@{-}[r]^-{\wlk''_3} & {} \ar@{-}[r]^-{\wlk'''_3} & { \Bigr)}. }
\]
Note that the pair of walks $\wlk':=\wlk'_1\wlk''_1$ and $\wlk'':=\wlk''_3\wlk'''_3$ is concatenable, decomposed in such a way that $\ell(\wlk',\wlk'')=\ell(\wlk''_1)=\ell(\wlk''_3)$, that is, $\wlk''_3=\invw{(\wlk''_1)}$ and $\wlk'_1\wlk'''_3$ is a reduced walk. Then we have $\wlk_1 \cdot (\wlk_2 \cdot \wlk_3)=(\wlk'_1\wlk''_1\wlk'''_1) \cdot (\wlk'_2\wlk''_3\wlk'''_3)=\wlk'_1\wlk'''_3$ since $\wlk''_1\wlk'''_1=\invw{(\wlk'_2\wlk''_3)}$, and $(\wlk_1 \cdot \wlk_2) \cdot \wlk_3=(\wlk'_1\wlk''_1\wlk''_2) \cdot (\wlk'_3\wlk''_3\wlk'''_3) =\wlk'_1\wlk'''_3$ since $\wlk''_1\wlk''_2=\invw{(\wlk'_3\wlk''_3)}$.

\smallskip

\noindent \textit{Case 3.} Assume that $\ell + \ell' > \ell(\wlk_2)$ and take the next decompositions with $\ell=\ell(\wlk''_1\wlk'''_1)=\ell(\wlk'_2\wlk''_2)$ and $\ell'=\ell(\wlk''_2\wlk'''_2)=\ell(\wlk'_3\wlk''_3)$,
\[
\xymatrix{ { \Bigl( } \ar@{.}@<-1.75ex>[rrr]_-{\wlk_1} \ar@{-}[r]^-{\wlk'_1} & {\am{[}} \ar@{-}[r]^-{\wlk''_1} & {} \ar@{-}[r]^-{\wlk'''_1} & { \Bigr)  \Bigl(} \ar@{.}@<-1.75ex>[rrr]_-{\wlk_2} \ar@{-}[r]^-{\wlk'_2} & {\am{(}} \ar@{-}[r]^-{\wlk''_2} & {\am{]}} \ar@{-}[r]^-{\wlk'''_2} & { \Bigr)  \Bigl(} \ar@{.}@<-1.75ex>[rrr]_-{\wlk_3} \ar@{-}[r]^-{\wlk'_3} & {} \ar@{-}[r]^-{\wlk''_3} & {\am{)}} \ar@{-}[r]^-{\wlk'''_3} & { \Bigr)}. }
\]
We take the decompositions $\wlk''_1\wlk'''_1$ and $\wlk'_3\wlk''_3$ satisfying $\ell(\wlk''_1)=\ell(\wlk''_2)=\ell(\wlk''_3)$, which implies that $\wlk''_1=\invw{(\wlk''_2)}=\wlk''_3$. 
Similarly, it follows that $\wlk_1'''=(\wlk_2')^{-1}$ and $\wlk_2'''=(\wlk_3')^{-1}$.
Moreover, observe that the compositions $\wlk''_1\wlk'''_3$ and $\wlk'_1\wlk''_3$ are reduced, since $\wlk''_1=\wlk''_3$ and the walks $\wlk_1$ and $\wlk_3$ are reduced. Then we have
\[
\wlk_1 \cdot (\wlk_2 \cdot \wlk_3)=(\wlk'_1\wlk''_1\wlk'''_1) \cdot (\wlk'_2\wlk'''_3)=\wlk'_1\wlk''_1\wlk'''_3 \qquad \text{and} \qquad
(\wlk_1 \cdot \wlk_2) \cdot \wlk_3=(\wlk'_1\wlk'''_2) \cdot (\wlk'_3\wlk''_3\wlk'''_3)=\wlk'_1\wlk''_3\wlk'''_3.
\]
In every case $\wlk_1 \cdot (\wlk_2 \cdot \wlk_3)=(\wlk_1 \cdot \wlk_2) \cdot \wlk_3$, as claimed.
\end{proof}

\def\stringCpx#1{\widehat{X}^\bullet_{#1}}

Recall that for a marked ribbon graph $(G,\leq)$ we denote by $\OT{\alpha}$ the (unique) anti-walk with target vertex $\alpha$, see Subsection~\ref{subsec:antiwalks}, and take for simplicity $\TO{\alpha}:=\invw{(\OT{\alpha})}$. For a reduced walk $\wlk$ of $G$ with $\sou(\wlk)=\alpha$ and $\tar(\wlk)=\beta$ we take
\begin{equation}\label{eq:DEFplus}
{}_+\wlk := \TO{\beta} \cdot \wlk, \qquad
\wlk_+ := \wlk \cdot \OT{\alpha}, \qquad \text{and} \qquad {}_+\wlk_+ := ({}_+\wlk)_+={}_+(\wlk_+)=\TO{\beta} \cdot \wlk \cdot \OT{\alpha}.
\end{equation}
Observe that ${}_+\wlk_+$ is well-defined by Lemma~\ref{lem:associative}. Compare construction~\eqref{eq:DEFplus} with the analogous construction at the level of homotopy strings as in~\cite[pp.~652-3]{Bob}. We may say that \eqref{eq:DEFplus} unifies the several cases considered in \cite{Bob} into one formula.

\begin{lemma} \label{lem:plusTrivial}
Let $(G,\leq)$ be a marked ribbon graph, and $\wlk$ be a non-trivial reduced walk of $G$. Then ${}_+\wlk_+$ is a non-trivial reduced walk, and at most one of the walks ${}_+\wlk$ or $\wlk_+$ is trivial.
\end{lemma}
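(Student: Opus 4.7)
The plan is to base the whole argument on the following reformulation. By the definition of reduced concatenation, a product $\wlk_1\cdot\wlk_2$ of non-trivial reduced walks is trivial if and only if $\wlk_2=\invw{\wlk_1}$, so ${}_+\wlk=\TO{\beta}\cdot\wlk$ is trivial iff $\wlk=\OT{\beta}$, and $\wlk_+=\wlk\cdot\OT{\alpha}$ is trivial iff $\wlk=\TO{\alpha}$. The ``at most one trivial'' claim then reduces to showing that no reduced walk can simultaneously be an anti-walk (equal to $\OT{\beta}$) and the inverse of an anti-walk (equal to $\TO{\alpha}$).

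For this impossibility I would split on the length of $\wlk$. If $\ell(\wlk)\geq 2$, at any internal junction $v_t$ the anti-walk condition on $\wlk=\OT{\beta}$ asserts that $\souh(i_t)$ is the direct successor of $\tarh(i_{t+1})$ in the linear order on $\s^{-1}(v_t)$, while the same condition applied to $\invw{\wlk}=\OT{\alpha}$ swaps the two roles, yielding two opposite ``direct successor'' relations between two distinct half-edges (distinctness follows from reducedness of $\wlk$) --- impossible in a linear order. If $\ell(\wlk)=1$, the two conditions together force $\m(\beta)$ to be simultaneously the maximum and the minimum of $\s^{-1}(\beta)$, hence $\deg(\beta)=1$, and symmetrically $\deg(\alpha)=1$; but then the edge $\wlk$ would form a connected component on its own, contradicting the standing hypothesis (Definition~\ref{def:mrg}) that $G$ is connected with a vertex of degree at least two.

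For the non-triviality of ${}_+\wlk_+$ I will use associativity (Lemma~\ref{lem:associative}) and case analysis. When exactly one of ${}_+\wlk$, $\wlk_+$ is trivial, the three-fold concatenation collapses: if $\wlk=\TO{\alpha}$, then ${}_+\wlk_+=\TO{\beta}\cdot(\wlk\cdot\OT{\alpha})=\TO{\beta}$, which is non-trivial since anti-walks have length $\geq 1$; the case $\wlk=\OT{\beta}$ is symmetric. In the remaining situation where neither is trivial, I proceed by contradiction: assume ${}_+\wlk_+=\TO{\beta}\cdot\wlk_+$ is trivial, which forces $\wlk_+=\OT{\beta}$, i.e.\ $\wlk\cdot\OT{\alpha}=\OT{\beta}$, and write the reduced concatenation as $\wlk=\wlk'\omega$, $\OT{\alpha}=\invw{\omega}\OT{\alpha}''$ with $\ell(\omega)$ maximal and $\wlk'\OT{\alpha}''=\OT{\beta}$.

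The hard part will be eliminating this last possibility. If $\OT{\alpha}''$ is non-trivial, it is a common non-trivial suffix of the two anti-walks $\OT{\alpha}$ and $\OT{\beta}$; but the target-side extension of a suffix to a full anti-walk is uniquely forced step by step (each prepended edge has $\souh$ equal to the direct successor of the current $\tarh$, and the process terminates precisely at the marked half-edge), so $\OT{\alpha}=\OT{\beta}$, whence $\wlk'=\invw{\omega}$ and $\wlk=\invw{\omega}\omega$ --- reduced only if $\omega$ is trivial, but then $\wlk=\OT{\beta}$ making ${}_+\wlk$ trivial, contrary to the case hypothesis. If $\OT{\alpha}''$ is trivial, then $\omega=\TO{\alpha}$ and $\wlk=\OT{\beta}\,\TO{\alpha}$; at the common source vertex $v=\sou(\OT{\beta})=\sou(\OT{\alpha})$, the source-maximality of each anti-walk forces the relevant half-edge to be the minimum of $\s^{-1}(v)$, so the last edge of $\OT{\beta}$ and the first edge of $\TO{\alpha}$ are inverses of each other, contradicting the reducedness of $\wlk$.
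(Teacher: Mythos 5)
Your proposal is correct, and it follows the same overall reduction as the paper's proof: both arguments boil everything down to the observation that $\wlk_1\cdot\wlk_2$ is trivial iff $\wlk_2=\invw{\wlk_1}$, so that ${}_+\wlk$, $\wlk_+$ trivial correspond to $\wlk=\OT{\beta}$, $\wlk=\TO{\alpha}$ respectively. Where you diverge is in the execution. For the non-triviality of ${}_+\wlk_+$, the paper has a one-line argument: from $\OT{\beta}=\wlk\cdot\OT{\alpha}$ it reads off $\sou(\OT{\beta})=\sou(\wlk\cdot\OT{\alpha})=\sou(\OT{\alpha})$ and invokes the bijectivity of $\sou$ on anti-walks (diagram~\eqref{eq:bijections}) to get $\OT{\beta}=\OT{\alpha}$, whence $\wlk$ is trivial; your cancellation case analysis on $\wlk=\wlk'\omega$, $\OT{\alpha}=\invw{\omega}\OT{\alpha}''$ reproves this by hand via unique target-side extension and source-minimality of anti-walks, which is more laborious but equally valid. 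Conversely, for the ``at most one trivial'' claim the paper simply asserts that a walk cannot be simultaneously an anti-walk and the inverse of an anti-walk, whereas you supply the missing detail (the clash of opposite direct-successor relations at an internal junction for $\ell\geq 2$, and the degree-one/connectivity contradiction for $\ell=1$) --- this is a genuine gain in completeness. One cosmetic slip: at the end of your case where $\OT{\alpha}''$ is non-trivial, $\omega$ trivial forces $\wlk=\invw{\omega}\omega$ to be the \emph{trivial} walk (contradicting the standing hypothesis that $\wlk$ is non-trivial), not $\wlk=\OT{\beta}$; a contradiction is reached either way, so the argument stands.
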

\begin{proof} 
Observe that a reduced concatenation $\wlk \cdot \wlk'$ is trivial if and only if $\wlk'=\invw{\wlk}$. Moreover, we have $\sou(\wlk \cdot \wlk')=\sou(\wlk')$ and $\tar(\wlk \cdot \wlk')=\tar(\wlk)$. Take $\sou(\wlk)=\alpha$ and $\tar(\wlk)=\beta$. Supposing that ${}_+\wlk_+$ is trivial, since ${}_+\wlk_+=\TO{\beta} \cdot (\wlk \cdot \OT{\alpha})$, we have $\OT{\beta}=\wlk \cdot \OT{\alpha}$. In particular, $\sou(\OT{\beta})=\sou(\OT{\alpha})$, and by the uniqueness of the anti-walk starting at a vertex (cf.~\eqref{eq:bijections}) we have $\OT{\beta}= \OT{\alpha}$. Since $\OT{\alpha}=\wlk \cdot \OT{\alpha}$, then necessarily $\wlk$ is trivial, a contradiction. Therefore, ${}_+\wlk_+$ is non-trivial. Supposing that both walks ${}_+\wlk$ and $\wlk_+$ are trivial, we have $\wlk=\TO{\alpha}=\OT{\beta}$. That is, $\wlk$ is at the same time an anti-walk and the inverse of an anti-walk, which is impossible.
\end{proof}

Note that either ${}_+\wlk$ or $\wlk_+$ may be a trivial walk, and that this happens exactly when $\wlk$ or its inverse is an anti-walk (compare to~\cite[Corollary~6.3]{Bob}). Recall that we treat $\stringcpx{m,\wlk}$ as the zero complex for any $m\in\ZZ$ and a trivial walk $\wlk$, cf.~Subsection~\ref{subsec:perfcpx}.

\subsection{Auslander-Reiten triangles}\label{subsec:ARtriang}

The following result describes, up to isomorphism, all almost split (or Auslander-Reiten) triangles of string complexes in the derived category of a gentle algebra, cf.~\cite{Bob, HDb3}. It may be viewed as a translation (and simplification, see \eqref{eq:DEFplus}) of the construction of Bobi\'nski \cite{Bob} into the language of walks in the marked ribbon graph.

\begin{proposition}\label{thm:ARtriangle}
 Let $(Q,I)$ be a gentle bound quiver with associated marked ribbon graph $(G,\leq)$.  Then for any non-trivial reduced walk $\Wlk$ of $G$ and any $m \in \Z$, there is an almost split triangle in $\per\,k\quiver/I$ of the form
\begin{equation}\label{eq:main64}
\xymatrix{ \stringcpx{m,\Wlk}  \ar[r] & \stringcpx{m+m(\Wlk),{}_+\Wlk} \oplus \stringcpx{m,\Wlk_+} \ar[r] & \stringcpx{m+m(\Wlk),{}_+\Wlk_+}  \ar[r] & \stringcpx{m-1,\Wlk} }
\end{equation}
where $m(\Wlk) := \ell(\TO{\beta})-2$ for $\beta:=\tar(\Wlk)$, cf.~\eqref{eq:DEFplus}.
\end{proposition} 

\begin{proof}
Take $\Wlk=i_1\cdots i_{L+1}$ ($L \geq 0$) with $\sou(\wt{\wlk})=\alpha$ and $\tar(\wt{\wlk})=\beta$, and consider the corresponding homotopy string $\wlk:=\GBQ(\Wlk)$ of $(\quiver,I)$, see Lemma \ref{lem:walkGQ} and the proof of Proposition \ref{pro:walkBelt} (here we set $\GBQ(\triv_\alpha):=\emptyset$ for every trivial walk $\triv_\alpha$ of $G$, where $\emptyset$ denotes the \lq\lq empty'' homotopy string as in \cite{Bob}). We want to translate to our graphical model Bobi\'nski's description of Auslander-Reiten triangles~\cite{Bob} in terms of homotopy strings (cf.~\eqref{eq:BMcpx}),
\begin{equation}\label{eq:BobAR}
\xymatrix{ \stringCpx{m,\wlk}  \ar[r] & \stringCpx{m+m'(\wlk),{}_+\wlk} \oplus \stringCpx{m,\wlk_+} \ar[r] & \stringCpx{m+m''(\wlk),{}_+\wlk_+}  \ar[r] & \stringCpx{m-1,\wlk}, }
\end{equation}
for which our goal is to show that $\GBQ({}_+\Wlk)={}_+\wlk$ and $m(\Wlk)=m'(\wlk)$ where $(m,\wlk) \mapsto (m+m'(\wlk),{}_+\wlk)$ is the main construction of~\cite{Bob}. We briefly recall such construction following the terminology and notation of that paper.

The first step is to consider the permitted path $\sigma_\wlk$ of maximal length such that $\sigma_\wlk \wlk$ is a homotopy presentation. We claim that $\ell(\sigma_\wlk)>0$ if and only if the (plain) concatenation $\TO{\beta}\Wlk$ is a reduced walk, in which case $\sigma_\wlk=\ppath{j'}{i_1}$ where $j'$ and $i_1$ are the last and first ordered edges of $\TO{\beta}$ and $\Wlk$ respectively. Indeed, recall that $\tar(\Wlk)=\tar(i_1)=\beta$ means that $i_1=(h^\beta_t,\iota h^\beta_t)$ for some $0 \leq t \leq \ell^\beta$, where
\[
\s^{-1}(\beta)=\{h^\beta_0>h^\beta_1>\ldots >h^\beta_{t-1}>h^\beta_t>\ldots >h^\beta_{\ell^\beta-1}>h^\beta_{\ell^\beta}\},
\]
see Remark~\ref{rem:linear}, and that by definition of $\TO{\beta}$ we have $j'=(\iota h^\beta_0,h^\beta_0)$.  Note that $\sigma_\wlk=\beta_1\cdots \beta_t=(h^\beta_0,h^\beta_1)\cdots (h^\beta_{t-1},h^\beta_t)$ which shows that $\ell(\sigma_\wlk)=t$, and our claim follows since $t=0$ if and only if $j'=\invw{i_1}$.

\begin{itemize}
 \item \emph{Assume that $\ell(\sigma_\wlk)>0$.} In this case ${}_+\wlk:=\invw{\theta}\sigma \wlk$ where $\sigma=\sigma_\wlk$ and $\theta=\theta_{\tar(\sigma),-T(\sigma)}$ is the forbidden path of maximal length with $\tar(\theta)=\tar(\sigma)$ and $T(\theta)=-T(\sigma)$ (cf.~\cite[Lemma 6.1]{Bob}). It is easy to see that by the maximality of $\sigma$ the path $\theta$ is a forbidden thread, and actually $\theta=\GBQ(\OT{\beta})$. Since $\sigma=\ppath{j'}{i_1}$ we have
 \begin{equation}\label{eq:proofOne}
\GBQ({}_+\Wlk)=\GBQ(\TO{\beta}\cdot \Wlk)=\GBQ(\invw{(\OT{\beta})}\Wlk)=\invw{\GBQ(\OT{\beta})}\ppath{j'}{i_1}\GBQ(\Wlk)= \invw{\theta}\sigma\wlk={}_+\wlk.
 \end{equation}
Moreover, $m'(\wlk):=\ell(\theta)-1=\ell(\TO{\beta})-2=m(\Wlk)$, see~(\ref{eq:lengthAntiWalk}).

 \item \emph{Assume that $\ell(\sigma_\wlk)=0$.} By the claim above, this means that there is a reduced walk $\wt{\theta}$ of maximal (positive) length such that
 \[
   \OT{\beta}=\wt{\theta}\wt{\theta}' \qquad \text{and} \qquad \Wlk=\wt{\theta}\Wlk',
 \]
for some reduced walks $\wt{\theta}'$ and $\Wlk'$ of $G$, so that $\TO{\beta}\cdot \Wlk=\invw{(\wt{\theta}')}\Wlk'$. Similarly, Bobi\'nski takes decompositions
\[
\GBQ(\OT{\beta})=\theta \theta' \qquad \text{and} \qquad \wlk=\theta \wlk',
\]
for a forbidden path $\theta$ of maximal length $r(\wlk) \geq 0$, and some walks $\theta'$ and $\wlk'$ of $\quiver$ (where $\theta'=\theta_{\tar \wlk',-T\wlk'}$ in the notation of~\cite{Bob}). The definition of ${}_+\wlk$ together with the remaining verification $\GBQ(\TO{\beta}\cdot \Wlk)=\GBQ(\invw{(\wt{\theta}')}\Wlk')={}_+\wlk$, which include several border cases, are shown in Table~\ref{tab:proofAR}. In this case, the integer $m'(\wlk)$ is given by $m'(\wlk):=\ell(\theta')+r(\wlk)-1=\ell(\theta\theta')-1=\ell(\TO{\beta})-2=m(\Wlk)$.
\end{itemize}
In any case, the homotopy string $\wlk_+$ is defined in~\cite{Bob} by $\wlk_+:=\invw{({}_+(\invw{\wlk}))}$, then using Lemma~\ref{lem:walkGQ}(c) we have
\[
\GBQ(\Wlk_+)=\GBQ(\Wlk\cdot \OT{\alpha})=\invw{\GBQ(\TO{\alpha}\cdot \invw{\Wlk})}=\invw{\GBQ(_+(\invw{\Wlk}))}=\invw{({}_+(\invw{\wlk}))}=\wlk_+,
\]
since clearly $\invw{(\Wlk_1 \cdot \Wlk_2)}=\invw{\Wlk_2}\cdot \invw{\Wlk_1}$ for any pair of reduced walks $\Wlk_1,\Wlk_2$ of $G$. The homotopy string ${}_+\wlk_+$ is given by ${}_+\wlk_+:=({}_+\wlk)_+$ if ${}_+\wlk\neq \emptyset$, and ${}_+\wlk_+:={}_+(\wlk_+)$ if $\wlk_+ \neq \emptyset$, and the integer $m''(\wlk)$ is given by $m''(\wlk):=m'(\wlk)$ and $m''(\wlk):=m'(\wlk_+)$ respectively. By Lemma~\ref{lem:associative} we have, in any case, $\GBQ(_+\Wlk_+)={}_+\wlk_+$, and clearly $m''(\wlk)=m(\Wlk)$. Using Proposition~\ref{pro:walkBelt}, this shows that the complexes of the triangle \eqref{eq:main64} in the main claim  coincide with the complexes of the triangle~\eqref{eq:BobAR}, which completes the proof.
\end{proof}

\begin{table}[!hbt]
\[\begin{array}{l l l l l l}
& \text{Conditions} & \text{Definition of ${}_+\wlk$} & \text{Walks of $G$} & \text{H. strings of $(\quiver,I)$} & \text{Example} \\[1ex]
& \ell(\theta')>0 & {}_+\wlk:=\invw{(\theta')}\wlk' & \OT{\beta}=\wt{\theta}\wt{\theta}', \quad \text{\footnotesize (with $\ell(\wt{\theta}') \geq 1$)} & \GBQ(\wt{\theta}')=\theta' & \Wlk=\invw{2}35 \\
& \ell(\wlk')>0 &  & \Wlk=\wt{\theta}\Wlk', \quad \text{\footnotesize (with $\ell(\Wlk') \geq 1$)} & \GBQ(\Wlk')={}^{[1]}\wlk' & \wlk=\alpha_1(\alpha_2\invw{\gamma}_1)  \\
& \text{$\sigma_1(\wlk')$ direct} & & {}_+\Wlk =\invw{(\wt{\theta}')}\Wlk' & \ppath{j}{i}=\sigma_1(\wlk')  & {}_{+}\Wlk=4\invw{1}35 \\
\bulito & \multicolumn{4}{l}{\text{Then $\GBQ({}_+\Wlk)=\invw{\GBQ(\wt{\theta}')}\ppath{j}{i}\GBQ(\Wlk')=\invw{(\theta')}\sigma_1(\wlk')({}^{[1]}\wlk')=\invw{(\theta')}\wlk'= {}_+\wlk$;}} & {}_{+}\wlk=\invw{\beta}_2(\alpha_2\invw{\gamma}_1). \\[3ex]
& \ell(\theta')>0 & {}_+\wlk:=\invw{(\theta')}\wlk' & \OT{\beta}=\wt{\theta}\wt{\theta}', \quad \text{\footnotesize (with $\ell(\wt{\theta}') > 1$)} &\GBQ(\wt{\theta}')={}^{[1]}\theta' & \Wlk=4\invw{1}3 \\
& \ell(\wlk')>0 &  & \Wlk=\wt{\theta}\Wlk', \quad \text{\footnotesize (with $\ell(\Wlk') \geq 1$)} & \GBQ(\Wlk')={}^{[1]}\wlk' & \wlk=(\invw{\beta}_2\alpha_2)  \\
& \text{$\sigma_1(\wlk')$ inverse} & & {}_+\Wlk =\invw{(\wt{\theta}')}\Wlk' & \ppath{j}{i}=\invw{\sigma_1(\theta')}\sigma_1(\wlk') & {}_+\Wlk=35\invw{1}3  \\
\bulito & \multicolumn{4}{l}{\text{Then $\GBQ({}_+\Wlk)=\invw{\GBQ(\wt{\theta}')}\ppath{j}{i}\GBQ(\Wlk')=\invw{(^{[1]}\theta')}\invw{\sigma_1(\theta')}\sigma_1(\wlk')({}^{[1]}\wlk')=\invw{(\theta')}\wlk'= {}_+\wlk$;}} & {}_+\wlk=\invw{\gamma}_1\invw{\beta}_3(\invw{\beta}_2\alpha_2). \\[3ex]
& \ell(\theta')>0 & {}_+\wlk:=\invw{(^{[1]}\theta')} & \OT{\beta}=\wt{\theta}\wt{\theta}', \quad \text{\footnotesize (with $\ell(\wt{\theta}') \geq 1$)} & \GBQ(\wt{\theta}')={}^{[1]}\theta' & \Wlk=\invw{2}1 \\
& \ell(\wlk')=0 &  & \Wlk=\wt{\theta}, \quad \text{\footnotesize (with $\ell(\Wlk') =0$)} & \GBQ(\Wlk')=\emptyset & \wlk=\alpha_1  \\
& \text{---------------} & & {}_+\Wlk =\invw{(\wt{\theta}')} & \text{---------------} & {}_+\Wlk=4 \\
\bulito & \multicolumn{4}{l}{\text{Then $\GBQ({}_+\Wlk)=\invw{\GBQ(\wt{\theta}')}=\invw{(^{[1]}\theta')}= {}_+\wlk$;}} & {}_+\wlk=\triv_{\vtx{4},-1}. \\[3ex]
& \ell(\theta')=0 & {}_+\wlk:=\wlk' & \OT{\beta}=\wt{\theta}\invw{j}, \quad \text{\footnotesize (with $\ell(\wt{\theta}') =1$)} & \GBQ(\invw{j})=\triv_{\vtx{j}} & \Wlk=\invw{2}1\invw{5}  \\
& \ell(\wlk')>0 & & \Wlk=\wt{\theta}\Wlk', \quad \text{\footnotesize (with $\ell(\Wlk') \geq 1$)} & \GBQ(\Wlk')={}^{[1]}\wlk' & \wlk=\alpha_1\beta_2(\beta_3)  \\
& \text{$\sigma_1(\wlk')$ direct} & & {}_+\Wlk =j\Wlk' & \ppath{j}{i}=\sigma_1(\wlk') & {}_+\Wlk=4\invw{5} \\
\bulito & \multicolumn{4}{l}{\text{Then $\GBQ({}_+\Wlk)=\GBQ(j\Wlk')=\ppath{j}{i}\GBQ(\Wlk')=\sigma_1(\wlk')({}^{[1]}\wlk')=\wlk'= {}_+\wlk$;}} & {}_+\wlk=(\beta_3).  \\[3ex]
& \ell(\theta')=0 & {}_+\wlk:={}^{[1]}\wlk' & \OT{\beta}=\wt{\theta}, \quad \text{\footnotesize (with $\ell(\wt{\theta}') =0$)} & \GBQ(\wt{\theta}')=\emptyset & \Wlk=4\invw{5}\invw{3}1\invw{5} \\
& \ell(\wlk')>0 & & \Wlk=\wt{\theta}\Wlk', \quad \text{\footnotesize (with $\ell(\Wlk') \geq 1$)} & \GBQ(\Wlk')={}^{[1]}\wlk' & \wlk=\beta_3\gamma_1(\invw{\alpha}_2\beta_2\beta_3) \\
& \text{$\sigma_1(\wlk')$ inverse} & & {}_+\Wlk =\Wlk' & \text{---------------}  & {}_+\Wlk=1\invw{5}  \\
\bulito & \multicolumn{4}{l}{\text{Then $\GBQ({}_+\Wlk)=\GBQ(\Wlk')={}^{[1]}\wlk'= {}_+\wlk$;}} & {}_+\wlk=\beta_2\beta_3.  \\[3ex]
& \ell(\theta')=0 & {}_+\wlk:=\emptyset & \OT{\beta}=\wt{\theta}, \quad \text{\footnotesize (with $\ell(\wt{\theta}') =0$)} &\GBQ(\wt{\theta}')=\emptyset & \Wlk=\invw{2}1\invw{4} \\
& \ell(\wlk')=0 & & \Wlk=\wt{\theta},\quad \text{\footnotesize (with $\ell(\Wlk') =0$)} & \GBQ(\Wlk')=\emptyset & \wlk=\alpha_1\beta_2 \\
& \text{---------------} & & {}_+\Wlk =\triv_\alpha,\quad \text{\footnotesize (with $\alpha:=\sou(\Wlk)$)} & \text{---------------} & {}_+\Wlk=\triv_\delta \\
\bulito & \multicolumn{4}{l}{\text{Then $\GBQ({}_+\Wlk)=\GBQ(\triv_\alpha)=\emptyset= {}_+\wlk$;}}
  & {}_+\wlk=\emptyset.
\end{array}\]
\caption{Verification of the identity $\GBQ({}_+\Wlk)={}_+\GBQ(\Wlk)$ for the case $\ell(\sigma_{\GBQ(\Wlk)})=0$. We use notation of \cite{Bob}. When possible, $j$ and $i$ denote the last and first ordered edge of $\invw{(\wt{\theta}')}$ and $\Wlk'$ respectively. The given example refers to the gentle bound quiver $(\quiver^{(1)},I^{(1)})$ and marked ribbon graph $(G^{(1)},\leq^{(1)})$ of Example~\ref{ex:bigARAmiot}.}
\label{tab:proofAR}
\end{table}

In the following observation, whose proof follows from Proposition~\ref{pro:walkBelt}(b) and~\cite[Main Theorem Part~I]{Bob}, we describe those almost split triangles of perfect complexes over gentle algebras which are not included in Proposition~\ref{thm:ARtriangle}.

\begin{remark}\label{rem:beltARtriangle}
 Let $(Q,I)$ be a gentle bound quiver with associated marked ribbon graph $(G,\leq)$. Then for any belt $\wlk$ of $(G,\leq)$, any $m \in \Z$ and any almost split sequence $\xymatrix{0 \ar[r] & \mu' \ar[r] & \bigoplus_{t=1}^n \mu_t \ar[r] & \mu'' \ar[r] & 0}$ in the category of automorphisms of finite dimensional vector spaces where $\mu',\mu_1,\ldots,\mu_n,\mu''$ are indecomposable automorphisms, there is an almost split triangle in $\per\,kQ/I$ of the form
\[
\xymatrix{ \stringcpx{m,\wlk,\mu'}  \ar[r] & \bigoplus_{t=1}^n \stringcpx{m,\wlk,\mu_t} \ar[r] & \stringcpx{m,\wlk,\mu''}  \ar[r] & \stringcpx{m-1,\wlk,\mu'}. }
\]
\end{remark}

\begin{example}\label{ex:bigARAmiot}
Consider the gentle bound quiver $(Q^{(1)},I^{(1)})$ of Example~\ref{exa:amiot}(b) with associated ribbon graph $(G^{(1)},\leq^{(1)})$ and its anti-walks (to simplify the notation we draw the edges of $G^{(1)}$ together with a fixed order, cf.~\eqref{eq:ordedg}):
\[
\xy 0;/r.20pc/:
(-90, 25)="Fr1" *{};
( 90, 25)="Fr2" *{};
( 90,-5)="Fr3" *{};
(-90,-5)="Fr4" *{};
( -25, 10)="T1" *{\xymatrix@C=1pc@R=.8pc{ {\vBlock{\gamma}{53}} \ar@{|->}[rr]^-{3} \ar@{<-|}[dd]_-{5} && {\vBlock{\alpha}{213}} \\ \\
*++{\vBlock{\beta}{2145}} \ar@{|->}[rr]_-{4} \ar@{|->}@<.5ex>[rruu]^-{1} \ar@{|->}@<-.5ex>[rruu]_-{2} && {\vBlock{\delta}{4}} }};
( 20, 9)="T2" *{\begin{tabular}{r c l}
$\OT{\alpha}$    & = &  $2\invw{1}3$ \\
$\OT{\beta}$     & = &  $\invw{2}1\invw{4}$ \\
$\OT{\gamma}$ & = & $5$ \\
$\OT{\delta}$     & = &  $4\invw{5}\invw{3}$ \\
\end{tabular}};
\endxy
\]
Take the reduced walk $\wlk=\invw{1}\,3\,5$ with $\tar(\wlk)=\beta=\sou(\wlk)$ and the reduced concatenations constructed in Proposition~\ref{thm:ARtriangle}:
\[
\xy 0;/r.20pc/:
(-90, 25)="Fr1" *{};
( 90, 25)="Fr2" *{};
( 90,-5)="Fr3" *{};
(-90,-5)="Fr4" *{};
(5, 10)="T1" *{\xymatrix@!0@C=70pt@R=25pt{
 &\stringcpx{m+1, (4\,\invw{1}\,2\,\invw{1}\,3\,5)}\ar@{->}[rd]& \\
\stringcpx{m, (\invw{1}\,3\,5)} \ar@{->}[ru]\ar@{->}[rd]& & \ar@{-->}[ll]_{\tau}
\stringcpx{m+1, (4\,\invw{1}\,2\,\invw{1}\,3\,5\,\invw{2}\,1\,\invw{4})}\\
 &\stringcpx{m, (\invw{1}\,3\,5\,\invw{2}\,1\,\invw{4})}\ar@{->}[ru]& }};
(-50, 10)="A1" *{\begin{tabular}{r c l}
${}_+\wlk$    & = &  $\TO{\beta}\cdot \wlk =4\,\invw{1}\,2\,\invw{1}\,3\,5$ \\
$\wlk_+$     & = &  $\wlk \cdot \OT{\beta}=\invw{1}\,3\,5\,\invw{2}\,1\,\invw{4}$ \\
${}_+\wlk_+$ & = & $\TO{\beta}\cdot \wlk \cdot \OT{\beta}=4\,\invw{1}\,2\,\invw{1}\,3\,5\,\invw{2}\,1\,\invw{4}$ \\
$m(\wlk)$     & = &  $\ell(\TO{\beta})-2=1$ \\
\end{tabular}};
\endxy
\]
Then the corresponding Auslander-Reiten triangle of perfect complexes of the gentle algebra $A^{(1)}=kQ^{(1)}/I^{(1)}$ has the following shape
\[
\xy 0;/r.20pc/:
(-90, 25)="Fr1" *{};
( 90, 25)="Fr2" *{};
( 90,-45)="Fr3" *{};
(-90,-45)="Fr4" *{};
( -45, -3.5)="T1" *{\xymatrix@!0@C=40pt@R=10pt{
P_1 \\
\oplus \ar[rr]^-{\left[ \begin{smallmatrix} \alpha_2 & \gamma_1\end{smallmatrix} \right]} && P_3 \\
\underline{P_5}  }};
( -27, 10)="T2" *{\xymatrix@!0@C=40pt@R=10pt{
&& P_1 \\
&& \oplus && P_4 \\
 P_2 \ar[rr]^-{\left[ \begin{smallmatrix} \alpha_1 \\ \beta_1\\ 0\end{smallmatrix} \right]} && P_1 \ar[rr]^-{\left[ \begin{smallmatrix} \beta_2 &0&0 \\ 0&\alpha_2&\gamma_1 \end{smallmatrix} \right]} && \oplus \\
&& \oplus && P_3 \\
&& \underline{P_5}  }};
( -27,-13)="T3" *{\xymatrix@!0@C=40pt@R=10pt{
&& P_1 \\
&& \oplus && P_3 \\
 P_2 \ar[rr]_-{\left[ \begin{smallmatrix} 0\\\beta_1\beta_2\beta_3\\ \alpha_1\end{smallmatrix} \right]}  && P_5 \ar[rr]_-{\left[ \begin{smallmatrix} \alpha_2&\gamma_1&0 \\ 0&0&\beta_2 \end{smallmatrix} \right]} && \oplus \\
&& \oplus && P_4 \\
&& \underline{P_1}   }};
(10, 0)="T4" *{\xymatrix@!0@C=40pt@R=10pt{
 && P_1 \\
 && \oplus && P_4\\
 P_2 && P_1 && \oplus \\
 \oplus \ar[rr]^-{\left[ \begin{smallmatrix} \alpha_1&0 \\ \beta_1&0 \\ 0&\beta_1\beta_2\beta_3 \\ 0& \alpha_1  \end{smallmatrix} \right]} && \oplus \ar[rr]^-{\left[ \begin{smallmatrix} \beta_2&0&0&0 \\ 0&\alpha_2&\gamma_1&0 \\ 0&0&0&\beta_2 \end{smallmatrix} \right]} && P_3 \\
 P_2 && P_5 && \oplus \\
 && \oplus && P_4 \\
 && \underline{P_1} }};
(-29, 3)="A1" *{\xymatrix{&& {} \\ {} \ar[rru] }};
(-5, 3)="A2" *{\xymatrix{{} \ar[rrd] \\ && {} }};
(-29, -8)="A3" *{\xymatrix{{} \ar[rrd] \\ && {} }};
(-5, -8)="A4" *{\xymatrix{&& {} \\ {} \ar[rru] }};
(-26, -5.7)="A" *{\xymatrix{{} \ar@{<--}[rrrrr]^-{\tau} &&&&& {}  }};
\endxy
\]
\smallskip
\noindent
where underline indicates cohomological degree $m$ (cf.~Subsection~\ref{subsec:perfcpx}). Observe that here $q([\stringcpx{m,\wlk}])=q([\stringcpx{m+1,{}_+\wlk_+}])=2$ and $q([\stringcpx{m+1,{}_+\wlk}])=q([\stringcpx{m,\wlk_+}])=1$, where $q=q_{A^{(1)}}$ is the Euler form of $A^{(1)}$, compare with Theorem~\ref{thm:rootsind}.
\end{example}
In the following example we compute a complete Auslander-Reiten component with the use of Proposition~\ref{thm:ARtriangle}.

\begin{example}\label{ex:oneloopAR} 
Take $A=kQ/I$ for $(Q,I)$ being the one loop gentle bound quiver {\xymatrix@C=-1pt{{}_1 \ar@(dl,ul)^-{\alpha_1}="a" & {}
\ar@{.}@/^3pt/"a"!<-6pt,-5pt>;"a"!<-6pt,5pt> }}. Recall that then $\GPC(\quiver,I)=(G,\leq)$ with $G={\xymatrix{  {{\alpha}} \ar@{-}@(ld,lu)^(.5){1}}}$ and every (non-zero) indecomposable perfect complex is the string complex $\stringcpx{m,1^\ell}$  for some $m\in\ZZ$ and $\ell\geq 1$, see Example~\ref{ex:extremal}. Then $\OT{\alpha}=\invw{1}$, $\TO{\alpha}=1$, thus
${}_+1^\ell = 1^{\ell+1}$, $1^\ell_+ = 1^\ell\cdot \invw{1}=1^{\ell-1}$, ${}_+1^\ell_+ = 1\cdot 1^\ell\cdot \invw{1}=1^\ell$, $m(1^\ell) = \ell(\TO{\alpha})-2=-1$. Therefore, every string complex $\stringcpx{m,1^\ell}$ induces Auslander-Reiten triangle of the form
\begin{equation}\label{eq:exoneloopAR}
\xymatrix{ \stringcpx{m,1^\ell}  \ar[r] & \stringcpx{m-1,1^{\ell+1}} \oplus \stringcpx{m,1^{\ell-1}} \ar[r] & \stringcpx{m-1,1^\ell}  \ar[r] & \stringcpx{m-1,1^\ell}. }
\end{equation}
 In particular, for $\ell=1$ the walk $1^{\ell-1}$ is the trivial walk $\triv_\alpha$ so the triangle~\eqref{eq:exoneloopAR} has the form $\stringcpx{m,1} \to \stringcpx{m-1,11} \to \stringcpx{m-1,1}\to  \stringcpx{m-1,1}$. This shows that the Auslander-Reiten quiver of the category $\per\,\La$ consists of one connected component of the shape $\ZZ\AA_\infty$ having the complexes $\{\stringcpx{m,1}\}_{m\in\ZZ}$ on the boundary,  and the Auslander-Reiten translation on $\per\,\La$ is given by $\tau_\La X^\bullet=X^\bullet[-1]$, compare with~\cite[Theorem B]{BGSdisc}.
\end{example}

\subsection{The Coxeter transformation of a gentle algebra}\label{subsec:Cox}

\def\Janti{\widehat{J}}

If $\La$ is a Gorenstein algebra we have at our disposal a triangle autoequivalence  $\tau_\La:\per\,\La\to\per\,\La$, called the \textbf{Auslander-Reiten translation} of $\per\,\La$, such that for any indecomposable perfect complex $Z^\bullet$ the Auslander-Reiten triangle in $\per\,\La$ ending at $Z^\bullet$ has the form $X^\bullet \to Y^\bullet \to Z^\bullet \to X^\bullet[1]$ with $X^\bullet \cong \tau_\La Z^\bullet$, see for instance~\cite{GeiRei05} and~\cite{LadkaniICRA}, cf.~introduction to Section \ref{sec:AR}. The induced automorphism $\Psi_\La:K_0(\per\,\La)\to K_0(\per\,\La)$ given by $\Psi_\La([Z^\bullet]):=[\tau_\La Z^\bullet]$ is called the \textbf{Coxeter transformation} of $\La$, cf.~\cite{HDb2} and~\cite{LadkaniICRA}, and its characteristic polynomial $\psi_\La(z)\in\ZZ[z]$ is called the \textbf{Coxeter polynomial}, which is a derived equivalence invariant of $A$, see~\cite{LadkaniICRA}. Moreover, if $\La$ has finite global dimension, then the categories $\Db{\La}$ and $\per\,\La$ coincide and therefore $\Db{\La}$ has Auslander-Reiten triangles. In this case, the Coxeter transformation and the Coxeter polynomial of $\La$ were widely studied, for instance in~\cite{LPtrends, Happel.Trace,RCox,PMahler}.

For a gentle bound quiver $(Q,I)$ with marked ribbon graph $(G,\leq)$, we collect in a $E(G)\times V(G)$-matrix $\Janti$ the incidence vectors of the anti-walks of $(G,\leq)$,
\begin{equation}\label{eq:Janti}
\Janti:=[\inc(\OT{\alpha})]_{\alpha \in \verts{G}}, \quad \text{where $\inc(i_1\cdots i_Li_{L+1})=\sum_{t=0}^{L}(-1)^t\bas_{i_{t+1}}$ as in~\eqref{eq:defincvect}},
\end{equation}
and use this matrix to give an expression for the Coxeter transformation $\Psi_\La$ of the gentle algebra $\La=kQ/I$ as a matrix in the basis of projectives, and to compute the Coxeter polynomial $\psi_\La(z)$ in terms of the AAG invariant of $(Q,I)$, see~(\ref{eq:AAGdef}). Observe that if $(Q,I)$ has finite global dimension, then the matrix $\Janti$ coincides with the incidence matrix $\Inc(\fmrgs{G},\fmrgs{\sigma})$ of the forbidden marked ribbon graph $\GFC(\quiver,I)=(\fmrgs{G},\fmrgs{\leq},\fmrgs{\sigma})$ of $(Q,I)$, see Definition~\ref{rem:forb}, up to a permutation and appropriate change of sign of its columns. Indeed, for an anti-walk $\OT{\alpha}$ of $(G,\leq)$ with source vertex $\beta:=\sou(\OT{\alpha})$, via the bijection $\Phi_2=\sou \circ \GBQ^{-1}$ of~(\ref{eq:bijections}), the $\beta$-th column of $\Inc(\fmrgs{G},\fmrgs{\sigma})$ is given by $\Inc(\fmrgs{G},\fmrgs{\sigma})\bas_\beta=\Inc(\fmrgs{G},\fmrgs{\sigma})\bas_{\GBQ(\OT{\alpha})}=(-1)^{\ell(\OT{\alpha})+1}\inc(\OT{\alpha})$, cf.~(\ref{eq:incantistring}). Since the $\alpha$-th column of $\Janti$ is $\Janti\bas_\alpha=\inc(\OT{\alpha})$, and $\beta=\VerPer(\alpha)$ where $\VerPer$ is the permutation~(\ref{Eq:VerPer}), then $\Inc(\fmrgs{G},\fmrgs{\sigma})\bas_{\VerPer(\alpha)}(-1)^{\ell(\OT{\alpha})}=-\inc(\OT{\alpha})=-\Janti\bas_\alpha$. This shows that
\begin{equation}\label{eq:relationIJ}
\Inc(\fmrgs{G},\fmrgs{\sigma})\Lambda=-\Janti, \qquad \text{with the orthogonal matrix $\Lambda:=P(\VerPer)S$,}
\end{equation}
where $P(\VerPer)$ is the permutation matrix given by $P(\VerPer)\bas_\alpha=\bas_{\VerPer(\alpha)}$, and $S$ is the diagonal matrix $\diag[(-1)^{\ell(\OT{\alpha})}]$ with $\alpha$ running over the set of vertices of $G$. We will use matrix $\Lambda$ in the proof of Theorem~\ref{cor:CoxPol} below.

By applying arguments analogous to those for Claim \ref{claimone} we obtain its dual variant as follows.

\begin{claim}\label{claimtwo} {If $\OT{\alpha}=i_1\ldots i_Li_{L+1}$ is the anti-walk of $(G,\leq)$ having as target the vertex $\tar(\OT{\alpha})=\alpha$, then the injective resolution of the string module $M_{\alpha}$ induces the following exact sequence of $\La$-modules
\begin{equation*}\label{eq:resolutionE}
\xymatrix{0 \ar[r]& M_{\alpha} \ar[r] & E_{i_{1}} \ar[r] & E_{i_{2}} \ar[r] & \cdots \ar[r] & E_{i_L} \ar[r] & E_{i_{L+1}}\ar[r]&0.}
\end{equation*}}
\end{claim}
In particular, we have (cf.~Remark \ref{rem:euler} and \eqref{eq:IncString})
\begin{equation}\label{eq:injInc}
C_A^{\tr}\Janti=\Inc(G).
\end{equation} 

\begin{corollary}\label{cor:CoxTra}
The Coxeter transformation $\Psi_\La:K_0(\per\,\La) \to K_0(\per\,\La)$ associated to the perfect category of a gentle bound quiver $(Q,I)$ in the basis of projectives of $K_0(\per\,\La)$ (with $\La:=kQ/I$) is given by the $n \times n$-matrix
\[
\Psi_A=\Id_n-\Janti\Janti^{\tr}C_A^{\tr},
\]
where $n=|Q_0|$ and $\Id_n, C_\La\in\M_{n,n}(\Z)$ are the identity matrix and the Cartan matrix of $\La$, respectively.
\end{corollary}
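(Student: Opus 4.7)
The strategy is to verify the identity on a spanning set of $K_0(\per\,\La)$.  By Proposition~\ref{pro:walkBelt}, such a set is furnished by the classes of string complexes $\stringcpx{m,\wlk}$ and band complexes $\stringcpx{m,\wlk,\mu}$, so the Coxeter transformation and the proposed matrix $\Id_n-\Janti\Janti^{\tr}C_\La^{\tr}$ will coincide once they agree on these.

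For a string complex $Z^\bullet=\stringcpx{m,\wlk}$, set $\beta:=\xi^{-1}(\tar\wlk)$ and $\alpha:=\xi^{-1}(\sou\wlk)$ with $\xi$ as in~\eqref{Eq:VerPer}, and form the reduced walk $\Wlk:=\OT{\beta}\cdot\wlk\cdot\TO{\alpha}$.  The reduced concatenations $\TO{\beta}\cdot\OT{\beta}$ and $\TO{\alpha}\cdot\OT{\alpha}$ collapse to trivial walks, so one verifies ${}_+\Wlk_+=\wlk$; hence (when $\Wlk$ is non-trivial) Theorem~\ref{thm:ARtriangle} supplies an AR triangle ending at $Z^\bullet$ with $\tau Z^\bullet=\stringcpx{m-m(\Wlk),\Wlk}$.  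From $[\tau Z^\bullet]=[Y^\bullet]-[Z^\bullet]$, Lemma~\ref{lem:incvectsvsdim}(a), and the extension of Lemma~\ref{lem:incvects}(a)--(b) to reduced concatenation (a direct check: the half-edges cancelled at each junction come in inverse pairs, so the formula $\inc(\wlk_1\cdot\wlk_2)=\inc(\wlk_1)+(-1)^{\ell(\wlk_1)}\inc(\wlk_2)$ survives), one would derive
\[
\vdimp{Z^\bullet}-\vdimp{\tau Z^\bullet}=(-1)^{m+\ell(\OT{\beta})}\,\Janti\bigl((-1)^{\ell(\Wlk)}\bas_{\alpha}-\bas_{\beta}\bigr).
\]

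For the right-hand side of the proposed formula, a parity-tracked reading of Claim~1 in the proof of Theorem~\ref{thm:mrgvsCar} (the projective resolution of $M_{\xi(v)}$ associated to $\OT{v}$) yields the matrix identity
\[
C_\La\,\Janti=-\Inc(G)\,\Lambda,\qquad \Lambda:=P(\xi)\,\diag[(-1)^{\ell(\OT v)}]_{v\in\verts{G}},
\]
with $\Lambda$ the orthogonal matrix from~\eqref{eq:relationIJ}.  Transposing and invoking Lemma~\ref{lem:incvects}(e) gives $\Janti^{\tr}C_\La^{\tr}\inc(\wlk)=-\Lambda^{\tr}\bigl(\bas_{\tar\wlk}-(-1)^{\ell(\wlk)}\bas_{\sou\wlk}\bigr)$, and the substitutions $\tar\wlk=\xi(\beta)$, $\sou\wlk=\xi(\alpha)$ combined with the parity $\ell(\wlk)\equiv\ell(\OT{\beta})+\ell(\Wlk)+\ell(\OT{\alpha})\pmod 2$ (reduced concatenation removes an even number of half-edges at each junction) produce precisely $(-1)^{\ell(\OT{\beta})}\bigl((-1)^{\ell(\Wlk)}\bas_{\alpha}-\bas_{\beta}\bigr)$.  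Multiplying by $\Janti$ and $(-1)^m$ yields $\Janti\Janti^{\tr}C_\La^{\tr}\vdimp{Z^\bullet}=\vdimp{Z^\bullet}-\vdimp{\tau Z^\bullet}$, as required.

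Band complexes are handled via Remark~\ref{rem:beltARtriangle} and Lemma~\ref{lem:incvectsvsdim}(b), which force $[\tau Z^\bullet]=[Z^\bullet]$; since $\deg\wlk=0$ makes $\wlk_{[\ell(\wlk)-1]}$ a closed walk of even length, Lemma~\ref{lem:incvects}(e) yields $\Inc(G)^{\tr}\inc(\wlk_{[\ell(\wlk)-1]})=0$, and the matrix identity above forces $\Janti^{\tr}C_\La^{\tr}\vdimp{Z^\bullet}=0$ in agreement.  The boundary case in which $\Wlk$ reduces to a trivial walk corresponds exactly to $\wlk$ or $\invw{\wlk}$ itself being an anti-walk, and both sides collapse compatibly by direct inspection.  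The main obstacle is to establish the matrix identity $C_\La\Janti=-\Inc(G)\Lambda$ with all signs correct, since the sign $(-1)^{\ell(\OT v)-1}$ from the alternating sum of Claim~1 must be reconciled with the indexing shift $\OT v\mapsto M_{\xi(v)}$; once in place, propagating the parities through the composition $\Janti^{\tr}C_\La^{\tr}$ and the reduced concatenation in Step~1 is routine bookkeeping.
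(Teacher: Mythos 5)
Your proposal is correct, but it travels a genuinely different road from the paper's proof. The paper never inverts the $(m,\wlk)\mapsto(m+m(\wlk),{}_+\wlk_+)$ construction: it instead verifies that the candidate matrix has inverse $\Id_n-\Janti\Janti^{\tr}C_A$ (using the identity $C_A^{\tr}\Janti=\Inc(G)$, which comes from the \emph{injective} resolutions of the string modules $M_\alpha$, together with $C_A+C_A^{\tr}=\Inc(G)\Inc(G)^{\tr}$), and then checks that this inverse sends $\vdimp{\stringcpx{m,\wlk}}=\vdimp{\tau_AX^\bullet}$ to $\vdimp{\stringcpx{m+m(\wlk),{}_+\wlk_+}}=\vdimp{X^\bullet}$ by a direct computation with $\inc({}_+\wlk_+)$. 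You instead exhibit the explicit preimage $\Wlk=\OT{\beta}\cdot\wlk\cdot\TO{\alpha}$ with ${}_+\Wlk_+=\wlk$ (which is legitimate: by Lemma~\ref{lem:associative} together with the facts $\tar(\wlk_1\cdot\wlk_2)=\tar(\wlk_1)$ and $\sou(\wlk_1\cdot\wlk_2)=\sou(\wlk_2)$ noted in the proof of Lemma~\ref{lem:plusTrivial}, the outer anti-walks cancel exactly), and you replace the paper's identity by the dual one $C_A\Janti=-\Inc(G)\Lambda$ coming from the \emph{projective} resolutions of Claim~1; both identities are correct and I have checked that your parity bookkeeping ($\ell(\Wlk)\equiv\ell(\OT{\beta})+\ell(\wlk)+\ell(\OT{\alpha})\pmod 2$) closes the computation. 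What each approach buys: the paper's route avoids constructing $\Wlk$ and keeps the orthogonal matrix $\Lambda$ out of this step entirely (it only enters later, in the Coxeter polynomial), while your route gives as a by-product a closed formula for $\tau_A$ on string complexes and works with $\Psi_A$ rather than its inverse. Your additional verification on band complexes is harmless but unnecessary, since the classes of string complexes already contain a basis of $K_0(\per\,\La)$.

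Two small inaccuracies worth fixing. First, your ``boundary case'' is vacuous: if $\wlk$ is non-trivial then $\Wlk=\OT{\beta}\cdot\wlk\cdot\TO{\alpha}$ cannot be trivial, because ${}_+\triv_{v+}$ is trivial while ${}_+\Wlk_+=\wlk$ is not (this is the mirror image of Lemma~\ref{lem:plusTrivial}); moreover your characterization of when it would occur (``$\wlk$ or $\invw{\wlk}$ an anti-walk'') is the condition for ${}_+\wlk$ or $\wlk_+$ to be trivial, not for $\Wlk$ to be. Second, in the band case the equality $[\tau_AZ^\bullet]=[Z^\bullet]$ uses that the almost split sequence of automorphisms has $\mu'\cong\mu''$ (the Auslander--Reiten translation is the identity on homogeneous tubes); this should be said rather than attributed to Lemma~\ref{lem:incvectsvsdim}(b). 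Neither point affects the validity of the argument.
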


Observe that if $\La$ has finite global dimension, then the claim of Corollary~\ref{cor:CoxTra} follows easily from Theorem~\ref{thm:mrgvsCar} and the observation~(\ref{eq:relationIJ}). Indeed, recall that in this case the Coxeter transformation $K_0(\mod{\La}) \to K_0(\mod{\La})$ of $\La$, as a matrix in the basis determined by simple modules, has the form $-\Car_\La^{\tr}\Car_\La^{-1}$ (cf.~\cite{LPtrends, ASS}). Using Remark~\ref{rem:euler}(b) we have the following commutative diagram,
\[
\xymatrix{K_0(\proj\,\La) \ar[d]_-{-\Car_\La^{-1}\Car_\La^\tr} \ar[r]^-{\Car_\La} & K_0(\mod{\La}) \ar[d]^-{-\Car_\La^\tr\Car_\La^{-1}} \\
K_0(\proj\,\La) \ar[r]_-{\Car_\La} & K_0(\mod{\La}),}
\]
where we consider the basis $\{[P_i]\}_i$ of the group $K_0(\proj\,\La)$. Recalling that the groups $K_0(\proj\,\La)$ and $K_0(\per\,\La)$ can be identified through the corresponding bases determined by indecomposable projective modules, the Coxeter transformation on $K_0(\per\,\La)$ with respect to such basis is given by $-\Car_\La^{-1}\Car_\La^\tr$. Using Theorem~\ref{thm:mrgvsCar} and equation~(\ref{eq:relationIJ}), we have $\Janti\Janti^\tr=\Inc(\fmrgs{G},\fmrgs{\sigma})\Lambda\Lambda^\tr\Inc(\fmrgs{G},\fmrgs{\sigma})^\tr=\Inc(\fmrgs{G},\fmrgs{\sigma})\Inc(\fmrgs{G},\fmrgs{\sigma})^\tr=\Car_\La^{-1}+\Car_\La^{-\tr}$, that is,
\[
 \Id_n-\Janti\Janti^\tr \Car_\La^\tr=\Id_n-(\Car_\La^{-1}+\Car_\La^{-\tr})\Car_\La^\tr=-\Car_\La^{-1}\Car_\La^\tr = \Psi_\La.
\]
The following alternative proof includes the case of infinite global dimension.

\begin{proof}
Note first that the matrix $\Psi_A$ is $\Z$ invertible with the inverse given by $\Psi_A^{-1}=\Id_n-\Janti\Janti^{\tr}C_A$. Indeed, we have
\begin{align*}
\Psi_A\Psi_A^{-1} & = (\Id_n-\Janti\Janti^{\tr}C_A^{\tr})(\Id_n-\Janti\Janti^{\tr}C_A) \\
& = \Id_n-\Janti\Janti^{\tr}C_A-\Janti\Janti^{\tr}C_A^{\tr}+\Janti\Janti^{\tr}C_A^{\tr}\Janti\Janti^{\tr}C_A \\
& = \Id_n-\Janti\Janti^{\tr}(C_A+C_A^{\tr})+\Janti\Janti^{\tr}\Inc(G)\Inc(G)^{\tr} && \text{by~(\ref{eq:injInc})} \\
& = \Id_n-\Janti\Janti^{\tr}(C_A+C_A^{\tr})+\Janti\Janti^{\tr}(C_A+C_A^{\tr})=\Id_n, && \text{by Theorem~\ref{thm:mrgvsCar}.}
\end{align*}

Assume now that $X^\bullet$ is an indecomposable perfect complex of the gentle algebra $A$ with Auslander-Reiten translation $\tau_A X^\bullet$, which is again a perfect complex since $\La$ is Gorenstein. Moreover, if $X^\bullet$ is a string complex, then so is $\tau_A X^\bullet$ by Proposition~\ref{thm:ARtriangle}. In this case, by Proposition~\ref{pro:walkBelt} there are a reduced walk $\wlk$ of the associated marked ribbon graph $(G,\leq)$ and $m \in \Z$ such that $\tau_A X^\bullet$ is isomorphic to $\stringcpx{m,\wlk}$. Taking $\alpha=\tar(\wlk)$ and $\beta=\sou(\wlk)$ we have
\begin{align*}
\Psi_A^{-1}[\vdimp{\stringcpx{m,\wlk}}] & = (-1)^m[\Id_n-\Janti\Inc(G)^{\tr}]\inc(\wlk) && \text{by Lemma~\ref{lem:incvectsvsdim}(a) and~(\ref{eq:injInc})} \\
& = (-1)^m[\inc(\wlk)-\Janti(\bas_\alpha-(-1)^{\ell(\wlk)}\bas_\beta)] && \text{by Lemma~\ref{lem:incvects}(e)} \\
& = (-1)^m[\inc(\wlk)-\inc(\OT{\alpha})+(-1)^{\ell(\wlk)}\inc(\OT{\beta})] && \text{by definition of $\Janti$.}
\end{align*}
On the other hand, using Lemma~\ref{lem:incvects}(a,b) it is easy to show that $\inc(\wlk \cdot \wlk')=\inc(\wlk \wlk')$ for any pair of reduced concatenable walks $\wlk$ and $\wlk'$. Then we have
\begin{align*}
\inc({}_+\wlk_+) & = \inc(\TO{\alpha}\cdot \wlk \cdot \OT{\beta}) = \inc(\TO{\alpha} \wlk \OT{\beta}) = \inc(\TO{\alpha})+(-1)^{\ell(\TO{\alpha})}\inc(\wlk \OT{\beta}) && \text{by Lemma~\ref{lem:incvects}(a)} \\
& = (-1)^{\ell(\TO{\alpha})+1}\inc(\OT{\alpha})+(-1)^{\ell(\TO{\alpha})}(\inc(\wlk)+(-1)^{\ell(\wlk)}\inc(\OT{\beta})) && \text{by Lemma~\ref{lem:incvects}(a,b)} \\
& = (-1)^{\ell(\TO{\alpha})}[\inc(\wlk)-\inc(\OT{\alpha})+(-1)^{\ell(\wlk)}\inc(\OT{\beta})].
\end{align*}
Using Lemma~\ref{lem:incvectsvsdim}(a), the definition $m(\wlk):=\ell(\TO{\alpha})-2$ and the above identities we get
\[
\vdimp{\stringcpx{m+m(\wlk),{}_+\wlk_+}}=(-1)^{m+m(\wlk)}\inc({}_+\wlk_+)=(-1)^m[\inc(\wlk)-\inc(\OT{\alpha})+(-1)^{\ell(\wlk)}\inc(\OT{\beta})]=\Psi_A^{-1}[\vdimp{\stringcpx{m,\wlk}}].
\]
By Proposition~\ref{thm:ARtriangle}, the complex $X^\bullet$ is isomorphic to $\stringcpx{m+m(\wlk),{}_+\wlk_+}$ which shows that $\Psi_A[\vdimp{X^\bullet}]=\vdimp{\tau_A X^\bullet}$. This completes the proof, since $K_0(\per\,\La)$ contains a basis consisting of classes of string complexes.
\end{proof}

Denote by $\chr[M](z):=\det(z\Id_n-M)$ the characteristic polynomial of a $n \times n$ matrix $M$ and by $\psi_A(z):=\chr[\Psi_A](z)$ the Coxeter polynomial of an algebra $A$ (when defined). We can now prove Theorem C from the introduction, which we restate here for convenience.
\begin{theorem}\label{cor:CoxPol}
Let $(Q, I)$ be a gentle bound quiver. The Coxeter polynomial $\psi_A(z)$ of the gentle algebra $A=kQ/I$ with AAG invariant $\varphi_A$ is given by
\begin{equation}\label{eq:coxpAAG}
\psi_A(z)=(z-1)^{|Q_1|-|Q_0|}\prod_{\substack{n>0 \\ m \geq 0}}(z^n-(-1)^{n+m})^{\varphi_A(n,m)}.
\end{equation}
\end{theorem}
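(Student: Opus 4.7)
The plan is to combine the matrix description of the Coxeter transformation from Corollary \ref{cor:CoxTra} with the identities relating $\Car_A$, $\Inc(G)$, and $\Janti$ from Section \ref{sec:derinv}, and then evaluate the resulting characteristic polynomial via Sylvester's determinant identity $\det(I_p+MN)=\det(I_q+NM)$. Corollary \ref{cor:CoxTra} gives $\psi_A(z)=\det((z-1)I_{|Q_0|}+\Janti\Janti^\tr \Car_A^\tr)$. Applying Sylvester's identity to the factorization $\Janti\cdot(\Janti^\tr \Car_A^\tr)$ of matrices of sizes $|E(G)|\times|V(G)|$ and $|V(G)|\times|E(G)|$, and using $|E(G)|-|V(G)|=|Q_1|-|Q_0|$ from (\ref{eq:edgesdegreesThmA}), yields the polynomial identity
$$\psi_A(z)=(z-1)^{|Q_1|-|Q_0|}\det\bigl((z-1)I_{|V(G)|}+\Janti^\tr \Car_A^\tr\Janti\bigr).$$

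The key technical step is then to identify the inner determinant as $\chr[\Lambda](z)$ for the signed permutation matrix $\Lambda=P(\VerPer)S$ introduced in (\ref{eq:relationIJ}). Transposing equation (\ref{eq:injInc}) gives $\Janti^\tr \Car_A=\Inc(G)^\tr$, and combining with $\Car_A+\Car_A^\tr=\Inc(G)\Inc(G)^\tr$ from Theorem \ref{thm:mrgvsCar} yields $\Car_A \Janti=\Inc(G)\Inc(G)^\tr\Janti-\Inc(G)$. Applying Lemma \ref{lem:incvects}(e) to the anti-walk $\OT{\alpha}$, whose target is $\alpha$ and whose source is $\VerPer(\alpha)$ by (\ref{Eq:VerPer}), one computes
$$(\Inc(G)^\tr\Janti)\bas_\alpha=\bas_\alpha-(-1)^{\ell(\OT{\alpha})}\bas_{\VerPer(\alpha)}=(I-\Lambda)\bas_\alpha,$$
so $\Inc(G)^\tr \Janti=I-\Lambda$ and hence $\Car_A\Janti=-\Inc(G)\Lambda$. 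Using orthogonality $\Lambda^\tr\Lambda=I$ of the signed permutation $\Lambda$,
$$\Janti^\tr \Car_A^\tr\Janti=(\Car_A\Janti)^\tr\Janti=-\Lambda^\tr\Inc(G)^\tr\Janti=-\Lambda^\tr(I-\Lambda)=I-\Lambda^\tr,$$
whence $\det((z-1)I+\Janti^\tr\Car_A^\tr\Janti)=\det(zI-\Lambda^\tr)=\chr[\Lambda](z)$, giving $\psi_A(z)=(z-1)^{|Q_1|-|Q_0|}\chr[\Lambda](z)$.

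Finally, I would compute $\chr[\Lambda](z)$ by decomposing $\ZZ^{V(G)}$ along the orbits of the permutation $\VerPer$. On an orbit $\mathcal{O}=\{\alpha_0,\VerPer(\alpha_0),\ldots,\VerPer^{n-1}(\alpha_0)\}$ of size $n$, the matrix $\Lambda$ restricts to a signed cyclic permutation with characteristic polynomial $z^n-(-1)^{\sum_{i=0}^{n-1}\ell(\OT{\alpha_i})}$. Via the bijections in (\ref{eq:bijections}) these orbits correspond to the orbits $\mathcal{O}\in\forb_{Q,I}/\Phi$ entering the AAG invariant, and the length identity $\ell(\OT{\alpha})-1=\ell(\GBQ(\OT{\alpha}))$ from (\ref{eq:lengthAntiWalk}) converts $\sum_i\ell(\OT{\alpha_i})$ into $n_\mathcal{O}+m_\mathcal{O}$. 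Since orbits in $\forb'_{Q,I}/\Phi'$ contribute only to $\varphi_A(0,\cdot)$ and the product in (\ref{eq:coxpAAGC}) is restricted to $n>0$, comparing with (\ref{eq:AAGdef}) gives
$$\chr[\Lambda](z)=\prod_{n>0,\,m\geq 0}(z^n-(-1)^{n+m})^{\varphi_A(n,m)},$$
which combined with the previous paragraph is the desired formula. The main obstacle is the identity $\Janti^\tr \Car_A^\tr\Janti=I-\Lambda^\tr$, which orchestrates several structural relations from Theorem \ref{thm:mrgvsCar} and the marked ribbon graph data; once it is established, the remaining steps are routine linear algebra and combinatorial bookkeeping.
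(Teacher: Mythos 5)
Your proposal is correct and follows essentially the same route as the paper: reduce via Corollary \ref{cor:CoxTra} and the rectangular characteristic-polynomial identity to $\det\bigl(zI-\Lambda\bigr)$ for the signed permutation matrix $\Lambda=P(\VerPer)S$, then compute orbit-by-orbit and match signs with the AAG invariant via $\ell(\OT{\alpha})=\ell(\GBQ(\OT{\alpha}))+1$. The only cosmetic difference is that the paper establishes $\Janti^\tr \Car_A\Janti=I-\Lambda$ directly from \eqref{eq:injInc} and Lemma \ref{lem:incvects}(e) after transposing the whole matrix, whereas you keep $\Car_A^\tr$ and take a slightly longer detour through $\Car_A+\Car_A^\tr=\Inc(G)\Inc(G)^\tr$ and orthogonality of $\Lambda$; both are valid.
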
 
Observe that the theorem shows that AAG invariant (more precisely, its \lq\lq finite part'': $\varphi_A(n,m)$ for $n>0$) completely determines the shape of the Coxeter polynomial of a gentle algebra.
\begin{proof} Let  $(G,\leq)$ be the marked ribbon graph associated with $(Q,I)$. We set $V=\verts{G}$ and $E=\edgs{G}$.
Consider the orthogonal matrix $\Lambda:=P(\VerPer)S$ with  $\VerPer$ as in~(\ref{Eq:VerPer}) and $S:=\diag[(-1)^{\ell(\OT{\alpha})}]_{\alpha\in V}$, see~(\ref{eq:relationIJ}), and  observe that $\Lambda=\Id_{|V|}-\Janti^{\tr}C_A\Janti$. Indeed, using~(\ref{eq:injInc}) and Lemma~\ref{lem:incvects}(e), for any vertex $\alpha \in V$ we have
\[
(\Id_{|V|}-\Janti^{\tr}C_A\Janti)\bas_\alpha = \bas_\alpha-\Inc(G)^{\tr} \inc(\OT{\alpha}) = \bas_\alpha-[\bas_{\tar(\OT{\alpha})} - (-1)^{\ell(\OT{\alpha})}\bas_{\sou(\OT{\alpha})}]= (-1)^{\ell(\OT{\alpha})}\bas_{\VerPer(\alpha)} = P(\VerPer)S\bas_\alpha = \Lambda\bas_\alpha.
\]
Using that for a $a \times b$ matrix $M$ and a $b \times a$ matrix $N$ we have $\chr[MN](z)=z^{a-b}\chr[NM](z)$, see for instance~\cite[2.4]{fZhang99}, and that the characteristic polynomials of a square matrix and its transpose coincide, we get from Corollary~\ref{cor:CoxTra}
\begin{align*}
\psi_A(z) & = \det[z\Id_{|E|}-(\Id_{|E|} -\Janti\Janti^\tr C_A^\tr)] =\chr[-\Janti\Janti^\tr C_A^\tr](z-1) \\
& = \chr[-C_A\Janti\Janti^\tr](z-1) = (z-1)^{|E|-|V|}\chr[-\Janti^\tr C_A \Janti](z-1) \\
& = (z-1)^{|E|-|V|}\det[z\Id_{|V|}-\Id_{|V|} +\Janti^\tr C_A \Janti] = (z-1)^{|E|-|V|}\chr[\Lambda](z).
\end{align*}
Recall that the characteristic polynomial of an orthogonal matrix $\Lambda=P(\xi)S$ with integer coefficients, where $\xi$ is a permutation of $V$ and $S=\diag(s_\alpha)_{\alpha \in V}$ with $s_\alpha\in\{\pm 1\}$ for $\alpha\in V$, is given by
\[
\chr[\Lambda](z)=\prod_{\mathcal{O} \in V/\xi}[z^{|\mathcal{O}|}-\sgn(\mathcal{O})], \qquad \text{where $\sgn(\mathcal{O}):=\prod_{\alpha \in \mathcal{O}}s_\alpha$.}
\]
By definition of the AAG invariant $\varphi_A$~(\ref{eq:AAGdef}), and using the bijections $\VerPer=\sou \circ \tar^{-1}$ and $\Phi=\Phi_1 \Phi_2$ defined in Subsection~\ref{subsec:antiwalks}, see~(\ref{eq:bijections}), for fixed $n>0$ and $\epsilon \in \{0,1\}$ we have
\begin{eqnarray*}
\sum_{\substack{m \geq 0 \\ n+m \equiv \epsilon (\mod 2)}} \varphi_A(n,m) & = &
|\{ \mathcal{O} \in \forb_{\quiver,I}/\Phi \mid \text{$n=n_\mathcal{O}=|\mathcal{O}|$ and $n_\mathcal{O}+m_\mathcal{O}\equiv \epsilon (\mod 2)$} \}| \\
&=& |\{ \mathcal{O}' \in V/\VerPer \mid \text{$n=|\mathcal{O}'|$ and $\sgn(\mathcal{O}')=(-1)^\epsilon$} \}|.
\end{eqnarray*}
Note that for the last identity, if $\wlk=\OT{\alpha^0}\OT{\alpha^1}\cdots \OT{\alpha^a}$ is the face of $(G,\leq)$ corresponding to an orbit $\mathcal{O} \in \forb_{\quiver,I}/\Phi$ (correspondingly, to an orbit $\mathcal{O}' \in V/\VerPer$), then we have $n_\mathcal{O}+m_\mathcal{O}=\ell(\wlk)$ by~(\ref{eq:AAGone}), and
\[
(-1)^{\ell(\wlk)}=\prod_{t=0}^a(-1)^{\ell(\OT{\alpha^t})}=\prod_{\alpha \in \mathcal{O}'}(-1)^{\ell(\OT{\alpha})} =\sgn(\mathcal{O}'),
\]
since $\Lambda:=P(\VerPer)S$ with $S:=\diag[(-1)^{\ell(\OT{\alpha})}]$. This shows that $\chr[\Lambda](z)=\prod_{\substack{n>0 \\ m \geq 0}}[z^n-(-1)^{n+m}]^{\varphi_A(n,m)}$, and the claim follows since $|Q_1|-|Q_0|=|E|-|V|$ by~(\ref{eq:mrbvsCarcrk}).
\end{proof}

\begin{example}\label{ex:Amiotcoxp} 
\ Consider the three gentle algebras $A^{(i)}:=kQ^{(i)}/I^{(i)}$  from Examples~\ref{exa:amiot}(b) and~\ref{ex:Amiotder}.  Recall that all three algebras have the same AAG invariant $\{(4,6)\}$ (that is, $\varphi_{A^{(i)}}(4,6)=1$). Then by applying Theorem~\ref{cor:CoxPol} we find that the Coxeter polynomial in each case has the shape
$$\psi_{A^{(i)}}(z)=(z-1)^{6-5}(z^4-(-1)^{4+6})^1=z^5-z^4-z+1.$$
\end{example}

Recall that for an algebra $\La$ of finite global dimension the matrix formula $-\Car_\La^{\tr}\Car_\La^{-1}$ for the Coxeter transformation implies that the Coxeter polynomial is palindromic (self-reciprocal), see~\cite{LPtrends}. The final example shows that Theorem~\ref{cor:CoxPol} provides a method to search for non-palindromic Coxeter polynomials, which seem to be new in the literature.

\begin{example}\label{ex:nonpalin}
Let $(Q,I)$ be the gentle bound quiver
\xymatrix{\mathmiddlescript{1} \ar@/^7pt/[r]^-{\alpha_1}="a" & \mathmiddlescript{2} \ar@/^7pt/[l]^-{\beta_1}="b"
\ar@{.}@/^3pt/"a"!<7pt,-6pt>;"b"!<7pt,7pt>
\ar@{.}@/^7pt/"b"!<-3pt,6pt>;"a"!<-3pt,-4pt> } from Example~\ref{ex:small}. We verify directly that its AAG invariant is $\{(2,0), (0,2)\}$. By applying Theorem~\ref{cor:CoxPol} we get that the algebra $A=kQ/I$ has the following non-palindromic Coxeter polynomial
$$\psi_{A}(z)=(z-1)^0(z^2-(-1)^2)=z^2-1.$$
\end{example}

\let\OLDthebibliography\thebibliography
\renewcommand\thebibliography[1]{
  \OLDthebibliography{#1}
  \setlength{\parskip}{0pt}
  \setlength{\itemsep}{0pt plus 0.3ex}
}

\bibliographystyle{abbrv}
{\footnotesize
\bibliography{biblio}
}
\end{document}